\definecolor{bred}{rgb}{0.8,0,0}
\numberwithin{equation}{section}
\newcommand{\multiline}[1]{%
  \begin{tabularx}{\dimexpr\linewidth-\ALG@thistlm}[t]{@{}X@{}}
    #1
  \end{tabularx}
}
\newtheorem{theorem}{Theorem}[section]
\newtheorem{proposition}[theorem]{Proposition}
\newtheorem{lemma}[theorem]{Lemma}
\newtheorem{corollary}[theorem]{Corollary}
\newtheorem{remark}[theorem]{Remark}
\newtheorem{definition}[theorem]{Definition}
\newtheorem{assumption}[theorem]{Assumption}
\newcommand{\R}{\mathbb{{R}}}
\newcommand{\N}{\mathbb{{N}}}
\newcommand\bE{\mathbb{E}}
\newcommand\bF{\mathbb{F}}
\newcommand\bN{\mathbb{N}}
\newcommand\bR{\mathbb{R}}
\newcommand\bP{\mathbb{P}}
\newcommand\bZ{\mathbb{Z}}
\newcommand\bS{\mathbb{S}}
\newcommand\cB{\mathcal{B}}
\newcommand\cF{\mathcal{F}}
\newcommand\cJ{\mathcal{J}}
\newcommand\cL{\mathcal{L}}
\newcommand\cM{\mathcal{M}}
\newcommand\cR{\mathcal{R}}
\newcommand{\cal}{\curvearrowleft}
 \newcommand{\sumstar}
 {\operatornamewithlimits{\sum@\kern-.2em\raise1ex\hbox{*}}}
\newcommand{\<}{\langle}
\renewcommand{\>}{\rangle}
\newcolumntype{R}[1]{>{\raggedleft\let\newline\\\arraybackslash\hspace{0pt}}m{#1}}
\begin{document}

\title[]{Multilevel Picard approximation algorithm for semilinear  partial integro-differential equations and its complexity analysis}

\author[A.Neufeld]{Ariel Neufeld}
\address{Nanyang Technological University, Division of Mathematical Sciences, Singapore}
\email{ariel.neufeld@ntu.edu.sg}

\author[S. Wu]{Sizhou Wu}
\address{Shanghai University of Finance and Economics, School of Mathematics, Shanghai, China}
\email{wusizhou@sufe.edu.cn}

\date{}
\thanks{
	Financial support by the Nanyang Assistant Professorship Grant (NAP Grant) \textit{Machine Learning based Algorithms in Finance and Insurance} is gratefully acknowledged.}
\keywords{Multilevel Picard approximation, Nonlinear PIDE, overcoming the curse of dimensionality, Monte Carlo methods, Feynman-Kac representation}

\subjclass[2010]{}

\begin{abstract}
In this paper we introduce a multilevel Picard approximation algorithm for  semilinear parabolic partial integro-differential equations (PIDEs). We prove that the numerical approximation scheme converges to the unique viscosity solution of the PIDE under consideration. To that end, we derive a Feynman-Kac representation for the unique viscosity solution of the semilinear PIDE, extending the classical Feynman-Kac representation for linear PIDEs.
Furthermore, we show that the algorithm  does not suffer from the curse of dimensionality, i.e.\ the computational complexity of the algorithm is bounded polynomially in the dimension $d$ and the reciprocal of the prescribed accuracy $\varepsilon$. We also provide a numerical example in up to 10'000 dimensions to demonstrate its applicability.
\end{abstract}
\maketitle

\section{\textbf{Introduction}}
Nonlinear partial integro-differential equations (PIDEs) have 
many important applications in finance, 
physics,
biology,
economics,
and engineering; we refer to 
 \cite{ContTankov,CV2005_0,delong2013backward,oksendal2005stochastic} 
and the references therein, as well as \cite{Jentzen2022deepPIDE} 
for an excellent survey on applications of nonlocal PDEs in various fields.
 
In the last years, several papers proposed modern numerical methods to solve
 nonlinear PDEs  involving either neural networks
 \cite{deepsplitting,han2017deep,E2018deepRitz, germain2022approximation, 
 han2018solving, hure2020deep, LuMengMaoKarniadakis21,
  RaissiPerdikarisKarniadakis19,SirignanoSpiliopoulos18}, 
or multilevel Monte Carlo methods
 \cite{becker2020numericalMLP,beck2020overcomingElliptic,
beck2020overcoming,hutzenthaler2021multilevel,giles2019generalised,
hutzenthaler2019multilevel,HJKN2020,hutzenthalerkruse2020multilevel}, 
which are able to approximately solve high-dimensional nonlinear PDEs 
in up to 10'000 dimensions. 
Moreover, it has been proven in
 \cite{becker2020numericalMLP,beck2020overcoming,hutzenthaler2021multilevel,
 hutzenthaler2019multilevel,
HJKN2020,hutzenthalerkruse2020multilevel}, 
under some (typically Lipschitz-) continuity conditions on the corresponding
coefficients of the PDE under consideration, 
that multilevel Picard approximations do overcome the curse of dimensionality 
in the sense that 
the computational complexity of the algorithms is bounded polynomially 
in the dimension $d$ and the reciprocal of the prescribed  
accuracy $\varepsilon$.
 
However, the development of numerical methods and their complexity analysis 
for PIDEs is only at its infancy and is still emerging.
\cite{gonon2021deep,GS2021} proposed both machine learning based and Monte Carlo based methods for solving \textit{linear} PIDEs and showed that their algorithms do not suffer from the curse of dimensionality. In  \cite{al2019applications} the deep Galerkin algorithm of \cite{SirignanoSpiliopoulos18} has been extented to PIDEs. In \cite{yuan2022pinn} physics informed neural networks for solving nonlinear PIDEs have been constructed.
Deep neural network based algorithms for solving nonlinear PIDEs have been presented in \cite{castro2021deep, frey2022deep}. However, none of these papers provide a complexity analysis of their algorithms.
Moreover, very recently, \cite{Jentzen2022deepPIDE} introduced for non-local nonlinear PDEs with Neumann boundary conditions both a deep splitting algorithm as well as a multilevel Picard approximation algorithm to numerically solve the PDEs under consideration. However, no convergence analysis nor a complexity analysis of their algorithms has been provided. Moreover, the nonlocal term is expressed through a probability measure, which corresponds to a normalized L\'evy measure of \textit{finite activity.}

In this paper, we develop a multilevel Picard approximation (MLP) algorithm for semilinear PIDEs, and provide both a convergence analysis and a complexity analysis of the methodology. We show that the algorithm  does not suffer from the curse of dimensionality, i.e.\ the computational complexity of the algorithm is bounded polynomially in the dimension $d$ and the reciprocal of the prescribed  accuracy $\varepsilon$. 
We also provide a numerical example in up to 10'000 dimensions to demonstrate its applicability.
 Similar to the ideas in \cite{becker2020numericalMLP,beck2020overcoming,hutzenthaler2021multilevel,hutzenthaler2019multilevel,HJKN2020,hutzenthalerkruse2020multilevel}, the approach of developing such an approximation method for PIDEs is the following.
Consider the following nonlinear PIDE on $[0,T]\times \R^d$ satisfying $u^d(T,x)=g^d(x)$ and
\begin{equation}\label{PDE}
	\begin{split}
\frac{\partial}{\partial t}u^d(t,x)
&+\langle\nabla u^d(t,x),\mu^d(t,x)\rangle
+\frac{1}{2}\operatorname{Trace}
(\sigma^{d}(t,x)[\sigma^{d}(t,x)]^T\operatorname{Hess}_x(u^d)(t,x))
+f^d(t,x,u^d(t,x))\\
&
+\int_{\R^{d}}\left(u^d(t,x+\eta^{d}_t(z,x))-u^d(t,x)
-\langle\nabla u^d(t,x),\eta^{d}_t(z,x)\rangle\right)\,\nu^{d}(dz)
=0.
\end{split}
\end{equation}
In this paper, we restrict ourselves to semilinear parabolic PIDEs which are only nonlinear in their solutions $u(t,x)$ but not in their derivatives.
To be able to obtain a converge result of our numerical method, we first prove that under suitable conditions on the coefficients (see Section \ref{section setting}), the PIDE has a unique viscosity solution, and its unique solution admits a
 Feynman-Kac representation 
\begin{equation}                                                   \label{FK}
	u^d(t,x)=\bE\left[g^d(X^{d,t,x}_{T})\right]
	+\int_t^T\bE\left[f^d(s,X^{d,t,x}_{s},u^d(s,X^{d,t,x}_{s}))\right]\,ds, 
\end{equation}
where
\begin{equation}                                               \label{SDE intro}                              
	dX^{d,t,x}_{s}
	=\mu^d(s,X^{d,t,x}_{s-})\,ds
	+\sigma^{d}(s,X^{d,t,x}_{s-})\,dW^{d}_s
	+\int_{\R^{d}}\eta^{d}_s(X^{d,t,x}_{s-},z)
	\,\tilde{\pi}^{d}(dz,ds),
\end{equation}
with $\tilde{\pi}^{d}(dz,dt):=\pi^{d}(dz,dt)-\nu^{d}(dz)\otimes dt$
being the compensated Poisson random measure of $\pi^{d}(dz,dt)$. 
The comparison result is obtained by an application of the result in \cite{JK}.
The existence of a viscosity solution and its Feynman-Kac representation is established by first obtaining the existence of smooth solutions and  their Feynman-Kac representations for PIDEs  with smooth coefficients \cite{DGW2020} and then  applying a limit argument involving mollifiers as well as a stochastic fixed point argument based on \cite{BHJ2020,HJKN2020}; see Section \ref{section PIDE}.

Motivated by the Feynman-Kac representation \eqref{FK}, one can then define the following map (from a suitable space of functions)
\begin{equation*}
(\Phi^d \circ v^d)(t,x):=\bE\left[g^d(X^{d,t,x}_{T})\right]
+\int_t^T\bE\left[f^d(s,X^{d,t,x}_{s},v^d(s,X^{d,t,x}_{s}))\right]\,ds
\end{equation*}
for which the solution $u^d$ of the PIDE is a fixed point.
This motivates us to define a sequence of Picard
iterations 
\begin{equation*}
u^d_k(t,x)=(\Phi^d\circ u^d_{k-1})(t,x).
\end{equation*}
The above sequence of Picard iterations is well studied, 
see e.g. \cite{HJKN2020}, for the case without jumps, 
where it was shown that under appropriate assumptions on the coefficients 
of the PDE under consideration,
$\Phi^d$ forms a contraction mapping on
some Banach space and
$\lim_{k\to\infty}u^d_k(t,x)=u^d(t,x)$ for all $(t,x)\in[0,T]\times\R^d$
(see, e.g., Proposition 2.2 in \cite{HJKN2020} for details).  
Furthermore, notice that 
\begin{align}
	u^d_k(t,x)&
	=u^d_1(t,x)+\sum_{l=1}^{k-1}\left[u^d_{l+1}(t,x)-u^d_{l}(t,x)\right]
	\nonumber\\
	&
	=(\Phi^d\circ u^d_0)(t,x)+\sum_{l=1}^{k-1}
	\left[(\Phi^d\circ u^d_l)(t,x)-(\Phi^d\circ u^d_{l-1})(t,x)\right]\nonumber\\
	&
	=\bE\Big[g^d(X^{d,t,x}_{T})\Big]+\sum_{l=0}^{k-1}\int_t^T\bE\Big[
	f^d(s,X^{d,t,x}_{s},u^d_l(s,X^{d,t,x}_{s}))\nonumber\\
	&
	\quad
	-\mathbf{1}_{\N}(l)f^d(s,X^{d,t,x}_{s},u^d_{l-1}(s,X^{d,t,x}_{s}))
	\Big]ds                                     \label{MLP 1}
\end{align}
with $u^d_0\equiv 0$. 
Then, as  suggested by Hutzenthaler et al.\ (see Theorem 1.1 in \cite{HJKN2020}
for PDEs without non-local operators),
one can introduce a multilevel Picard approximation scheme of $u^d(t,x)$
by replacing $(X^{d,t,x}_{s})_{s\in[t,T]}$ in \eqref{MLP 1}
with its (time-) discretized approximation, and by approximating 
the expectations and time integrals in \eqref{MLP 1} by Monte Carlo methods. 
To be able to simulate the jumps of the SDE in \eqref{SDE intro}, 
we propose to use the Monte Carlo approximation method of the compensator integral 
as in \cite[Section~4.4]{GS2021}. 
Since Euler-discretizations and Monte Carlo approximations 
do not suffer from the curse of dimensionality, 
we are then able to show that the corresponding complexity 
of the algorithm only grows polynomially in the dimension $d$ 
and the reciprocal of the prescribed  accuracy~$\varepsilon$, 
extending the results in
 \cite{becker2020numericalMLP,beck2020overcoming,
 hutzenthaler2021multilevel,hutzenthaler2019multilevel,
 HJKN2020,hutzenthalerkruse2020multilevel} to PIDEs.

The remainder of this paper is organized as follows. In Sections 
\ref{section setting} and \ref{section results}, 
we introduce the settings of the MLP approximations,
and formulate the main results of this paper 
(see Theorems \ref{MLP conv} and \ref{MLP complexity}). 
The pseudocode of our MLP 
algorithm is provided in Section~\ref{section code}, whereas the numerical example is presented in Section~\ref{section example}.
In Section~\ref{section FP}, some useful results for stochastic fixed 
point equations are presented.
In Section~\ref{section PIDE},
we investigate the existence and uniqueness of viscosity solutions of semilinear
PIDE \eqref{PDE}, and establish a Feynman-Kac formula for semilinear PIDEs
(see Proposition \ref{proposition PIDE existence}).
Section~\ref{section no Euler} presents an error bound for MLP approximations
not involving Euler scheme which is taken from \cite{HJKN2020}.
The proof of Theorems \ref{MLP conv} 
and \ref{MLP complexity} are given in Section \ref{section MLP}.
Moreover, in Appendix \ref{section pre}
we present some lemmas for elementary dimension-depending estimates 
and properties of SDEs with jumps
and their Euler approximations, which we use in Sections 
\ref{section PIDE} and \ref{section MLP}.

\subsection*{Notation}
In conclusion, we introduce some notations used throughout this paper.
We denote by $\bN$ and $\bN_0$ the set of all positive integers 
and the set of all natural numbers, respectively,
and denote by $\bZ$ the set of all integers.
For each $d\in\bN$, the $d\times d$ identity  matrix is denoted by $\mathbf{I}_d$.
For each $d\in\bN$,
we use $\bS^d$ to denote the space of $d\times d$
symmetric matrices, and for matrices $A,B\in\bS^d$ the notation $A\geq B$
means $A-B$ is positive semi-definite. 
For each $d\in\bN$ and any vectors $a,b\in\bR^d$, 
we denote by $\<a,b\>$ the scalar
product of $a$ and $b$, and denote by $\|a\|$ the euclidean norm of $a$.
For each $d\in\bN$ and every matrix $A\in\bR^{d\times d}$, we
denote by $\|A\|_F$ the Frobenius norm of $A$, 
and we use $A^{ij}$ to denote the element on the $i$-th row and $j$-th column
of $A$ for $i,j=1,...,d$,
For each $d\in\bN$ and every
matrix $A\in\bR^{d\times d}$, we denote by $A^T$ the transpose of $A$. 
For any metric spaces $(E,d_E)$ and $(F,d_F)$,
we denote by $C(E,F)$ the
set of continuous functions from $E$ to $F$. For every topological space $E$,
denote by $\cB(E)$ the Borel $\sigma$-algebra of $E$. 
For all measurable spaces $(X_1,\Sigma_1)$ and $(X_2,\Sigma_2)$, we denote by
$\cM(\Sigma_1,\Sigma_2)$ the set of $\Sigma_1/\Sigma_2$-measurable functions
from $X_1$ to $X_2$.
For all $a,b\in\bR$, we
use the notations $a\wedge b:=\min\{a,b\}$ and $a\vee b:=\max\{a,b\}$.
For any set $B$, 
we use $\mathbf{1}_B$ to denote the indicator function of $B$.

Let $\mathbb{T}$ denote either $(0,T)$ or $[0,T]$,
and denote by $LSC(\mathbb{T}\times\bR^d)$ 
($USC(\mathbb{T}\times\bR^d)$)
the space of lower (upper) semicontinuous functions 
$u:\mathbb{T}\times\bR^d\to\bR$.
We denote by
$LSC_1(\mathbb{T}\times\bR^d)$ 
($USC_1(\mathbb{T}\times\bR^d)$)
the space of functions $u\in LSC(\mathbb{T}\times\bR^d)$ 
$(USC(\mathbb{T}\times\bR^d))$
satisfying the linear growth condition  
\begin{equation}                                               \label{p growth}
\sup_{(t,x)\in\mathbb{T}\times\bR^d}\frac{|u(t,x)|}{1+\|x\|}<\infty,
\end{equation}
and we denote by $C_1(\mathbb{T}\times\bR^d)$ the space of continuous functions
$u:\mathbb{T}\times\bR^d\to\bR$ satisfying \eqref{p growth}.
Moreover, 
$SC(\mathbb{T}\times\bR^d):=LSC(\mathbb{T}\times\bR^d)
\cup USC(\mathbb{T}\times\bR^d)$,
and $SC_1(\mathbb{T}\times\bR^d):=LSC_1(\mathbb{T}\times\bR^d)\cup USC_1(\mathbb{T}\times\bR^d)$.
We use the notation $C^{1,2}(\mathbb{T}\times\bR^d)$ to denote the space of once
in time $t\in\mathbb{T}$ and twice in space $x\in\bR^d$ continuously differentiable
functions $u:\mathbb{T}\times\bR^d\to\bR$, and we also use the notation 
$C^{1,2}_1(\mathbb{T}\times\bR^d)
:=C^{1,2}(\mathbb{T}\times\bR^d)\cap C_1(\mathbb{T}\times\bR^d)$.
Moreover,  
we denote by $C^2_1(\bR^d)$ the space of twice continuously 
differentiable functions $u:\bR^d\to\bR$ satisfying \eqref{p growth}
(without dependence on $t$).
The notation $C^\infty_c(\bR^d)$ means the space of 
infinitely differentiable real-valued
functions on $\bR^d$ with compact support.

\section{\textbf{Multilevel Picard approximation scheme: 
setting and main results}}           
\label{section Picard}

\subsection{Setting}                                      \label{section setting}
Let $T>0$ be a fixed constant, and let $(\Omega,\cF,P)$ be a complete probability space
equipped with a filtration $\bF:=(\cF_t)_{t\in[0,T]}$ 
satisfying the usual conditions.
For each $d\in \bN$ we are given an $\bR^d$-valued standard $\bF$-Brownian motion
denoted by $(W^d_t)_{t\in[0,T]}$.
For each $d\in\bN$ let $\nu^{d}(dz)$ be a L\'evy measure on $\bR^{d}$,
and let $\pi^{d}(dz,dt)$ be a $\bF$-Poisson random measure on
$([0,T]\times\bR^{d},\cB([0,T])\otimes\cB(\bR^{d}))$ 
with intensity measure $\nu^{d}(dz)\otimes dt$,
and we denote by 
$\tilde{\pi}^{d}(dz,dt):=\pi^{d}(dz,dt)-\nu^{d}(dz)\otimes dt$
the compensated Poisson random measure of $\pi^{d}(dz,dt)$.
Moreover, for each $d\in\bN$ let
$\mu^d=(\mu^{d,1},...,\mu^{d,d})
\in C([0,T]\times\bR^d,\bR^d)$
$,
\sigma^{d}=(\sigma^{d,ij})_{i,j\in\{1,2,...,d\}}
\in C([0,T]\times\bR^d,\bR^{d\times d})
$, and
$\eta^{d}=(\eta^{d,1},...,\eta^{d,d})\in C([0,T]\times\bR^d\times \bR^{d},\bR^d)$.
Then for each $d\in\bN$ and $(t,x)\in[0,T]\times\bR^d$ 
consider the following stochastic differential equation (SDE) on $[t,T]$
\begin{equation}                                               \label{SDE}                              
dX^{d,t,x}_{s}
=\mu^d(s,X^{d,t,x}_{s-})\,ds
+\sigma^{d}(s,X^{d,t,x}_{s-})\,dW^{d}_s
+\int_{\bR^{d}}\eta^{d}_s(X^{d,t,x}_{s-},z)
\,\tilde{\pi}^{d}(dz,ds)
\end{equation}
with initial condition $X^{d,t,x}_{t}=x$.
Let $g^d\in C(\bR^d,\bR)$ and $f^d\in C([0,T]\times \bR^d \times\bR,\bR)$.
Then we make the following assumptions for the coefficient functions 
and L\'evy measures.

\subsubsection*{Regularity and growth conditions}

\begin{assumption}[Lipschitz and linear growth conditions]
\label{assumption Lip and growth}
There exists constants $L,p\in(0,\infty)$ satisfying 
for all $d\in\bN$, $x,y\in\bR^d$, $v,w\in\bR$, and $t\in[0,T]$ that
\begin{equation}                                    \label{assumption Lip f g}
T|f^d(t,x,v)-f^d(t,y,w)|+|g^d(x)-g^d(y)|
\leq L^{1/2}\big(T|v-w|+T^{-1/2}\|x-y\|\big),
\end{equation}
\begin{equation}                              \label{assumption Lip mu sigma eta}
\|\mu^d(t,x)-\mu^d(t,y)\|^2
+\|\sigma^{d}(t,x)-\sigma^{d}(t,y)\|_F^2
+\int_{\bR^{d}}\big\|\eta^{d}_t(x,z)-\eta^{d}_t(y,z)\big\|^2
\,\nu^{d}(dz)
\leq L\|x-y\|^2,
\end{equation}
\begin{equation}                                        \label{assumption growth}
\|\mu^d(t,x)\|^2+\|\sigma^{d}(t,x)\|_F^2
+\int_{\bR^{d}}\big\|\eta^{d}_t(x,z)\big\|^2\,\nu^{d}(dz)\leq Ld^p(1+\|x\|^2),
\end{equation}
and
\begin{equation}                                  \label{assumption growth f g}
|T\cdot f^d(t,x,0)|^2+|g^d(x)|^2\leq L(d^p+\|x\|^2).
\end{equation}
\end{assumption}
 
\begin{assumption}[Pointwise Lipschitz and integrability conditions]
\label{assumption pointwise}
For each $d\in \bN$ there exits a constant $C_{d}\in(0,\infty)$ such that
for all $x,x'\in \bR^d$, $t\in[0,T]$, and $z\in\bR^{d}$ 
\begin{equation}                                        \label{assumption eta}                                     
\|\eta^{d}_t(x,z)\|^2\leq C_{d}(1\wedge \|z\|^2), 
\quad \|\eta^{d}_t(x,z)-\eta^{d}_t(x',z)\|^2
\leq C_{d}\|x-x'\|^2(1\wedge \|z\|^2).
\end{equation} 
\end{assumption}

\begin{assumption}                                     \label{assumption jacobian}
For all $d\in\bN$, $(t,x)\in[0,T]\times\bR^d$, and $z\in\bR^d$ we assume that
$D_x \eta^d_t(x,z)$ exists,
where $D_x\eta^d_t(x,z)$ denotes the Jacobian matrix of $\eta^d_t(x,z)$
with respect to $x$.
Moreover, for each $d\in\bN$ assume that there exists a constant
$\lambda_d>0$ such that for all 
$(t,x)\in[0,T]\times\bR^d$, $z\in\bR^d$,
and $\gamma\in[0,1]$  
\begin{equation}                                     \label{jacobian cond}
\lambda_d\leq |\det(\mathbf{I}_d+\gamma D_x \eta^d_t(x,z))|.
\end{equation}
\end{assumption}

\begin{remark}
Note that Assumption \ref{assumption jacobian} is needed to 
show the existence and
uniqueness of linear PIDEs whose coefficient functions
are infinitely differentiable and compactly supported 
as functions of $x\in\bR^d$
(see Proposition \ref{proposition linear PIDE}). 
Moreover, note that Assumption \ref{assumption jacobian} is satisfied
in the L\'evy case where $\eta^d_t(x,z)=z$ or 
$\eta^d_t(x,z)=z\mathbf{1}_{\{\|z\|\leq 1\}}$ 
for all $d\in\bN$, $(t,x)\in[0,T]\times\bR^d$
and $z\in\bR^d$.
\end{remark}

\begin{assumption}
\label{assumption small jumps}
There exist constants $K,q\in(0,\infty)$ satisfying
for all $d\in \bN$, $\delta\in(0,1)$, and $(t,x)\in[0,T]\times\bR^d$ that
\begin{equation}                                       \label{int z & q bound}
\int_{\bR^{d}}(1\wedge\|z\|^2)\,\nu^{d}(dz)\leq Kd^p
\quad \text{and} \quad
\int_{\|z\|\leq \delta}\|\eta^{d}_t(x,z)\|^2
\,\nu^{d}(dz)\leq Kd^p\delta^q(1+\|x\|^2).
\end{equation}
\end{assumption}
Note that Assumptions \ref{assumption Lip and growth} and 
\ref{assumption pointwise} guarantee that for each $d\in\bN$,
the SDE in \eqref{SDE} has an unique solution satisfying that
\begin{equation}                                          \label{SDE moment est}
\bE\Big[\sup_{s\in[t,T]}\big\|X^{d,t,x}_s\big\|^2\Big]<C(1+\|x\|^2),
\end{equation}
where $C$ is a constant only depending on $L$, $d$, $p$, and $T$
(see, e.g., Theorem 3.1 and Theorem 3.2 in \cite{Kunita}, or 
Lemma 114 and Theorem 117 in \cite{Situ}).
The next assumption is needed to obtain the rate of convergence of 
the Euler approximation of SDE \eqref{SDE}, 
in the case that the coefficients
$\mu^d, \sigma^{d}$ and $\eta^{d}$ depend on time $t$
(see Lemma \ref{Lemma Euler} in Section \ref{section pre}).

\begin{assumption}[Temporal $1/2$-H\"older condition]
\label{assumption time Holder}
There exist constants $L_1,L_2>0$ satisfying 
for all $d\in \bN$,
$x\in\bR^d$, $z\in\bR^{d}$, and $(s,s')\in[0,T]^2$ that 
\begin{equation}                                        \label{Holder mu sigma d}
\|\mu^d(s,x)-\mu^d(s',x)\|^2+\|\sigma^{d}(s,x)-\sigma^{d}(s',x)\|_F^2
\leq L_1|s-s'|,
\end{equation}
and
\begin{equation}                                           \label{Holder eta d}
\int_{\bR^{d}}\|\eta^{d}_s(x,z)-\eta^{d}_{s'}(x,z)\|^2\,\nu^{d}(dz)
\leq L_2|s-s'|.
\end{equation}
\end{assumption}
For each $d\in\bN$
let 
$$
F^d: \cM(\cB([0,T]\times \bR^d),\cB(\bR))\to \cM(\cB([0,T]\times \bR^d),\cB(\bR))
$$
be the operator such that
$$
[0,T]\times\bR^d\ni(t,x)\mapsto (F^d(v))(t,x)
:=f^d(t,x,v(t,x)),\quad v\in \cM(\cB([0,T]\times \bR^d),\cB(\bR)).
$$

\subsubsection*{Time Discretization Approximations}
Let $\Theta=\cup_{n\in \bN}\bZ^n$ be an index set which we will use for 
the families of independent random variables needed for the Monte Carlo
approximations. For each $d\in\bN$, let 
$W^{d,\theta}=(W^{d,\theta,1},...,W^{d,\theta,d})
:[0,T]\times \Omega\to \bR^d, \theta\in\Theta$, be independent 
$\bR^d$-valued standard $\bF$-Brownian motions. 
For each $d\in\bN$, let $\pi^{d,\theta}(dz,ds),\theta \in \Theta$, be
independent $\bF$-Poisson random measures on $\bR^{d}\times [0,T]$ 
with identical intensity measure $\nu^{d}(dz)\otimes dt$, and denote by
$
\tilde{\pi}^{d,\theta}(dz,ds)
:=\pi^{d,\theta}(dz,ds)-\nu^{d}(dz)\otimes dt
$ 
the compensated Poisson random measures of $\pi^{d,\theta}(dz,ds)$, respectively.
For each $d\in\bN$ and $(t,x)\in[0,T]\times\bR^d$, 
let $\big(X^{d,\theta,t,x}_s\big)_{s\in[t,T]}: 
[t,T]\times\Omega\to \bR^d$, $\theta\in\Theta$, be 
$\cB([0,T])\otimes\cF/\cB(\bR^d)$-measurable 
functions satisfying for all $\theta\in\Theta$ and $s\in[t,T]$ 
almost surely that $X^{d,\theta,t,x}_{t}=x$ and
\begin{equation}                                               \label{SDE d}                              
dX^{d,\theta,t,x}_{s}
=\mu^d(s,X^{d,\theta,t,x}_{s-})\,ds
+\sigma^{d}(s,X^{d,\theta,t,x}_{s-})\,dW^{d,\theta}_s
+\int_{\bR^{d}}\eta^{d}_s(X^{d,\theta,t,x}_{s-},z)
\,\tilde{\pi}^{d,\theta}(dz,ds).
\end{equation}
For each $d\in\bN$, $N\in\bN$, and $(t,x)\in[0,T]\times\bR^d$, 
let $\big(Y^{d,\theta,t,x,N}_s\big)_{s\in[t,T]}: 
[t,T]\times\Omega\to \bR^d$, $\theta\in\Theta$, be measurable 
functions satisfying for all 
$n\in \bN_0$, $s\in \Big[t+\frac{n(T-t)}{N},t+\frac{(n+1)(T-t)}{N}\Big]
\cap [t,T]$ that
$Y^{d,\theta,t,x,N}_{t}=x$ and
\begin{align}
Y^{d,\theta,t,x,N}_{s}= & Y^{d,\theta,t,x,N}_{t+\frac{n(T-t)}{N}}
+\mu^d\Big(t+\frac{n(T-t)}{N},Y^{d,\theta,t,x,N}_{t+\frac{n(T-t)}{N}}\Big)
\left[s-(t+\frac{n(T-t)}{N})\right]
\nonumber\\
&
+\sigma^{d}\Big(t+\frac{n(T-t)}{N},Y^{d,\theta,t,x,N}_{t+\frac{n(T-t)}{N}}\Big)
\left(W^{d,\theta}_s-W^{d,\theta}_{t+\frac{n(T-t)}{N}}\right)\nonumber\\
&
+\int_{t+\frac{n(T-t)}{N}}^s\int_{\bR^{d}}
\eta^{d}_{t+\frac{n(T-t)}{N}}\Big(Y^{d,\theta,t,x,N}_{t+\frac{n(T-t)}{N}},z\Big)
\,\tilde{\pi}^{d,\theta}(dz,dr).                    \label{discrete Euler}
\end{align}
To ease notations we define 
$$
\kappa_N(s):=t+\frac{\lfloor N(s-t)/(T-t)\rfloor \cdot(T-t)}{N},\quad s\in[t,T],
$$
where $\lfloor y\rfloor:=\max\{n\in\bN_0:n\leq y\}$ for $y\in[0,\infty)$.
Then for all $d,N\in\bN$, $\theta\in\Theta$, and $(t,x)\in[0,T]\times\bR^d$, 
\eqref{discrete Euler} can be written as
\begin{align}
dY^{d,\theta,t,x,N}_{s}=
&
\mu^d\Big({\kappa_N(s-),Y^{d,\theta,t,x,N}_{\kappa_N(s-)}}\Big)\,ds
+\sigma^{d}\Big({\kappa_N(s-),Y^{d,\theta,t,x,N}_{\kappa_N(s-)}}\Big)
\,dW^{d,\theta}_s
\nonumber\\
&
+\int_{\bR^{d}}\eta^{d}_{\kappa_N(s-)}\Big(Y^{d,\theta,t,x,N}_{\kappa_N(s-)},z\Big)
\,\tilde{\pi}^{d,\theta}(dz,ds), \quad s\in[t,T].
\label{Euler 1}
\end{align}

Although \eqref{Euler 1} provides a first approximation to \eqref{SDE d},
one is not able to directly simulate the items of the integrals with respect to 
the compensated Poisson random measures. To find an 
appropriate approximation of \eqref{SDE d}
that allows us to simulate it explicitly, we follow the idea of 
L.Gonon and C. Schwab (see, Sections 4.2 and 4.4 in \cite{GS2021}),
and introduce the following approximation procedures.
For each $d\in\bN$ and $\delta\in(0,1)$ we define the set $A^{d}_\delta$
by
$$
A^{d}_\delta:=\{z\in\bR^{d}:\|z\|\geq \delta\},
$$
and also define a probability measure $\nu_{\delta}^{d}$ 
on $(\bR^{d},\cB(\bR^{d}))$ by
$$
\nu_{\delta}^{d}(B)
:=\frac{\nu^{d}(B\cap A_\delta^{d})}{\nu^{d}(A_\delta^{d})},
\quad B\in\cB(\bR^{d}).
$$
Then, for each $d,N\in\bN$, $(t,x)\in[0,T]\times\bR^d$,
and $\delta\in(0,1)$, let
$\big(Y^{d,\theta,t,x,N,\delta}_{s}\big)_{s\in[t,T]}: 
[t,T]\times\Omega\to \bR^d$, $\theta\in\Theta$,  
be measurable functions satisfying that $Y^{d,\theta,t,x,N,\delta}_{t}=x$ 
and almost surely for all $s\in[t,T]$ 
\begin{align}
dY^{d,\theta,t,x,N,\delta}_{s}=
&
\mu^d\Big({\kappa_N(s-),Y^{d,\theta,t,x,N,\delta}_{\kappa_N(s-)}}\Big)\,ds
+\sigma^{d}\Big({\kappa_N(s-),Y^{d,\theta,t,x,N,\delta}_{\kappa_N(s-)}}\Big)
\,dW^{d,\theta}_s\nonumber\\
&
+\int_{A_\delta^{d}}
\eta^{d}_{\kappa_N(s-)}\Big(Y^{d,\theta,t,x,N,\delta}_{\kappa_N(s-)},z\Big)
\,\pi^{d,\theta}(dz,ds)\nonumber\\
&
-\int_{A_\delta^{d}}
\eta^{d}_{\kappa_N(s-)}\Big(Y^{d,\theta,t,x,N,\delta}_{\kappa_N(s-)},z\Big)
\,\nu^{d}(dz)\,ds.                          \label{def Y N delta}
\end{align}
Furthermore,
for each $d,N,\cM\in\bN$ and $\delta\in(0,1)$ 
let $V^{d,\theta,N,\delta,\cM}_{i,j}$,
$\theta\in\Theta$, $i=0,1,2,...,N$, $j=1,2...,\cM$, 
be independent $\bR^{d}$-valued random variables with
identical distribution $\nu_\delta^{d}$, independent of 
$(W^{d,\theta},\pi^{d,\theta})_{(\theta,d)\in \Theta\times\bN}$.
For each $d\in\bN$, $(t,x)\in[0,T]\times\bR^d$,
$N,\cM\in\bN$, and $\delta\in(0,1)$, let
$\big(Y^{d,\theta,t,x,N,\delta,\cM}_{s}\big)_{s\in[t,T]}: 
[t,T]\times\Omega\to \bR^d$, $\theta\in\Theta$,  
be measurable functions satisfying that $Y^{d,\theta,t,x,N,\delta,\cM}_{t}=x$ 
and almost surely for all $s\in[t,T]$
\begin{align*}
dY^{d,\theta,t,x,N,\delta,\cM}_{s}=
&
\mu^d\Big({\kappa_N(s-),Y^{d,\theta,t,x,N,\delta,\cM}_{\kappa_N(s-)}}\Big)\,ds
+\sigma^{d}\Big({\kappa_N(s-),Y^{d,\theta,t,x,N,\delta,\cM}_{\kappa_N(s-)}}\Big)
\,dW^{d,\theta}_s\\
&
+\int_{A_\delta^{d}}
\eta^{d}_{\kappa_N(s-)}\Big(Y^{d,\theta,t,x,N,\delta,\cM}_{\kappa_N(s-)},z\Big)
\,\pi^{d,\theta}(dz,ds)\nonumber\\
&
-\left(\frac{\nu^{d}(A_\delta^{d})}{\cM}\sum_{j=1}^{\cM}
\eta^{d}_{\kappa_N(s-)}\Big(Y^{d,\theta,t,x,N,\delta,\cM}_{\kappa_N(s-)},
V^{d,\theta,N,\delta,\cM}_{\frac{(\kappa_N(s-)-t)N}{T-t},j}\Big)
\right)ds.
\end{align*}

\subsubsection*{Multilevel Picard Approximation Scheme}

Next, we introduce the multi-level Picard approximations.
Let $\xi^\theta:\Omega\to [0,1], \theta\in\Theta$, be i.i.d. uniformly distributed 
random variables, and assume that $(\xi^\theta)_{\theta\in\Theta}$ is independent of 
$(W^{d,\theta},\pi^{d,\theta})_{(\theta,d)\in \Theta\times\bN}$.
Let $\cR^\theta:[0,T]\times \Omega\to [0,T]$, $\theta\in \Theta$,
satisfy that $\cR^\theta_t=t+(T-t)\xi^\theta$ for all $t\in[0,T]$ and 
$\theta\in\Theta$.
For each $d\in\bN$, $M,N\in \bN$, $\theta\in\Theta$, 
$(t,x)\in[0,T]\times\bR^d$, and $s\in[t,T]$, let 
$\big(Y^{(d,\theta,t,x,N,\delta,\cM,l,i)}_{s}\big)_{(l,i)\in\bN\times(\bZ/\{0\})}$
and $\big(\cR^{(\theta,l,i)}_t\big)_{(l,i)\in\bN\times\bN_0}$ be independent
copies of $Y^{d,\theta,t,x,N,\delta,\cM}_{s}$ and 
$\cR^{\theta}_t$, respectively.

For each $d\in\bN$, $n\in\bN_0$, $M,N,\cM\in \bN$,
$\theta\in\Theta$, $\delta\in(0,1)$, 
let 
$U^{d,\theta,\delta,\cM}_{n,M,N}:[0,T]\times \bR^d \times \Omega \to \bR$
be measurable functions satisfying for all 
$(t,x)\in\bR^d$ that
$$
U^{d,\theta,\delta,\cM}_{n,M,N}(t,x)
=\frac{\mathbf{1}_{\bN}(n)}{M^n}
\sum_{i=1}^{M^n}g^d\Big(Y^{(d,\theta,t,x,N,\delta,\cM,0,-i)}_{T}\Big)
$$
\begin{equation}                                              \label{MLP def}
+\sum_{l=0}^{n-1}\frac{(T-t)}{M^{n-l}}
\left[\sum^{M^{n-l}}_{i=1}\Big(F^d\Big(U^{(d,\theta,\delta,\cM,l,i)}_{l,M,N}\Big)
-\mathbf{1}_{\bN}(l)F^d\Big(U^{(d,\theta,\delta,\cM,-l,i)}_{l-1,M,N}\Big)\Big)
\Big(\cR^{(\theta,l,i)}_t,
Y^{(d,\theta,t,x,N,\delta,\cM,l,i)}_{\cR^{(\theta,l,i)}_t}\Big)
\right],
\end{equation}
where $\Big(U^{(d,\theta,\delta,\cM,l,i)}_{n,M,N}\Big)_{(l,i)\in\bZ\times\bN_0}$ 
are independent copies of $U^{d,\theta,\delta,\cM}_{n,M,N}$.

\subsubsection*{Viscosity solutions of PIDEs}
For every $d\in\bN$, let
$
G^d:(0,T)\times \bR^d\times\bR\times\bR^d\times\bS^d\times C^2(\bR^d)\to \bR
$
be a functional defined for all
$(t,x,r,y,A,\phi)
\in(0,T)\times\bR^d\times\bR\times\bR^d\times\bS^d\times C^2(\bR^d)$ by 
\begin{align}
G^d(t,x,r,y,A,\phi):=
&
-\langle y,\mu^d(t,x)\rangle
-\frac{1}{2}\operatorname{Trace}
(\sigma^d(t,x)[\sigma^d(t,x)]^TA)
-f^d(t,x,r)\nonumber\\
& 
-\int_{\bR^{d}}\left(\phi(x+\eta^d_t(x,z))-\phi(x)
-\langle\nabla_x \phi(x),\eta^d_t(x,z)\rangle\right)\,\nu^d(dz).
\label{def operator G}
\end{align}
Then for every $d\in\bN$
 we consider the semilinear PIDE of parabolic type
\begin{align}
&
-\frac{\partial}{\partial t}u^d(t,x)
+G^d(t,x,u^d(t,x),\nabla_x u^d(t,x)
,\operatorname{Hess}_xu^d(t,x),u^d(t,\cdot))=0
\quad \text{on $(0,T)\times \bR^d$},
\label{APIDE}
\\
&
u^d(T,x)=g^d(x) \quad \text{on $\bR^d$}.
\label{APIDE initial}
\end{align}
We use the following definition of viscosity solutions
of PIDE \eqref{APIDE} (cf.\,Definition 2.1 in \cite{JK}).

\begin{definition}
Let $d\in\bN$.
A function $u^d\in USC_1((0,T)\times\bR^d)$ 
($u^d\in LSC_1((0,T)\times\bR^d)$)
is called a viscosity subsolution (supersolution) of PIDE \eqref{APIDE}
if for every $(t,x)\in(0,T)\times\bR^d$ 
and $\varphi\in C^{1,2}((0,T)\times\bR^d)$ such that 
$\varphi(t,x)=u^d(t,x)$, 
and $u^d\leq \varphi$ ($u^d\geq \varphi$), we have that
$$
-\frac{\partial}{\partial t}\varphi(t,x)
+G^d(t,x,\varphi(t,x),\nabla_x \varphi(t,x),
\operatorname{Hess}_x\varphi(t,x),\varphi(t,\cdot))
\leq 0\; (\geq 0).
$$
A function $u^d:(0,T)\times\bR^d\to\bR$ 
is said to be a viscosity solution of PIDE \eqref{APIDE} if $u$ is both a 
viscosity subsolution and a viscosity supersolution of \eqref{APIDE}.
\end{definition}

\begin{remark}                                       
\label{remark v solution}
Let $d\in\bN$.
If $u^d\in C_1([0,T]\times\bR^d)$ is a viscosity solution of
\eqref{APIDE} with $u^d(T,x)=g^d(x)$ for $x\in\bR^d$, 
then the function $v^d(t,x):=u^d(T-t,x)$, $(t,x)\in[0,T]\times\bR^d$,
satisfies the follows.
\begin{enumerate}[(i)]
\item{
$v^d(0,x)=g^d(x)$ for $x\in\bR^d$.
}
\item{
For every $(t,x)\in(0,T)\times\bR^d$ and
$\varphi\in C^{1,2}((0,T)\times\bR^d)$ such that $\varphi(t,x)=v^d(t,x)$ 
and $v^d\leq \varphi$ ($v^d\geq \varphi$), we have
$$
\frac{\partial}{\partial t}\varphi(t,x)
+G^d(t,x,\varphi(t,x),\nabla_x \varphi(t,x),\operatorname{Hess}_x\varphi(t,x),
\varphi(t,\cdot))
\leq 0\; (\geq 0).
$$
}
\end{enumerate}
The converse holds as well.
\end{remark}
\subsection{Main results}                              \label{section results}
The following theorem shows that there exists a unique viscosity solution of
PIDE \eqref{APIDE}, and that this solution satisfies a Feynman-Kac representation.
This allows to prove that our MLP approximation scheme converges to the
unique viscosity solution of PIDE \eqref{APIDE}.
\begin{theorem}                                                   \label{MLP conv}
Assume Setting \ref{section setting}.
Then the following holds.
\begin{enumerate}[(i)]
\item{
For each $d\in\bN$, $(t,x)\in[0,T]\times\bR^d$,
and $\theta\in\Theta$ there exists an unique $\bF$-adapted 
$\bR^d$-valued c\`adl\`ag process
$(X^{d,\theta,t,x}_{s})_{s\in[t,T]}$ such that \eqref{SDE d} holds.
}

\item{
For each $d\in\bN$ there exists an unique
Borel function $u^d:[0,T]\times\bR^d\to\bR$ satisfying for all
$(t,x)\in[0,T]\times\bR^d$ that

$$
\bE\Big[\big|g^d(X^{d,0,t,x}_{T})\big|\Big]
+\int_t^T\bE\Big[\big|f^d(s,X^{d,0,t,x}_{s},u^d(s,X^{d,0,t,x}_{s}))\big|\Big]\,ds
+\sup_{y\in\bR^d}\sup_{s\in[0,T]}\left(\frac{|u^d(s,y)|}{\sqrt{d^p+\|y\|^2}}
\right)<\infty,
$$
and
$$
u^d(t,x)=\bE\Big[g^d(X^{d,0,t,x}_{T})\Big]
+\int_t^T\bE\Big[f^d(s,X^{d,0,t,x}_{s},u^d(s,X^{d,0,t,x}_{s}))\Big]\,ds.
$$
}

\item{
For each $d\in\bN$ there exists a unique viscosity solution
$v^d\in C_1([0,T]\times\bR^d)$ of PIDE \eqref{APIDE} with
$v^d(T,x)=g^d(x)$ for all $x\in\bR^d$.
}

\item{
It holds for all $d\in\bN$ and $(t,x)\in[0,T]\times\bR^d$ that
$v^d(t,x)=u^d(t,x)$.
}

\item{
There exists a positive constant $c=c(T,L,L_1,L_2,K)$ satisfying
for all $d\in\bN$,
$(t,x)\in[0,T]\times\bR^d$, $\delta\in(0,1)$, $n\in\bN_0$, 
and $M,N,\cM\in\bN$ with $\cM\geq \delta^{-2}Kd^p$ that
\begin{align}
&
\left(\bE\left[\big|U^{d,0,\delta,\cM}_{n,M,N}(t,x)-u^d(t,x)\big|^2\right]\right)^{1/2}
\nonumber\\
&
\leq
c(d^p+\|x\|^2)\left[e^{M/2}M^{-n/2}e^{2nTL^{1/2}}
+(N^{-1}T+\delta^qd^p+\delta^{-2}Kd^p\cM^{-1})^{1/2}\right].
\end{align}
}
\end{enumerate}
\end{theorem}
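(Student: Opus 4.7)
The plan is to treat the five parts in sequence, noting that (iii) and (iv) are naturally handled together and (v) decomposes cleanly into three disjoint error sources.

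For part (i), I would invoke the classical theory of strong solutions to SDEs with jumps. Assumption \ref{assumption Lip and growth} (the Lipschitz and linear growth conditions) together with Assumption \ref{assumption pointwise} (pointwise control of $\eta^d$) provide exactly the hypotheses of, e.g., Theorem 3.2 in Kunita or Theorem 117 in Situ. Existence, uniqueness, c\`adl\`ag $\mathbb{F}$-adaptedness and the moment bound \eqref{SDE moment est} follow directly; the same argument applies to each i.i.d.\ copy indexed by $\theta\in\Theta$.

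For part (ii), I would set up a Banach fixed point argument. Define the weighted space
$$
\mathcal{V}^d := \left\{ v:[0,T]\times\bR^d\to\bR \text{ Borel} : \|v\|_d := \sup_{(s,y)\in[0,T]\times\bR^d} \frac{|v(s,y)|}{\sqrt{d^p+\|y\|^2}} < \infty \right\},
$$
equipped with a suitable exponential weight in $t$ (e.g.\ $\sup_t e^{-\lambda(T-t)}\|v(t,\cdot)\|$ for $\lambda$ large enough). The moment estimate \eqref{SDE moment est}, the growth bound \eqref{assumption growth f g} on $g^d,f^d(\cdot,\cdot,0)$, and the Lipschitz condition \eqref{assumption Lip f g} on $f^d$ show that $\Phi^d$ maps $\mathcal{V}^d$ into itself and, for $\lambda$ sufficiently large, is a strict contraction. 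The unique fixed point is the required $u^d$.

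For parts (iii) and (iv), I would follow the three-step scheme indicated in the introduction. First, mollify the coefficients $\mu^d,\sigma^d,\eta^d,f^d,g^d$ to obtain sequences with smooth compactly supported (in $x$) data; Assumption \ref{assumption jacobian} is crucial here to apply the classical smooth-coefficient theory of \cite{DGW2020} and obtain the mollified PIDE has a $C^{1,2}$ classical solution $u^{d,\varepsilon}$ admitting the Feynman-Kac representation driven by the mollified SDE. Second, pass to the limit $\varepsilon\downarrow 0$: by the stochastic fixed point machinery of Section \ref{section FP} (based on \cite{BHJ2020,HJKN2020}) together with the SDE stability estimates of Section \ref{section pre}, the functions $u^{d,\varepsilon}$ converge to $u^d$ from (ii) with appropriate uniformity; standard stability of viscosity solutions under locally uniform convergence then yields that $u^d$ is a viscosity solution of \eqref{APIDE}, and its continuity together with the growth bound gives $u^d\in C_1([0,T]\times\bR^d)$. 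Finally, uniqueness of the viscosity solution in the class $C_1$ follows from the comparison principle of \cite{JK}; combined with the fixed point uniqueness of (ii) this yields $v^d=u^d$, settling (iii) and (iv). This step is the main obstacle, because one has to reconcile the Feynman-Kac fixed point in (ii) (which lives in a purely measurable class) with the viscosity notion (which requires semicontinuity), and the mollification limit must be controlled uniformly in the growth variable $1+\|x\|$ so that the test-function inequalities survive the passage to the limit.

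For part (v), I would split the error via the triangle inequality into three contributions:
$$
\bigl\|U^{d,0,\delta,\mathcal{M}}_{n,M,N}(t,x)-u^d(t,x)\bigr\|_{L^2} \le E_1+E_2+E_3,
$$
where $E_1$ is the pure MLP error assuming one could use the exact solution $X^{d,\theta,t,x}$ of \eqref{SDE d}, $E_2$ replaces $X^{d,\theta,t,x}$ by the Euler scheme $Y^{d,\theta,t,x,N}$, and $E_3$ further replaces the compensator integral by its Monte Carlo/cut-off approximation used in \eqref{def Y N delta} and its $V^{d,\theta,N,\delta,\mathcal{M}}$-version. The first piece $E_1$ is bounded by $c(d^p+\|x\|^2)^{1/2}\,e^{M/2}M^{-n/2}e^{2nTL^{1/2}}$ by Section~\ref{section no Euler} (quoted from \cite{HJKN2020}). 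For $E_2$, the strong $L^2$ rate of the Euler scheme with Lipschitz and temporally $1/2$-H\"older coefficients (Lemma \ref{Lemma Euler}, using Assumption \ref{assumption time Holder}) yields $O(N^{-1/2}T^{1/2})$ times the Lipschitz constants of $f^d,g^d$. For $E_3$, the cut-off of jumps of size $\le\delta$ contributes $O(\delta^{q/2}d^{p/2})$ via \eqref{int z & q bound}, and the $\mathcal{M}$-sample Monte Carlo approximation of the compensator integral $\int_{A^d_\delta}\eta^d\,\nu^d(dz)$ contributes $O(\delta^{-1}(Kd^p)^{1/2}\mathcal{M}^{-1/2})$ by conditioning on the $V$'s, exploiting $\nu^d(A^d_\delta)\le\delta^{-2}Kd^p$; the condition $\mathcal{M}\ge\delta^{-2}Kd^p$ keeps this comparable. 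Propagating these per-step errors through the Picard telescoping sum (with the Lipschitz constant $L$ and Gronwall) produces the square-root aggregate in the stated bound, yielding (v) with a constant $c$ depending only on $T,L,L_1,L_2,K$.
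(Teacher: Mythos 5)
Parts (i)--(iv) of your plan track the paper's argument reasonably well: (i) is indeed the classical SDE-with-jumps existence/uniqueness result, (ii) is precisely the stochastic fixed-point argument packaged as Proposition \ref{Prop FP}, and (iii)--(iv) follow the mollification + comparison route through \cite{DGW2020} and \cite{JK}. One correction for (iii)--(iv): the paper does not mollify $f^d$. It linearizes exactly once by setting $\mathbf{h}(t,x)=f^d(t,x,u^d(t,x))$ with $u^d$ the fixed point from (ii), verifies that $\mathbf{h}$ satisfies Assumption \ref{assumption h}, and applies Proposition \ref{proposition linear PIDE} (the \emph{linear} PIDE result) to conclude that $u^d$ is a viscosity solution; the mollification / cut-off machinery is deployed only inside the proof of that linear result, on $\mu,\sigma,\eta,g,h$.

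For part (v) your decomposition is structurally wrong, and this is the essential gap. You propose
$\|U^{d,0,\delta,\cM}_{n,M,N}-u^d\|\le E_1+E_2+E_3$
with $E_1$ described as the "MLP error assuming one could use the exact solution $X^{d,\theta,t,x}$." But the algorithm $U^{d,0,\delta,\cM}_{n,M,N}$ is built from the fully discretized process $Y^{d,\theta,t,x,N,\delta,\cM}$, not from $X^{d,\theta,t,x}$, so the first leg of any triangle inequality starting from $U^{d,0,\delta,\cM}_{n,M,N}$ cannot be the MLP error of the idealized scheme driven by $X$. The paper introduces the crucial pivot you are missing: the \emph{auxiliary fixed-point solution} $u^d_{N,\delta,\cM}$, namely the unique Borel function solving the stochastic fixed-point equation \eqref{M 3} driven by the discretized process $Y^{N,\delta,\cM}$ itself (this exists by another application of Proposition \ref{Prop FP}, using the measurability in \eqref{measurability 2} and the moment bound \eqref{est theta Euler}). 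The decomposition is two-term, not three-term:
$$\Big(\bE\big[|U^{d,0,\delta,\cM}_{n,M,N}(t,x)-u^d(t,x)|^2\big]\Big)^{1/2}\le \Big(\bE\big[|U^{d,0,\delta,\cM}_{n,M,N}(t,x)-u^d_{N,\delta,\cM}(t,x)|^2\big]\Big)^{1/2}+\big|u^d_{N,\delta,\cM}(t,x)-u^d(t,x)\big|.$$
The first term is handled by the abstract MLP error bound of Section~\ref{section no Euler} (Lemma \ref{Lemma MLP}) with $Z=Y^{N,\delta,\cM}$ and the limit function being $u^d_{N,\delta,\cM}$ --- not the exact $X$ and $u^d$. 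The second term is handled by the fixed-point perturbation Lemma~\ref{lemma perturbation}, into which one feeds the single combined SDE estimate \eqref{Euler error theta}, $\bE[\sup_s\|X^{t,x}_s-Y^{t,x,N,\delta,\cM}_s\|^2]\le c_4(d^p+\|x\|^2)(N^{-1}T+\delta^qd^p+\delta^{-2}Kd^p\cM^{-1})$, which already aggregates the Euler, truncation, and compensator Monte Carlo errors from Lemmas \ref{Lemma Euler}--\ref{lemma delta M}. Your alternative of "propagating per-step errors through the Picard telescoping sum" would amount to comparing two MLP approximations driven by different processes, which is recursively entangled because every inner sample in the recursion \eqref{MLP def} would also change; the paper's perturbation lemma deliberately sidesteps this by working at the level of the two fixed-point equations rather than the two recursive estimators. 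Without the pivot $u^d_{N,\delta,\cM}$ and Lemma \ref{lemma perturbation}, the argument as you sketch it does not close.
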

The next theorem shows that the MLP approximation scheme 
defined by \eqref{MLP def} 
can overcome the curse of dimensionality. 
To describe the computational complexity, 
for each $d\in\bN$,
$n\in\bN_0$, 
and $M\in\bN$ 
we introduce a natural number $\mathfrak{C}^{(d)}_{n,M}$ to denote the sum 
of the number of function evaluations of $g^d$,
of the number of function evaluations of $\mu^d$,
of the number of function evaluations of $\sigma^{d}$,
of the number of function evaluations of $\eta^{d}$,
and of the number of realizations of scalar random variables used to obtain
one realization of the MLP approximation algorithm in \eqref{MLP def}. 
Moreover, for each $d\in\bN$ we use
$\mathfrak{g}^{(d)}$ to denote the number of function evaluations of $g^d$, 
$\mathfrak{f}^{(d)}$ to denote the number of function evaluations of $f^d$, 
$\mathfrak{e}^{(d)}$ to denote the sum 
of the number of realizations of scalar random variables generated,
of the number of function evaluations of $\mu^d$,
of the number of function evaluations of $\sigma^{d}$,
and of the number of function evaluations of $\eta^{d}$.

\begin{theorem}                                            \label{MLP complexity}
Assume the settings in Section \ref{section setting}. 
Moreover, for each $d\in\bN$, let 
$\mathfrak{e}^{(d)}$, $\mathfrak{g}^{(d)}$, $\mathfrak{f}^{(d)}$,
$n,M\in\bN$, $\mathfrak{C}^{(d)}_{n,M}\in\bN$,
satisfy for all $n,M\in\bN$ that
\begin{equation}                                            \label{cc 1}
\mathfrak{C}^{(d)}_{n,M}\leq M^n(M^M\mathfrak{e}^{(d)}+\mathfrak{g}^{(d)})\mathbf{1}_{\bN}(n)
+\sum_{l=0}^{n-1}[M^{n-l}(M^M\mathfrak{e}^{(d)}+\mathfrak{f}^{(d)}+\mathfrak{C}^{(d)}_{l,M}
+\mathfrak{C}^{(d)}_{l-1,M})].
\end{equation}
Then, for all $d\in\bN$ and $n\in\bN$ we have
\begin{equation}                                               \label{cc 2}
\sum_{k=1}^{n+1}\mathfrak{C}^{(d)}_{k,k}
\leq 12(3\mathfrak{e}^{(d)}+\mathfrak{g}^{(d)}+2\mathfrak{f}^{(d)})36^nn^{2n}.
\end{equation}
Moreover, for each $d\in\bN$, $\varepsilon\in(0,1]$, and $x\in\bR^d$
there exists a positive integer $\mathbf{n}^d(x,\varepsilon)\geq 2$ 
such that for all 
$\gamma\in(0,1]$ and $t\in[0,T]$,
\begin{equation}                                            \label{error epsilon}
\sup_{n\in[\mathbf{n}^d(x,\varepsilon),\infty)\cap \bN}
\left(\bE\left[\big|U^{d,0,n^{-n/q},n^{n+2n/q}Kd^p}_{n,n,n^n}(t,x)
-u^d(t,x)\big|^2\right]
\right)^{1/2}<\varepsilon,
\end{equation}
and
\begin{align}                                             \label{cc 3}
\left(\sum_{n=1}^{\mathbf{n}^d(x,\varepsilon)}\mathfrak{C}^{(d)}_{n,n}\right)
\varepsilon^{\gamma+4} 
\leq & 12(3\mathfrak{e}^{(d)}+\mathfrak{g}^{(d)}+2\mathfrak{f}^{(d)})
\left[c(d^p+\|x\|^2)\right]^{\gamma+4}
\nonumber\\
&
\cdot \sup_{n\in\bN}
\left(36^n
n^{-\gamma n/2}(e^{n/2}e^{2nTL^{1/2}}+(1+T+d^p)^{1/2})^{\gamma+4}\right)
<\infty,
\end{align}
where $c=c(T,L,L_1,L_2,K)$
is the constant introduced in (v) in Theorem \ref{MLP conv}. 
\end{theorem}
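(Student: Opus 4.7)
The plan is to prove the three assertions of Theorem \ref{MLP complexity} in sequence, starting from \eqref{cc 2}. First, I would establish by induction on $n\in\bN$ the pointwise bound $\mathfrak{C}^{(d)}_{n,M}\leq (3\mathfrak{e}^{(d)}+\mathfrak{g}^{(d)}+2\mathfrak{f}^{(d)})\cdot 3^n M^{M+n}$ for every $M\in\bN$ with $M\geq 2$. The base case $n=1$ follows by directly substituting into \eqref{cc 1} with $\mathfrak{C}^{(d)}_{0,M}=0$. For the inductive step, \eqref{cc 1} combined with the induction hypothesis and the geometric bounds $\sum_{l=0}^{n-1}M^{n-l}\leq 2M^n$ and $\sum_{l=0}^{n-1}3^l\leq 3^n/2$ collapse to a single factor $1+\tfrac{7}{12}\cdot 3^n$, which is dominated by $3^n$ for all $n\geq 1$. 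Specializing $M=n$ yields $\mathfrak{C}^{(d)}_{n,n}\leq C\cdot 3^n n^{2n}$ with $C:=3\mathfrak{e}^{(d)}+\mathfrak{g}^{(d)}+2\mathfrak{f}^{(d)}$. Summing over $k\in\{1,\dots,n+1\}$ via the crude estimate $\sum_{k=1}^{n+1}3^k k^{2k}\leq (n+1)\cdot 3^{n+1}(n+1)^{2(n+1)}$, together with the elementary inequality $3(n+1)^3(1+1/n)^{2n}\leq 12\cdot 12^n$ (valid for all $n\geq 1$ since $(1+1/n)^{2n}\leq e^2$), yields \eqref{cc 2} with the constant $12$.

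Next, for the existence of $\mathbf{n}^d(x,\varepsilon)$ and \eqref{error epsilon}, I would apply Theorem \ref{MLP conv}(v) with $M=n$, $N=n^n$, $\delta=n^{-n/q}$, and $\cM=n^{n+2n/q}Kd^p$. The hypothesis $\cM\geq\delta^{-2}Kd^p$ reduces to the trivial inequality $n^n\geq 1$, and a direct computation gives $N^{-1}T+\delta^q d^p+\delta^{-2}Kd^p\cM^{-1}=n^{-n}(T+d^p+1)$, so the bound in Theorem \ref{MLP conv}(v) factors as
\begin{equation*}
\bigl(\bE\bigl[|U^{d,0,\delta,\cM}_{n,n,N}(t,x)-u^d(t,x)|^2\bigr]\bigr)^{1/2}
\leq c(d^p+\|x\|^2)\,n^{-n/2}\Bigl[e^{n/2}e^{2nTL^{1/2}}+(T+d^p+1)^{1/2}\Bigr].
\end{equation*}
Since $n^{-n/2}$ decays super-exponentially while the bracket grows at most exponentially in $n$, the right-hand side tends to $0$ and is eventually monotone decreasing; I would then define $\mathbf{n}^d(x,\varepsilon)\geq 2$ as the smallest integer for which this bound is strictly less than $\varepsilon$ at every $n$ beyond it, giving \eqref{error epsilon} directly.

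Finally, I would derive \eqref{cc 3} by writing $\mathbf{n}:=\mathbf{n}^d(x,\varepsilon)$ and splitting cases. The case $\mathbf{n}=2$ is immediate, since both $\sum_{n=1}^{2}\mathfrak{C}^{(d)}_{n,n}$ and the supremum in \eqref{cc 3} are finite positive quantities and $\varepsilon^{\gamma+4}\leq 1$. For $\mathbf{n}\geq 3$, the minimality of $\mathbf{n}$ together with the eventual monotonicity from Step 2 force the error bound at $n=\mathbf{n}-1\geq 2$ to be at least $\varepsilon$, so that
\begin{equation*}
\varepsilon\leq c(d^p+\|x\|^2)(\mathbf{n}-1)^{-(\mathbf{n}-1)/2}\Bigl[e^{(\mathbf{n}-1)/2}e^{2(\mathbf{n}-1)TL^{1/2}}+(T+d^p+1)^{1/2}\Bigr].
\end{equation*}
Raising this to the $(\gamma+4)$-th power, combining it with \eqref{cc 2} applied at $n=\mathbf{n}-1$, and using the algebraic identity $2-(\gamma+4)/2=-\gamma/2$ to collect exponents, I obtain that $\left(\sum_{n=1}^{\mathbf{n}}\mathfrak{C}^{(d)}_{n,n}\right)\varepsilon^{\gamma+4}$ is bounded above by $12C\bigl[c(d^p+\|x\|^2)\bigr]^{\gamma+4}\cdot 36^{\mathbf{n}-1}(\mathbf{n}-1)^{-\gamma(\mathbf{n}-1)/2}\bigl[\cdots\bigr]^{\gamma+4}$, which is in turn dominated by the supremum appearing in \eqref{cc 3}. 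That supremum is finite because the factor $n^{-\gamma n/2}$ decays faster than any exponential in $n$, dominating both the growth of $36^n$ and of $(e^{n/2}e^{2nTL^{1/2}})^{\gamma+4}$.

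The hard part will be the careful bookkeeping in the Step 1 induction: it barely closes (at $n=1$ the reduction coefficient equals $\tfrac{11}{4}<3$, leaving very little slack), which essentially forces the constants $3$ in the pointwise bound and $12$ in \eqref{cc 2}. In Step 3, the clean identity $2-(\gamma+4)/2=-\gamma/2$ is the pivotal algebraic insight that produces the required decay factor $n^{-\gamma n/2}$ in the supremum, and is precisely what makes the complexity bound polynomial in $\varepsilon^{-1}$ for every $\gamma\in(0,1]$, thereby overcoming the curse of dimensionality in the sense of the theorem.
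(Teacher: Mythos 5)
Your proof of \eqref{error epsilon} and \eqref{cc 3} follows exactly the same route as the paper: define $\mathbf{n}^d(x,\varepsilon)$ as the smallest integer $\geq 2$ past which the uniform error bound dips below $\varepsilon$, use Theorem~\ref{MLP conv}(v) with the specific choices $M=n$, $N=n^n$, $\delta=n^{-n/q}$, $\cM=n^{n+2n/q}Kd^p$ (so that $N^{-1}T+\delta^qd^p+\delta^{-2}Kd^p\cM^{-1}=n^{-n}(T+d^p+1)$), observe that $e^{n/2}e^{2nTL^{1/2}}n^{-n/2}\to 0$, and then for the complexity estimate multiply \eqref{cc 2} at $n=\mathbf{n}-1$ against the error bound raised to the power $\tfrac{\gamma+4}{2}$, using the exponent identity $2-\tfrac{\gamma+4}{2}=-\tfrac{\gamma}{2}$ to extract $n^{-\gamma n/2}$ inside the supremum. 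The genuine difference is in \eqref{cc 2}: the paper simply cites Theorem~4.2(v) of \cite{HJKN2020}, whereas you give a self-contained inductive proof of $\mathfrak{C}^{(d)}_{n,M}\leq(3\mathfrak{e}^{(d)}+\mathfrak{g}^{(d)}+2\mathfrak{f}^{(d)})3^nM^{M+n}$ and then sum. Your induction is essentially correct (the reduction coefficient $1+\tfrac{7}{12}3^n\leq 3^n$ does close for $n\geq 1$), and this makes the argument more transparent and reproducible than the citation; the only caveats are small and localized at $n=1$: (i) you state the induction under $M\geq 2$ but then specialize to $M=n$, so $\mathfrak{C}^{(d)}_{1,1}$ needs (and passes) a separate direct check; (ii) your justification $(1+1/n)^{2n}\leq e^2$ does not actually close $3(n+1)^3(1+1/n)^{2n}\leq 12\cdot 12^n$ at $n=1$ (it gives $\approx 177>144$), although the inequality itself holds there since $(1+1)^2=4$ gives $96\leq 144$; a direct verification at $n=1$ and then the $e^2$ bound for $n\geq 2$ fixes this. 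Finally, your dismissal of the $\mathbf{n}^d(x,\varepsilon)=2$ case as ``immediate'' papers over the same subtlety the paper itself leaves implicit — the minimality argument only yields $\varepsilon\leq(\bE[|U_{(\mathbf{n}-1)}-u^d|^2])^{1/2}$ when $\mathbf{n}\geq 3$, so the $\mathbf{n}=2$ case needs the observation that $c(d^p+\|x\|^2)\geq 1$ and the bracketed quantity at $n=1$ is $\geq 3$, whence the right-hand side already dominates $\sum_{k=1}^{2}\mathfrak{C}^{(d)}_{k,k}$; this is easy but is not the same as ``finite positive quantities'' doing the work by themselves.
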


\subsection{Pseudocode}                                      \label{section code}
In this subsection we provide a pseudocode to show how the multilevel Picard
approximations \eqref{MLP def} can be implemented. 
\begin{algorithm}
\scriptsize{
\caption{Multilevel Picard Approximation}
\begin{algorithmic}[1]
\Function{MLP}{$t,x,n,M,N,\delta,\cM$}

\State $t_1(j)\leftarrow (T-t)/N$ for all $j\in\{0,...,N-1\}$;
\Comment the length of time intervals in the time discretization
 
\For{$i\leftarrow 1$ to $M^n$}

    \State $X(i)\leftarrow x$;

    \State \multiline{Generate $N$ realizations $W(j)\in\bR^d$,
    $j\in\{0,...,N-1\}$, of i.i.d.\,standard normal random vectors;}
    \State \multiline{Generate $N$ realizations $P(j)\in\bN_0$,
    $j\in\{0,...,N-1\}$, of i.i.d.\,Poisson random variables with
    parameter $\nu^d(A^d_\delta)(T-t)N^{-1}$;
    (see, e.g., (1.8.3) in \cite{PB2010})}
    \For{$k\leftarrow 0$ to $N-1$}
    \State \multiline{Generate $P(k)$ realizations $Z(j)\in\bR^d$,
     $j\in\{1,...,P(k)\}$,
    of i.i.d.\,random vectors with distribution 
    $\nu^d_\delta(dz)$;}
    \State \multiline{Generate $\cM$ realizations $V(j)\in\bR^d$, 
    $j\in\{1,2,...,\cM\}$, of i.i.d.\,random vectors 
    with distribution $\nu^d_\delta(dz)$;}
    \State \multiline{
    $
    X(i)\leftarrow  X(i)+\mu^d(t+k(T-t)/N,X(i))t_1(k)+\sigma^d(t+k(T-t)/N,X(i))
    \sqrt{t_1(k)}\cdot W(k)
    $\\
    $
    \quad\quad\quad\;\;+\sum_{j=1}^{P(k)}\eta^d_{t+k(T-t)/N}(X(i),Z(j))
    -\frac{t_1(k)\nu^d(A^d_\delta)}{\cM}
    \sum_{j=1}^{\cM}\eta^d_{t+(T-t)k/N}(X(i),V(j));
    $
    }
    \EndFor
    
\EndFor
\If{$n>0$}
       \State $u=\frac{1}{M^n}\sum_{i=1}^{M^n}g(X(i));$
       \Else 
       \State $u=0;$
    \EndIf

\For{$l \leftarrow 0$ to $n-1$}

    \State $Y(i)\leftarrow x$ for all $i\in\{1,...,M^{n-l}\}$;
    \State \multiline{
    Generate $M^{n-l}$ realizations $\cR(i)\in[t,T]$, $i\in{1,...,M^{n-l}}$,
    of i.i.d.\,random variables uniformly distributed on $[t,T]$;
    }
    \For{$i \leftarrow 1$ to $M^{n-l}$}
      \State $S(i)\leftarrow \lfloor (\cR(i)-t)N/(T-t)\rfloor+1$;
      \Comment total time points of the time discretization of SDE
      \State $t_2(j)\leftarrow (T-t)/N$ for all $j\in\{0,...,S(i)-2\}$;
      \Comment the length of the first $S(i)-1$ time intervals
      \State $t_2(S(i)-1)\leftarrow \cR(i)-[t+N^{-1}(T-t)(S_2(i)-1)]$;
      \Comment the length of the last time interval
      \State \multiline{Generate $S(i)$ realizations $W(j)\in\bR^d$,
      $j\in\{0,...,S(i)-1\}$, of independent standard normal random vectors;}
      \State \multiline{Generate $S(i)-1$ realizations $P(j)\in\bN_0$,
      $j\in\{0,...,S(i)-2\}$, of i.i.d.\,Poisson random variables with
      parameter $\nu^d(A^d_\delta)(T-t)N^{-1}$;}
      \State \multiline{Generate one realization $P(S_2(i)-1)\in\bN_0$ 
      of Poisson random variable with parameter $\nu^d(A^d_\delta)t_2(S(i)-1)$;}
      
        \For{$k\leftarrow 0$ to $S(i)-1$}
          \State \multiline{Generate $P(k)$ realizations $Z(j)\in\bR^d$,
          $j\in\{1,...,P(k)\}$,
          of i.i.d.\,random vectors with distribution 
          $\nu^d_\delta(dz)$;}
          \State \multiline{Generate $\cM$ realizations $V(j)\in\bR^d$, 
          $j\in\{1,2,...,\cM\}$, of i.i.d.\,random vectors 
          with distribution $\nu^d_\delta(dz)$;}
          \State \multiline{
          $
          Y(i)\leftarrow  Y(i)+\mu^d(t+k(T-t)/N,Y(i))t_2(k)
          +\sigma^d(t+k(T-t)/N,Y(i))\sqrt{t_2(k)}\cdot W(k)
          $\\
          $
          \quad\quad\quad\;\;+\sum_{j=1}^{P(k)}\eta^d_{t+k(T-t)/N}(Y(i),Z(j))
          -\frac{t_2(k)\nu^d(A^d_\delta)}{\cM}\sum_{j=1}^{\cM}\eta^d_{t+k(T-t)/N}
          (Y(i),V(j));
          $
          }
        \EndFor
    \EndFor
    \State $u\leftarrow u+\frac{T-t}{M^{n-l}}\sum_{i=1}^{M^{n-l}}
    f\big(\cR(i),Y(i),\operatorname{MLP}(\cR(i),Y(i),l,M,N,\delta,\cM)\big)$;
    \If{$l>0$}
    \State $u\leftarrow u-\frac{T-t}{M^{n-l}}\sum_{i=1}^{M^{n-l}}
    f\big(\cR(i),Y(i),\operatorname{MLP}(\cR(i),Y(i),l-1,M,N,\delta,\cM)\big)$;
    \EndIf
\EndFor\\
\quad\:   \Return{$u$;}
\EndFunction
\end{algorithmic}
}
\end{algorithm}
\clearpage

\subsection{A numerical example 
of pricing with counterparty credit risk in Vasi\v{c}ek jump model}                           
\label{section example}
In this subsection, we present a numerical example\footnote{All numerical experiments have been implemented 
in \texttt{Python} on an average laptop 
(Lenovo ThinkPad X13 Gen2a with Processor AMD Ryzen 7 PRO 5850U and Radeon Graphics,
1901 Mhz, 8 Cores, 16 Logical Processors). 
The code can be found under the following link: 
\url{https://github.com/psc25/MLPJumps}} 
to demonstrate the applicability
of the MLP algorithm~\eqref{MLP def}.
We consider the problem of pricing a financial derivative 
with counterparty credit risk, which was already considered in \cite{EHJK2019}, 
see also \cite{BurgardKjaer2011,HernyLabordere2012} for the corresponding PDE. 
For this purpose, we assume that the stock prices 
can be described by the Vasi\v{c}ek jump model (see also \cite{WuLiang2018}), i.e.,
~for every $d\in\bN$ and $(t,x)\in [0,T]\times\bR^d$, the stock price
$(X^{d,0,x}_{t})_{t\in[0,T]}:\Omega\times[0,T]\to\bR^d$ 
follows the SDE
	\begin{equation}
		\label{EqJVasicek}
		\begin{split}
			X^{d,0,x}_{t} 
			& = x + \alpha \int_0^t 
			\left( \mu_0\mathds{1}_d - X^{d,0,x}_{s} \right) \, ds + 
			\sigma_0 W^d_t + \left( \sum_{n=1}^{N_t} Z^d_n - \frac{\lambda t}{2} 
			\mathds{1}_d \right) \\
			& = x + \alpha \int_0^t 
			\left( \mu_0\mathds{1}_d - X^{d,0,x}_{s} \right) \, ds 
			+ \sigma_0 
			\int_0^t \, dW^d_{s} 
			+ \int_0^t\int_{\mathbb{R}^d} z\mathbf{1}_{[0,1]^d}(z) 
			\, \tilde{\pi}^d(dz,ds).
		\end{split}
	\end{equation}
	Hereby, $x \in \mathbb{R}^d$ is the initial value, 
	$\alpha, \mu_0, \sigma_0 > 0$ are parameters,
	$\mathds{1}_d:=(1,1,\dots,1)^T\in \R^d$,
	$(N_t)_{t \in [0,\infty)}$ is a Poisson process with intensity parameter
	$\lambda > 0$, and $(Z^d_n)_{n \in \mathbb{N}_0} \sim \mathcal{U}_d([0,1]^d)$ 
	is a sequence of i.i.d.~multivariate uniformly distributed random variables which are independent of $(N_t)_{t \in [0,\infty)}$.
Moreover, $\tilde{\pi}^d(dz,dt):=\pi^d(dz,dt)-\nu^d(dz)~\otimes~dt$ 
is the compensated Poisson random measure whose corresponding Poisson point
measure is of the form
$$
\pi^d((0,t]\times A):=\#\big\{s\in(0,t]:X^{d,0,x}_s-X^{d,0,x}_{s-}\in A\big\},
\quad t\in(0,T], \;\; A\in\bR^d/\{0\},
$$
with compensator $\nu^d(dz) \otimes dt$ satisfying 
$\nu^d(dz) = \lambda\mathbf{1}_{[0,1]^d}(z) dz$, where 
here $\#B$ denotes the number of elements in some set $B$.
Therefore, for every $d\in\bN$ 
the stochastic process $(X^{d,0,x}_t)_{t \in [0,T]}$ is of the form 
\eqref{SDE} with $\mu^d(s,x) = \alpha (\mu_0\mathds{1}_d - x)$, 
$\sigma^d(s,x) = \sigma_0\mathbf{I}_d$, 
$\eta^d_s(x,z) = z \mathbf{1}_{[0,1]^d}(z)$, 
and $\nu^d(dz) = \lambda \mathbf{1}_{[0,1]^d}(z) dz$, which satisfy 
Assumptions \ref{assumption Lip and growth}, \ref{assumption pointwise},
\ref{assumption jacobian}, \ref{assumption small jumps},
and \ref{assumption time Holder}. 
For the numerical example, 
we choose $\alpha = 0.01$, $\mu_0 = 100$, $\sigma_0 = 2$, $\lambda = 0.5$, 
and $x = (100,...,100)^T \in \mathbb{R}^d$.
	
	Then, by following the derivations in \cite{BurgardKjaer2011,HernyLabordere2012}, 
	we consider 
	the pricing problem which consists of solving PIDE \eqref{PDE} 
	with $f^d(t,x,v) = -\beta \min(v,0)$ 
	and 
	$g^d(x) = \max\left( \min_{i=1,...,d} x_i - K_1, 0 \right) 
	- \max\left( \min_{i=1,...,d} x_i - K_2, 0 \right) - L_0$, 
	where we choose the parameters $\beta = 0.03$, $K_1 = 80$, $K_2 = 100$, 
	and $L_0 = 5$.
	Note that all the assumptions in Setting~\ref{section setting} are satisfied, in particular Theorem~\ref{MLP conv} and Theorem~\ref{MLP complexity} hold.
	
	For different dimensions $d \in \mathbb{N}$ 
	and levels $M = n \in \lbrace 1,...,5 \rbrace$, 
	we run the algorithm $10$ times to approximate the solution $u(T,100,...,100)$ 
	of the PIDE \eqref{PDE}, 
	where we choose $T = 1/2$, $N = 12$, $\mathcal{M} = 200$, and $\delta = 0.1$ 
	such that the condition $\mathcal{M} \geq \delta^{-2} K d^p$ 
	in (v) of Theorem \ref{MLP conv} is satisfied 
	with $K = \lambda = 0.5$ and $p = 0$. 
	The results are reported in Table~\ref{TabCounterpartyJVasicek}.
	
	\begin{table}[h!]
		\begin{tabular}{rl|R{2cm}R{2cm}R{2cm}R{2cm}R{2cm}|}
			& & \multicolumn{5}{c|}{Level} \\
			$d$ & & $M = n = 1$ & $M = n = 2$ & $M = n = 3$ & 
			$M = n = 4$ & $M = n = 5$ \\
			\hline
			10 & Avg. Sol. & $12.6439$ & $12.8667$ & $12.7782$ & $12.8062$ & $12.8130$ \\ 
 & \textit{Std. Dev.} & \textit{0.7686} & \textit{0.3133} & \textit{0.1732} & \textit{0.0519} & \textit{0.0193} \\ 
 & Avg. Eval. & $4.23 \cdot 10^{4}$ & $3.56 \cdot 10^{5}$ & $5.03 \cdot 10^{6}$ & $9.88 \cdot 10^{7}$ & $2.45 \cdot 10^{9}$ \\ 
 & \textit{Avg. Time} & \textit{0.0016} & \textit{0.0141} & \textit{0.1534} & \textit{2.8302} & \textit{70.5218} \\ 
\hline 
50 & Avg. Sol. & $11.7451$ & $12.1087$ & $11.8201$ & $11.7833$ & $11.8102$ \\ 
 & \textit{Std. Dev.} & \textit{0.5888} & \textit{0.4380} & \textit{0.1114} & \textit{0.0414} & \textit{0.0115} \\ 
 & Avg. Eval. & $1.87 \cdot 10^{5}$ & $1.67 \cdot 10^{6}$ & $2.37 \cdot 10^{7}$ & $4.57 \cdot 10^{8}$ & $1.13 \cdot 10^{10}$ \\ 
 & \textit{Avg. Time} & \textit{0.0016} & \textit{0.0251} & \textit{0.3770} & \textit{5.9702} & \textit{151.1463} \\ 
\hline 
100 & Avg. Sol. & $11.3215$ & $11.4291$ & $11.4581$ & $11.4258$ & $11.4468$ \\ 
 & \textit{Std. Dev.} & \textit{0.3986} & \textit{0.3069} & \textit{0.1650} & \textit{0.0432} & \textit{0.0092} \\ 
 & Avg. Eval. & $3.76 \cdot 10^{5}$ & $3.17 \cdot 10^{6}$ & $4.66 \cdot 10^{7}$ & $9.06 \cdot 10^{8}$ & $2.24 \cdot 10^{10}$ \\ 
 & \textit{Avg. Time} & \textit{0.0079} & \textit{0.0533} & \textit{0.6849} & \textit{12.3840} & \textit{300.3673} \\ 
\hline 
500 & Avg. Sol. & $10.8580$ & $10.6897$ & $10.6524$ & $10.6949$ & $10.6960$ \\ 
 & \textit{Std. Dev.} & \textit{0.3835} & \textit{0.2409} & \textit{0.0701} & \textit{0.0405} & \textit{0.0069} \\ 
 & Avg. Eval. & $1.66 \cdot 10^{6}$ & $1.60 \cdot 10^{7}$ & $2.31 \cdot 10^{8}$ & $4.50 \cdot 10^{9}$ & $1.11 \cdot 10^{11}$ \\ 
 & \textit{Avg. Time} & \textit{0.0251} & \textit{0.2411} & \textit{3.3046} & \textit{63.6527} & \textit{1563.3035} \\ 
\hline 
1000 & Avg. Sol. & $10.6316$ & $10.4786$ & $10.3588$ & $10.4011$ & $10.4021$ \\ 
 & \textit{Std. Dev.} & \textit{0.4451} & \textit{0.3437} & \textit{0.0910} & \textit{0.0265} & \textit{0.0092} \\ 
 & Avg. Eval. & $3.54 \cdot 10^{6}$ & $3.26 \cdot 10^{7}$ & $4.68 \cdot 10^{8}$ & $9.00 \cdot 10^{9}$ & $2.23 \cdot 10^{11}$ \\ 
 & \textit{Avg. Time} & \textit{0.0549} & \textit{0.4985} & \textit{6.6654} & \textit{126.1326} & \textit{3039.6606} \\ 
\hline 
5000 & Avg. Sol. & $9.7696$ & $9.8894$ & $9.7807$ & $9.7843$ & $9.7854$ \\ 
 & \textit{Std. Dev.} & \textit{0.5765} & \textit{0.2825} & \textit{0.0763} & \textit{0.0235} & \textit{0.0105} \\ 
 & Avg. Eval. & $1.85 \cdot 10^{7}$ & $1.60 \cdot 10^{8}$ & $2.30 \cdot 10^{9}$ & $4.48 \cdot 10^{10}$ & $1.11 \cdot 10^{12}$ \\ 
 & \textit{Avg. Time} & \textit{0.2481} & \textit{2.2290} & \textit{31.9406} & \textit{616.2213} & \textit{15233.9473} \\ 
\hline 
10000 & Avg. Sol. & $9.8036$ & $9.4448$ & $9.5468$ & $9.5395$ & $9.5347$ \\ 
 & \textit{Std. Dev.} & \textit{0.3240} & \textit{0.1942} & \textit{0.0539} & \textit{0.0311} & \textit{0.0057} \\ 
 & Avg. Eval. & $3.77 \cdot 10^{7}$ & $3.25 \cdot 10^{8}$ & $4.61 \cdot 10^{9}$ & $8.98 \cdot 10^{10}$ & $2.22 \cdot 10^{12}$ \\ 
 & \textit{Avg. Time} & \textit{0.5087} & \textit{4.4832} & \textit{63.7070} & \textit{1234.3232} & \textit{30334.4023} \\ 
\hline 

		\end{tabular}
		\caption{MLP solution of the pricing problem under counterparty credit risk 
		with the Vasi\v{c}ek jump model \eqref{EqJVasicek}, 
		for different $d \in \mathbb{N}$ and $M = n \in \lbrace 1,...,5 \rbrace$. 
		The average solution (Avg.~Sol.), the standard deviation (Std.~Dev.), 
		the average time (Avg.~Time [in seconds]), 
		and the number of function evaluations 
		$\mathfrak{C}^{(d)}_{n,M}$ (Avg.~Eval., where the evaluation 
		of $\mu^d$, $\sigma^d$, $\eta^d$, $f^d$, and $g^d$, 
		and the generation of a one-dimensional random variable 
		is counted as one unit) are computed over the 10 runs of the algorithm.}
		\label{TabCounterpartyJVasicek}
	\end{table}

\section{\textbf{Stochastic fixed-point equations}}
\label{section FP}
The following results from \cite{HJKN2020} provide existence and uniqueness of
solutions of stochastic fixed point equations. 
These results will play an important role in
the proof of Theorem \ref{MLP conv} in Section \ref{section MLP}.
\begin{proposition}                                        \label{Prop FP}
Let $d\in \bN$, $a,b,c,p,T\in(0,\infty)$, and let
$Z^{t,x}_s:\Omega\to \bR^d$ be a random variable for each $t\in[0,T]$,
$s\in[t,T]$, and $x\in\bR^d$. For every nonnegative Borel function
$\varphi:[0,T]\times\bR^d\to [0,\infty)$, we assume that the mapping
$
\{(t,s)\in[0,T]^2:t\leq s\}\times \bR^d \ni (t,s,x)
\mapsto \bE[\varphi(s,Z^{t,x}_s)]\in[0,\infty] 
$
is measurable. Let $f:[0,T]\times \bR^d \times \bR\to \bR$
and $g:\bR^d\to\bR$ be Borel functions, and assume for all
$t\in[0,T]$, $s\in[t,T]$, $x\in\bR^d$, and $y,y'\in\bR$ that
$|f(t,x,0)|\leq a(d^p+\|x\|^2)^{1/2}$, $|g(x)|\leq a(d^p+\|x\|^2)^{1/2}$,
$\bE[(d^p+\|Z^{t,x}_s\|^2)^{1/2}]\leq b(d^p+\|x\|^2)^{1/2}$
and
$|f(t,x,y)-f(t,x,y')|\leq c|y-y'|$.

Then there exists a unique Borel function 
$u:[0,T]\times \bR^d\to\bR$ satisfying for all $(t,x)\in[0,T]\times\bR^d$ that
$$
\bE\left[\big|g(Z^{t,x}_T)\big|\right]
+\int_t^T\bE\Big[\big|f(s,Z^{t,x}_s,u(s,Z^{t,x}_s))\big|\Big]\,ds
+\sup_{y\in\bR^d}\sup_{s\in[0,T]}\left(\frac{|u(s,y)|}{(d^p+\|y\|^2)^{1/2}}
\right)<\infty
$$
and
$$
u(t,x)=\bE\left[g(Z^{t,x}_T)\right]+\int_t^T
\bE\left[f(s,Z^{t,x}_s,u(s,Z^{t,x}_s))\right]\,ds.
$$
Moreover, it holds for all $t\in[0,T]$ that
$$
\sup_{x\in \bR^d}\left( \frac{|u(t,x)|}{(d^p+\|x\|^2)^{1/2}} \right)
\leq \left[  \sup_{x\in \bR^d} \left( \frac{|g(x)|}{(d^p+\|x\|^2)^{1/2}} \right) 
+\sup_{x\in\bR^d}\sup_{s\in[t,T]}
\left( \frac{T|f(s,x,0)|}{(d^p+\|x\|^2)^{1/2}}\right)\right]e^{c(T-t)}<\infty.
$$
\end{proposition}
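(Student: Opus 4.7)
The plan is to realize $u$ as the unique fixed point of a Picard-type operator on a suitably weighted Banach space, in the spirit of the argument for Proposition~2.2 in \cite{HJKN2020}. Equip the space
\begin{equation*}
V := \Bigl\{v: [0,T] \times \bR^d \to \bR \text{ Borel}: \|v\|_V := \sup_{(t,x) \in [0,T] \times \bR^d} \frac{|v(t,x)|}{(d^p+\|x\|^2)^{1/2}} < \infty \Bigr\}
\end{equation*}
with the weighted sup-norm $\|\cdot\|_V$, under which $V$ is a Banach space. Define $\Phi: V \to V$ by
\begin{equation*}
(\Phi v)(t,x) := \bE[g(Z^{t,x}_T)] + \int_t^T \bE[f(s, Z^{t,x}_s, v(s, Z^{t,x}_s))] \, ds.
\end{equation*}

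The first step is to check that $\Phi v \in V$ whenever $v \in V$. Borel-measurability of $(t,x) \mapsto (\Phi v)(t,x)$ follows from the assumed measurability of $(t,s,x) \mapsto \bE[\varphi(s, Z^{t,x}_s)]$ for nonnegative Borel $\varphi$, combined with monotone class and Fubini arguments. The weighted linear-growth bound follows from the triangle inequality
\begin{equation*}
|(\Phi v)(t,x)| \leq \bE[|g(Z^{t,x}_T)|] + \int_t^T \bE[|f(s, Z^{t,x}_s, 0)|] \, ds + c \int_t^T \bE[|v(s, Z^{t,x}_s)|] \, ds,
\end{equation*}
combined with $|g(y)|, |f(s,y,0)| \leq a(d^p+\|y\|^2)^{1/2}$, the Lipschitz property of $f$ in its third argument, and the moment bound $\bE[(d^p+\|Z^{t,x}_s\|^2)^{1/2}] \leq b(d^p+\|x\|^2)^{1/2}$.

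The second step is the contraction estimate. From the Lipschitz hypothesis on $f$ and the moment bound on $Z^{t,x}_s$,
\begin{equation*}
|(\Phi v_1 - \Phi v_2)(t,x)| \leq c \int_t^T \bE\bigl[\,|(v_1 - v_2)(s, Z^{t,x}_s)|\bigr] \, ds \leq cb(T-t)(d^p+\|x\|^2)^{1/2} \|v_1 - v_2\|_V.
\end{equation*}
Iterating this inequality, with the moment bound applied at each step, yields $\|\Phi^n v_1 - \Phi^n v_2\|_V \leq (cbT)^n/n! \,\|v_1 - v_2\|_V$, so $\Phi^n$ is a strict contraction on $V$ for all sufficiently large $n$. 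Banach's fixed point theorem applied to $\Phi^n$, together with the standard observation that its unique fixed point is also the unique fixed point of $\Phi$ in $V$, produces the required $u \in V$; the finite integrability of $|g(Z^{t,x}_T)|$ and $|f(s, Z^{t,x}_s, u(s,Z^{t,x}_s))|$ then follows from $u \in V$ and the growth assumptions.

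The pointwise growth bound on $u$ is obtained by substituting $u$ into its own fixed-point equation. Setting $\phi(t) := \sup_x |u(t,x)|/(d^p+\|x\|^2)^{1/2}$, $G := \sup_x |g(x)|/(d^p+\|x\|^2)^{1/2}$, and $\tilde F(t) := \sup_{x,\, s \in [t,T]} T|f(s,x,0)|/(d^p+\|x\|^2)^{1/2}$, the fixed-point equation combined with the moment bound gives an integral inequality of the form $\phi(t) \leq G + \tilde F(t) + c \int_t^T \phi(s) \, ds$, and a reversed-time application of Gronwall's lemma yields the stated bound with exponential factor $e^{c(T-t)}$. The main technical subtlety throughout is the careful propagation of Borel measurability and weighted integrability through every Picard iterate; this is precisely why the measurability hypothesis on $(t,s,x) \mapsto \bE[\varphi(s, Z^{t,x}_s)]$ for nonnegative Borel $\varphi$ is built into the statement of the proposition.
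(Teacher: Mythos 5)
Your overall route is exactly the one the paper (implicitly) relies on: Proposition~\ref{Prop FP} is proved in the paper simply by citing Proposition~2.2 of \cite{HJKN2020}, which is itself a Banach fixed-point argument on a weighted sup-norm space. Your definition of $V$, the well-definedness and measurability check for $\Phi$, the iterated contraction estimate $\|\Phi^n v_1 - \Phi^n v_2\|_V \leq (cbT)^n/n!\,\|v_1-v_2\|_V$, and the resulting existence/uniqueness via Banach's fixed-point theorem for $\Phi^n$ are all correct and are essentially the HJKN2020 argument written out.

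There is, however, a genuine gap in your Gronwall step. Each passage from $\bE\big[h(Z^{t,x}_s)\big]$ to a bound in terms of $(d^p+\|x\|^2)^{1/2}$ costs a factor $b$ from the moment hypothesis $\bE\big[(d^p+\|Z^{t,x}_s\|^2)^{1/2}\big]\leq b(d^p+\|x\|^2)^{1/2}$. You use this factor correctly in the contraction estimate (that is where your $b$ in $(cbT)^n/n!$ comes from), but you then assert the integral inequality $\phi(t)\leq G+\tilde F(t)+c\int_t^T\phi(s)\,ds$ without the $b$'s. Carrying the factor through, the inequality one actually obtains is
\begin{equation*}
\phi(t)\leq b\big[G+\tilde F(t)\big]+cb\int_t^T\phi(s)\,ds,
\end{equation*}
and since $\tilde F$ is non-increasing, backward Gronwall gives $\phi(t)\leq b\big[G+\tilde F(t)\big]\,e^{cb(T-t)}$. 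This matches the proposition's display only when $b\leq 1$, whereas the applications in the paper take $b=c_1^{1/2}e^{\rho_1 T/2}>1$. In other words, your derivation, done carefully, does not yield the bound exactly as stated; the displayed bound in Proposition~\ref{Prop FP} appears to omit these $b$-factors, a discrepancy that your proof should either fix or explicitly flag (the omission propagates into \eqref{u^d est}, \eqref{M 4}, and the downstream constants, but affects only the size of the constants, not the qualitative conclusions). A second, smaller technical point you gloss over: $\phi(t)=\sup_{x}|u(t,x)|/(d^p+\|x\|^2)^{1/2}$ is an uncountable supremum of a Borel function, so its Lebesgue-measurability (needed to integrate in Gronwall) requires a short separate argument, for instance by first deriving the integral inequality pointwise in $x$ and only afterwards taking suprema, as in the cited proof.
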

\begin{proof}
For $b=1$, 
the above proposition is proved in \cite{HJKN2020} as Proposition 2.2 
(with $X^x_{t,s}\cal Z^{t,x}_s$, $\psi \cal \varphi$,
$\mathcal{O}\cal \bR^d$, $T\cal T$, $c\cal a$, $1\cal b$, $L\cal c$,
$V\cal (\bR^d\ni x\mapsto (d^p+\|x\|^2)^{1/2}\in\bR)$,
$u \cal u$, $f \cal f$, $g \cal g$
in the notation of Proposition 2.2 in \cite{HJKN2020}).
For $b\in(0,\infty)$, one can precisely follow the proof of Proposition 2.2
in \cite{HJKN2020} to obtain the desired results.
\end{proof}

\begin{lemma}                                           \label{lemma perturbation}
Let $d\in\bN$, $c,\rho,\eta,L\in[0,\infty)$, $T,\delta\in(0,\infty)$, 
let $(X^{t,x,k}_{s})_{s\in[t,T]}:[t,T]\times\Omega\to\bR^d$,
$t\in[0,T]$, $x\in\bR^d$, $k\in\{1,2\}$, be 
$\cB([0,T])\otimes\cF/\cB(\bR^d)$-measurable,
let $f:[0,T]\times\bR^d\times\bR\to\bR$, 
$g:\bR^d\to\bR$,
and 
$u_k:[0,T]\times\bR^d\to\bR$, $k\in\{1,2\}$, be Borel functions.
For all $t\in[0,T]$, $s\in[t,T]$, and all Borel functions 
$h:\bR^d\times\bR^d\to[0,\infty)$ assume that
$
\bR^d\times\bR^d\ni(y_1,y_2)\mapsto\bE[h(X^{t,y_1,1}_{s},X^{t,y_2,1}_{s})]
\in[0,\infty]
$ 
is measurable. Moreover, for all $x,y\in\bR^d$, $t\in[0,T]$ and $s\in[t,T]$,
$r\in[s,T]$, $v,w\in\bR$, $k\in\{1,2\}$, and all Borel functions 
$h:\bR^d\times\bR^d\to[0,\infty)$ assume that
$$
X^{t,x,k}_{t}=x,\quad \bE\Big[d^p
+\big\|X^{t,x,k}_{s}\big\|^2\Big]\leq \eta(d^p+\|x\|^2),
$$
$$
\bE\left[\bE\Big[h(X^{s,x',1}_{r},X^{s,y',1}_{r})\Big]
\Big|_{(x',y')=(X^{t,x,1}_{s},X^{t,y,1}_{s})}\right]
=\bE\Big[h(X^{t,x,1}_{r},X^{t,y,1}_{r})\Big],
$$
$$
\max\{T|f(t,x,v)-f(t,y,w)|,|g(x)-g(y)|\}
\leq L\Big(T|v-w|+T^{-1/2}(2d^p+\|x\|^2+\|y\|^2)^{1/2}\|x-y\|\Big),
$$
$$
\bE\left[\bE\left[\big\|X^{s,x',1}_{r}-X^{s,y',1}_{r}\big\|^2\right]
\Big|_{(x',y')=(X^{t,x,1}_{s},X^{t,x,2}_{s})}\right]\leq\delta^2(d^p+\|x\|^2),
$$
$$
\bE\left[\big|g(X^{t,x,k}_{T})\big|\right]
+\int_t^T\bE\left[\big|f(s,X^{t,x,k}_{s},u_k(s,X^{t,x,k}_{s}))\big|
\right]\,ds<\infty,
$$
$$
u_k(t,x)=\bE\left[g(X^{t,x,k}_{T})
+\int_t^Tf(s,X^{t,x,k}_{s},u_k(s,X^{t,x,k}_{s}))\,ds\right],
$$
and
\begin{equation}                                      \label{cond max}
\max\left\{|u_k(t,x)|^2,
e^{\rho(t-s)}\bE\Big[d^p+\big\|X^{t,x,k}_{s}\big\|^2\Big]\right\}
\leq c(d^p+\|x\|^2).
\end{equation}
Then for all $(t,x)\in[0,T]\times\bR^d$ it holds that
\begin{equation}                                     \label{u difference}
|u_1(t,x)-u_2(t,x)|\leq 4c\delta T^{1/2}L(1+LT)(d^p+\|x\|^2)
\exp\big\{(L+\rho/2+(\eta c)^{1/2}L)(T-t)\big\}.
\end{equation}
\end{lemma}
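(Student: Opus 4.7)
The strategy is to compare the two fixed-point equations satisfied by $u_1$ and $u_2$, and invoke Gronwall's lemma on the resulting integral inequality for $\Delta:=u_1-u_2$. Subtracting the two identities yields
\begin{equation*}
\Delta(t,x) = \bE\bigl[g(X^{t,x,1}_T)-g(X^{t,x,2}_T)\bigr] + \int_t^T \bE\bigl[f(s,X^{t,x,1}_s,u_1(s,X^{t,x,1}_s)) - f(s,X^{t,x,2}_s,u_2(s,X^{t,x,2}_s))\bigr]\,ds.
\end{equation*}
The natural splitting is obtained by adding and subtracting $f(s,X^{t,x,1}_s,u_2(s,X^{t,x,1}_s))$ inside the integrand: the value-change part is bounded by $L|\Delta(s,X^{t,x,1}_s)|$ (which is the term to be iterated in Gronwall through the $X^1$-flow), while the residual $f(s,X^{t,x,1}_s,u_2(s,X^{t,x,1}_s))-f(s,X^{t,x,2}_s,u_2(s,X^{t,x,2}_s))$ is further decomposed using the combined Lipschitz hypothesis on $f$ in both its spatial and value arguments.

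The $g$-difference and the spatial-Lipschitz piece of the $f$-residual are handled by the same technique: apply the Lipschitz hypothesis, Cauchy--Schwarz, the growth bound $\bE[d^p+\|X^{t,x,k}_s\|^2]\leq\eta(d^p+\|x\|^2)$, and the $\delta$-hypothesis specialized to $r=s$. Since $X^{s,x',1}_s=x'$, the latter reduces to $\bE[\|X^{t,x,1}_s-X^{t,x,2}_s\|^2]\leq \delta^2(d^p+\|x\|^2)$, producing contributions of order $\delta(d^p+\|x\|^2)$ uniformly in $s$. Together with the factor $T^{1/2}L(1+LT)$ coming from collecting the Lipschitz constants of $g$ and $f$, these form the prefactor $4c\delta T^{1/2}L(1+LT)(d^p+\|x\|^2)$ in the claimed estimate.

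The subtle contribution is $L\int_t^T \bE[|u_2(s,X^{t,x,1}_s)-u_2(s,X^{t,x,2}_s)|]\,ds$, because $u_2$ is not a priori Lipschitz under the abstract hypotheses of the lemma. Here I would use the pointwise growth bound $|u_k(s,y)|\leq \sqrt{c(d^p+\|y\|^2)}$ from \eqref{cond max}, Jensen's inequality, and the growth of the $X^{t,x,k}_s$ paths to dominate this crudely by a multiple of $\sqrt{c\eta(d^p+\|x\|^2)}$; rather than treating it as a stand-alone driver, I would absorb the resulting $(\eta c)^{1/2}L$ factor into the Gronwall multiplier, which is precisely what generates the corresponding term in the exponent of the final bound. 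The remaining $\rho/2$ in the exponent arises from the inequality $\bE[d^p+\|X^{t,x,1}_s\|^2]\leq c\,e^{\rho(s-t)}(d^p+\|x\|^2)$ from \eqref{cond max} after applying Jensen to convert the pathwise estimate into a pointwise-in-$x$ one. A direct application of Gronwall's lemma to the resulting integral inequality then produces the claimed bound. The main technical obstacle is precisely the handling of the non-Lipschitz $u_2$-difference term: the necessity of absorbing it into the exponential via the growth constant $c$ is the reason $c$ appears simultaneously in the prefactor and inside the exponent of the final estimate.
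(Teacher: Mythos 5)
The paper does not write out a proof; it simply delegates to Lemma~2.3 in \cite{HJKN2020}. So you cannot be matched against the paper's proof, only against what would actually make the argument work — and on that test there is a genuine gap.

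The decomposition you propose — subtract the two fixed-point identities, add and subtract $f(s,X^{t,x,1}_s,u_2(s,X^{t,x,1}_s))$, and treat the residual with the combined Lipschitz estimate — is a sensible start. The problem is in your handling of the term $L\int_t^T\bE\bigl[|u_2(s,X^{t,x,1}_s)-u_2(s,X^{t,x,2}_s)|\bigr]\,ds$. You propose to dominate $|u_2(s,y)|$ by $\sqrt{c(d^p+\|y\|^2)}$ from \eqref{cond max}, take expectations, and "absorb the resulting $(\eta c)^{1/2}L$ into the Gronwall multiplier." This cannot work: the crude bound $|u_2(s,X^{t,x,1}_s)-u_2(s,X^{t,x,2}_s)|\leq|u_2(s,X^{t,x,1}_s)|+|u_2(s,X^{t,x,2}_s)|$ produces a quantity that is \emph{independent of $\delta$}. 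A $\delta$-independent additive driver in the Gronwall inequality would yield a bound $|u_1(t,x)-u_2(t,x)|\leq C_1\delta+C_2$ with $C_2>0$ independent of $\delta$, which is strictly weaker than the claimed $O(\delta)$ estimate \eqref{u difference}. Moreover a constant driver cannot be "absorbed into the Gronwall multiplier" — the multiplier must multiply the quantity being iterated, not a free-standing constant. So the argument as written proves a much weaker inequality than the one claimed.

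The fact that the step fails is signalled by two hypotheses you never use. First, you never invoke the Markov/flow identity $\bE\bigl[\bE[h(X^{s,x',1}_r,X^{s,y',1}_r)]\big|_{(x',y')=(X^{t,x,1}_s,X^{t,y,1}_s)}\bigr]=\bE[h(X^{t,x,1}_r,X^{t,y,1}_r)]$, which is what permits the Gronwall iteration to be propagated along the $X^1$-flow. Second, you only apply the $\delta$-hypothesis at $r=s$; the hypothesis is stated for all $r\in[s,T]$ precisely because the $u_2$-difference term has to be unfolded one more level using the fixed-point equation, after which the $X^1$-trajectories started from $X^{t,x,1}_s$ and $X^{t,x,2}_s$ must be compared at later times $r>s$ — and then re-collapsed via the Markov identity. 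The exponent contribution $(\eta c)^{1/2}L$ in \eqref{u difference} comes out of this nested step (a product of the moment bound $\eta^{1/2}$ and the growth bound $c^{1/2}$ against the Lipschitz constant $L$), not from a crude absorption of the growth of $u_2$. Without that iteration your proof cannot reach the stated estimate, and you should instead follow the multi-step defect/contraction argument of Lemma~2.3 in \cite{HJKN2020} (or reproduce it), which is what the paper itself does.
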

\begin{proof}
The above lemma is basically a special version of Lemma 2.3 in \cite{HJKN2020}
(with
$(X^{x,k}_{t,s})_{s\in[t,T]}\cal (X^{t,x,k}_s)_{s\in[t,T]}$,
$\eta\cal \eta$, $L\cal L$, $T\cal T$,
$\psi\cal(\bR^d\ni x\mapsto (d^p+\|x\|^2)\in\bR)$, 
$T^{1/2}$ in (27) in \cite{HJKN2020} $\cal LT^{1/2}$, $h\cal h$,
$V\cal(\bR^d\ni x\mapsto (d^p+\|x\|^2)\in\bR)$, $p\cal 2$,
$q\cal 2$, $\delta\cal \delta$, $g\cal g$, $f \cal f$,
$u_k\cal u_k$, $ce^{\rho(t-s)}$ in the last condition of Lemma 2.3 
in \cite{HJKN2020} $\cal e^{\rho(t-s)}$ in the notation of Lemma 2.3 
in \cite{HJKN2020}).
One can follow the proof of Lemma 2.3 
in \cite{HJKN2020} step by step to get \eqref{u difference}. 
\end{proof}

\section{\textbf{Existence and uniqueness of viscosity solutions of PIDE
\eqref{APIDE} and its Feynman-Kac representation}}
\label{section PIDE}
There are many existence and uniqueness results
for viscosity solutions of nonlinear second-order PIDEs in the literature
(c.f. e.g., \cite{AT1996,BBP1997,BI2008,CIP1992,IY1993,JK}).
For the comparison principle, we verify that the conditions of Theorem 3.1 
in \cite{JK} are satisfied; see Proposition \ref{proposition uniqueness PIDE}.
Moreover, we show that there exists a viscosity solution of PIDE \eqref{APIDE} 
which satisfies a Feynman-Kac representation; 
see Proposition \ref{proposition PIDE existence}. 
The idea is to first show for a corresponding linear PIDE 
having smooth coefficients that it possesses a classical solution 
satisfying a Feynman-Kac representation. 
Then, combining a limit argument involving mollifiers 
as well as a stochastic fixed point argument based on \cite{BHJ2020,HJKN2020}
we can conclude the desired result.

We assume the settings in Section \ref{section setting},
fix $d$ and $\theta$, and omit  the superscript $d$ and $\theta$ in the notations. 
Then we introduce some notions and notations 
which will be used throughout this section.
To ease notation we introduce some functionals.
Let
$
H:(0,T)\times \bR^d\times\bR\times\bR^d\times\bS^d\to \bR
$
be a functional defined for each
$(t,x,r,y,A)\in(0,T)\times\bR^d\times\bR\times\bR^d\times\bS^d$ by
$$
H(t,x,r,y,A):= 
-\langle y,\mu(t,x)\rangle
-\frac{1}{2}\operatorname{Trace}
(\sigma(t,x)[\sigma(t,x)]^TA)
-f(t,x,r).
$$
Let 
$
G_0:(0,T)\times \bR^d\times\bR\times\bR^d\times\bS^d\times C^2(\bR^d)\to \bR
$
be a functional defined for all
$(t,x,r,y,A,\phi)
\in(0,T)\times\bR^d\times\bR\times\bR^d\times\bS^d\times C^2(\bR^d)$ by 
\begin{align*}
G_0(t,x,r,y,A,\phi):=G(t,x,r,y,A,\phi)+f(t,x,r)-h(t,x),
\end{align*}
where $G$ is defined by \eqref{def operator G},
and $h\in C([0,T]\times\bR^d)$ is a fixed function 
satisfying the follows. 
\begin{assumption}                                      \label{assumption h}
There exist a positive constant
$C_0$ satisfying for all $(t,x)\in[0,T]\times\bR^d$ that 
\begin{equation}                                   \label{cond h}
\quad |h(t,x)|^2\leq C_0(1+\|x\|^2).
\end{equation}
\end{assumption}
For every $\kappa\in(0,1)$, let
$
G_\kappa:
(0,T)\times \bR^d\times\bR\times\bR^d\times\bS^d\times 
SC((0,T)\times\bR^d)\times C^{1,2}((0,T)\times\bR^d)\to \bR
$
be a functional defined for each 
$(t,x,r,y,A,\psi,\varphi)\in(0,T)\times\bR^d\times\bR\times\bR^d\times\bS^d
\times SC((0,T)\times\bR^d)\times C^{1,2}((0,T)\times\bR^d)$ by
\begin{align*}
G_\kappa(t,x,r,y,A,\psi,\varphi):=
&
H(t,x,r,y,A)\\
&
-\int_{\|z\|\geq\kappa}\left(\psi(t,x+\eta_t(x,z))-\psi(t,x)
-\langle y,\eta_t(x,z)\rangle\right)\,\nu(dz)\\
&
-\int_{\|z\|<\kappa}\left(\varphi(t,x+\eta_t(x,z))-\varphi(t,x)
-\langle\nabla_x \varphi(t,x),\eta_t(x,z)\rangle\right)\,\nu(dz).
\end{align*}

\begin{lemma}                                \label{Lemma equiv def}
Let $u\in USC_1((0,T)\times\bR^d)$ be a viscosity subsolution 
of PIDE \eqref{APIDE}
Let $\varphi\in C^{1,2}((0,T)\times\bR^d)$, $(t_0,x_0)\in(0,T)\times\bR^d$,
and assume that $u-\varphi$ has a local maximum at $(t_0,x_0)$. Then
$$
-\frac{\partial}{\partial t}\varphi(t_0,x_0)
+G(t_0,x_0,u(t_0,x_0),\nabla_x \varphi(t_0,x_0),
\operatorname{Hess}_x\varphi(t_0,x_0),
\varphi(t,\cdot))
\leq 0.
$$
\end{lemma}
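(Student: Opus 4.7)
The strategy is the classical perturbation-of-test-function argument, adapted to the nonlocal setting: I would build a modified test function $\tilde\varphi\in C^{1,2}((0,T)\times\bR^d)$ that satisfies the hypotheses of the viscosity subsolution definition (namely $\tilde\varphi(t_0,x_0)=u(t_0,x_0)$ and $u\le\tilde\varphi$ globally), whose first- and second-order jet at $(t_0,x_0)$ matches that of $\varphi$, and which coincides with $\varphi$, up to an additive constant, on the spatial image of the jumps at time $t_0$, so that the nonlocal integral in $G$ evaluated at $\tilde\varphi(t_0,\cdot)$ equals the one evaluated at $\varphi(t_0,\cdot)$. The desired inequality then drops out of the definition applied to $\tilde\varphi$.

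The first move is a constant shift: set $M:=u(t_0,x_0)-\varphi(t_0,x_0)$ and $\hat\varphi:=\varphi+M$, so that $\hat\varphi(t_0,x_0)=u(t_0,x_0)$ while all partial derivatives and the nonlocal integrand at $(t_0,x_0)$ are preserved (the constant cancels in the difference $\hat\varphi(t_0,x_0+\eta)-\hat\varphi(t_0,x_0)$). Thus $u-\hat\varphi$ has a local maximum equal to $0$ at $(t_0,x_0)$, and there exists $r>0$ with $u\le\hat\varphi$ on $B_r(t_0,x_0)\subset(0,T)\times\bR^d$. Next, pick a smooth cutoff $\chi$ equal to $1$ on $B_{r/2}(t_0,x_0)$ and vanishing outside $B_r(t_0,x_0)$, and a smooth function $\Psi$ with $\Psi\ge u$ globally (such $\Psi$ exists since $u\in USC_1$ grows at most linearly, e.g.\ $\Psi(x)=c(1+\|x\|^2)^{1/2}$ for $c$ large enough). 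Define $\tilde\varphi:=\chi\hat\varphi+(1-\chi)\Psi$. Then $\tilde\varphi\in C^{1,2}$; on $B_{r/2}(t_0,x_0)$ one has $\tilde\varphi=\hat\varphi$, so the jet at $(t_0,x_0)$ matches and $\tilde\varphi(t_0,x_0)=u(t_0,x_0)$; and $u\le\tilde\varphi$ everywhere (on the transition annulus one has $u\le\hat\varphi$ and $u\le\Psi$, so the convex combination still dominates $u$). Applying the definition of viscosity subsolution to $\tilde\varphi$ at $(t_0,x_0)$ gives
\[
-\partial_t\tilde\varphi(t_0,x_0)+G\bigl(t_0,x_0,u(t_0,x_0),\nabla_x\tilde\varphi(t_0,x_0),\operatorname{Hess}_x\tilde\varphi(t_0,x_0),\tilde\varphi(t_0,\cdot)\bigr)\le 0,
\]
and by the jet matching plus the shift reduction, this will yield the claimed inequality for $\varphi$ as soon as the nonlocal integral with $\tilde\varphi(t_0,\cdot)$ is identified with the one with $\hat\varphi(t_0,\cdot)$.

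The principal obstacle is precisely this last identification. By Assumption~\ref{assumption pointwise}, $\|\eta^d_{t_0}(x_0,z)\|\le\sqrt{C_d}$ uniformly in $z$, so the nonlocal integrand only evaluates $\tilde\varphi(t_0,\cdot)$ on the bounded ball $B_{\sqrt{C_d}}(x_0)$. If $r\ge 2\sqrt{C_d}$, then $\chi\equiv 1$ on the spatial slice $\{t_0\}\times B_{\sqrt{C_d}}(x_0)$, so $\tilde\varphi(t_0,y)=\hat\varphi(t_0,y)$ for every $y$ in the jump range and the two integrals coincide exactly. Otherwise one must work harder: either refine the cutoff to leave $\hat\varphi$ untouched on the whole spatial jump range (using that $u-\hat\varphi$ is upper semicontinuous and bounded above on the compact ball $\overline{B_{\sqrt{C_d}}(x_0)}$, so after a further constant shift one regains $u\le\hat\varphi$ there), or pass through an approximation in which the excess $\int(\tilde\varphi(t_0,x_0+\eta)-\hat\varphi(t_0,x_0+\eta))\,\nu^d(dz)$ is controlled and sent to zero (a route naturally compatible with the $G_\kappa$-formulation introduced at the start of this section, which decouples small and large jumps). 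This nonlocal matching is the technical heart of the argument; once it is in hand, the first two steps glue together to deliver the stated inequality.
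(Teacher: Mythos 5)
Your strategy (shift $\varphi$ by a constant to restore contact, then splice it against a global dominator via a smooth cutoff) is essentially the same as the paper's: the paper produces an open $\mathcal O\ni(t_0,x_0)$ and $\phi\in C^{1,2}$ with $\phi=\varphi$ on $\mathcal O$ and $u-\phi$ globally maximized at $(t_0,x_0)$, shifts by a constant to get $\phi_0$, applies the subsolution definition to $\phi_0$, and then reads off the conclusion. What you call ``the technical heart'' --- matching the nonlocal integral in $G(\ldots,\phi_0(t_0,\cdot))$ against the one in $G(\ldots,\varphi(t_0,\cdot))$ --- is precisely the step the paper's proof leaves unaddressed: its closing ``Hence \dots we have'' uses only that $\phi_0=\varphi+\mathrm{const}$ on $\mathcal O$, which pins down the first- and second-order jet at $(t_0,x_0)$ and the values of $\phi_0$ on $\mathcal O$, but the nonlocal integrand samples $\phi_0$ at points $(t_0,x_0+\eta_{t_0}(x_0,z))$, and by Assumption~\ref{assumption pointwise} these range over $\{t_0\}\times\overline{B_{\sqrt{C_d}}(x_0)}$, a set that need not lie inside $\mathcal O$. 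You have correctly located a genuine omission.

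Your two proposed repairs for the general case, however, do not close it. For the constant-shift repair: if $u-\hat\varphi>0$ somewhere on $\{t_0\}\times\overline{B_{\sqrt{C_d}}(x_0)}$ outside the local-max ball, raising $\hat\varphi$ by $\sup_{\overline{B_{\sqrt{C_d}}(x_0)}}(u-\hat\varphi)>0$ to regain domination destroys the contact $\hat\varphi(t_0,x_0)=u(t_0,x_0)$, so $(t_0,x_0)$ is no longer a maximizer of $u-\tilde\varphi$ and the subsolution definition can no longer be invoked at $(t_0,x_0)$. For the approximation repair: any admissible test function $\tilde\varphi$ in Definition 2.6 must satisfy $\tilde\varphi\geq u$ everywhere, so whenever $u(t_0,x_0+\eta_{t_0}(x_0,z))>\hat\varphi(t_0,x_0+\eta_{t_0}(x_0,z))$ on a set of positive $\nu$-measure, the excess $\int\big(\tilde\varphi(t_0,x_0+\eta)-\hat\varphi(t_0,x_0+\eta)\big)\,\nu(dz)$ is bounded below by $\int\big(u(t_0,x_0+\eta)-\hat\varphi(t_0,x_0+\eta)\big)_+\,\nu(dz)>0$ uniformly over admissible $\tilde\varphi$, and cannot be sent to zero. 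Since $G$ carries a minus sign in front of the nonlocal integral, this makes $G(\ldots,\tilde\varphi(t_0,\cdot))$ strictly smaller than $G(\ldots,\varphi(t_0,\cdot))$, and the inequality $G(\ldots,\tilde\varphi(t_0,\cdot))\leq 0$ then says nothing about $G(\ldots,\varphi(t_0,\cdot))$. The only clean routes are the one you already noted (assume the local maximum holds on a neighborhood wide enough, in space, to contain $\overline{B_{\sqrt{C_d}}(x_0)}$) or to restate the conclusion with the $G_\kappa$ operator, placing $u$ rather than $\varphi$ in the large-jump part; neither is what the lemma, or the paper's proof, actually does.
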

\begin{proof}
First, notice 
that the upper semi-continuity of $u$ ensures that there exist a non-empty
open set $\mathcal{O}\subseteq (0,T)\times\bR^d$ and 
$\phi\in C^{1,2}((0,T)\times\bR^d)$ such that $(t_0,x_0)\in\mathcal{O}$,
$\varphi(t,x)=\phi(t,x)$ for all $(t,x)\in\mathcal{O}$,
and $u-\phi$ has a global maximum at $(t_0,x_0)$.
Then we define the function
$$
\phi_0(t,x):=\phi(t,x)-\phi(t_0,x_0)+u(t_0,x_0),
\quad (t,x)\in(0,T)\times\bR^d.
$$
Note that $\phi_0\in C^{1,2}((0,T)\times\bR^d)$, 
$\phi_0(t_0,x_0)=u(t_0,x_0)$, and $u\leq\phi_0$.
Thus, the fact $u\in USC_1((0,T)\times\bR^d)$ is a 
viscosity subsolution of PIDE \eqref{APIDE} implies that 
$$
-\frac{\partial}{\partial t}\phi_0(t_0,x_0)
+G(t_0,x_0,\phi_0(t_0,x_0),\nabla_x \phi_0(t_0,x_0),
\operatorname{Hess}_x\phi_0(t_0,x_0),
\phi_0(t,\cdot))
\leq 0.
$$
Hence, by the definition of $\phi_0$, and the fact that
$\phi_0(t_0,x_0)=u(t_0,x_0)$ and 
$\phi_0(t,x)=\varphi(t,x)-\phi(t_0,x_0)+u(t_0,x_0)$ for all $(t,x)\in\mathcal{O}$
we have
$$
-\frac{\partial}{\partial t}\varphi(t_0,x_0)
+G(t_0,x_0,u(t_0,x_0),\nabla_x \varphi(t_0,x_0),
\operatorname{Hess}_x\varphi(t_0,x_0),
\varphi(t,\cdot))
\leq 0.
$$
This completes the proof of Lemma \ref{Lemma equiv def}.
\end{proof}

\begin{lemma}                                  \label{lemma continuity G 0}
Let Assumptions \ref{assumption Lip and growth}, \ref{assumption pointwise},
and \ref{assumption h}
hold. Then for every $\varphi\in C^{1,2}((0,T)\times\bR^d)$, the mapping
$$
(0,T)\times\bR^d\times\bR\times\bR^d\times\bS^d\ni(t,x,r,y,A)
\mapsto G_0(t,x,r,y,A,\varphi(t,\cdot))\in\bR
$$
is continuous.
\end{lemma}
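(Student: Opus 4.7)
The plan is to reduce to continuity of each piece separately, observing first that the functional simplifies: by the definition of $G_0$, the term $-f(t,x,r)$ inside $G$ cancels with the $+f(t,x,r)$ added in $G_0$, so
\begin{align*}
G_0(t,x,r,y,A,\varphi(t,\cdot)) = &-\langle y,\mu(t,x)\rangle - \tfrac{1}{2}\operatorname{Trace}(\sigma(t,x)[\sigma(t,x)]^TA) - h(t,x) \\
&- \int_{\R^d}\Big(\varphi(t,x+\eta_t(x,z)) - \varphi(t,x) - \langle \nabla_x\varphi(t,x),\eta_t(x,z)\rangle\Big)\,\nu(dz).
\end{align*}
In particular the variable $r$ does not enter, and the non-integral terms are visibly continuous in $(t,x,y,A)$ by the assumed continuity of $\mu$, $\sigma$, and $h$.

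It remains to show that the map
$(t,x)\mapsto I(t,x):=\int_{\R^d}(\varphi(t,x+\eta_t(x,z))-\varphi(t,x)-\langle \nabla_x\varphi(t,x),\eta_t(x,z)\rangle)\nu(dz)$
is continuous. I would fix a convergent sequence $(t_n,x_n)\to(t,x)$ in $(0,T)\times\R^d$ and apply dominated convergence after splitting the domain as $\{\|z\|\leq 1\}\cup\{\|z\|>1\}$. Pointwise convergence of the integrand is immediate from the joint continuity of $\eta$ in $(t,x,z)$ together with $\varphi\in C^{1,2}$.

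The main technical step is establishing a $\nu$-integrable dominating function, and here Assumption~\ref{assumption pointwise} is decisive. Since $\|\eta_{t_n}(x_n,z)\|^2\leq C_d(1\wedge\|z\|^2)$, the point $x_n+\eta_{t_n}(x_n,z)$ stays in a fixed compact ball $\overline{B}(x,\sqrt{C_d}+1)$ for all $n$ large and all $z\in\R^d$; similarly $t_n$ lies in a compact subset of $(0,T)$. On this compact set, $\varphi$, $\nabla_x\varphi$, and $\operatorname{Hess}_x\varphi$ are bounded, say by a constant $M$. For $\|z\|\leq 1$, a second-order Taylor expansion gives
$$|\varphi(t_n,x_n+\eta_{t_n}(x_n,z))-\varphi(t_n,x_n)-\langle \nabla_x\varphi(t_n,x_n),\eta_{t_n}(x_n,z)\rangle|\leq \tfrac{M}{2}\|\eta_{t_n}(x_n,z)\|^2\leq \tfrac{MC_d}{2}\|z\|^2,$$
which is $\nu$-integrable on $\{\|z\|\leq 1\}$ because $\int_{\R^d}(1\wedge\|z\|^2)\,\nu(dz)<\infty$ (this is part of Assumption~\ref{assumption small jumps}, but holds in any case for a L\'evy measure). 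For $\|z\|>1$ the integrand is bounded by a constant (depending only on $M$ and $\sqrt{C_d}$), and $\nu(\{\|z\|>1\})<\infty$ by the same L\'evy measure property. Dominated convergence then yields $I(t_n,x_n)\to I(t,x)$, completing the proof.

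I expect the only mild subtlety to be checking carefully that the compact set capturing $x_n+\eta_{t_n}(x_n,z)$ for all $z$ is really uniform in $z$, which is exactly what the bound $\|\eta_t(x,z)\|^2\leq C_d(1\wedge\|z\|^2)\leq C_d$ delivers; everything else is routine.
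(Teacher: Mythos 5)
Your proposal is correct and follows essentially the same route as the paper: reduce to the nonlocal integral term, bound the second-order Taylor remainder using the pointwise estimate $\|\eta_t(x,z)\|^2\le C_d(1\wedge\|z\|^2)$ and local boundedness of $\operatorname{Hess}_x\varphi$ on a compact neighbourhood, and conclude by dominated convergence. The only cosmetic difference is that you split $\{\|z\|\le1\}$ from $\{\|z\|>1\}$, whereas the paper keeps the single dominating function $C\cdot(1\wedge\|z\|^2)$ over all of $\bR^d$ (which your Taylor bound already provides, so the split is unnecessary but harmless).
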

\begin{proof}
By the definition of $G_0$, to prove this lemma we only need to show that
for every $\varphi\in C^{1,2}((0,T)\times\bR^d)$ the mapping 
\begin{equation}                                          \label{map nonlocal}
(0,T)\times\bR^d\ni(t,x)\mapsto\int_{\bR^d}\Big(
\varphi(t,x+\eta_t(x,z))-\varphi(t,x)
-\langle \nabla_x\varphi(t,x),\eta_t(x,z)\rangle
\Big)\,\nu(dz)\in\bR
\end{equation}
is continuous. To see this, note that by Taylor's formula we obtain for every
$\varphi\in C^{1,2}((0,T)\times\bR^d)$ and $(t,x)\in(0,T)\times\bR^d$ that
\begin{align}
&
\int_{\bR^d}\Big(
\varphi(t,x+\eta_t(x,z))-\varphi(t,x)
-\langle \nabla_x\varphi(t,x),\eta_t(x,z)\rangle
\Big)\,\nu(dz)\nonumber\\
&
=\int_{\bR^d}\int_0^1(1-\alpha)\sum_{i,j=1}^d\left[
\frac{\partial^2}{\partial x_i\partial x_j}
\varphi(t,x+\alpha\eta_t(x,z))\eta^i_t(x,z)\eta^j_t(x,z)\right]d\alpha\,\nu(dz)
\label{Taylor varphi}
\end{align} 
Moreover, by \eqref{assumption eta} and Cauchy-Schwarz inequality we have 
for all $\alpha\in[0,1]$, $z\in\bR^d$, $(t,x)\in (0,T)\times \bR^d$, 
and $(t',x')\in (0,T)\times \bR^d$ with $|t'-t|\leq [(T-t)\wedge t]/2$ 
and $\|x'-x\|\leq 1$ that
\begin{align}
&
\left|(1-\alpha)\sum_{i,j=1}^d\left[
\frac{\partial^2}{\partial x_i\partial x_j}
\varphi(t',x'+\alpha\eta_{t'}(x',z))
\eta^i_{t'}(x',z)\eta^j_{t'}(x',z)\right]\right|
\nonumber\\
&
\leq 
\sup_{s:|s-t|\leq \frac{(T-t)\wedge t}{2}}
\sup_{y:\|y-x\|\leq 1+C_d^{1/2}}\Big[\big\|
\operatorname{Hess}_y\varphi(s,y)\big\|_F\Big]
\cdot \sum_{i,j=1}^d\left|\eta^i_{t'}(x',z)\eta^j_{t'}(x',z)
\right|
\nonumber\\
&
\leq 
\sup_{s:|s-t|\leq \frac{(T-t)\wedge t}{2}}
\sup_{y:\|y-x\|\leq 1+C_d^{1/2}}\Big[\big\|
\operatorname{Hess}_y\varphi(s,y)\big\|_F\Big]
\cdot d\|\eta_{t'}(x',z)\|^2
\nonumber\\
&
\leq
\sup_{s:|s-t|\leq \frac{(T-t)\wedge t}{2}}
\sup_{y:\|y-x\|\leq 1+C_d^{1/2}}\Big[\big\|
\operatorname{Hess}_y\varphi(s,y)\big\|_F\Big]
\cdot dC_d(1\wedge\|z\|^2).                                 \label{est J}
\end{align}
Thus, by \eqref{Taylor varphi} and Lebesgue's dominated convergence theorem
we obtain that \eqref{map nonlocal} is continuous. The proof of this lemma
is therefore completed.
\end{proof}

\begin{proposition}                          \label{proposition uniqueness PIDE}
Let Assumptions \ref{assumption Lip and growth} and \ref{assumption pointwise}
hold, and let $u_1,u_2\in C_1([0,T]\times\bR^d)$ 
be two viscosity solutions of PIDE \eqref{APIDE} such that 
$u_1(T,x)=u_2(T,x)=g(x)$ for all $x\in\bR^d$.
Then we have for all
$(t,x)\in[0,T]\times\bR^d$ that 
$u_1(t,x)=u_2(t,x)$.
\end{proposition}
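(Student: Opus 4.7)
The plan is to invoke the comparison principle \cite[Theorem 3.1]{JK} for nonlinear parabolic PIDEs, applied twice (to $u_1$ vs.\ $u_2$ and then to $u_2$ vs.\ $u_1$) after the time reversal of Remark \ref{remark v solution}. Setting $v_i(t,x) := u_i(T-t,x)$, each $v_i \in C_1([0,T]\times\bR^d)$ is a viscosity solution (in the forward-in-time sense) of the associated PIDE with initial data $v_i(0,\cdot) = g$, and it suffices to prove $v_1 = v_2$.

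First I would identify and verify the structural hypotheses of \cite[Theorem 3.1]{JK} for the operator $G$ defined in \eqref{def operator G}: (i) \emph{degenerate ellipticity} in the Hessian slot, which is immediate from the form $-\tfrac12\operatorname{Trace}(\sigma\sigma^T A)$; (ii) \emph{properness}, i.e.\ (quasi-)monotonicity of $G$ in the $r$-variable, which for the Lipschitz-in-$r$ nonlinearity $f$ is reduced to the strictly monotone case by the standard exponential rescaling $\tilde v_i(t,x) := e^{-\lambda t} v_i(t,x)$ for $\lambda > L^{1/2}$; (iii) the \emph{local Lipschitz-in-$x$ estimates} on $\mu$, $\sigma$, $f$ and $g$ needed by \cite{JK}, all of which follow from Assumption \ref{assumption Lip and growth}; (iv) the \emph{nonlocal integrability and Lipschitz conditions}, for which one uses the pointwise bounds $\|\eta_t(x,z)\|^2 \le C_d(1\wedge\|z\|^2)$ and $\|\eta_t(x,z)-\eta_t(x',z)\|^2\le C_d\|x-x'\|^2(1\wedge\|z\|^2)$ from Assumption \ref{assumption pointwise}, combined with the finiteness $\int_{\bR^d}(1\wedge\|z\|^2)\,\nu(dz) < \infty$ inherent to any L\'evy measure; (v) the \emph{continuity} of $(t,x,r,y,A)\mapsto G(t,x,r,y,A,\varphi(t,\cdot))$ for test functions $\varphi \in C^{1,2}$, which is the content of Lemma \ref{lemma continuity G 0}.

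Once (i)--(v) are in place, the final step is to verify that $v_1,v_2$ lie in the uniqueness class to which \cite[Theorem 3.1]{JK} applies. Since $v_1, v_2 \in C_1([0,T]\times\bR^d)$ have at most linear growth in $x$ and $v_1(0,\cdot) = v_2(0,\cdot) = g$, both the growth and the initial-data matching required by \cite{JK} are satisfied. Applying that theorem to the pair $(v_1,v_2)$ yields $v_1 \le v_2$, and the symmetric application gives $v_2 \le v_1$; hence $v_1 = v_2$ and so $u_1 = u_2$ on $[0,T]\times\bR^d$.

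The main obstacle will be (iv), i.e.\ translating the present nonlocal operator into the precise framework used in \cite{JK}. The natural device is the small-/large-jump splitting already encoded in the family of operators $G_\kappa$ introduced at the beginning of this section: for $\|z\|\ge \kappa$ the nonlocal term is controlled directly using the semicontinuous values of $u_i$ (and the L\'evy bound), while for $\|z\|<\kappa$ one tests against $C^{1,2}$ functions and uses Taylor expansion together with Assumption \ref{assumption pointwise}, as in the estimate \eqref{est J} in the proof of Lemma \ref{lemma continuity G 0}. This matches the hypothesis on the jump operator in \cite{JK} and closes the argument.
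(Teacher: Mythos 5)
Your proposal identifies the correct reference (Theorem~3.1 of \cite{JK}), the time-reversal via Remark~\ref{remark v solution}, and the general shape of the hypothesis check, so the route is essentially the same as the paper's. However, as written it has a material gap: you list five conditions (i)--(v) and single out (iv) -- the transfer of the nonlocal operator into the \cite{JK} framework -- as ``the main obstacle,'' but the actually hard hypothesis in \cite{JK} is their condition (F6), which your outline never engages with. Verifying (F6) is where nearly all of the paper's proof lives. It requires choosing the specific doubled penalization $\phi(t,x,y)=e^{\beta t}\frac{\alpha}{2}\|x-y\|^2+e^{\beta t}\frac{\varepsilon}{2}(\|x\|^2+\|y\|^2)$, invoking Ishii's matrix inequality \eqref{matrix cond} and the multiplication/trace trick to bound $-\tfrac12\operatorname{Trace}(\Sigma_{y_0}\Sigma_{y_0}^TY)+\tfrac12\operatorname{Trace}(\Sigma_{x_0}\Sigma_{x_0}^TX)$ by $L e^{\beta t_0}\alpha\|x_0-y_0\|^2 + \ldots$, Taylor-expanding the small-jump part of the nonlocal operator against $\phi$ to get the vanishing term $\int_{\|z\|<\kappa}e^{\beta T}(\alpha+2\varepsilon)C_d(1\wedge\|z\|^2)\,\nu(dz)$, and bounding the large-jump part using the global-maximum inequality for the doubled function together with the pointwise Lipschitz bound on $\eta$ from Assumption~\ref{assumption pointwise}. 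Merely noting the ``small-/large-jump splitting already encoded in $G_\kappa$'' does not deliver any of these estimates; without them there is no proof.

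Two smaller points. First, the exponential rescaling you propose for properness is unnecessary: condition (F3) in \cite{JK} allows a quasi-monotonicity constant $\gamma>0$, and the Lipschitz constant $L^{1/2}$ for $f$ in $r$ serves as $\gamma$ directly, as the paper does. Second, the conditions in \cite{JK} are labeled (C1)--(C4), (F0)--(F6); the paper first observes that (F0)--(F4) imply (C1)--(C4) via Remark~2.2 in \cite{JK}, so that only (F0)--(F6) need checking. Your (i)--(v) grouping does not map cleanly onto these and omits the stability under local uniform convergence of test functions (condition (F1), which uses \eqref{conv J phi k}) and condition (F5) (stability under pointwise convergence of the ``semicontinuous'' argument, handled by dominated convergence). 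Those are routine, but they need to be mentioned; (F6) is not routine, and it is the part that is missing.
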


\begin{proof}
By \eqref{assumption Lip f g} and Rademacher's theorem 
(see, e.g., Theorem 3.1.6 in \cite{HF}) we first notice 
for almost all $x\in\bR^d$ that
$\nabla_x g(x)$ exists, and
\begin{equation}                                        \label{bbd Dg}
\|\nabla_x g(x)\|\leq (Ld)^{1/2}T^{-1/2}.
\end{equation}
To apply Theorem 3.1 in \cite{JK}, it remains to verify that Assumptions
(C1)-(C4) and (F0)-(F6) in \cite{JK} are satisfied.
Note that by Remark 2.2 in \cite{JK}, Assumptions (F0)-(F4) in \cite{JK} implies
Assumptions (C1)-(C4) in \cite{JK}. Thus, we only need to verify Assumptions
(F0)-(F6).
In the rest of this proof, we fix
$u,-v\in USC_1((0,T)\times\bR^d)$, $w\in SC_1((0,T)\times\bR^d)$,
$\varphi,\psi\in C^{1,2}_1((0,T)\times\bR^d)$.
To ease notation, for each $\phi\in C^{1,2}((0,T)\times\bR^d)$ 
and $(t,x)\in(0,T)\times\bR^d$ we define
\begin{align*}
J\phi(t,x):=
&
\int_{\bR^{d}}\left(\phi(t,x+\eta_t(x,z))-\phi(t,x)
-\langle\nabla_x \phi(t,x),\eta_t(x,z)\rangle\right)\,\nu(dz).
\end{align*}
By the definition of $G$ and $G_\kappa$, we first notice that  
(F0) obviously  holds (with  $F\cal G$ and $F_\kappa\cal G_\kappa$ 
in the notation of \cite{JK}). 
Next, by Lemma \ref{lemma continuity G 0}
and the continuity of $h$ and $f$, we notice that the mapping
\begin{equation}                                        \label{continuity G}
(0,T)\times\bR^d\times\bR\times\bR^d\times\bS^d\ni(t,x,r,y,A)
\mapsto G(t,x,r,y,A,\varphi(t,\cdot))\in\bR
\end{equation}
is continuous. 
Furthermore, for each $k\in\bN$ 
let $\varphi_k\in C^{1,2}((0,T)\times\bR^d)$, $(t_k,x_k)\in(0,T)\times\bR^d$,
and let $(t_0,x_0)\in(0,T)\times\bR^d$.
Assume that $(t_k,x_k)\to(t_0,x_0)$ as $k\to\infty$, and
\begin{equation}                               \label{conv phi k}
\varphi_k\to \varphi,
\;\; \nabla_x\varphi_k\to\nabla_x\varphi,
\;\; \operatorname{Hess}_x\varphi_k\to\operatorname{Hess}_x\varphi
\;\;
\text{locally uniformly on $(0,T)\times\bR^d$ as $k\to\infty$}.
\end{equation}
By \eqref{assumption eta}, \eqref{int z & q bound}, 
and the analogous argument to obtain
\eqref{est J}, for all $k\in\bN$ with 
$$
|t_k-t_0|\leq[(T-t_0)\wedge t_0]/2 \quad
\text{and} \quad \|x_k-x_0\|\leq 1
$$
we have that
\begin{align*}
&
\big|J\varphi_k(t_k,x_k)-J\varphi(t_k,x_k)\big|
\\
&
=\big|J(\varphi_k-\varphi)(t_k,x_k)\big|
\\
&
\leq 
\left(
\sup_{|s-t_0|\leq \frac{(T-t_0)\wedge t_0}{2}}
\sup_{\|y-x_0\|\leq 1+C_d^{1/2}}\Big[\big\|
\operatorname{Hess}_x(\varphi_k-\varphi)(s,y)\big\|_F\Big]
\right) 
\cdot
\int_{\bR^d}dC_d(1\wedge\|z\|^2)\,\nu(dz)
\\
&
\leq Kd^{p+1}C_d
\cdot
\left(
\sup_{|s-t_0|\leq \frac{(T-t_0)\wedge t_0}{2}}
\sup_{\|y-x_0\|\leq 1+C_d^{1/2}}\Big[\big\|
\operatorname{Hess}_x(\varphi_k-\varphi)(s,y)\big\|_F\Big]
\right).
\end{align*} 
Hence, \eqref{conv phi k} implies that
\begin{equation}                          \label{J conv 1}
\lim_{k\to\infty}\big|
J\varphi_k(t_k,x_k)-J\varphi(t_k,x_k)
\big|=0.
\end{equation}
Then,  by \eqref{continuity G}, \eqref{J conv 1}, and the triangle inequality
we have
\begin{align} 
&                         
\lim_{k\to\infty}
\big|
J\varphi(t_0,x_0)-J\varphi_k(t_k,x_k)
\big|
\nonumber\\
&
\leq
\lim_{k\to\infty}
\big|
J\varphi(t_0,x_0)-J\varphi(t_k,x_k)
\big|
+
\lim_{k\to\infty}
\big|
J\varphi_k(t_k,x_k)-J\varphi(t_k,x_k)
\big|=0.                                            \label{conv J phi k}
\end{align}
This together with \eqref{continuity G} ensure that (F1) holds
(with $F\cal G$ in the notation of \cite{JK}).
To verity (F2), let $(t_0,x_0)\in(0,T)\times\bR^d$, and assume that the functions
$(u-v)(t_0,\cdot)$ and $(\varphi-\psi)(t_0,\cdot)$ have global maxima at
$x_0$. Then for all $z\in\bR^d$ we have
$$
(u-v)(t_0,x_0+\eta_{t_0}(x_0,z))\leq (u-v)(t_0,x_0), \quad
(\varphi-\psi)(t_0,x_0+\eta_{t_0}(x_0,z))\leq (\varphi-\psi)(t_0,x_0),
$$
and
$$
\nabla_x(\varphi-\psi)(t_0,x_0)=0.
$$
Hence, by the definition of $G_\kappa$ we have for all
$r\in\bR$, $y\in\bR^d$, and $A,B\in\bS^d$ with $A\leq B$ that
\begin{align*}
&
G_\kappa\big(t_0,x_0,r,y,A,u(t_0,\cdot),\varphi(t_0,\cdot)\big)
-G_\kappa\big(t_0,x_0,r,y,B,v(t_0,\cdot),\psi(t_0,\cdot)\big)
\\
&
=-\frac{1}{2}\operatorname{Trace}\left(
\sigma(t_0,x_0)[\sigma(t_0,x_0)]^T(A-B)
\right)
\\
& \quad
-\int_{\|z\|\geq \kappa}\left[
(u-v)(t_0,x_0+\eta_{t_0}(x_0,z))-(u-v)(t_0,x_0)
\right]\nu(dz)
\\
& \quad
-\int_{\|z\|< \kappa}\left[
(\varphi-\psi)(t_0,x_0+\eta_{t_0}(x_0,z))-(\varphi-\psi)(t_0,x_0)
-\langle \nabla_x(\varphi-\psi)(t_0,x_0),\eta_{t_0}(x_0,z) \rangle
\right]\nu(dz)
\\
& 
\geq 
\frac{1}{2}\sum_{i,j=1}^d\sum_{k=1}^d
\sigma^{ik}(t_0,x_0)\sigma^{jk}(t_0,x_0)(B-A)^{ij}
\geq 0
\end{align*}
Thus, (F2) is verified (with $F_\kappa\cal G_\kappa$ 
in the notation of \cite{JK}).
Next, by \eqref{assumption Lip f g} we obtain for all $(t,x)\in(0,T)\times\bR^d$,
and $r,s\in\bR$ that
$$
|f(t,x,r)-f(t,x,s)|\leq L^{1/2}|r-s|.
$$
This implies that for all $(t,x)\in(0,T)\times\bR^d$, $y\in\bR^d$, $A\in\bS^d$,
and $r,s\in\bR$ with $r\leq s$
$$
G(t,x,r,y,A,\varphi(t,\cdot))-G(t,x,s,y,A,\varphi(t,\cdot))
\geq L^{1/2}(r-s).
$$
Thus, we verify (F3) (with $F\cal G$ and $\gamma\cal L^{1/2}$ in the notation of
\cite{JK}). Next, by the definition of $G_\kappa$ it is easy to see that
(F4) holds (with $F_\kappa \cal G_\kappa$ in the notation of \cite{JK}).
To see that (F5) is satisfied, fix $t_0\in(0,T)$ , and 
let $\phi_k\in C^{1,2}_1((0,T)\times\bR^d)$ for 
all $k\in\bN$ such that 
$\lim_{k\to\infty}\phi_k(t_0,x)=w(t_0,x)$ for almost all $x\in\bR^d$.
Moreover, assume that there is a constant $\mathfrak{C}>0$ such that
\begin{equation}                                   \label{growth phi k}
|\phi_k(t_0,x)|\leq \mathfrak{C}(1+\|x\|) \quad
\text{for all $x\in\bR^d$}.
\end{equation}
Note that Assumption \ref{assumption pointwise} ensures 
$\|\eta_{t_0}(x,z)\|^2\leq C_d$
for all $x,z\in\bR^d$.
This together with \eqref{growth phi k} and the assumption 
$w\in SC_1((0,T)\times\bR^d)$ imply that for all $x,z\in\bR^d$
\begin{equation}                                         \label{bound w phi k}
\big|(w-\phi_k)(t_0,x+\eta_{t_0}(x,z))-(w-\phi_k)(t_0,x)\big|
\leq \sup_{y:\|y-x_0\|\leq C_d^{1/2}}\big\{2|w(t_0,y)|\big\}
+2\mathfrak{C}(1+\|x\|+C_d^{1/2}).
\end{equation} 
Moreover, $\nu$ being a L\'evy measure on $\bR^d$ ensures that 
$\nu(\{z\in\bR^d:\|z\|\geq \kappa\})<\infty$ for all $\kappa\in(0,1)$.
Hence, \eqref{bound w phi k} allows us to apply Lebesgue's
dominated convergence theorem to show that for all
$(t,x)\in(0,T)\times\bR^d$ and $\kappa\in(0,1)$ 
$$
\left|
\lim_{k\to\infty}\int_{\|z\|\geq \kappa}\left[
(w-\phi_k)(t_0,x+\eta_{t_0}(x,z))-(w-\phi_k)(t_0,x)
\right]\nu(dz)
\right|
=0.
$$
This implies that for all $x\in\bR^d$, $r\in\bR$, $y\in\bR^d$,
$A\in\bS^d$, and $\kappa\in(0,1)$
$$
\lim_{k\to\infty}
G_\kappa\big(t_0,x,r,y,A,\phi_k(t_0\cdot),\varphi(t_0,\cdot)\big)
=
G_\kappa\big(t_0,x,r,y,A,w(t_0\cdot),\varphi(t_0,\cdot)\big).
$$
Therefore, we verify (F5) (with $F_\kappa\cal G_\kappa$ 
in the notation of \cite{JK}).\\
Next, to verify (F6) let $\alpha,\varepsilon,\beta\in(0,\infty)$, and define
$$
\phi(t,x,y):=e^{\beta t}\frac{\alpha}{2}\|x-y\|^2
+e^{\beta t}\frac{\varepsilon}{2}(\|x\|^2+\|y\|^2),
\quad (t,x,y)\in(0,T)\times\bR^d\times\bR^d.
$$
Let $(x_0,y_0,t_0)$ be a global maximum of 
\begin{equation}                                     \label{function u v phi}
(0,T)\times\bR^d\times\bR^d\ni(t,x,y)
\mapsto
u(t,x)-v(t,y)-\phi(t,x,y)\in\bR.
\end{equation}
Moreover, let $X,Y\in\bS^d$ satisfy
\begin{equation}                                 \label{matrix cond}
\begin{pmatrix}
X & 0 \\
0 & -Y
\end{pmatrix}
\leq 2e^{\beta t_0}\alpha
\begin{pmatrix}
\mathbf{I}_d & -\mathbf{I}_d\\
-\mathbf{I}_d & \mathbf{I}_d
\end{pmatrix}
+e^{\beta t_0}\varepsilon
\begin{pmatrix}
\mathbf{I}_d & 0\\
0 & \mathbf{I}_d
\end{pmatrix},
\end{equation}
where we recall that $\mathbf{I}_d$ denotes the $d\times d$ identity matrix.
By \eqref{assumption eta} we first notice 
for all $\kappa\in(0,1)$ that
\begin{align}
&
\int_{\|z\|<\kappa}\left[
\phi(t_0,x_0,y_0+\eta_{t_0}(y_0,z))-\phi(t_0,x_0,y_0)
-\langle
\nabla_y\phi(t_0,x_0,y_0),\eta_{t_0}(y_0,z)
\rangle
\right]\nu(dz)
\nonumber\\
& \quad
+\int_{\|z\|<\kappa}\left[
\phi(t_0,x_0+\eta_{t_0}(x_0,z),y_0)-\phi(t_0,x_0,y_0)
-\langle
\nabla_x\phi(t_0,x_0,y_0),\eta_{t_0}(x_0,z)
\rangle
\right]\nu(dz)
\nonumber\\
&
=\int_{\|z\|<\kappa}\int_0^1(1-\vartheta)\sum_{i,j=1}^d
\left(\frac{\partial^2}{\partial y_i\partial y_j}
\phi\right)(t_0,x_0,y_0+\vartheta\eta_{t_0}(y_0,z))
\eta^i_{t_0}(y_0,z)\eta^j_{t_0}(y_0,z)
\,d\vartheta\,\nu(dz)
\nonumber\\
& \quad
+ \int_{\|z\|<\kappa}\int_0^1(1-\vartheta)\sum_{i,j=1}^d
\left(\frac{\partial^2}{\partial x_i\partial x_j}
\phi\right)(t_0,x_0+\vartheta\eta_{t_0}(x_0,z),y_0)
\eta^i_{t_0}(x_0,z)\eta^j_{t_0}(x_0,z)
\,d\vartheta\,\nu(dz)
\nonumber\\
&
=\int_{\|z\|<\kappa}\int_0^1(1-\vartheta)\sum_{i=1}^d\left[
e^{\beta t_0}(-\alpha+\varepsilon)
\big|\eta^i_{t_0}(y_0,z)\big|^2
+e^{\beta t_0}(\alpha+\varepsilon)
\big|\eta^i_{t_0}(x_0,z)\big|^2\right]
\,d\vartheta\,\nu(dz)
\nonumber\\
&
\leq \int_{\|z\|<\kappa}e^{\beta T}(\alpha+2\varepsilon)C_d(1\wedge\|z\|^2)
\,\nu(dz).
\label{est G k 1}
\end{align}
By \eqref{int z & q bound}, we obtain 
$$
\int_{\bR^d}e^{\beta T}(\alpha+2\varepsilon)C_d(1\wedge\|z\|^2)
\,\nu(dz)<\infty.
$$
This together with Lebesgue's dominated convergence theorem imply that
\begin{equation}                       \label{conv z kappa}
\lim_{\kappa\to 0}
\int_{\|z\|<\kappa}e^{\beta T}(\alpha+2\varepsilon)C_d(1\wedge\|z\|^2)
\,\nu(dz)=0.
\end{equation} 
Furthermore, note that for all $t\in(0,T)$ and $x,y\in\bR^d$ we have
$$
\nabla_x\phi(t,x,y)=e^{\beta t}\alpha(x-y)+e^{\beta t}\varepsilon x
\quad \text{and} \quad
\nabla_y\phi(t,x,y)=-e^{\beta t}\alpha(x-y)+e^{\beta t}\varepsilon y.
$$
Hence, it holds for all $\kappa\in(0,1)$ that
\begin{align}
E_\kappa 
& 
:=-\int_{\|z\|\geq \kappa}\Big[
v(t_0,y_0+\eta_{t_0}(y_0,z))-v(t_0,y_0)
\nonumber\\
& \quad \quad \quad \quad \quad \quad
-\langle
e^{\beta t_0}\alpha(x_0-y_0)-e^{\beta t_0}\varepsilon y_0,
\eta_{t_0}(y_0,z)
\rangle
\Big]\nu(dz)
\nonumber\\
& \quad
+
\int_{\|z\|\geq \kappa}\Big[
u(t_0,x_0+\eta_{t_0}(x_0,z))-u(t_0,x_0)
\nonumber\\
& \quad \quad \quad \quad \quad \quad
-\langle
e^{\beta t_0}\alpha(x_0-y_0)+e^{\beta t_0}\varepsilon x_0,
\eta_{t_0}(x_0,z)
\rangle
\Big]\nu(dz)
\nonumber\\
& 
=-\int_{\|z\|\geq \kappa}\Bigg[
v(t_0,y_0+\eta_{t_0}(y_0,z))-v(t_0,y_0)
+\langle
\nabla_y\phi(t_0,x_0,y_0),
\eta_{t_0}(y_0,z)
\rangle
\Big]\nu(dz)
\nonumber\\
& \quad
+
\int_{\|z\|\geq \kappa}\Bigg[
u(t_0,x_0+\eta_{t_0}(x_0,z))-u(t_0,x_0)
-\langle
\nabla_x\phi(t_0,x_0,y_0),
\eta_{t_0}(x_0,z)
\rangle
\Big]\nu(dz).                                    \label{kfc}
\end{align}
Moreover, $(t_0,x_0,y_0)$ being a global maximum of the function
defined by \eqref{function u v phi} 
ensures that for all $z\in\bR^d$
\begin{align}
&
u(t_0,x_0)-v(t_0,y_0)-\phi(t_0,x_0,y_0)\nonumber
\\
&
\geq
u(t_0,x_0+\eta_{t_0}(x_0,z))-v(t_0,y_0+\eta_{t_0}(y_0,z))
-\phi(t_0,x_0+\eta_{t_0}(x_0,z),y_0+\eta_{t_0}(y_0,z)).
\label{maximum ineq}
\end{align}
Then by \eqref{assumption Lip mu sigma eta}, \eqref{assumption growth},
 \eqref{kfc}, \eqref{maximum ineq}, and Taylor's formula 
we have for every $\kappa\in(0,1)$ that
\begin{align}
E_\kappa
&\leq
\int_{\|z\|\geq\kappa}
\Big[
\phi(t_0,x_0+\eta_{t_0}(x_0,z),y_0+\eta_{t_0}(y_0,z))-\phi(t_0,x_0,y_0)
\nonumber\\
& \quad\quad\quad\quad\;\;
-\langle
\nabla_y\phi(t_0,x_0,y_0),
\eta_{t_0}(y_0,z)
\rangle
-\langle
\nabla_x\phi(t_0,x_0,y_0),
\eta_{t_0}(x_0,z)
\rangle
\Big]
\nu(dz)
\nonumber\\
&
=\int_{\|z\|\geq \kappa}\int_0^1
(1-\vartheta)\Bigg
[\sum_{i=1}^d(e^{\beta t_0}\alpha+e^{\beta t_0}\varepsilon)
\big|\eta^i_{t_0}(x_0,z)\big|^2
+
\sum_{i=1}^d(e^{\beta t_0}\alpha+e^{\beta t_0}\varepsilon)
\big|\eta^i_{t_0}(y_0,z)\big|^2\\
& \quad\quad\quad\quad\quad\;
-2\sum_{i=1}^de^{\beta t_0}\alpha\eta^i_{t_0}(x_0,z)\eta^i_{t_0}(y_0,z)
\Bigg]\,d\vartheta\,\nu(dz)
\nonumber\\
&
\leq 
\int_{\|z\|\geq \kappa}\left[
e^{\beta t_0}\varepsilon\left(\|\eta_{t_0}(x_0,z)\|^2+
\|\eta_{t_0}(y_0,z)\|^2\right)
+\sum_{i=1}^de^{\beta t_0}\alpha
\big|\eta^i_{t_0}(x_0,z)-\eta^i_{t_0}(y_0,z)\big|^2
\right]\nu(dz)
\nonumber\\
&
\leq 2e^{\beta t_0}\varepsilon Ld^p(1+\|x_0\|^2+\|y_0\|^2)
+e^{\beta t_0}\alpha L\|x_0-y_0\|^2.                   
\label{estimate E kappa}
\end{align}
Furthermore, by \eqref{assumption Lip mu sigma eta}, \eqref{assumption growth},
and the Cauchy-Schwarz inequality
it holds that
\begin{align}
&
-\langle
\mu(t_0,y_0),e^{\beta t_0} \alpha(x_0-y_0)-e^{\beta t_0}\varepsilon y_0
\rangle
+
\langle
\mu(t_0,x_0),e^{\beta t_0} \alpha(x_0-y_0)-e^{\beta t_0}\varepsilon x_0
\rangle
\nonumber\\
&
\leq e^{\beta t_0}\alpha \|x_0-y_0\|\cdot \|\mu(t_0,x_0)-\mu(t_0,y_0)\|
+e^{\beta t_0}\varepsilon \|y_0\|\cdot\|\mu(t_0,y_0)\|
+e^{\beta t_0}\varepsilon \|x_0\|\cdot\|\mu(t_0,x_0)\|
\nonumber\\
&
\leq L^{1/2}e^{\beta t_0}\alpha\|x_0-y_0\|^2
+4L^{1/2}d^{p/2}e^{\beta t_0}\varepsilon(1+\|x_0\|^2+\|y_0\|^2).
\label{est mu x 0 y 0}
\end{align}
Next, we use a trick by Ishii (see, e.g., Example 3.6 in \cite{CIP1992}) 
to get an estimate for 
$$
-\frac{1}{2}\operatorname{Trace}\Big(\Sigma_{y_0}\Sigma_{y_0}^TY\Big)
+\frac{1}{2}\operatorname{Trace}\Big(\Sigma_{x_0}\Sigma_{x_0}^TX\Big),
$$
where
$$
\Sigma_{x_0}:=\sigma(t_0,x_0)
\quad \text{and} \quad
\Sigma_{y_0}:=\sigma(t_0,y_0).
$$
More precisely, 
left multiplying both sides of \eqref{matrix cond} by the symmetric matrix
$$
\begin{pmatrix}
\Sigma_{x_0}\Sigma_{x_0}^T & \Sigma_{y_0}\Sigma_{x_0}^T \\
 & \\
\Sigma_{x_0}\Sigma^T_{y_0} & \Sigma_{y_0}\Sigma_{y_0}^T
\end{pmatrix}
$$
yields
\begin{align}
\begin{pmatrix}
\Sigma_{x_0}\Sigma_{x_0}^T X & -\Sigma_{y_0}\Sigma_{x_0}^T Y \\
 & \\
\Sigma_{x_0}\Sigma^T_{y_0} X & -\Sigma_{y_0}\Sigma_{y_0}^T Y
\end{pmatrix}
\leq
&
2e^{\beta t_0}\alpha
\begin{pmatrix}
\Sigma_{x_0}\Sigma_{x_0}^T-\Sigma_{y_0}\Sigma_{x_0}^T 
& 
-\Sigma_{x_0}\Sigma_{x_0}^T+\Sigma_{y_0}\Sigma_{x_0}^T\\
& 
\\
\Sigma_{x_0}\Sigma_{y_0}^T-\Sigma_{y_0}\Sigma_{y_0}^T 
& 
-\Sigma_{x_0}\Sigma_{y_0}^T+\Sigma_{y_0}\Sigma_{y_0}^T
\end{pmatrix}
\nonumber\\
&
+e^{\beta t_0}\varepsilon
\begin{pmatrix}
\Sigma_{x_0}\Sigma_{x_0}^T & \Sigma_{y_0}\Sigma_{x_0}^T \\
 & \\
\Sigma_{x_0}\Sigma^T_{y_0} & \Sigma_{y_0}\Sigma_{y_0}^T
\end{pmatrix}.                                 
\label{matrix ineq}
\end{align}
By Assumption \ref{assumption Lip and growth}, 
taking traces to \eqref{matrix ineq} yields 
\begin{align}
&
-\frac{1}{2}\operatorname{Trace}\Big(
\Sigma_{y_0}\Sigma_{y_0}^TY
\Big)
+\frac{1}{2}\operatorname{Trace}\Big(
\Sigma_{x_0}\Sigma_{x_0}^TX
\Big)
\nonumber\\
&
\leq 
e^{\beta t_0}\alpha\operatorname{Trace}
\left\{
\Big(\Sigma_{x_0}-\Sigma_{y_0}\Big)\Big(\Sigma_{x_0}-\Sigma_{y_0}\Big)^T
\right\}
+\frac{1}{2} e^{\beta t_0}\varepsilon \operatorname{Trace}
\Big\{
\Sigma_{x_0}\Sigma_{x_0}^T+\Sigma_{y_0}\Sigma_{y_0}^T
\Big\}
\nonumber\\
&
=e^{\beta t_0}\alpha\big\|\Sigma_{x_0}-\Sigma_{y_0}\big\|_F^2
+\frac{1}{2} e^{\beta t_0}\varepsilon
\left(\big\|\Sigma_{x_0}\big\|_F^2+\big\|\Sigma_{y_0}\big\|_F^2\right)
\nonumber\\
&
\leq Le^{\beta t_0}\alpha\|x_0-y_0\|^2
+Ld^pe^{\beta t_0}\varepsilon(1+\|x_0\|^2+\|y_0\|^2).
\label{trace est}
\end{align}
Moreover, by \eqref{assumption Lip f g} 
it holds for all $r\in\bR$ that
\begin{equation}                                               \label{est f r}
|f(t_0,x_0,r)-f(t_0,y_0,r)|\leq L^{1/2}T^{-3/2}\|x_0-y_0\|.
\end{equation}
Then using \eqref{est G k 1}, \eqref{estimate E kappa}, \eqref{est mu x 0 y 0},
\eqref{trace est}, and \eqref{est f r} 
we verify condition (F6) on page 287 of \cite{JK}
(with $m\cal 1$, $\alpha\cal \alpha$, $\varepsilon\cal \varepsilon$,
$\lambda\cal \beta$, $p\cal 2$ , $\bar{F}_\kappa\cal G_\kappa$, 
$F_\kappa\cal G_\kappa$,
$\eta_1=\eta_2=\eta_3=\eta_4=0$, $p_1=p_2=p_3=p_4=p_s=0$,
$K_1=L^{1/2}T^{-3/2}$, $K_2=e^{\beta t_0}L^{1/2}(1+2L^{1/2})$,
$K_3=L^{1/2}(3L^{1/2}d^p+4d^{p/2})$,
and $m_{\alpha,\varepsilon}$ being the integral term in \eqref{conv z kappa}
in the notation of condition (F6) in \cite{JK}, noting that 
$m_{\alpha,\varepsilon}\equiv m_{\alpha,\varepsilon,\lambda}$ is allowed to
depend on $\lambda$ in  the notation of condition (F6) in \cite{JK};
see p.\ 291-292 in \cite{JK}).
Hence, by Remark \ref{remark v solution} and 
Theorem 3.1 in \cite{JK} we obtain $u_1(t,x)=u_2(t,x)$ for all
$(t,x)\in[0,T]\times\bR^d$.
\end{proof}

To show the existence of viscosity solutions of \eqref{APIDE}, we first consider
the linear PIDE
\begin{align}
&
-\frac{\partial}{\partial t}u(t,x)
+G_0(t,x,u(t,x),\nabla_x u(t,x),\operatorname{Hess}_xu(t,x),u(t,\cdot))=0
\quad \text{on $(0,T)\times \bR^d$},
\label{linear PIDE}\\
&
u(T,x)=g(x) \quad \text{on $\bR^d$}.             \label{linear PIDE initial}
\end{align}

\begin{lemma}                                         \label{lemma smooth PIDE}
Let Assumptions \ref{assumption Lip and growth}, \ref{assumption pointwise},
\ref{assumption jacobian}, and \ref{assumption h} hold.
Moreover, for every $t\in[0,T]$ and $z\in\bR^d$ 
we assume that $\mu(t,\cdot)$, $\sigma(t,\cdot)$,
$\eta_t(\cdot,z)$, $g(\cdot)$, and $h(t,\cdot)$ 
are compactly supported and infinitely differentiable.
Then PIDE \eqref{linear PIDE}
has a unique classical solution $u\in C^{1,2}([0,T]\times\bR^d)$
with $u(T,x)=g(x)$ for all $x\in\bR^d$.
Moreover, we have for all $(t,x)\in[0,T]\times\bR^d$ that
\begin{equation}                                           \label{FK linear}
u(t,x)=\bE\Bigg[g(X^{t,x}_T)+\int_t^Th(s,X^{t,x}_s)\,ds\Bigg].
\end{equation}
\end{lemma}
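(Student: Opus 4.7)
The plan is to establish this lemma in three stages: (i) obtain a classical solution $u\in C^{1,2}([0,T]\times\bR^d)$ via a cited smooth-coefficient existence theorem, (ii) apply It\^o's formula to derive the Feynman-Kac representation \eqref{FK linear}, and (iii) conclude uniqueness from this representation.

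For the existence in stage (i), I would invoke the results in \cite{DGW2020} on classical solutions of linear backward parabolic PIDEs with smooth coefficients. The hypotheses of those results are met here because (a) $\mu$, $\sigma$, $\eta$, $g$, and $h$ are by assumption infinitely differentiable in the spatial variable with compact support (and continuous in time), (b) Assumption \ref{assumption jacobian} provides a uniform lower bound $\lambda_d$ on $|\det(\mathbf{I}_d+\gamma D_x\eta_t(x,z))|$, which is used to carry out the change-of-variables in the nonlocal integral and to propagate spatial smoothness of $u$ to higher derivatives by differentiating \eqref{linear PIDE} in $x$, and (c) Assumption \ref{assumption small jumps} together with \eqref{assumption eta} supplies the integrability of the L\'evy measure near the origin needed to make the nonlocal operator well-defined on $C^{1,2}$ functions.

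For the Feynman-Kac representation in stage (ii), I would apply It\^o's formula for c\`adl\`ag semimartingales (e.g.~Theorem 3.1 in \cite{GW2019}) to $u(s,X^{t,x}_s)$ on $s\in[t,T]$. Because $u$ satisfies \eqref{linear PIDE}, the finite-variation drift in the resulting decomposition collapses to $-h(s,X^{t,x}_s)\,ds$, giving
\begin{equation*}
u(s,X^{t,x}_s)+\int_t^s h(r,X^{t,x}_r)\,dr=u(t,x)+M_s,
\end{equation*}
where $M$ is a local martingale consisting of a Brownian integral against $[\sigma(r,X^{t,x}_{r-})]^T\nabla_x u(r,X^{t,x}_{r-})$ and a compensated-Poisson integral against the jump increments $u(r,X^{t,x}_{r-}+\eta_r(X^{t,x}_{r-},z))-u(r,X^{t,x}_{r-})$. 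A stopping-time localization analogous to the one used in Lemma \ref{lemma est moment}, combined with the boundedness of $u$ and $\nabla_x u$ (which follows from the compact-support assumption together with Assumption \ref{assumption eta}) and the moment bound \eqref{est 1+}, upgrades $M$ to a true martingale on $[t,T]$. Taking expectations at $s=T$ and using the terminal condition $u(T,\cdot)=g$ then yields exactly \eqref{FK linear}. Stage (iii) follows at once: any second classical solution $\tilde u\in C^{1,2}([0,T]\times\bR^d)$ with $\tilde u(T,\cdot)=g$ admits the same It\^o decomposition, hence must coincide with the right-hand side of \eqref{FK linear}.

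The main obstacle is stage (i): carefully matching the setting of \cite{DGW2020} to ours (in particular, accommodating time-dependent coefficients and verifying that the constructed solution is $C^{1,2}$ on all of $[0,T]\times\bR^d$, with the spatial regularity crossing through the singular nonlocal term — this is precisely where Assumption~\ref{assumption jacobian} enters). Once a genuinely $C^{1,2}$ solution is secured, the It\^o computation and the ensuing uniqueness are routine given the boundedness guaranteed by the compact-support hypotheses.
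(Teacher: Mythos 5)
Your overall plan matches the paper's structure in spirit, but there is a substantive gap in stage (i), which is precisely the nontrivial part of the argument. You state that you ``would invoke the results in \cite{DGW2020} on classical solutions of linear backward parabolic PIDEs with smooth coefficients.'' However, \cite{DGW2020} (Theorem 2.1) does not provide classical $C^{1,2}$ solutions directly: it provides a \emph{generalised} (weak, $L_2$-based) solution $v$ that is strongly continuous in $t$ as a $W^m_2(\bR^d)$-valued function for every $m\in\bN_0$. The paper's proof then passes from this Sobolev regularity to classical regularity by invoking the \emph{Sobolev embedding theorem} (4.12 Part II in \cite{AF}), which gives $v(t,\cdot)\in C^2_\delta(\bR^d)$ with continuity in $t$ in the $C^2_\delta$-norm, and combines this with the weak form of the equation plus integration by parts to deduce $v\in C^{1,2}([0,T]\times\bR^d)$. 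That embedding-and-bootstrapping step is the key missing idea in your proposal; flagging it as ``the main obstacle'' is not the same as resolving it, and without it you do not actually have a $C^{1,2}$ function to which stage (ii) can be applied. The paper also works with the time-reversed (forward) PIDE \eqref{linear PIDE 1} and flips $t\mapsto T-t$ at the end, since \cite{DGW2020} is stated for initial-value problems; you would need to do the same.

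Two smaller points. First, you invoke Assumption \ref{assumption small jumps}, which is not among the hypotheses of the lemma; the integrability $\int_{\bR^d}(1\wedge\|z\|^2)\,\nu(dz)<\infty$ needed to define the nonlocal term is already guaranteed by $\nu$ being a L\'evy measure. Second, your stages (ii)--(iii) are correct and are in fact a more self-contained alternative to the paper, which simply cites a Feynman--Kac theorem (Theorem 17.4.10 in \cite{CE2015} or Theorem 285 in \cite{Situ}) and inherits uniqueness from the uniqueness of the generalised solution. Your It\^o-formula argument does go through because, as you observe, outside the (compact) supports of $\mu$, $\sigma$, $\eta_{\cdot}(\cdot,z)$, $g$, and $h$ the equation degenerates to $\partial_t u=0$ with terminal data $0$, so $u$ is itself compactly supported and hence bounded with bounded derivatives, making the local martingale a true martingale. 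However, your uniqueness claim in stage (iii) implicitly requires any competing $\tilde u\in C^{1,2}$ to admit the same localization argument; to make it rigorous you need to restrict to a class (e.g.\ $C^{1,2}_1$) in which the martingale term is genuinely a martingale, or else argue as the paper does via uniqueness of the generalised solution.
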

To prove the above lemma, we recall some notions and introduce some notations.
For each $n\in\bN$, 
a finite list $\alpha=\alpha_1\alpha_2,...,\alpha_n$  of numbers $\alpha_i\in\{1,2,...,d\}$ 
is called a multi-number of length $|\alpha|:=n$, and we use the notation 
$$
D_{\alpha}:=D_{\alpha_1}D_{\alpha_2}...D_{\alpha_n},
$$ 
where for $i\in\{1,...,d\}$ and locally integrable functions $v: \bR^d\to\bR$,
$D_iv$ refers to the weak derivative 
(see, e.g., 1.62 in \cite{AF} for the definition) of $v$ with respect $x_i$.
We also use the multi-number $\epsilon$ of length $0$, 
and agree that $D_{\epsilon}$ means the identity operator. 
For a separable Banach space $V$ we use the notation $L_2([0,T],V)$ 
for the space of Borel
functions $v:[0,T]\to V$ such that 
$$
\int_0^T\|v(t)\|^2_V\,dt<\infty.
$$
We also denote by $L_2(\bR^d)$ the space 
Borel functions $\varphi:\bR^d\to\bR$ such that 
$$
\|\varphi\|^2_{L_2(\bR^d)}:=\int_{\bR^d}|\varphi(x)|^2\,dx<\infty. 
$$
For integers $m\in \bN_0$, the notation $W^m_2$ refers to the 
Sobolev space defined as the completion of $C_c^{\infty}(\bR^d)$ in the norm 
$$
\|\varphi\|_{W^m_2(\bR^d)}
:=\sum_{0\leq|\alpha|\leq m}\|D_{\alpha}\varphi\|_{L_2(\bR^d)}.  
$$
Moreover, for $\delta\in(0,1)$ we use the notation $C^2_\delta(\bR^d)$ to denote
the H\"older space defined as the set 
of all functions $v:\bR^d\to\bR$ with continuous 
derivatives up to order $2$ such that
$$
\|v\|_{C^2_\delta}:=\max_{0\leq|\alpha|\leq 2}\sup_{x\in\bR^d}|D_\alpha v(x)|
+\max_{0\leq|\alpha|\leq 2}\sup_{\substack{ x,y\in\bR^d \\ x\neq y}}
\frac{|D_\alpha v(x)- D_\alpha v(y)|}{|x-y|^\delta}<\infty.
$$

\begin{proof}[Proof of Lemma \ref{lemma smooth PIDE}]
We first introduce the notion of generalised solutions
(cf., Definition 2.1 in \cite{DGW2020}) of the linear PIDE
\begin{align}
&
\frac{\partial}{\partial t}v(t,x)
+G_0(t,x,v(t,x),\nabla_x v(t,x),\operatorname{Hess}_xv(t,x),v(t,\cdot))=0
\quad \text{on $(0,T)\times \bR^d$},
\label{linear PIDE 1}\\
&
v(0,x)=g(x) \quad \text{on $\bR^d$}.             \label{linear PIDE initial 1}
\end{align}
Note that by virtue of Remark 2.1 in \cite{DGW2020}, integrating by parts 
ensures for all
$\varphi\in C^\infty_c(\bR^d)$, $v\in W^1_2(\bR^d)$, and $t\in[0,T]$ that
\begin{align}
&
\int_{\bR^d}\varphi(x)\int_{\bR^d}\left[
v(x+\eta_t(x,z))-v(t,x)-\langle\nabla_xv(x),\eta_t(x,z)\rangle
\right]\nu(dz)\,dx\nonumber\\
&
=-\sum_{k=1}^d\int_{\bR^d}\Big[\mathcal{J}^k(t)v(x)\Big]\cdot
D_k\varphi(x)\,dx
+\int_{\bR^d}\Big[\mathcal{J}^0(t)v(x)\Big]\cdot\varphi(x)\,dx,
\label{int by parts}
\end{align}
where, by denoting $\eta_t(x,z)=(\eta_t^1(x,z),...,\eta_t^d(x,z))$
$$
\cJ^k(t)v(x):=\int_0^1\int_{\bR^d}\eta^k_t(x,z)(v(x+\vartheta\eta_t(x,z))-v(x))
\,\nu(dz)\,d\vartheta, \;\; (t,x)\in[0,T]\times\bR^d,\;\; k=1,...,d,
$$
and
\begin{align*}
\cJ^0(t)v(x):=
&
-\int_0^1\int_{\bR^d}\Bigg[\sum_{k=1}^dD_k\eta^k_t(x,z)
(v(x+\vartheta\eta_t(x,z))-v(x))\\
&
+\sum_{k,l=1}^d\vartheta\eta^k_t(x,z)(D_k\eta^l_t)(x,z)
(D_lv)(x+\vartheta\eta_t(x,z))
\Bigg]\,\nu(dz)\,d\vartheta, \quad (t,x)\in[0,T]\times\bR^d.
\end{align*}
Define $
a=(a^{i,j})_{i,j\in\{1,...,d\}}
\in C([0,T]\times\bR^d,\bS^d)$ by
$$
a(t,x):=\sigma(t,x)[\sigma(t,x)]^T,
\quad (t,x)\in[0,T]\times\bR^d.
$$
Then an $L_2(\bR^d)$-valued continuous function 
$v=(v(t))_{t\in[0,T]}$, is called a generalised solution
(see Definition 2.1 in \cite{DGW2020})
to equation \eqref{linear PIDE 1} with initial condition 
$v(0)=g$, if $v(t)\in W^1_2(\bR^d)$ 
for $dt$-almost every $t\in[0,T]$, $v\in L_2([0,T], W^1_2(\bR^d))$,  
and	for every $\varphi\in C^\infty_c(\bR^d)$ and $t\in[0,T]$
\begin{align}
\int_{\bR^d}v(t,x)\varphi(x)\,dx
&
=\int_{\bR^d}g(x)\varphi(x)\,dx
+\int_0^t\int_{\bR^d}\left(h(s,x)\varphi(x)
+\left[\mathcal{J}^0(s)v(s,x)\right]\cdot\varphi(x)\right)dx\,ds
\nonumber\\
&
\quad
+\int_0^t\int_{\bR^d}
\Bigg(
\sum_{i,j=1}^d\Big[
-\left(a^{ij}(s,x)D_jv(s,x)
+\mathcal{J}^i(s)v(s,x)\right)D_i\varphi(x)
\nonumber\\
& \quad \quad \quad \quad \quad \quad  \;
+[\mu^i(s,x)-D_ja^{ij}(s,x)][D_iv(s,x)]\cdot\varphi(x)
\Big]
\Bigg)\,dx\,ds.
\label{def weak sol}
\end{align}
By the assumptions of Lemma \ref{lemma smooth PIDE}, 
for each $m\in\bN$ Assumptions 2.1, 2.2, and 2.5  
in \cite{DGW2020} are satisfied
(with $p=2$, $m\cal m$, $a\cal a$, $b\cal \mu$, $\eta\cal \eta$,
$\psi\cal g$, and $f\cal h$
in the notation of Assumptions 2.1, 2.2, and 2.5 in \cite{DGW2020}).
Thus, Theorem 2.1 in \cite{DGW2020} ensures that PIDE \eqref{linear PIDE 1}
has a unique generalized solution $v=(v(t))_{t\in[0,T]}$ with $v(0)=g$,
and for every $m\in\bN_0$ the generalised solution 
$v$ 
is strongly continuous as a $W^m_2(\bR^d)$-valued function of $t\in[0,T]$. 
Thus, by Sobolev embedding theorem (see, e.g., 4.12 Part II in \cite{AF}) 
we have for each $\delta\in(0,1)$
the generalised solution $v$ is strongly continuous as a 
$C^2_{\delta}(\bR^d)$-valued function of $t\in[0,T]$.
Hence, for every $(t_0,x_0)\in[0,T]\times\bR^d$ and multi-number $\alpha$ 
with $|\alpha|\leq 2$ we have
\begin{equation}                                          \label{D alpha 1}
\lim_{t\to t_0}
\sup_{x\in\bR^d}\big|
D_\alpha v(t_0,x)-D_\alpha v(t,x)
\big|=0.
\end{equation}
Moreover, the fact that
$v(t,\cdot)\in C^2_{\delta}(\bR^d)$ for all $t\in[0,T]$ and $\delta\in(0,1)$
ensures that for every $(t_0,x_0)\in[0,T]\times\bR^d$ and multi-number $\alpha$ 
with $|\alpha|\leq 2$
$$
\lim_{x\to x_0}\big|D_\alpha v(t_0,x)-D_\alpha v(t_0,x_0)\big|=0.
$$
This together with \eqref{D alpha 1} imply that for every $(t_0,x_0)\in[0,T]\times\bR^d$ 
and multi-number $\alpha$ with $|\alpha|\leq 2$
\begin{align*}
\lim_{(t,x)\to (t_0,x_0)}\big|D_\alpha v(t,x)-D_\alpha v(t_0,x_0)\big|
\leq
& 
\lim_{(t,x)\to (t_0,x_0)}\big|D_\alpha v(t,x)-D_\alpha v(t_0,x)\big|
\\
& \quad
+
\lim_{x\to x_0}\big|D_\alpha v(t_0,x)-D_\alpha v(t_0,x_0)\big|
\\
&
=0.
\end{align*}
Thus, as a real-valued function on $[0,T]\times\bR^d$, 
$v\in C^{1,2}([0,T]\times\bR^d)$.
Furthermore, \eqref{def weak sol}, integrating by parts, and the fact that 
$v\in C^{1,2}([0,T]\times\bR^d)$ ensures that the unique generalised 
solution $v$ is the unique classical solution of \eqref{linear PIDE 1}
with $v(0,x)=g(x)$ for $x\in\bR^d$.
Then by time reversing, the function $u(t,x):=v(T-t,x)$, $(t,x)\in[0,T]\times\bR^d$,
is the unique classical solution of \eqref{linear PIDE} with $u(T,x)=g(x)$ for
$x\in\bR^d$. 
Moreover, by Feynman-Kac
formula (see, e.g., Theorem 17.4.10 in \cite{CE2015}, 
or Theorem 285 in \cite{Situ}) we verify \eqref{FK linear}. 
The proof of this lemma is therefore completed.
\end{proof}

In the rest of this section, 
for every $n\in\bN$ let $g^{(n)}\in C(\bR^d,\bR)$, 
$h^{(n)}\in C([0,T]\times\bR^d,\bR)$, 
$\mu^{(n)}\in C([0,T]\times\bR^d,\bR^d)$,
$\sigma^{(n)}\in C([0,T]\times\bR^d,\bR^{d\times d})$,
and $\eta^{(n)}=(\eta^{(n),1},...,\eta^{(n),d})
\in C([0,T]\times\bR^d\times\bR^d,\bR^d)$. 
Then we make the following assumptions.  

\begin{assumption}                            \label{assumption conv compact}
It holds for all non-empty compact sets 
$\mathcal{K}\subseteq[0,T]\times\bR^d$ that 
\begin{align}
\lim_{n\to\infty}\Bigg[ &
\sup_{(t,x)\in\mathcal{K}}
\Big(\big|g^{(n)}(x)-g(x)\big|+\big|h^{(n)}(t,x)-h(t,x)\big|
+\big\|\mu^{(n)}(t,x)-\mu(t,x)\big\|\nonumber\\
&
+\big\|\sigma^{(n)}(t,x)-\sigma(t,x)\big\|_F\Big)
\Bigg]= 0,                                             \label{conv compact}
\end{align}
and
\begin{equation}
\lim_{n\to\infty}\Bigg[
\sup_{(t,x)\in\mathcal{K}}
\big\|\eta^{(n)}_t(x,z)-\eta_t(x,z)\big\|
\Bigg]= 0  \quad \text{$\nu(dz)$-almost everywhere}.                                             
\label{conv compact z}
\end{equation}  
\end{assumption}

\begin{assumption}                           \label{assumption Lip and growth n}
There exists a constant $C_1>0$ satisfying 
for all $n\in\bN$, $x,y\in\bR^d$, and $t\in[0,T]$ that
\begin{equation}                           \label{assumption Lip n}
|g^{(n)}(x)-g^{(n)}(y)|^2+\|\mu^{(n)}(t,x)-\mu^{(n)}(t,y)\|^2
+\|\sigma^{(n)}(t,x)-\sigma^{(n)}(t,y)\|_F^2
\leq C_1\|x-y\|^2,
\end{equation}
and
\begin{equation}                                     \label{assumption growth n}
|h^{(n)}(t,x)|^2+|g^{(n)}(x)|^2+\|\mu^{(n)}(t,x)\|^2+\|\sigma^{(n)}(t,x)\|_F^2
\leq C_1(1+\|x\|^2).
\end{equation}
\end{assumption}

\begin{assumption}                              \label{assumption eta n}
There exists a constant $C_2>0$ such that
for all $n\in\bN$, $x,y\in \bR^d$, $t\in[0,T]$, and $z\in\bR^{d}$ 
\begin{equation}                                        \label{pointwise eta n}                                     
\|\eta^{(n)}_t(x,z)\|^2\leq C_2(1\wedge \|z\|^2), 
\quad \|\eta^{(n)}_t(x,z)-\eta^{(n)}_t(y,z)\|^2
\leq C_2\|x-y\|^2(1\wedge \|z\|^2).
\end{equation}
\end{assumption}

\begin{assumption}                                  \label{assumption jacobian n}
There exists a constant
$\delta>0$ satisfying for all $n\in\bN$,
$(t,x)\in[0,T]\times\bR^d$, $z\in\bR^d$, and $\gamma\in[0,1]$
that $D_x \eta^{(n)}_t(x,z)$ exists, and
\begin{equation}                                     \label{jacobian cond n}
\delta\leq |\det(\mathbf{I}_d+\gamma D_x \eta^{(n)}_t(x,z))|.
\end{equation}
\end{assumption}
                                    
Then for each $n\in\bN$, let 
$
G_0^{(n)}:(0,T)\times \bR^d\times\bR\times\bR^d\times\bS^d\times C^2(\bR^d)
\to \bR
$
be a functional defined as $G_0$ with $\mu^{(n)}$, 
$\sigma^{(n)}$, $\eta^{(n)}$, and $h^{(n)}$
in place of $\mu$, $\sigma$, $\eta$, and $h$, respectively.

\begin{lemma}                                      \label{lemma conv compact G}
Let Assumptions \ref{assumption Lip and growth}, \ref{assumption pointwise},
\ref{assumption h}, and \ref{assumption conv compact}-\ref{assumption eta n} hold.
Then for every $\varphi\in C^{1,2}((0,T)\times\bR^d)$ and
every compact set 
$\mathcal{K}\subseteq(0,T)\times\bR^d\times\bR\times\bR^d\times\bS^d$ 
it holds that
\begin{equation}                                          \label{conv compact G}
\lim_{n\to\infty}\left(
\sup_{(t,x,r,y,A)\in\mathcal{K}}
\Big|G_0(t,x,r,y,A,\varphi(t,\cdot))-G_0^{(n)}(t,x,r,y,A,\varphi(t,\cdot))\Big|
\right)=0.
\end{equation}
\end{lemma}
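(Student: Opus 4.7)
\textbf{Proof proposal for Lemma \ref{lemma conv compact G}.}

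The plan is to split $G_0 - G_0^{(n)}$ into local and nonlocal parts and handle them separately. Writing out the definitions, the $f(t,x,r)$ summands cancel between $G$ and the explicit $+f(t,x,r)$ term in the definition of $G_0$, so
\begin{align*}
G_0(t,x,r,y,A,\varphi(t,\cdot)) - G_0^{(n)}(t,x,r,y,A,\varphi(t,\cdot))
&= -\langle y,\mu(t,x)-\mu^{(n)}(t,x)\rangle \\
&\quad -\tfrac{1}{2}\operatorname{Trace}\bigl([\sigma\sigma^T-\sigma^{(n)}(\sigma^{(n)})^T](t,x)\,A\bigr) \\
&\quad -(h-h^{(n)})(t,x) - \bigl(J\varphi(t,x) - J^{(n)}\varphi(t,x)\bigr),
\end{align*}
where $J\varphi(t,x):=\int_{\bR^d}[\varphi(t,x+\eta_t(x,z))-\varphi(t,x)-\langle\nabla_x\varphi(t,x),\eta_t(x,z)\rangle]\,\nu(dz)$ and $J^{(n)}\varphi$ is defined analogously with $\eta^{(n)}$. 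Since $\mathcal{K}$ is compact, $(y,A)$ stay bounded and the projection $\mathcal{K}_{tx}\subset(0,T)\times\bR^d$ is compact. Then $\sigma$, $\sigma^{(n)}$ are uniformly bounded on $\mathcal{K}_{tx}$ (by continuity of $\sigma$ and the uniform convergence \eqref{conv compact}), and writing $\sigma\sigma^T-\sigma^{(n)}(\sigma^{(n)})^T=(\sigma-\sigma^{(n)})\sigma^T+\sigma^{(n)}(\sigma-\sigma^{(n)})^T$, the three local terms tend to zero uniformly on $\mathcal{K}$ directly from \eqref{conv compact}.

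The substantive step is to show $\sup_{(t,x)\in\mathcal{K}_{tx}}|J\varphi(t,x)-J^{(n)}\varphi(t,x)|\to 0$. For this I would apply Taylor's formula to rewrite the integrand as
$$
\varphi(t,x+\eta)-\varphi(t,x)-\langle\nabla_x\varphi(t,x),\eta\rangle
=\int_0^1(1-\alpha)\,\eta^{\top}\operatorname{Hess}_x\varphi(t,x+\alpha\eta)\,\eta\,d\alpha,
$$
with $\eta=\eta_t(x,z)$ or $\eta=\eta^{(n)}_t(x,z)$. By Assumptions \ref{assumption pointwise} and \ref{assumption eta n} the vectors $\eta_t(x,z)$ and $\eta^{(n)}_t(x,z)$ are bounded by $(C_d\vee C_2)^{1/2}(1\wedge\|z\|)$, so the arguments $x+\alpha\eta$ remain in a fixed compact enlargement $\widetilde{\mathcal{K}}_{tx}$ of $\mathcal{K}_{tx}$, on which $\operatorname{Hess}_x\varphi$ is bounded by some constant $M$. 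Consequently, uniformly in $(t,x)\in\mathcal{K}_{tx}$ and $n\in\bN$,
$$
\sup_{(t,x)\in\mathcal{K}_{tx}}\bigl|F_n(t,x,z)-F(t,x,z)\bigr|\;\leq\; M(C_d+C_2)(1\wedge\|z\|^2),
$$
where $F,F_n$ denote the integrands of $J\varphi,J^{(n)}\varphi$ respectively; by Assumption \ref{assumption small jumps} this bound is $\nu$-integrable.

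Next, by \eqref{conv compact z} there is a $\nu$-null set $N\subset\bR^d$ off of which $\sup_{(t,x)\in\mathcal{K}_{tx}}\|\eta^{(n)}_t(x,z)-\eta_t(x,z)\|\to 0$. Fixing such a $z\notin N$, the arguments $x+\eta^{(n)}_t(x,z)\to x+\eta_t(x,z)$ uniformly on $\mathcal{K}_{tx}$, and since $\varphi$ and $\nabla_x\varphi$ are uniformly continuous on the compact set $\widetilde{\mathcal{K}}_{tx}$ (and $\nabla_x\varphi$ is uniformly bounded there), it follows that $\sup_{(t,x)\in\mathcal{K}_{tx}}|F_n(t,x,z)-F(t,x,z)|\to 0$ for $\nu$-a.e.\ $z$. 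Applying the dominated convergence theorem with the integrable majorant above then yields
$$
\sup_{(t,x)\in\mathcal{K}_{tx}}\bigl|J\varphi(t,x)-J^{(n)}\varphi(t,x)\bigr|
\leq \int_{\bR^d}\sup_{(t,x)\in\mathcal{K}_{tx}}|F_n(t,x,z)-F(t,x,z)|\,\nu(dz)\;\longrightarrow\;0,
$$
which combined with the local estimates establishes \eqref{conv compact G}. The principal technical point I expect to require some care is choosing the enlargement $\widetilde{\mathcal{K}}_{tx}$ and verifying that both $\eta^{(n)}_t(x,z)$ (for all $n$) and $\eta_t(x,z)$ keep the Taylor arguments in it, so that the $\operatorname{Hess}_x\varphi$-bound $M$ is genuinely uniform in $n$; this is where Assumption \ref{assumption eta n} (not merely Assumption \ref{assumption pointwise}) is used.
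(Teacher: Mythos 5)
Your proposal is correct and follows essentially the same route as the paper: reduce to the nonlocal term, apply Taylor's formula to obtain the $\nu$-integrable majorant $M(C_d+C_2)(1\wedge\|z\|^2)$ from Assumptions \ref{assumption pointwise} and \ref{assumption eta n}, invoke the $\nu$-a.e.\ uniform-on-compacts convergence \eqref{conv compact z}, and conclude by dominated convergence. The paper handles the local terms implicitly by citing Assumption \ref{assumption conv compact}, whereas you spell out the decomposition $\sigma\sigma^T-\sigma^{(n)}(\sigma^{(n)})^T=(\sigma-\sigma^{(n)})\sigma^T+\sigma^{(n)}(\sigma-\sigma^{(n)})^T$ and the boundedness of $(y,A)$ on $\mathcal{K}$, which is a welcome clarification but not a different argument.
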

\begin{proof}
By Assumption \ref{assumption conv compact} and 
the definitions of $G_0$ and $G_0^{(n)}$, 
it is sufficient to show for every $\varphi\in C^{1,2}((0,T)\times\bR^d)$
and compact set $\mathcal{K}_0\subseteq (0,T)\times\bR^d$ that 
\begin{equation}                                   \label{conv compact J}
\lim_{n\to\infty}\Bigg(\sup_{(t,x)\in\subseteq \mathcal{K}_0}
\big|\mathbb{J}^n\varphi(t,x)\big|
\Bigg)=0,
\end{equation}
where for all $(t,x)\in(0,T)\times\bR^d$, $n\in\bN$, 
and $\varphi\in C^{1,2}((0,T)\times\bR^d)$,   
\begin{align*}
\mathbb{J}^n\varphi(t,x):=
&
\int_{\bR^{d}}\left(\varphi(t,x+\eta^{(n)}_t(x,z))-\varphi(t,x)
-\langle\nabla_x \varphi(t,x),\eta^{(n)}_t(x,z)\rangle\right)\,\nu(dz)\\
&
-\int_{\bR^{d}}\left(\varphi(t,x+\eta_t(x,z))-\varphi(t,x)
-\langle\nabla_x \varphi(t,x),\eta_t(x,z)\rangle\right)\,\nu(dz).
\end{align*}
To see this, note that by Taylor's formula and the triangle inequality 
we have for every 
$\varphi(t,x)\in C^{1,2}((0,T)\times\bR^d)$, $n\in\bN$, 
and $(t,x)\in(0,T)\times\bR^d$ that
\begin{align}
\big|\mathbb{J}^n\varphi(t,x)\big| & = 
\Bigg|
\int_{\bR^d}\int_0^1(1-\alpha)\sum_{i,j=1}^d
\Bigg(
\frac{\partial^2}{\partial x_i\partial x_j}\varphi(t,x+\alpha\eta^{(n)}_t(x,z))
\cdot\eta^{(n),i}_t(x,z)\eta^{(n),j}_t(x,z)\nonumber\\
& \quad
-\frac{\partial^2}{\partial x_i\partial x_j}\varphi(t,x+\alpha\eta_t(x,z))
\cdot\eta^{i}_t(x,z)\eta^{j}_t(x,z)
\Bigg)\,d\alpha\,\nu(dz)
\Bigg|\nonumber\\
&
\leq \int_0^1\int_{\bR^d}(1-\alpha)\sum_{i,j=1}^d\Bigg(\Bigg|
\frac{\partial^2}{\partial x_i\partial x_j}\varphi(t,x+\alpha\eta^{(n)}_t(x,z))
-\frac{\partial^2}{\partial x_i\partial x_j}\varphi(t,x+\alpha\eta_t(x,z))
\Bigg|\nonumber\\
& \quad\quad\quad\quad\quad \cdot
\big|
\eta^{(n),i}_t(x,z)\eta^{(n),j}_t(x,z)
\big|\Bigg)\,\nu(dz)\,d\alpha\nonumber\\
& \quad
+\int_0^1\int_{\bR^d}(1-\alpha)\sum_{i,j=1}^d\Bigg(
\big|
\eta^{i}_t(x,z)\eta^{j}_t(x,z)-\eta^{(n),i}_t(x,z)\eta^{(n),j}_t(x,z)
\big|
\nonumber\\
& \quad\quad\quad\quad\quad \cdot
\Bigg|
\frac{\partial^2}{\partial x_i\partial x_j}\varphi(t,x+\alpha\eta_t(x,z))
\Bigg|
\Bigg)\,\nu(dz)\,d\alpha.
\label{J n est}
\end{align}
Moreover, by Assumptions \ref{assumption pointwise} and \ref{assumption eta n},
the triangle inequality, and Cauchy Schwarz inequality we obtain for every
$\varphi\in C^{1,2}((0,T)\times\bR^d)$,
$n\in\bN$, $\alpha\in[0,1]$, $z\in\bR^d$ ,and $(t,x)\in(0,T)\times\bR^d$ that
\begin{align*}
&
\sum_{i,j=1}^d\Bigg(\Bigg|
\frac{\partial^2}{\partial x_i\partial x_j}\varphi(t,x+\alpha\eta^{(n)}_t(x,z))
-\frac{\partial^2}{\partial x_i\partial x_j}\varphi(t,x+\alpha\eta_t(x,z))
\Bigg|
\cdot
\big|
\eta^{(n),i}_t(x,z)\eta^{(n),j}_t(x,z)
\big|\Bigg)
\\
&
\leq 
2\cdot\sup_{y:\|y-x\|\leq (C_d\vee C_2)^{1/2}}\Big[\big\|
\operatorname{Hess}_x\varphi(t,y)\big\|_F\Big]
\cdot \sum_{i,j=1}^d\left|\eta^{(n),i}_{t}(x,z)\eta^{(n),j}_{t}(x,z)
\right|
\\
&
\leq 
2\cdot\sup_{y:\|y-x\|\leq (C_d\vee C_2)^{1/2}}\Big[\big\|
\operatorname{Hess}_x\varphi(t,y)\big\|_F\Big]
\cdot d\|\eta^{(n)}_{t}(x,z)\|^2
\\
&
\leq
2\cdot\sup_{y:\|y-x\|\leq (C_d\vee C_2)^{1/2}}\Big[\big\|
\operatorname{Hess}_x\varphi(t,y)\big\|_F\Big]
\cdot dC_2(1\wedge\|z\|^2),
\end{align*}
and
\begin{align*}
&
\sum_{i,j=1}^d\Bigg(
\big|
\eta^{i}_t(x,z)\eta^{j}_t(x,z)-\eta^{(n),i}_t(x,z)\eta^{(n),j}_t(x,z)
\big|
\cdot
\Bigg|
\frac{\partial^2}{\partial x_i\partial x_j}\varphi(t,x+\alpha\eta_t(x,z))
\Bigg|
\Bigg)
\\
&
\leq 
\sup_{y:\|y-x\|\leq C_d^{1/2}}\Big[\big\|
\operatorname{Hess}_x\varphi(t,y)\big\|_F\Big]
\cdot \sum_{i,j=1}^d\left[
\left|\eta^{i}_{t}(x,z)\eta^{j}_{t}(x,z)\right|
+
\left|\eta^{(n),i}_{t}(x,z)\eta^{(n),j}_{t}(x,z)\right|
\right]
\\
&
\leq
\sup_{y:\|y-x\|\leq C_d^{1/2}}\Big[\big\|
\operatorname{Hess}_x\varphi(t,y)\big\|_F\Big]
\cdot d(C_2+C_d)(1\wedge\|z\|^2),
\end{align*}
which is integrable on $\bR^d$ with respect to $\nu(dz)$.
Therefore, by \eqref{conv compact z}, \eqref{J n est},
and Lebesgue's
dominated convergence theorem we get \eqref{conv compact J}, 
which completes the proof of this lemma.
\end{proof}

\begin{lemma}                                         \label{lemma subsolution}
Let Assumptions \ref{assumption Lip and growth}, \ref{assumption pointwise},
\ref{assumption h}, and \ref{assumption conv compact}-\ref{assumption eta n} hold.
For every $n\in\bN$, let $u\in C_1([0,T]\times\bR^d)$, 
$u^{(n)}\in C_1([0,T]\times\bR^d)$.
Assume for all non-empty compact sets $\mathcal{K}\subseteq[0,T]\times\bR^d$ that 
\begin{equation}                                        \label{conv compact u}
\lim_{n\to\infty}\Bigg[
\sup_{(t,x)\in\mathcal{K}}
\big|u^{(n)}(t,x)-u(t,x)\big|\Bigg]= 0.                                                                                                                    
\end{equation}
Moreover, assume for all $n\in\bN$ that $u^{(n)}$ 
is a viscosity solution of 
\begin{align}
-\frac{\partial}{\partial t}u^{(n)}(t,x)
+G_0^{(n)}(t,x,u^{(n)}(t,x),\nabla_x u^{(n)}(t,x),\operatorname{Hess}_x
u^{(n)}(t,x),u^{(n)}(t,\cdot))=0
\;\text{on $(0,T)\times \bR^d$}
\label{linear PIDE n}            
\end{align}
with $u^{(n)}(T,x)=g^{(n)}(x)$ for $x\in\bR^d$.
Then $u$ is a viscosity solution 
of the linear PIDE \eqref{linear PIDE} with $u(T,x)=g(x)$ for $x\in\bR^d$.
\end{lemma}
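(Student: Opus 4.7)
The plan is to adapt the standard stability argument for viscosity solutions of PDEs to the present nonlocal setting by combining Lemma \ref{Lemma equiv def} (the equivalent subsolution formulation via local maxima), Lemma \ref{lemma continuity G 0} (continuity of $G_0$ in $(t,x,r,y,A)$), and Lemma \ref{lemma conv compact G} (uniform convergence of $G_0^{(n)}$ to $G_0$ on compact sets). The terminal condition $u(T,\cdot)=g$ is immediate: $u^{(n)}(T,\cdot)=g^{(n)}\to g$ locally uniformly by Assumption \ref{assumption conv compact}, while $u^{(n)}(T,\cdot)\to u(T,\cdot)$ locally uniformly by \eqref{conv compact u}. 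The substantive task is to verify the subsolution and supersolution inequalities; we discuss only the subsolution case, the supersolution case being entirely symmetric.

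Fix $\varphi\in C^{1,2}((0,T)\times\bR^d)$ and $(t_0,x_0)\in(0,T)\times\bR^d$ such that $\varphi(t_0,x_0)=u(t_0,x_0)$ and $u\leq\varphi$ on an open neighborhood $\mathcal{O}$ of $(t_0,x_0)$; by Lemma \ref{Lemma equiv def}, testing at local maxima is sufficient. For $\rho>0$, set $\varphi_\rho(t,x):=\varphi(t,x)+\rho\bigl[(t-t_0)^2+\|x-x_0\|^2\bigr]$, so that $(t_0,x_0)$ is a strict local maximum of $u-\varphi_\rho$; pick a closed ball $\overline{B}\subset\mathcal{O}$ on which $(t_0,x_0)$ is the unique maximizer. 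Continuity and compactness provide maximizers $(t_n,x_n)\in\overline{B}$ of $u^{(n)}-\varphi_\rho$, and the locally uniform convergence $u^{(n)}\to u$ together with strictness of the maximum forces $(t_n,x_n)\to(t_0,x_0)$, placing $(t_n,x_n)$ eventually in the interior of $\overline{B}$. Setting $\tilde{\varphi}_n:=\varphi_\rho+[u^{(n)}(t_n,x_n)-\varphi_\rho(t_n,x_n)]$ yields $\tilde{\varphi}_n(t_n,x_n)=u^{(n)}(t_n,x_n)$ and $u^{(n)}\leq\tilde{\varphi}_n$ near $(t_n,x_n)$; since $\tilde{\varphi}_n$ and $\varphi_\rho$ differ by a constant, their derivatives coincide and the nonlocal integral in $G_0^{(n)}$ is invariant, so the natural analog of Lemma \ref{Lemma equiv def} applied to the subsolution $u^{(n)}$ of the linear PIDE with operator $G_0^{(n)}$ gives
\begin{align*}
-\tfrac{\partial}{\partial t}\varphi_\rho(t_n,x_n)
+G_0^{(n)}\bigl(t_n,x_n,u^{(n)}(t_n,x_n),\nabla_x\varphi_\rho(t_n,x_n),\operatorname{Hess}_x\varphi_\rho(t_n,x_n),\varphi_\rho(t_n,\cdot)\bigr)\leq 0.
\end{align*}

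To pass $n\to\infty$, observe that the quintuple $(t_n,x_n,u^{(n)}(t_n,x_n),\nabla_x\varphi_\rho(t_n,x_n),\operatorname{Hess}_x\varphi_\rho(t_n,x_n))$ converges to its $(t_0,x_0)$-counterpart and is therefore eventually contained in a fixed compact set. Lemma \ref{lemma conv compact G} supplies uniform convergence $G_0^{(n)}(\,\cdot\,;\varphi_\rho(t,\cdot))\to G_0(\,\cdot\,;\varphi_\rho(t,\cdot))$ on this set, while Lemma \ref{lemma continuity G 0} yields continuity of the limit in $(t,x,r,y,A)$; the triangle inequality and continuity of $\tfrac{\partial}{\partial t}\varphi_\rho$ then transfer the inequality to $(t_0,x_0)$ with $\varphi_\rho$ in place of $\varphi$. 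Finally, letting $\rho\to 0$, only the Hessian (shifted by $2\rho\mathbf{I}_d$) and the nonlocal integral (shifted by $\rho\int_{\bR^d}\|\eta_{t_0}(x_0,z)\|^2\,\nu(dz)$, which is finite by Assumptions \ref{assumption pointwise} and \ref{assumption small jumps}) depend on $\rho$, and both converge to their $\varphi$-values as $\rho\to 0$; continuity of $G_0$ in its arguments then delivers the desired PIDE inequality for $\varphi$ at $(t_0,x_0)$. The principal technical hurdle is passage to the limit inside the nonlocal integral under varying coefficients $\eta^{(n)}$, which is precisely what Lemma \ref{lemma conv compact G} is tailored to handle.
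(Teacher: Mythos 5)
Your proposal is correct and follows essentially the same route as the paper: perturb the test function with a strictly convex quadratic, extract near-maximizers $(t_n,x_n)$ of $u^{(n)}-\varphi_\rho$ converging to $(t_0,x_0)$, apply the viscosity property of $u^{(n)}$ via (the analogue of) Lemma \ref{Lemma equiv def}, pass $n\to\infty$ using Lemmas \ref{lemma conv compact G} and \ref{lemma continuity G 0}, and then send $\rho\to 0$ using Lemma \ref{lemma continuity G 0} again. One small inaccuracy: the finiteness of $\int_{\bR^d}\|\eta_{t_0}(x_0,z)\|^2\,\nu(dz)$ should be drawn from \eqref{assumption eta} together with $\nu$ being a L\'evy measure (or directly from \eqref{assumption growth}), since Assumption \ref{assumption small jumps} is not among the hypotheses of this lemma.
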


\begin{proof}
This proof is analogous to the proof of Lemma 2.18 in \cite{BHJ2020}.
We only prove the viscosity subsolution case, and the viscosity supersolution
case can be proved analogously. First note that 
\eqref{conv compact}, \eqref{conv compact u}, and 
the condition that $u^{(n)}(T,x)=g^{(n)}(x)$ for all $x\in\bR^d$ and $n\in\bN$
guarantees that $u(T,x)=g(x)$ for $x\in\bR^d$.
Throughout this proof, let $(t_0,x_0)\in(0,T)\times\bR^d$,
and let $\varphi\in C^{1,2}((0,T)\times\bR^d)$ such that 
$\varphi(t_0,x_0)=u(t_0,x_0)$ and $\varphi(t,x)\geq u(t,x)$ for all 
$(t,x)\in[0,T]\times\bR^d$. 
For every $\varepsilon\in(0,\infty)$ define
$\varphi_\varepsilon\in C^{1,2}((0,T)\times\bR^d)$ by
\begin{equation}                              \label{def phi e}
\varphi_\varepsilon(t,x):=\varphi(t,x)+\frac{\varepsilon}{2}
(|t-t_0|^2+\|x-x_0\|^2),\quad (t,x)\in(0,T)\times\bR^d.
\end{equation}
Moreover, let $\xi:=\frac{(T-t_0)\wedge t_0}{2}$, and define the set
$$
\mathcal{K}_0:=\{(t,x)\in[0,T]\times\bR^d:\max\{|t-t_0|,\|x-x_0\|\}\leq \xi\}.
$$
According to \eqref{conv compact u}, we notice for each 
$\varepsilon\in(0,\infty)$  that there exists an integer $N_\varepsilon>0$
such that for all integers $n\geq N_\varepsilon$
$$                                     
\sup_{(t,x)\in\mathcal{K}_0}\Big(|u^{(n)}(t,x)-u(t,x)|\Big)
<\frac{\varepsilon\xi^2}{4}.
$$
Thus, by \eqref{def phi e} we obtain for all $\varepsilon\in(0,\infty)$,
$n\in\bN\cap[N_\varepsilon,\infty)$, and $(t,x)\in\mathcal{K}_0$
with $|t-t_0|^2+\|x-x_0\|^2\geq\xi^2$ that
\begin{align*}
u^{(n)}(t_0,x_0)-\varphi_{\varepsilon}(t_0,x_0)
&
=u^{(n)}(t_0,x_0)-\varphi(t_0,x_0)
=u^{(n)}(t_0,x_0)-u(t_0,x_0)>-\frac{\varepsilon\xi^2}{4}\\
&
>u^{(n)}(t,x)-u(t,x)-\frac{\varepsilon}{2}(|t-t_0|^2+\|x-x_0\|^2)\\
&
\geq u^{(n)}(t,x)-\varphi(t,x)-\frac{\varepsilon}{2}(|t-t_0|^2+\|x-x_0\|^2)\\
&
=u^{(n)}(t,x)-\varphi_\varepsilon(t,x).
\end{align*}
Since for every $\varepsilon\in(0,\infty)$ and $n\in\bN$
the function $u^{(n)}-\varphi_\varepsilon$ is continuous, we obtain
that for each $\varepsilon\in(0,\infty)$ there exist sequences
$\big\{t^{(\varepsilon)}_n\big\}_{n=1}^\infty\subseteq
\{t\in[0,T]:|t-t_0|\leq \xi\}$ 
and
$\big\{x^{(\varepsilon)}_n\big\}_{n=1}^\infty\subseteq
\{x\in\bR^d:\|x-x_0\|\leq \xi\}$ satisfying for all
$n\in\bN\cap[N_\varepsilon,\infty)$ and $(t,x)\in\mathcal{K}_0$ that
\begin{equation}                               \label{ineq n e}
u^{(n)}(t^{(\varepsilon)}_n,x^{(\varepsilon)}_n)
-\varphi_\varepsilon(t^{(\varepsilon)}_n,x^{(\varepsilon)}_n)
\geq u^{(n)}(t,x)-\varphi_\varepsilon(t,x).
\end{equation}
Thus, taking into account that for each $n\in\bN$ and $\varepsilon\in(0,\infty)$,
$\varphi_\varepsilon\in C^{1,2}((0,T)\times\bR^d)$,
and that $u^{(n)}$ is a viscosity subsolution of 
\eqref{linear PIDE n}, 
by Lemma \ref{Lemma equiv def} we have for all $\varepsilon\in(0,\infty)$
and $n\in\bN\cap[N_\varepsilon,\infty)$ that
\begin{align} 
&                                  
-\frac{\partial}{\partial t}\varphi_\varepsilon
(t^{(\varepsilon)}_n,x^{(\varepsilon)}_n)\nonumber\\
&
+G_0^{(n)}(t^{(\varepsilon)}_n,x^{(\varepsilon)}_n,
u^{(n)}(t^{(\varepsilon)}_n,x^{(\varepsilon)}_n),
\nabla_x \varphi_\varepsilon(t^{(\varepsilon)}_n,x^{(\varepsilon)}_n),
\operatorname{Hess}_x
\varphi_\varepsilon(t^{(\varepsilon)}_n,x^{(\varepsilon)}_n)
,\varphi_\varepsilon(t^{(\varepsilon)}_n,\cdot))
\leq 0.                                                           \label{sub n e}
\end{align}
Furthermore, by \eqref{conv compact u}, \eqref{def phi e}, and
\eqref{ineq n e} it holds for all $\varepsilon\in(0,\infty)$ that
\begin{align}
0 & =\limsup_{n\to\infty}\big[u(t_0,x_0)-u^{(n)}(t_0,x_0)\big]
\nonumber\\
&
=\limsup_{n\to\infty}\big[\varphi_\varepsilon(t_0,x_0)-u^{(n)}(t_0,x_0)\big]
\nonumber\\
&
\geq \limsup_{n\to\infty}\big[\varphi_\varepsilon
(t^{(\varepsilon)}_n,x^{(\varepsilon)}_n)
-u^{(n)}(t^{(\varepsilon)}_n,x^{(\varepsilon)}_n)\big]\nonumber\\
&
= \limsup_{n\to\infty}\left[\varphi(t^{(\varepsilon)}_n,x^{(\varepsilon)}_n)
+\frac{\varepsilon}{2}\Big(\big|t^{(\varepsilon)}_n-t_0\big|^2
+\big\|x^{(\varepsilon)}_n-x_0\big\|^2\Big)
-u^{(n)}(t^{(\varepsilon)}_n,x^{(\varepsilon)}_n)\right]\nonumber\\
&
\geq \limsup_{n\to\infty}\left[u(t^{(\varepsilon)}_n,x^{(\varepsilon)}_n)
-u^{(n)}(t^{(\varepsilon)}_n,x^{(\varepsilon)}_n)
+\frac{\varepsilon}{2}\Big(\big|t^{(\varepsilon)}_n-t_0\big|^2
+\big\|x^{(\varepsilon)}_n-x_0\big\|^2\Big)
\right]\nonumber\\
&
= \limsup_{n\to\infty}\left[
\frac{\varepsilon}{2}\Big(\big|t^{(\varepsilon)}_n-t_0\big|^2
+\big\|x^{(\varepsilon)}_n-x_0\big\|^2\Big)
\right].   
\nonumber                                          
\end{align} 
Therefore, we see that for all $\varepsilon\in(0,\infty)$ 
\begin{equation}                                               \label{conv t x}
\limsup_{n\to\infty}\left[
\frac{\varepsilon}{2}\Big(\big|t^{(\varepsilon)}_n-t_0\big|^2
+\big\|x^{(\varepsilon)}_n-x_0\big\|^2\Big)
\right]=0.
\end{equation}
Moreover, the continuity of $u$, \eqref{conv compact u}, and \eqref{conv t x}
ensure for all $\varepsilon\in(0,\infty)$ that
\begin{align}
\lim_{n\to\infty}\big|u^{(n)}(t^{(\varepsilon)}_n,x^{(\varepsilon)}_n)
-u(t_0,x_0)\big|
\leq
&
\lim_{n\to\infty}\big|u^{(n)}(t^{(\varepsilon)}_n,x^{(\varepsilon)}_n)
-u(t^{(\varepsilon)}_n,x^{(\varepsilon)}_n)\big|\nonumber\\
&
+
\lim_{n\to\infty}\big|u(t^{(\varepsilon)}_n,x^{(\varepsilon)}_n)
-u(t_0,x_0)\big|=0.
\label{conv u^n u}
\end{align}
Then by \eqref{def phi e}, \eqref{conv t x}, \eqref{conv u^n u},
Lemma \ref{lemma continuity G 0}, and Lemma \ref{lemma conv compact G}, 
it holds for all $\varepsilon\in(0,\infty)$
that
$$
\lim_{n\to\infty}\left|\frac{\partial}{\partial t}
\varphi_\varepsilon(t^{(\varepsilon)}_n,x^{(\varepsilon)}_n)
-\frac{\partial}{\partial t}\varphi(t_0,x_0)\right|=0,
$$
and
\begin{align*}
&
G_0\Big(t_0,x_0,
\varphi(t_0,x_0),
\nabla_x \varphi(t_0,x_0),
\operatorname{Hess}_x
\varphi(t_0,x_0)+\varepsilon\mathbf{I}_d
,\varphi(t_0,\cdot)\Big)\\
&
=
G_0\Big(t_0,x_0,
u(t_0,x_0),
\nabla_x \varphi_\varepsilon(t_0,x_0),
\operatorname{Hess}_x
\varphi_\varepsilon(t_0,x_0)
,\varphi_\varepsilon(t_0,\cdot)\Big)\\
&
= \lim_{n\to\infty}
G_0\Big(t^{(\varepsilon)}_n,x^{(\varepsilon)}_n,
u^{(n)}(t^{(\varepsilon)}_n,x^{(\varepsilon)}_n),
\nabla_x \varphi_\varepsilon(t^{(\varepsilon)}_n,x^{(\varepsilon)}_n),
\operatorname{Hess}_x
\varphi_\varepsilon(t^{(\varepsilon)}_n,x^{(\varepsilon)}_n)
,\varphi_\varepsilon(t^{(\varepsilon)}_n,\cdot)\Big)\\
&
=\lim_{n\to\infty}
G_0^{(n)}\Big(t^{(\varepsilon)}_n,x^{(\varepsilon)}_n,
u^{(n)}(t^{(\varepsilon)}_n,x^{(\varepsilon)}_n),
\nabla_x \varphi_\varepsilon(t^{(\varepsilon)}_n,x^{(\varepsilon)}_n),
\operatorname{Hess}_x
\varphi_\varepsilon(t^{(\varepsilon)}_n,x^{(\varepsilon)}_n)
,\varphi_\varepsilon(t^{(\varepsilon)}_n,\cdot)\Big).
\end{align*}
Hence, taking the limit as $n\to\infty$ in \eqref{sub n e}, and
using the assumption $u(t_0,x_0)=\varphi(t_0,x_0)$ yields that 
for all $\varepsilon\in(0,\infty)$
\begin{align*}                                  
-\frac{\partial}{\partial t}\varphi
(t_0,x_0)
+G_0\Big(t_0,x_0,
\varphi(t_0,x_0),
\nabla_x \varphi(t_0,x_0),
\operatorname{Hess}_x
\varphi(t_0,x_0)+\varepsilon\mathbf{I}_d
,\varphi(t_0,\cdot)\Big)
\leq 0.                                                           
\end{align*}
Lemma \ref{lemma continuity G 0} 
therefore ensures that 
\begin{align*}                                  
-\frac{\partial}{\partial t}\varphi
(t_0,x_0)
+G_0\Big(t_0,x_0,
\varphi(t_0,x_0),
\nabla_x \varphi(t_0,x_0),
\operatorname{Hess}_x
\varphi(t_0,x_0)
,\varphi(t_0,\cdot)\Big)
\leq 0,                                                           
\end{align*}
which shows that $u$ is a viscosity subsolution 
of \eqref{linear PIDE}. The proof of this lemma is thus completed.
\end{proof}

\begin{lemma}                        \label{lemma v solution linear}
Let Assumptions \ref{assumption Lip and growth}, \ref{assumption pointwise},
\ref{assumption h}, and \ref{assumption conv compact}-\ref{assumption eta n} hold.
Assume that $\mu$ and $\sigma$ are compactly
supported, and that $\eta_{\cdot}(\cdot,z)$ is compactly supported
for all $z\in\bR^d$.
Moreover, assume that there exists a non-empty compact set 
$\mathcal{K}_0\in\bR^d$ such that for all $n\in\bN$, $t\in[0,T]$, and $z\in\bR^d$
\begin{equation}
\operatorname{supp}\{\mu^{(n)}(t,\cdot)\}\cup
\operatorname{supp}\{\sigma^{(n)}(t,\cdot)\}\cup
\operatorname{supp}\{\eta_t^{(n)}(\cdot,z)\}
\subseteq \mathcal{K}_0,
\end{equation}
where $\operatorname{supp}\{\mathfrak{f}\}$ denotes the support of the real-valued
function $\mathfrak{f}$ on $\bR^d$. 
Furthermore, for every $n\in\bN$ and $(t,x)\in[0,T]\times\bR^d$, 
let $\big(X^{t,x,(n)}_s\big)_{s\in[t,T]}:[t,T]\times\Omega\to\bR^d$
be an $\bF$-adapted c\`adl\`ag process satisfying that
$X^{t,x,(n)}_t=x$ and almost surely for all $s\in[0,T]$
\begin{equation}                                               \label{SDE n}                              
dX^{t,x,(n)}_{s}
=\mu^{(n)}(s,X^{t,x,(n)}_{s-})\,ds
+\sigma^{(n)}(s,X^{t,x,(n)}_{s-})\,dW_s
+\int_{\bR^{d}}\eta^{(n)}_s(X^{t,x,(n)}_{s-},z)
\,\tilde{\pi}(dz,ds).
\end{equation}
Then there is a constants $K_0>0$ only depending on $T$ and $L$ 
such that for all $n\in\bN$ and $(t,x)\in[0,T]\times\bR^d$
\begin{align}                             
\bE\Big[\sup_{s\in[t,T]}\big\|X^{t,x,(n)}_s-X^{t,x}_s\big\|^2\Big]
\leq  
&
K_0\cdot
\Bigg[
\sup_{(r,y)\in[0,T]\times\bR^d}
\Bigg\{
\big\|\mu^{(n)}(r,y)-\mu(r,y)\big\|^2\nonumber\\
&
+\big\|\sigma^{(n)}(r,y)-\sigma(r,y)\big\|_F^2
+\int_{\bR^d}\big\|\eta^{(n)}_r(y,z)-\eta_r(y,z)\big\|^2\,\nu(dz)
\Bigg\}
\Bigg]
\label{sup e T n}
\end{align}
\end{lemma}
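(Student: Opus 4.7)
The plan is to mimic the stability-type arguments already carried out in Lemma \ref{Lemma stability} and in the Euler-scheme estimate Lemma \ref{Lemma Euler}: write the difference $X^{t,x,(n)}_s - X^{t,x}_s$ explicitly as a sum of a drift integral, a Brownian stochastic integral and a compensated Poisson integral, then decompose each integrand into a coefficient-mismatch part plus a Lipschitz-in-space part, and close the resulting inequality by Gr\"onwall's lemma.

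First, for every $s\in[t,T]$ I would write, almost surely,
\begin{align*}
X^{t,x,(n)}_s - X^{t,x}_s
&= \int_t^s \Delta\mu_r\,dr
 + \int_t^s \Delta\sigma_r\,dW_r
 + \int_t^s\!\!\int_{\bR^d}\!\Delta\eta_r(z)\,\tilde\pi(dz,dr),
\end{align*}
where, for $r\in[t,T]$,
$\Delta\mu_r = [\mu^{(n)}(r,X^{t,x,(n)}_{r-})-\mu(r,X^{t,x,(n)}_{r-})] + [\mu(r,X^{t,x,(n)}_{r-})-\mu(r,X^{t,x}_{r-})]$,
with analogous splits for $\Delta\sigma_r$ and $\Delta\eta_r(z)$. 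The first summand in each split is majorised in norm by the sup appearing in \eqref{sup e T n} (evaluated at $(r,X^{t,x,(n)}_{r-})$, which is irrelevant since the sup is over all $(r,y)$); the second summand is controlled by the Lipschitz bounds of Assumption \ref{assumption Lip and growth}, which provide the constant $L$ (and nothing else).

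Second, I would apply Davis'/BDG inequality to the two martingale terms, exactly as in the derivation of \eqref{A_s}--\eqref{A4} in Lemma \ref{Lemma Euler}, and H\"older's inequality to the drift term. Using $(a+b)^2\le 2a^2+2b^2$ to handle each split, together with It\^o's isometry for the compensated Poisson integral, this yields constants $c_1, c_2>0$ depending only on $T$ and $L$ such that, setting
$$\Phi_n(s):=\bE\Big[\sup_{u\in[t,s]}\big\|X^{t,x,(n)}_u-X^{t,x}_u\big\|^2\Big], \qquad \cE_n := \sup_{(r,y)\in[0,T]\times\bR^d}\Big\{\cdots\Big\}$$
(with the sup being the bracket in \eqref{sup e T n}), one obtains for every $s\in[t,T]$
\begin{equation*}
\Phi_n(s) \;\le\; c_1\,(T+1)(T-t)\,\cE_n \;+\; c_2\int_t^s \Phi_n(r)\,dr.
\end{equation*}

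Third, to apply Gr\"onwall I need $\Phi_n(T)<\infty$ a priori; this is inherited from \eqref{SDE moment est}, which applies to both $X^{t,x,(n)}$ (under Assumptions \ref{assumption Lip and growth n} and \ref{assumption eta n}) and $X^{t,x}$ (under Assumptions \ref{assumption Lip and growth} and \ref{assumption pointwise}), so that $\Phi_n(T)\le 2\bE[\sup_{u\in[t,T]}\|X^{t,x,(n)}_u\|^2] + 2\bE[\sup_{u\in[t,T]}\|X^{t,x}_u\|^2]<\infty$. If one prefers full rigour one can first localise with the hitting time $\tau^{(n)}_k:=\inf\{s\ge t:\|X^{t,x,(n)}_s\|+\|X^{t,x}_s\|\ge k\}\wedge T$ and pass to the limit $k\to\infty$ by Fatou's lemma, just as in the proofs of Lemma \ref{lemma est moment} and Lemma \ref{Lemma Euler}. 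Then Gr\"onwall's lemma delivers \eqref{sup e T n} with $K_0:=c_1(T+1)T\,e^{c_2T}$, which depends only on $T$ and $L$ as required.

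The only mildly subtle point is the jump-integral contribution: after BDG, one must control $\int_t^s\!\int_{\bR^d}\!\|\Delta\eta_r(z)\|^2\nu(dz)\,dr$, but the same decomposition works because the Lipschitz part gives $\int_{\bR^d}\|\eta_r(X^{t,x,(n)}_{r-},z)-\eta_r(X^{t,x}_{r-},z)\|^2\nu(dz)\le L\|X^{t,x,(n)}_{r-}-X^{t,x}_{r-}\|^2$ by \eqref{assumption Lip mu sigma eta}, while the coefficient-mismatch part is $\int_{\bR^d}\|\eta^{(n)}_r(X^{t,x,(n)}_{r-},z)-\eta_r(X^{t,x,(n)}_{r-},z)\|^2\nu(dz)\le\cE_n$. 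Thus no new difficulty arises from the nonlocal term and the proof reduces to the standard Gr\"onwall closure.
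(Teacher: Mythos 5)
Your proof is correct and follows essentially the same route as the paper: decompose the difference into drift, Brownian, and compensated-Poisson integrals, split each integrand into a coefficient-mismatch part (bounded by the global supremum $\cE_n$) and a Lipschitz-in-space part (bounded via \eqref{assumption Lip mu sigma eta}), control the martingale terms by BDG/Davis, verify a priori finiteness of $\Phi_n(T)$ from \eqref{SDE moment est} and its analogue for $X^{(n)}$, and close with Gr\"onwall. The paper uses $(\sum_{i=1}^3 a_i)^2\le 3\sum a_i^2$ for the initial three-term split rather than iterating $(a+b)^2\le 2a^2+2b^2$, but this only changes numerical constants, not the argument.
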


\begin{proof}
We first notice for each $(t,x)\in[0,T]\times\bR^d$, and $n\in\bN$
that it holds almost surely for all $s\in[t,T]$ that
\begin{align*}
X^{t,x,(n)}_s-X^{t,x}_s=
&
\int_t^s\left(\mu^{(n)}\big(r,X^{t,x,(n)}_{r-}\big)
-\mu\big(r,X^{t,x}_{r-}\big)
\right)dr\\
&
+\int_t^s\left(\sigma^{(n)}\big(r,X^{t,x,(n)}_{r-}\big)
-\sigma\big(r,X^{t,x}_{r-}\big)
\right)dW_r\\
&
+\int_t^s\int_{\bR^d}\left(\eta^{(n)}_r\big(X^{t,x,(n)}_{r-},z\big)
-\eta_r\big(X^{t,x}_{r-},z\big)
\right)\tilde{\pi}(dz,dr).
\end{align*}
Thus, by Jensen's inequality, Burkholder-Davis-Gundy inequality, 
and the fact that
$(\sum_{i=1}^3a_i)^2\leq 3\sum_{i=1}^3a_i^2$ for $a_i\in\bR$, $i=1,2,3$,
it holds for all $(t,x)\in[0,T]\times\bR^d$, $s\in[t,T]$, and $n\in\bN$ that
\begin{equation}                                   \label{e n s t x}
e^{(n)}_s(t,x):=\bE\Bigg[\sup_{r\in[t,s]}
\big\|X^{t,x,(n)}_s-X^{t,x}_s\big\|^2\Bigg]
\leq \sum_{i=1}^3A^{(n),i}_s(t,x),
\end{equation}
where 
\begin{align*}
&
A^{(n),1}_s(t,x):=3T\int_t^s\bE\Big[
\big\|
\mu^{(n)}\big(r,X^{t,x,(n)}_r\big)-\mu\big(r,X^{t,x}_r\big)
\big\|^2
\Big]\,dr,\\
&
A^{(n),2}_s(t,x):=24\int_t^s\bE\Big[
\big\|
\sigma^{(n)}\big(r,X^{t,x,(n)}_r\big)-\sigma\big(r,X^{t,x}_r\big)
\big\|_F^2
\Big]\,dr,\\
&
A^{(n),3}_s(t,x):=24\int_t^s\bE\left[\int_{\bR^d}
\big\|
\eta^{(n)}_r\big(X^{t,x,(n)}_r,z\big)-\eta_r\big(X^{t,x}_r,z\big)
\big\|^2\nu(dz)
\right]dr.
\end{align*}
We notice that Assumptions \ref{assumption Lip and growth}, 
\ref{assumption Lip and growth n}, and \ref{assumption eta n} implies 
for all $(t,x)\in[0,T]\times\bR^d$, $s\in[0,T]$,
and $n\in\bN$ that
\begin{align}
A^{(n),1}_s(t,x)
&
\leq 
6T\int_t^s\bE\left[
\big\|
\mu^{(n)}\big(r,X^{t,x,(n)}_r\big)-\mu\big(r,X^{t,x,(n)}_r\big)
\big\|^2
\right]dr\nonumber\\
& \quad
+6T\int_t^s\bE\left[
\big\|
\mu\big(r,X^{t,x,(n)}_r\big)-\mu\big(r,X^{t,x}_r\big)
\big\|^2
\right]dr\nonumber\\
&
\leq 6T^2\cdot\left[\sup_{(r,y)\in[0,T]\times\bR^d}
\big\|\mu^{(n)}(r,y)-\mu(r,y)\big\|^2
\right]
+6TL
\int_t^se^{(n)}_r(t,x)\,dr,                           \label{n A 1}
\end{align}
\begin{equation}
A^{(n),2}_s(t,x)\leq 48T\cdot\left[\sup_{(r,y)\in[0,T]\times\bR^d}
\big\|\sigma^{(n)}(r,y)-\sigma(r,y)\big\|_F^2
\right]
+48L\int_t^se^{(n)}_r(t,x)\,dr,                           \label{n A 2}
\end{equation}
and
\begin{equation}
A^{(n),3}_s(t,x)\leq 48T\cdot\left[\sup_{(r,y)\in[0,T]\times\bR^d}
\int_{\bR^d}\big\|\eta^{(n)}_r(y,z)-\eta_r(y,z)\big\|^2\,\nu(dz)
\right]
+48L\int_t^se^{(n)}_r(t,x)\,dr.                           \label{n A 3}
\end{equation}
Moreover, by \eqref{SDE moment est}, and Assumptions 
\ref{assumption Lip and growth n} and \ref{assumption eta n} 
it holds for all $(t,x)\in[0,T]\times\bR^d$ and
$n\in\bN$ that 
\begin{equation}                                      \label{SDE moment est n}
\bE\left[\sup_{s\in[t,T]}\big\|X^{t,x,(n)}_s\big\|^2\right]< \infty.
\end{equation}
Then by \eqref{SDE moment est}, \eqref{e n s t x}-\eqref{SDE moment est n},
and Gr\"onwall's lemma we obtain for all $(t,x)\in[0,T]\times\bR^d$ and
$n\in\bN$ that
\begin{align*}
e^{(n)}_T(t,x)\leq & 
6T(T+16)\exp\{6L(T+16)(T-t)\}\cdot
\Bigg[
\sup_{(r,y)\in[0,T]\times\bR^d}
\Bigg\{
\big\|\mu^{(n)}(r,y)-\mu(r,y)\big\|^2\\
&
+\big\|\sigma^{(n)}(r,y)-\sigma(r,y)\big\|_F^2
+\int_{\bR^d}\big\|\eta^{(n)}_r(y,z)-\eta_r(y,z)\big\|^2\,\nu(dz)
\Bigg\}
\Bigg].
\end{align*}
The proof of this lemma is thus completed.
\end{proof}

\begin{proposition}                               \label{proposition smooth PIDE}
Let Assumptions \ref{assumption Lip and growth},
\ref{assumption pointwise}, \ref{assumption jacobian}, 
and \ref{assumption h} hold.
Moreover, we assume that $g$, $\mu$, $\sigma$, and $h$ are compactly
supported, and that $\eta_{\cdot}(\cdot,z)$ is compactly supported
for all $z\in\bR^d$.
Let $u:[0,T]\times\bR^d\to\bR^d$ satisfy for all 
$(t,x)\in[0,T]\times\bR^d$ that
\begin{equation}                                           \label{FK linear 1}
u(t,x)=\bE\Bigg[g(X^{t,x}_T)+\int_t^Th(s,X^{t,x}_s)\,ds\Bigg].
\end{equation}
Then $u$ is a viscosity solution 
of the linear PIDE \eqref{linear PIDE} with $u(T,x)=g(x)$ for $x\in\bR^d$.
\end{proposition}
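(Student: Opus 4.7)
The plan is to obtain $u$ as a limit of classical solutions of linear PIDEs whose coefficients are additionally smooth, then invoke Lemma~\ref{lemma subsolution} to conclude. Since the coefficients $g,\mu,\sigma,h$ and $\eta_\cdot(\cdot,z)$ (for each fixed $z$) are already assumed to be continuous and compactly supported in $x$, standard mollification in $x$ produces approximations meeting the hypotheses of Lemma~\ref{lemma smooth PIDE}. Concretely, I would fix a nonnegative mollifier $\rho_n \in C^\infty_c(\bR^d)$ with $\int \rho_n = 1$ and $\operatorname{supp}(\rho_n) \subseteq \{\|y\| \leq 1/n\}$, and define $g^{(n)} := g * \rho_n$, $h^{(n)}(t,\cdot) := h(t,\cdot) * \rho_n$, $\mu^{(n)}(t,\cdot) := \mu(t,\cdot) * \rho_n$, $\sigma^{(n)}(t,\cdot) := \sigma(t,\cdot) * \rho_n$, and $\eta^{(n)}_t(\cdot,z) := \eta_t(\cdot,z) * \rho_n$, each convolved in the $x$-variable. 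Since the underlying functions are compactly supported in $x$ (uniformly in $t$ and $z$ by hypothesis), the approximations are $C^\infty$ and compactly supported in $x$, and general mollifier theory yields both the locally uniform convergences in Assumption~\ref{assumption conv compact} and the uniform-in-$n$ Lipschitz/growth/pointwise bounds in Assumptions~\ref{assumption Lip and growth n} and~\ref{assumption eta n} (with constants inherited from Assumptions~\ref{assumption Lip and growth} and~\ref{assumption pointwise}).

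Next, Lemma~\ref{lemma smooth PIDE} applied to each mollified problem provides a unique classical solution $u^{(n)} \in C^{1,2}([0,T]\times\bR^d)$ with $u^{(n)}(T,\cdot) = g^{(n)}$, represented via Feynman--Kac as
\begin{equation*}
u^{(n)}(t,x) = \bE\!\left[g^{(n)}\!\big(X^{t,x,(n)}_T\big) + \int_t^T h^{(n)}\!\big(s, X^{t,x,(n)}_s\big)\,ds\right],
\end{equation*}
where $X^{t,x,(n)}$ solves \eqref{SDE n}. In particular, $u^{(n)}$ is a viscosity solution of \eqref{linear PIDE n}. To pass to the limit, I would split
\begin{align*}
|u^{(n)}(t,x) - u(t,x)| &\leq \bE\!\left[\big|g^{(n)}(X^{t,x,(n)}_T) - g(X^{t,x}_T)\big|\right] \\
&\quad + \int_t^T \bE\!\left[\big|h^{(n)}(s,X^{t,x,(n)}_s) - h(s,X^{t,x}_s)\big|\right] ds,
\end{align*}
and control each term by combining the mean-square convergence $\bE[\sup_{s\in[t,T]}\|X^{t,x,(n)}_s - X^{t,x}_s\|^2] \to 0$ supplied by Lemma~\ref{lemma v solution linear} (whose right-hand side vanishes by Assumption~\ref{assumption conv compact}) with the locally uniform convergence $g^{(n)} \to g$, $h^{(n)} \to h$, using the moment bound \eqref{SDE moment est} to dominate contributions outside a large ball. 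This yields $u^{(n)}(t,x) \to u(t,x)$ locally uniformly on $[0,T]\times\bR^d$. Lemma~\ref{lemma subsolution} then promotes this limit to the conclusion that $u$ is a viscosity solution of \eqref{linear PIDE} with $u(T,\cdot) = g$.

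The main technical obstacle is arranging that the mollified jump coefficients $\eta^{(n)}$ verify Assumption~\ref{assumption jacobian n} uniformly in $n$, which is needed to apply Lemma~\ref{lemma smooth PIDE}. Since $D_x \eta^{(n)}_t(x,z) = (D_x \eta_t(\cdot,z)) * \rho_n$ on the open set where $D_x \eta_t(\cdot,z)$ exists, and since $\det$ is continuous, one obtains $|\det(\mathbf{I}_d + \gamma D_x\eta^{(n)}_t(x,z))| \geq \lambda_d/2$ for $n$ sufficiently large on any compact set; handling the measure-zero exceptional set and uniformity in $z$ will require a careful localization (or, alternatively, one may restrict to the primary case $\eta_t(x,z) = z\mathbf{1}_{\{\|z\|\leq 1\}}$, where $D_x \eta_t(x,z) \equiv 0$ makes the bound trivial). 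The remaining convergence arguments are then routine applications of the lemmas already established.
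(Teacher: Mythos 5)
Your plan follows the paper's outline at the macro level (mollify, invoke Lemma~\ref{lemma smooth PIDE}, pass to the limit via Lemma~\ref{lemma subsolution}) but deviates in a meaningful way at the limit step: you mollify all coefficients with a \emph{single} parameter $n$ and take a single limit, whereas the paper introduces a two-parameter family $u^{n,k}$ (with $n$ controlling the mollification of $\mu,\sigma,\eta$ and $k$ that of $g,h$), lets $n\to\infty$ first, then $k\to\infty$. The paper's reason for this extra structure is visible in display~\eqref{beer 0}: they estimate $\int_t^T\bE[|h^{(\varepsilon_k)}(s,X^{t,x,(n)}_s)-h^{(\varepsilon_k)}(s,X^{t,x}_s)|]\,ds$ by the mean value theorem using $\sup\|\nabla_y h^{(\varepsilon_k)}\|$, which is finite for each fixed $k$ but blows up as $k\to\infty$; hence the need to send $n\to\infty$ first at fixed $k$. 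Your single-parameter route avoids that constant by estimating $\bE[|h(s,X^{t,x,(n)}_s)-h(s,X^{t,x}_s)|]$ through the \emph{uniform continuity} of $h$ itself (it is continuous with compact support) together with Chebyshev and Lemma~\ref{lemma v solution linear}; this is sound and eliminates one layer of the construction. The phrase ``moment bound~\eqref{SDE moment est} to dominate contributions outside a large ball'' is not quite what is needed (compact support of $h$ already kills the far field); what does the work is uniform continuity plus $L^2$ convergence of the solutions. You should also record, as the paper does around~\eqref{linear growth u n k} and~\eqref{conv u 0 k}, the linear growth bounds that put $u^{(n)}$ and $u$ in $C_1([0,T]\times\bR^d)$, since Lemma~\ref{lemma subsolution} requires this.

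The genuine gap is in the verification of Assumption~\ref{assumption jacobian n} for $\eta^{(n)}$, which you yourself flag as ``the main technical obstacle.'' Your first proposed resolution fails: from $D_x\eta^{(n)}_t(\cdot,z)=(D_x\eta_t(\cdot,z))*\rho_n$ and continuity of $\det$ you want to conclude $|\det(\mathbf{I}_d+\gamma D_x\eta^{(n)}_t(x,z))|\geq\lambda_d/2$ for large $n$, but that step needs $D_x\eta^{(n)}_t(\cdot,z)\to D_x\eta_t(\cdot,z)$ \emph{uniformly} (at least locally in $x$, uniformly in $t,z,\gamma$), and mollification of a bounded measurable Jacobian only gives a.e.~and $L^p_{\mathrm{loc}}$ convergence unless $D_x\eta_t(\cdot,z)$ is continuous --- which Assumptions~\ref{assumption pointwise} and~\ref{assumption jacobian} do not grant. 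Determinant is also not convex, so the averaged matrix having $|\det(\mathbf{I}_d+\gamma\,\cdot\,)|\geq\lambda_d$ is not inherited from each value of $D_x\eta$. Your fallback --- restricting to $\eta_t(x,z)=z\mathbf{1}_{\{\|z\|\leq 1\}}$ where $D_x\eta\equiv 0$ --- proves a strictly weaker statement. The paper handles this point by citing Corollary~3.6 of \cite{DGW2020}, which gives the needed uniform lower bound $\delta'\leq|\det(\mathbf{I}_d+\gamma D_x\eta^{(\varepsilon)}_t(x,z))|$ for all small $\varepsilon$ directly; you should invoke the same result (or reproduce its argument) rather than rely on a generic continuity-of-determinant step.
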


To prove the above proposition, we recall the notion of mollifications
of functions. 
For $\varepsilon>0$ and 
locally integrable functions $\phi:\bR^d\to\bR$ we use the notation 
$\phi^{(\varepsilon)}$ for the mollification of $\phi$, defined by 
\begin{equation}                                       \label{def mollification}
\phi^{(\varepsilon)}(x)
:=\varepsilon^{-d}\int_{\bR^d}\phi(y)k((x-y)/\varepsilon)\,dy
=\int_{\bR^d}\phi(x-\varepsilon z)k(z)\,dz,\quad x\in\bR^d , 
\end{equation}
where $k:\bR^d\to\bR$ is a fixed nonnegative smooth function on $\bR^d$ 
such that $k(x)=0$ for $|x|\geq1$, $k(-x)=k(x)$ 
for $x\in\bR^d$, and $\int_{\bR^d}k(x)\,dx=1$. 

\begin{proof}[Proof of Proposition \ref{proposition smooth PIDE}]
Let $\{\varepsilon_n\}_{n=1}^\infty\subseteq (0,1]$ 
such that $\lim_{n\to\infty}\varepsilon_n=0$.
Then for each $n\in\bN$ we denote by $g^{(\varepsilon_n)}$, $h^{(\varepsilon_n)}$,
$\mu^{(\varepsilon_n)}$, $\sigma^{(\varepsilon_n)}$, and $\eta^{(\varepsilon_n)}$
the mollifications (in $x\in\bR^d$) of $g$, $h$, $\mu$, $\sigma$, and $\eta$,
respectively.
For every $n\in\bN$, 
let $\big(X^{t,x,(n)}_s\big)_{s\in[t,T]}:[t,T]\times\Omega\to\bR^d$
be an $\bF$-adapted c\`adl\`ag process satisfying that
$X^{t,x,(n)}_t=x$ and almost surely for all $s\in[0,T]$
\begin{equation}                                          \label{SDE epsilon n}                              
dX^{t,x,(n)}_{s}
=\mu^{(\varepsilon_n)}(s,X^{t,x,(n)}_{s-})\,ds
+\sigma^{(\varepsilon_n)}(s,X^{t,x,(n)}_{s-})\,dW_s
+\int_{\bR^{d}}\eta^{(\varepsilon_n)}_s(X^{t,x,(n)}_{s-},z)
\,\tilde{\pi}(dz,ds).
\end{equation}
For each $n\in\bN$, $k\in\bN$, and $(t,x)\in[0,T]\times\bR^d$, we
define 
\begin{equation}                           \label{def u n k epsilon}
u^{n,k}(t,x):=\bE\left[
g^{(\varepsilon_k)}\big(X^{t,x,(n)}_T\big)
+\int_t^Th^{(\varepsilon_k)}\big(s,X^{t,x,(n)}_s\big)\,ds
\right],
\end{equation}
and
\begin{equation}                            \label{def u 0 k epsilon}
u^{0,k}(t,x):=\bE\left[
g^{(\varepsilon_k)}\big(X^{t,x}_T\big)
+\int_t^Th^{(\varepsilon_k)}\big(s,X^{t,x}_s\big)\,ds
\right].
\end{equation}
We first notice that by \eqref{def mollification} 
it holds for all $k\in\bN$ and $(t,x)\in[0,T]\times\bR^d$ that
\begin{align*}
\big|h^{(\varepsilon_k)}(t,x)-h(t,x)\big|
&
=\varepsilon_k^{-d}\left|\int_{\bR^d}\left[h(t,y)-h(t,x)\right]
k((x-y)/\varepsilon_k)\,dy\right|\\
&
\leq \left[\sup_{s\in[0,T]}\sup_{\|y-y'\|\leq\varepsilon_k}
\left(
|h(t,y)-h(t,y')|
\right)
\right]
\cdot
\int_{\bR^d}\varepsilon_k^{-d}k((x-y)/\varepsilon_k)\,dy\\
&
=\sup_{s\in[0,T]}\sup_{\|y-y'\|\leq\varepsilon_k}
\left(
|h(s,y)-h(s,y')|
\right).
\end{align*}
This together with the assumption that $h$ is compactly supported imply 
\begin{equation}                                    \label{conv h n}
\lim_{k\to\infty}\left[\sup_{(t,x)\in[0,T]\times\bR^d}
\big|h^{(\varepsilon_k)}(t,x)-h(t,x)\big|\right]=0.
\end{equation}
Moreover, by \eqref{assumption Lip mu sigma eta}, \eqref{def mollification}, 
and Jensen's inequality we first observe
for all $(t,x)\in[0,T]\times\bR^d$,$n\in\bN$, and $z\in\bR^d$ that
\begin{align}
\big\|\mu^{(\varepsilon_n)}(t,x)-\mu(t,x)\big\|^2
&
=\left\|
\int_{\bR^d}\mu(t,x-\varepsilon_ny)k(y)\,dy
-\int_{\bR^d}\mu(t,x)k(y)\,dy
\right\|^2\nonumber\\
&
\leq \int_{\bR^d}\big\|
\mu(t,x-\varepsilon_ny)-\mu(t,x)
\big\|^2k(y)\,dy\leq\varepsilon_n^2L,                          \label{coca 1}
\end{align}
\begin{equation}                                            \label{coca 2}
\big\|\sigma^{(\varepsilon_n)}(t,x)-\sigma(t,x)\big\|_F^2
\leq\varepsilon_n^2L,  
\quad
\big\|\eta^{(\varepsilon_n)}_t(x,z)-\eta_t(x,z)\big\|^2
\leq\varepsilon_n^2C_d(1\wedge\|z\|^2),                      
\end{equation}
and
$$
\big|g^{(\varepsilon_n)}(x)-g(x)\big|^2\leq \varepsilon_n^2LT^{-1}.
$$
Hence we have
\begin{equation}                                       \label{compact conv mu n}
\lim_{n\to\infty}\Bigg[
\sup_{(t,x)\in[0,T]\times\bR^d}\Big(
\big\|
\mu^{(\varepsilon_n)}(t,x)-\mu(t,x)
\big\|^2
+
\big\|
\sigma^{(\varepsilon_n)}(t,x)-\sigma(t,x)
\big\|^2
+\big|g^{(\varepsilon_n)}(x)-g(x)\big|^2\Big)
\Bigg]=0,
\end{equation}
and
\begin{equation}                                 \label{compact conv eta n}
\lim_{n\to\infty}\left[
\sup_{(t,x)\in[0,T]\times\bR^d}
\big\|
\eta^{(\varepsilon_n)}_t(x,z)-\eta_t(x,z)
\big\|^2
\right]=0 \quad \text{for all } z\in\bR^d.
\end{equation}
Hence by \eqref{conv h n} and analogous argument 
to obtain \eqref{compact conv mu n} and \eqref{compact conv eta n}, 
we have that Assumption \ref{assumption conv compact} holds
with $g^{(\varepsilon_n)}$, $h^{(\varepsilon_n)}$, $\mu^{(\varepsilon_n)}$,
$\sigma^{(\varepsilon_n)}$, and $\eta^{(\varepsilon_n)}$ in place of
$g^{(n)}$, $h^{(n)}$, $\mu^{(n)}$,
$\sigma^{(n)}$, and $\eta^{(n)}$, respectively.
Moreover, by \eqref{assumption Lip f g}, 
\eqref{assumption growth}, \eqref{assumption eta},
and Jensen's inequality we have for all $t\in[0,T]$, $x,x'\in\bR^d$, 
$z\in\bR^d$, and $n\in\bN$ that
\begin{align*}
\big\|\eta^{(\varepsilon_n)}_t(x,z)-\eta^{(\varepsilon_n)}_t(x',z)\big\|^2
&
=\left\|
\int_{\bR^d}\eta_t(x-\varepsilon_ny,z)k(y)\,dy
-\int_{\bR^d}\eta_t(x'-\varepsilon_ny,z)k(y)\,dy
\right\|^2\\
&
\leq \int_{\bR^d}\big\|
\eta_t(x-\varepsilon_ny,z)-\eta_t(x'-\varepsilon_ny,z)
\big\|^2k(y)\,dy\\
&
\leq C_d(1\wedge\|z\|^2)\|x-x'\|^2\int_{\bR^d}k(y)\,dy\\
&
=C_d(1\wedge\|z\|^2)\|x-x'\|^2,
\end{align*}
\begin{equation}                        \label{Lip g epsilon}
\big\|g^{(\varepsilon_n)}(x)-g^{(\varepsilon_n)}(x')\big\|^2
\leq LT^{-1}\|x-x'\|^2,
\end{equation}
\begin{align}
\big\|\eta^{(\varepsilon_n)}_t(x,z)\big\|^2
&
\leq \int_{\bR^d}\|\eta_t(x-\varepsilon_ny,z)\|^2k(y)\,dy
\leq C_d(1\wedge\|z\|^2)\int_{\bR^d}k(y)\,dy\nonumber\\
&
= C_d(1\wedge\|z\|^2),
\label{bbd eta epsilon n}
\end{align}
\begin{align}
\big\|g^{(\varepsilon_n)}(t,x)\big\|^2
&
\leq \int_{\bR^d}\|g(t,x-\varepsilon_ny)\|^2k(y)\,dy
\leq \int_{\bR^d}L(d^p+\|x-\varepsilon_ny\|^2)k(y)\,dy\nonumber\\
&
\leq L(d^p+2\|x\|^2)+\int_{\bR^d}2L\|\varepsilon_ny\|^2k(y)\,dy\nonumber\\
&
\leq 2L(d^p+1)(1+\|x\|^2),                        \label{growth g epsilon}
\end{align}
and
\begin{equation}                                   \label{growth h epsilon}
\big\|h^{(\varepsilon_n)}(t,x)\big\|^2\leq 2(C_0+1)(1+\|x\|^2).
\end{equation}
Thus, by analogous argument also for $\mu^{(\varepsilon_n)}$ 
and $\sigma^{(\varepsilon_n)}$
we get that Assumptions 
\ref{assumption Lip and growth n} and \ref{assumption eta n} hold with
$h^{(\varepsilon_n)}$, $g^{(\varepsilon_n)}$, $\mu^{(\varepsilon_n)}$, 
$\sigma^{(\varepsilon_n)}$, and $\eta^{(\varepsilon_n)}$ in place of
$h^{(n)}$, $g^{(n)}$, $\mu^{(n)}$, $\sigma^{(n)}$, and $\eta^{(n)}$, respectively.
Furthermore, by virtue of Corollary 3.6 in \cite{DGW2020}
there exist constants
$\delta'>0$ and $\varepsilon_0>0$ such that for all 
$\varepsilon\in(0,\varepsilon_0)$,
$(t,x)\in[0,T]\times\bR^d$, $z\in\bR^d$, and $\gamma\in[0,1]$
$$                                     
\delta'\leq \big|\det\big(\mathbf{I}_d
+\gamma D_x \eta^{(\varepsilon)}_t(x,z)\big)\big|.
$$
This verifies that
Assumption \ref{assumption jacobian n} holds for sufficiently large $n$,
with $\eta^{(\varepsilon_n)}$ in place of $\eta^{(n)}$.
Moreover, the application of Lemma 
\ref{lemma v solution linear} yields for all $n\in\bN$ 
and $(t,x)\in[0,T]\times\bR^d$ that 
\begin{align}                             
\bE\Big[\sup_{s\in[t,T]}\big\|X^{t,x,(n)}_s-X^{t,x}_s\big\|^2\Big]
\leq  
&
c_0\cdot
\Bigg[
\sup_{(r,y)\in[0,T]\times\bR^d}
\Bigg\{
\big\|\mu^{(\varepsilon_n)}(r,y)-\mu(r,y)\big\|^2\nonumber\\
&
+\big\|\sigma^{(\varepsilon_n)}(r,y)-\sigma(r,y)\big\|_F^2
+\int_{\bR^d}\big\|\eta^{(\varepsilon_n)}_r(y,z)-\eta_r(y,z)\big\|^2\,\nu(dz)
\Bigg\}
\Bigg],
\label{sup e epsilon T n}
\end{align}
where $c_0$ a positive constant only depending on $T$ and $L$.
Hence, by \eqref{int z & q bound} \eqref{coca 1} and \eqref{coca 2}
we obtain for all $n\in\bN$ that
\begin{align}
\sup_{(t,x)\in[0,T]\times\bR^d}
\bE\Big[\sup_{s\in[t,T]}\big\|X^{t,x,(n)}_s-X^{t,x}_s\big\|^2\Big]
&
\leq  
c_0\cdot\left[
2\varepsilon_n^2L+\varepsilon_n^2C_d\int_{\bR^d}(1\wedge\|z\|^2)\,\nu(dz)
\right]
\nonumber\\
&
\leq
c_0\varepsilon_n^2(2L+C_dKd^p), 
\label{est X n X}
\end{align}
which implies
\begin{equation}                                \label{uni conv X n}
\lim_{n\to \infty}\left(
\sup_{(t,x)\in[0,T]\times\bR^d}
\bE\Big[\sup_{s\in[t,T]}\big\|X^{t,x,(n)}_s-X^{t,x}_s\big\|^2\Big]
\right)=0.
\end{equation}
Furthermore, by Assumptions
\ref{assumption Lip and growth n}-\ref{assumption jacobian n},
applying Lemma \ref{lemma smooth PIDE}
(with $\mu\cal\mu^{(\varepsilon_n)}$, 
$\sigma\cal\sigma^{(\varepsilon_n)}$, 
$\eta\cal\eta^{(\varepsilon_n)}$,
$g\cal g^{(\varepsilon_k)}$, 
$h\cal h^{(\varepsilon_k)}$ for $n,k\in\bN$ in the notation of Proposition
\ref{lemma smooth PIDE}) yields that for all $k\in\bN$ and sufficiently large
$n\in\bN$
the function
$u^{n,k}\in C^{1,2}([0,T]\times\bR^d)$ is a classical solution of
\begin{align}
&
\frac{\partial}{\partial t}u^{n,k}(t,x)
+\langle\nabla_xu^{n,k}(t,x),\mu^{(\varepsilon_n)}(t,x)\rangle
+\frac{1}{2}\operatorname{Trace}
\left(\sigma^{(\varepsilon_n)}(t,x)[\sigma^{(\varepsilon_n)}(t,x)]^T
\operatorname{Hess}_xu^{n,k}(t,x)\right)\nonumber\\
&
+h^{(\varepsilon_k)}(t,x)+\int_{\bR^d}\left(
u^{n,k}(t,x+\eta^{(\varepsilon_n)}_t(x,z))
-u^{n,k}(t,x)-\langle\nabla_xu^{n,k}(t,x),
\eta^{(n)}_t(x,z)\rangle\right)\nu(dz)=0               \label{PIDE n k 00}
\end{align}
on $(0,T)\times \bR^d$ with $u^{n,k}(T,x)=g^{(\varepsilon_k)}(x)$ on $\bR^d$.

Next, by well-known properties of derivatives of mollifications
(see, e.g., Theorem 2.29 in \cite{AF})
and change of variables, we have for every 
$k\in\bN$, $i\in \{1,...,d\}$, and $(t,x)\in[0,T]\times\bR^d$ that
\begin{align}
\frac{\partial}{\partial x_i}h^{(\varepsilon_k)}(t,x)
&
=\varepsilon_k^{-(d+1)}
\int_{\bR^d}h(t,y)\Big(\frac{\partial}{\partial x_i}k\Big)
((x-y)/\varepsilon_k)\,dy
\nonumber\\
&
=\varepsilon_k^{-1}\int_{\bR^d}h(t,x-\varepsilon_kz)
\frac{\partial}{\partial x_i}k(z)\,dz.
\nonumber\\                  
\end{align}
This together with the fact that the functions $h$ and $k$ are compactly
supported and continuous imply for all $k\in\bN$ and $i\in \{1,...,d\}$ that
$\frac{\partial}{\partial x_i}h^{(\varepsilon_k)}\in C([0,T]\times\bR^d)$
and $\frac{\partial}{\partial x_i}h^{(\varepsilon_k)}$ is compactly supported.
Therefore, we obtain that for all $k\in\bN$
\begin{equation}                                     \label{est gradient h k}
\sup_{(t,x)\in[0,T]\times\bR^d}\|\nabla_xh^{(\varepsilon_k)}(t,x)\|<\infty.
\end{equation}
Furthermore, by \eqref{def u n k epsilon}, \eqref{def u 0 k epsilon},  
\eqref{Lip g epsilon}, and the mean value theorem
it holds for all $n,k\in\bN$ and $x\in\bR^d$ that
\begin{align} 
&                                  
\big|u^{n,k}(t,x)-u^{0,k}(t,x)\big|\nonumber\\
&
\leq
\bE\left[\big|g^{(\varepsilon_k)}
\big(X^{t,x,(n)}_T\big)-g^{(\varepsilon_k)}\big(X^{t,x}_T\big)\big|\right]
+\int_t^T\bE\left[\big|
h^{(\varepsilon_k)}(s,X^{t,x,(n)}_s)-h^{(\varepsilon_k)}(s,X^{t,x}_s)
\big|\right]ds
\nonumber\\
&
\leq \left(
\bE\left[LT^{-1}\big\|X^{t,x,(n)}_T-X^{t,x}_T\big\|^2\right]
\right)^{1/2}\nonumber\\
& \quad
+\int_t^T\bE\left[\left(\sup_{(r,y)\in[0,T]\times\bR^d}
\|\nabla_yh^{(\varepsilon_k)}(r,y)\|\right)\cdot
\big\|
X^{t,x,(n)}_s-X^{t,x}_s
\big\|
\right]ds\nonumber\\
&
\leq \left(
\bE\left[LT^{-1}\big\|X^{t,x,(n)}_T-X^{t,x}_T\big\|^2\right]
\right)^{1/2}\nonumber\\
& \quad
+T\left(\sup_{(r,y)\in[0,T]\times\bR^d}
\|\nabla_yh^{(\varepsilon_k)}(r,y)\|\right)\cdot\left(
\bE\Big[\sup_{s\in[t,T]}\big\|X^{t,x,(n)}_s-X^{t,x}_s\big\|^2\Big]
\right)^{1/2}
\label{beer 0}
\end{align}
This together with \eqref{uni conv X n} and \eqref{est gradient h k} show 
for every $k\in\bN$ that
\begin{equation}                              \label{uni conv u n k u 0 k}                                   
\lim_{n\to\infty}\Bigg[
\sup_{(t,x)\in[0,T]\times\bR^d}
\big|u^{n,k}(t,x)-u^{0,k}(t,x)\big|\Bigg]= 0.                                                                                                                    
\end{equation}
Furthermore, by \eqref{SDE moment est}, \eqref{FK linear 1}, 
Assumptions \ref{assumption Lip and growth} and \ref{assumption h}, 
and H\"older's inequality
we have for all $(t,x)\in[0,T]$ that
\begin{align*}
|u(t,x)|
&
\leq \left(\bE\left[\big|g(X^{t,x}_T)\big|^2\right]\right)^{1/2}
+\int_t^T\left(\bE\left[\big|h(s,X^{t,x}_s)\big|^2\right]\right)^{1/2}ds
\nonumber\\
&
\leq\left(\bE\left[L\Big(d^p+\big\|X^{t,x}_T\big\|^2\Big)\right]\right)^{1/2}
+\int_t^T
\left(\bE\left[C_0\Big(1+\big\|X^{t,x}_s\big\|^2\Big)\right]\right)^{1/2}ds
\nonumber\\
&
\leq \left[Ld^p+LC(1+\|x\|^2)\right]^{1/2}
+T\left[C_0+C_0C(1+\|x\|^2)\right]^{1/2}.
\end{align*}
This implies that
\begin{equation}                            \label{linear growth u}
\sup_{(t,x)\in[0,T]\times\bR^d}\frac{|u(t,x)|}{1+\|x\|}<\infty.
\end{equation}
Similarly, using Assumption \ref{assumption Lip and growth n} 
we also obtain for all $k,n\in\bN$ that
\begin{equation}                                \label{linear growth u n k}
\sup_{(t,x)\in[0,T]\times\bR^d}
\frac{|u^{n,k}(t,x)|+|u^{0,k}(t,x)|}{1+\|x\|}<\infty.
\end{equation}
Hence, by \eqref{uni conv u n k u 0 k} and 
the fact that $u^{n,k}\in C_1([0,T]\times\bR^d)$ for
all $k\in\bN$ and sufficiently large $n\in\bN$ shows that $u^{0,k}\in C_1([0,T]\times\bR^d,\bR)$ for all $k\in\bN$.
Therefore, by \eqref{PIDE n k 00}, \eqref{uni conv u n k u 0 k}, 
and Assumptions \ref{assumption conv compact}-\ref{assumption eta n},
applying Lemma
\ref{lemma subsolution} yields for all $k\in\bN$ that 
the function 
$u^{0,k}$ is a viscosity solution of
\begin{align}
&
\frac{\partial}{\partial t}u^{0,k}(t,x)
+\langle\nabla_xu^{0,k}(t,x),\mu(t,x)\rangle
+\frac{1}{2}\operatorname{Trace}
\left(\sigma(t,x)[\sigma(t,x)]^T
\operatorname{Hess}_xu^{0,k}(t,x)\right)\nonumber\\
&
+h^{(\varepsilon_k)}(t,x)+\int_{\bR^d}\left(
u^{0,k}(t,x+\eta_t(x,z)-u^{0,k}(t,x)-\langle\nabla_xu^{0,k}(t,x),
\eta_t(x,z)\rangle\right)\nu(dz)=0               \label{PIDE 0 k 00}
\end{align}
on $(0,T)\times \bR^d$ with $u^{0,k}(T,x)=g^{(\varepsilon_k)}(x)$ on $\bR^d$.
Furthermore, by \eqref{FK linear 1} and \eqref{def u 0 k epsilon} we obtain
for all $k\in\bN$ and $(t,x)\in[0,T]\times\bR^d$ that
\begin{align*}
\big|u(t,x)-u^{0,k}(t,x)\big|
&
\leq \bE\left[\big|g^{(\varepsilon_k)}(X^{t,x}_T)-g(X^{t,x}_T)\big|\right]
+\int_t^T\bE\left[\big|h^{(\varepsilon_k)}(s,X^{t,x}_s)
-h(s,X^{t,x}_s)\big|\right]ds\\
&
\leq \sup_{y\in\bR^d}
\left\{
\big|g^{(\varepsilon_k)}(y)-g(y)\big|
\right\}
+T\cdot\sup_{(r,y)\in[0,T]\times\bR^d}
\left\{
\big|h^{(\varepsilon_k)}(s,y)-h(s,y)\big|
\right\}.
\end{align*}
This together with \eqref{conv h n} and \eqref{compact conv mu n} imply that
\begin{equation}                                          \label{conv u 0 k}
\lim_{n\to\infty}
\sup_{(t,x)\in[0,T]\times\bR^d}\left\{\big|u(t,x)-u^{0,k}(t,x)\big|\right\}=0.
\end{equation}
Hence, \eqref{linear growth u n k} and 
the fact that $u^{0,k}\in C_1([0,T]\times\bR^d)$ for all $k\in\bN$ 
ensures $u\in C_1([0,T]\times\bR^d)$.
Therefore, by \eqref{conv h n}, \eqref{compact conv mu n},
\eqref{PIDE 0 k 00}, 
Assumptions \ref{assumption Lip and growth}-\ref{assumption jacobian},
and Assumption \ref{assumption h},
the application of Lemma \ref{lemma subsolution}
yields that $u$ is a viscosity solution of
\eqref{linear PIDE} with $u(T,x)=g(x)$ for all $x\in\bR^d$.
\end{proof}

\begin{lemma}                                      \label{lemma two SDEs}
Let $B\subseteq \bR^d$ be a closed set.
Let $\bar{\mu}\in C([0,T]\times\bR^d,\bR^d)$, 
$\bar{\sigma}\in C([0,T]\times\bR^d,\bR^{d\times d})$, and
$\bar{\eta}\in C([0,T]\times\bR^d\times\bR^d,\bR^d)$ such that
for all $(t,x)\in[0,T]\times B$ and $z\in\bR^d$
\begin{equation}                                        \label{cond B}
\bar{\mu}(t,x)=\mu(t,x),\quad \bar{\sigma}(t,x)=\sigma(t,x), 
\quad \bar{\eta}_t(x,z)=\eta_t(x,z).
\end{equation}
Assume that $\bar{\mu}$, $\bar{\sigma}$, and $\bar{\eta}$ satisfy
Assumptions \ref{assumption Lip and growth} and \ref{assumption pointwise}.
Moreover, for each $(t,x)\in[0,T]\times\bR^d$ 
let $\big(\bar{X}^{t,x}_s\big)_{s\in[t,T]}:[t,T]\times\Omega\to\bR^d$ be an 
$\bF$-adapted c\`adl\`ag process satisfying that $\bar{X}^{t,x}_t=x$,
and almost surely for all $s\in[t,T]$ 
$$                                             
d\bar{X}^{t,x}_{s}
=\bar{\mu}\big(s,\bar{X}^{t,x}_{s-}\big)\,ds
+\bar{\sigma}\big(s,\bar{X}^{t,x}_{s-}\big)\,dW_s
+\int_{\bR^{d}}\bar{\eta}_s\big(\bar{X}^{t,x}_{s-},z\big)
\,\tilde{\pi}(dz,ds).
$$
For each $(t,x)\in[0,T]\times\bR^d$, let $\tau^{t,x}:\Omega\to[0,T]$ be a stopping
time defined by
\begin{equation}                                     \label{def tau B}
\tau^{t,x}:=\inf
\big\{
s\geq t: X^{t,x}_s\notin B \; \text{or} \; \bar{X}^{t,x}_s\notin B
\big\}
\wedge T.
\end{equation}
Then it holds for all $(t,x)\in[0,T]\times\bR^d$ that
\begin{equation}                                               \label{prob 1 tau}
\bP\left[
\mathbf{1}_{\{s\leq\tau^{t,x}\}}\big\|X^{t,x}_s-\bar{X}^{t,x}_s\big\|=0
\; \text{for} \; \text{all} \; s\in[t,T]
\right]=1.
\end{equation}
\end{lemma}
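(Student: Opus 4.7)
The plan is to apply a standard localization/Gr\"onwall argument to the difference process $D_s := X^{t,x}_s - \bar{X}^{t,x}_s$, exploiting that on the random interval $\llbracket t,\tau^{t,x}\rrbracket$ both processes stay in $B$, where the two sets of coefficients coincide. First I would verify that $\tau^{t,x}$ is a stopping time: since $B$ is closed and both $X^{t,x}$ and $\bar{X}^{t,x}$ are c\`adl\`ag adapted with respect to a filtration satisfying the usual conditions, $\tau^{t,x}$ is the first entry time into the open set $B^c$, hence a stopping time. Moreover, closedness of $B$ together with c\`adl\`agness implies $X^{t,x}_{u-}\in B$ and $\bar{X}^{t,x}_{u-}\in B$ for every $u\in[t,T]$ on the event $\{u\leq \tau^{t,x}\}$, because $X^{t,x}_v, \bar{X}^{t,x}_v\in B$ for all $v<\tau^{t,x}$ by the definition of infimum and the left limits therefore lie in $\overline{B}=B$.

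Next I would write, for each $s\in[t,T]$,
\begin{align*}
X^{t,x}_{s\wedge\tau^{t,x}}-\bar{X}^{t,x}_{s\wedge\tau^{t,x}}
&=\int_t^{s\wedge\tau^{t,x}}\!\!\left[\mu(u,X^{t,x}_{u-})-\bar{\mu}(u,\bar{X}^{t,x}_{u-})\right]du
+\int_t^{s\wedge\tau^{t,x}}\!\!\left[\sigma(u,X^{t,x}_{u-})-\bar{\sigma}(u,\bar{X}^{t,x}_{u-})\right]dW_u\\
&\quad+\int_t^{s\wedge\tau^{t,x}}\!\int_{\bR^d}\!\left[\eta_u(X^{t,x}_{u-},z)-\bar{\eta}_u(\bar{X}^{t,x}_{u-},z)\right]\tilde{\pi}(dz,du),
\end{align*}
and use the hypothesis \eqref{cond B} together with the above left-limit observation to replace, on $\{u\leq \tau^{t,x}\}$, every occurrence of $\mu(u,X^{t,x}_{u-})$ by $\bar{\mu}(u,X^{t,x}_{u-})$, and analogously for $\sigma$ and $\eta$. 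The Lipschitz bounds of Assumption \ref{assumption Lip and growth} for $\bar{\mu},\bar{\sigma},\bar{\eta}$ then control each integrand by $L^{1/2}\|X^{t,x}_{u-}-\bar{X}^{t,x}_{u-}\|$ (and in squared/integrated norm by $L\|X^{t,x}_{u-}-\bar{X}^{t,x}_{u-}\|^2$ for the $\nu$-integral of $\eta-\bar{\eta}$).

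I would then take expectation of the supremum over $r\in[t,s]$ of the squared norm of $X^{t,x}_{r\wedge\tau^{t,x}}-\bar{X}^{t,x}_{r\wedge\tau^{t,x}}$, combine Davis' inequality (exactly as in the proof of \eqref{error Euler} in Lemma \ref{Lemma Euler}) for the Brownian and compensated-Poisson parts with Young's inequality, and localize along stopping times analogous to \eqref{def tau} (together with Fatou's lemma at the end) to make the stochastic integrals true martingales. The resulting bound has the form
$$
\bE\Big[\sup_{r\in[t,s]}\big\|X^{t,x}_{r\wedge\tau^{t,x}}-\bar{X}^{t,x}_{r\wedge\tau^{t,x}}\big\|^2\Big]
\leq C\int_t^s\bE\Big[\sup_{r\in[t,u]}\big\|X^{t,x}_{r\wedge\tau^{t,x}}-\bar{X}^{t,x}_{r\wedge\tau^{t,x}}\big\|^2\Big]du
$$
for a constant $C=C(T,L)$, the right-hand side being finite thanks to the moment estimate \eqref{SDE moment est} applied to both $X^{t,x}$ and $\bar{X}^{t,x}$. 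Gr\"onwall's lemma then forces the supremum to vanish, and hence almost surely $X^{t,x}_{u\wedge\tau^{t,x}}=\bar{X}^{t,x}_{u\wedge\tau^{t,x}}$ for every $u\in[t,T]$; since on $\{s\leq \tau^{t,x}\}$ we have $s\wedge\tau^{t,x}=s$, this yields \eqref{prob 1 tau}.

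The main technical care lies in the jump term: one must justify that the integrand $\mathbf{1}_{\{u\leq\tau^{t,x}\}}[\eta_u(X^{t,x}_{u-},z)-\bar{\eta}_u(\bar{X}^{t,x}_{u-},z)]$ is predictable in $(u,\omega)$ (it is, because $\mathbf{1}_{\{u\leq\tau^{t,x}\}}$ is left-continuous and $X^{t,x}_{u-}, \bar{X}^{t,x}_{u-}$ are left-continuous) and verify the square-integrability required to apply It\^o's isometry (via the localization). Once these points are handled, the coincidence $\mu=\bar{\mu}$, $\sigma=\bar{\sigma}$, $\eta=\bar{\eta}$ on $B$ and the Lipschitz Assumption \ref{assumption Lip and growth} do all the remaining work.
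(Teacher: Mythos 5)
Your proposal is correct and follows essentially the same Gr\"onwall argument as the paper. The only mild divergence is that the paper controls the pointwise second moment $\bE[\|X^{t,x}_{s\wedge\tau}-\bar X^{t,x}_{s\wedge\tau}\|^2]$ via Minkowski's inequality and It\^o's isometry and then upgrades the ``for each $s$'' conclusion to indistinguishability using c\`adl\`ag sample paths, whereas you control the running supremum directly via Davis/BDG (making the final indistinguishability immediate); both routes rest on the identical key observations — coincidence of coefficients on $B$, the fact that on $\{u\le\tau^{t,x}\}$ the left limits lie in the closed set $B$, the Lipschitz bounds for $\bar\mu,\bar\sigma,\bar\eta$, and finiteness from \eqref{SDE moment est} — and your extra remarks on predictability and localization are careful but not strictly needed beyond what the moment bound already provides.
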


\begin{proof}
Throughout this proof, we fix $(t,x)\in[0,T]\times\bR^d$, and use the notation
$\tau=\tau^{t,x}$. We first notice that almost surely for all $s\in[t,T]$
\begin{align*}
\bar{X}^{t,x}_{s\wedge\tau}-X^{t,x}_{s\wedge\tau}
&
=\int_t^{s\wedge\tau}\left(
\bar{\mu}\big(r,\bar{X}^{t,x}_{r-}\big)-\mu\big(r,X^{t,x}_{r-}\big)
\right)dr
+\int_t^{s\wedge\tau}\left(
\bar{\sigma}\big(r,\bar{X}^{t,x}_{r-}\big)-\sigma\big(r,X^{t,x}_{r-}\big)
\right)dW_r\\
& \quad
+\int_t^{s\wedge\tau}\int_{\bR^d}\left(
\bar{\eta}_r\big(\bar{X}^{t,x}_{r-},z\big)-\eta_r\big(X^{t,x}_{r-},z\big)
\right)\tilde{\pi}(dz,dr).
\end{align*}
Hence, by Minkowski's inequality, Jensen's inequality, It\^o's isometry,
and \eqref{cond B}
we have for all $s\in[t,T]$ that 
\begin{align*}
\left(\bE\left[\big\|
\bar{X}^{t,x}_{s\wedge \tau}-X^{t,x}_{s\wedge\tau}
\big\|^2\right]\right)^{1/2}
&
\leq 
\left(
\int_t^s(s-t)\bE\left[
\mathbf{1}_{\{r\leq\tau\}}\big\|
\mu\big(r,\bar{X}^{t,x}_r\big)-\mu\big(r,X^{t,x}_r\big)
\big\|^2
\right]dr
\right)^{1/2}\\
& \quad
+ \left(
\int_t^s\bE\left[
\mathbf{1}_{\{r\leq\tau\}}\big\|
\sigma\big(r,\bar{X}^{t,x}_r\big)-\sigma\big(r,X^{t,x}_r\big)
\big\|_F^2
\right]dr
\right)^{1/2}\\
& \quad
+\left(
\int_t^s\bE\left[\int_{\bR^d}
\mathbf{1}_{\{r\leq\tau\}}\big\|
\eta_r\big(\bar{X}^{t,x}_r,z\big)-\eta_r\big(X^{t,x}_r,z\big)
\big\|^2\,\nu(dz)
\right]dr
\right)^{1/2}\\
&
\leq 
\left(
\int_t^s(s-t)\bE\left[
\big\|
\mu\big(r,\bar{X}^{t,x}_{r\wedge\tau}\big)-\mu\big(r,X^{t,x}_{r\wedge\tau}\big)
\big\|^2
\right]dr
\right)^{1/2}\\
& \quad
+ \left(
\int_t^s\bE\left[
\big\|
\sigma\big(r,\bar{X}^{t,x}_{r\wedge\tau}\big)
-\sigma\big(r,X^{t,x}_{r\wedge\tau}\big)
\big\|_F^2
\right]dr
\right)^{1/2}\\
& \quad
+\left(
\int_t^s\bE\left[\int_{\bR^d}
\big\|
\eta_r\big(\bar{X}^{t,x}_{r\wedge\tau},z\big)
-\eta_{r}\big(X^{t,x}_{r\wedge\tau},z\big)
\big\|^2\,\nu(dz)
\right]dr
\right)^{1/2}.
\end{align*}
Thus, by \eqref{assumption Lip mu sigma eta} and the fact that
$(a+b+c)^2\leq 3(a^2+b^2+c^2)$ for $a,b,c\in\bR$, we obtain for all
$s\in[t,T]$ that
\begin{equation}                                 \label{bar difference}
\bE\left[\big\|\bar{X}^{t,x}_{s\wedge\tau}-X^{t,x}_{s\wedge\tau}\big\|^2\right]
\leq
3(T+2)L\int_t^s\bE\left[\big\|\bar{X}^{t,x}_{r\wedge\tau}
-X^{t,x}_{r\wedge\tau}\big\|^2\right]dr.
\end{equation}
Moreover, by \eqref{SDE moment est} it holds that
\begin{equation}                                \label{bar est finite}
\bE\Big[\sup_{s\in[t,T]}\big\|X^{t,x}_s\big\|^2\Big]+
\bE\Big[\sup_{s\in[t,T]}\big\|\bar{X}^{t,x}_s\big\|^2\Big]<\infty.
\end{equation}
Then by \eqref{bar difference}, \eqref{bar est finite},
and Gr\"onwall's lemma we have for every $s\in[t,T]$ that
$$
\bE\left[\big\|\bar{X}^{t,x}_{s\wedge\tau}-X^{t,x}_{s\wedge\tau}\big\|^2\right]
=0.
$$
Hence, considering that $\big(\bar{X}^{t,x}_s\big)_{s\in[t,T]}$ and
$\big(X^{t,x}_s\big)_{s\in[t,T]}$ have c\`adl\`ag sample paths,
we obtain \eqref{prob 1 tau}. Thus, the proof of this lemma is completed.
\end{proof}

\begin{proposition}                            \label{proposition linear PIDE}
Let Assumptions \ref{assumption Lip and growth},
\ref{assumption pointwise}, \ref{assumption jacobian}, 
and \ref{assumption h} hold,
and let $u:[0,T]\times\bR^d\to\bR^d$ satisfy for all 
$(t,x)\in[0,T]\times\bR^d$ that
\begin{equation}                                           \label{FK linear 2}
u(t,x)=\bE\Bigg[g(X^{t,x}_T)+\int_t^Th(s,X^{t,x}_s)\,ds\Bigg].
\end{equation}
Then $u$ is a viscosity solution 
of the linear PIDE \eqref{linear PIDE} with $u(T,x)=g(x)$ for all $x\in\bR^d$.
\end{proposition}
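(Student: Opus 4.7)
My plan is to pass from the compactly supported setting of Proposition~\ref{proposition smooth PIDE} to the general one via a cutoff approximation, leveraging Lemmas~\ref{lemma two SDEs} and~\ref{lemma subsolution}. I would fix a family of smooth cutoff functions $\chi_R\in C^\infty_c(\bR^d)$, $R\in\bN$, with $0\leq\chi_R\leq 1$, $\chi_R\equiv 1$ on $\{x\in\bR^d:\|x\|\leq R\}$ and $\chi_R\equiv 0$ on $\{x\in\bR^d:\|x\|\geq R+1\}$, and set
$$
\mu_R(t,x):=\chi_R(x)\mu(t,x),\;\; \sigma_R(t,x):=\chi_R(x)\sigma(t,x),\;\; \eta^R_t(x,z):=\chi_R(x)\eta_t(x,z),
$$
together with $g_R(x):=\chi_R(x)g(x)$ and $h_R(t,x):=\chi_R(x)h(t,x)$. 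Each of these modified coefficients coincides with the original on $\{x\in\bR^d:\|x\|\leq R\}$, is compactly supported in $x$ (uniformly in $z$ for $\eta^R$), and satisfies Assumptions~\ref{assumption Lip and growth}, \ref{assumption pointwise}, \ref{assumption jacobian}, and~\ref{assumption h} with appropriate ($R$-dependent) constants.

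Denoting by $(\bar{X}^{R,t,x}_s)_{s\in[t,T]}$ the unique $\bF$-adapted c\`adl\`ag solution of the SDE with coefficients $(\mu_R,\sigma_R,\eta^R)$ and initial condition $\bar{X}^{R,t,x}_t=x$, an application of Proposition~\ref{proposition smooth PIDE} to the truncated data yields that
$$
u_R(t,x):=\bE\Bigg[g_R\big(\bar{X}^{R,t,x}_T\big)+\int_t^T h_R\big(s,\bar{X}^{R,t,x}_s\big)\,ds\Bigg]
$$
is a viscosity solution of the truncated version of~\eqref{linear PIDE} with terminal value $g_R$. Next, Lemma~\ref{lemma two SDEs} applied with $B=\{x\in\bR^d:\|x\|\leq R\}$ shows $\bar{X}^{R,t,x}_s=X^{t,x}_s$ almost surely on $\{s\leq\tau^{t,x}_R\}$, where $\tau^{t,x}_R$ is the first exit time of either process from $B$. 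Combined with Chebyshev's inequality and the moment estimate~\eqref{est 1+} of Lemma~\ref{lemma est moment}, this forces $\bP(\tau^{t,x}_R<T)\to 0$ as $R\to\infty$, locally uniformly in $(t,x)$.

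Using the at-most-linear-in-$\|x\|^2$ growth of $g$ and $h$ from Assumptions~\ref{assumption Lip and growth} and~\ref{assumption h} together with the moment bound of Lemma~\ref{lemma est moment}, a uniform integrability/dominated-convergence argument yields that $u_R\to u$ locally uniformly on $[0,T]\times\bR^d$, and a uniform growth bound implies $u_R\in C_1([0,T]\times\bR^d)$ for every $R$. The multiplicative structure of the truncations makes Assumptions~\ref{assumption conv compact}, \ref{assumption Lip and growth n}, and~\ref{assumption eta n} immediate for the sequence $\{(\mu_R,\sigma_R,\eta^R,g_R,h_R)\}_{R\in\bN}$, so Lemma~\ref{lemma subsolution} applies and delivers $u$ as a viscosity solution of~\eqref{linear PIDE} with $u(T,x)=g(x)$, which is the desired conclusion.

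The main technical obstacle will be verifying Assumption~\ref{assumption jacobian} for the truncated $\eta^R$ on the transition annulus $R<\|x\|<R+1$, since by the product rule
$$
D_x\eta^R_t(x,z)=\chi_R(x)D_x\eta_t(x,z)+\eta_t(x,z)\nabla\chi_R(x)^T
$$
is a rank-one perturbation of a scaled version of the original Jacobian. Since $\gamma\chi_R(x)\in[0,1]$, Assumption~\ref{assumption jacobian} already gives $|\det(\mathbf{I}_d+\gamma\chi_R(x)D_x\eta_t(x,z))|\geq \lambda_d>0$, and the matrix determinant lemma reduces the question to bounding
$$
\big|1+\gamma\nabla\chi_R(x)^T\big(\mathbf{I}_d+\gamma\chi_R(x)D_x\eta_t(x,z)\big)^{-1}\eta_t(x,z)\big|
$$
away from zero; this can be arranged using the pointwise bound $\|\eta_t(x,z)\|\leq C_d^{1/2}(1\wedge\|z\|)$ from Assumption~\ref{assumption pointwise} and a sufficiently gentle choice of $\chi_R$ (e.g.\ widening the transition annulus). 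Since Assumption~\ref{assumption jacobian} permits the lower bound to depend on the problem data, producing some $\lambda_{d,R}>0$ for each fixed $R$ is enough.
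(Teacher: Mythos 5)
Your overall scheme -- cut off all coefficients, apply Proposition~\ref{proposition smooth PIDE} to the truncated data, use Lemma~\ref{lemma two SDEs} plus Chebyshev to identify the truncated and original SDE solutions up to a vanishing exit-time event, and then pass to the limit via Lemma~\ref{lemma subsolution} -- is the same approach the paper takes, so the architecture is sound. Two comments on where the executions diverge.

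First, the Jacobian condition is the genuine obstruction, and your fixed-width annulus $\{R<\|x\|<R+1\}$ does not resolve it: on such an annulus $\|\nabla\chi_R\|$ is bounded below uniformly in $R$, so the rank-one perturbation $\gamma\,\eta_t(x,z)\nabla\chi_R(x)^T$ has Frobenius norm of order $C_d^{1/2}$ independently of $R$, and the factor $1+\gamma\nabla\chi_R^T(\mathbf{I}_d+\gamma\chi_R D_x\eta_t)^{-1}\eta_t$ from the matrix determinant lemma can indeed vanish at some $(t,x,z,\gamma)$; nothing in Assumptions~\ref{assumption Lip and growth}--\ref{assumption jacobian} precludes this. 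Your remedy of widening the annulus is exactly what is needed -- you must make $\sup_x\|\nabla\chi_R(x)\|$ small enough (relative to $C_d^{1/2}$ and a bound on $\|(\mathbf{I}_d+\gamma\chi_R D_x\eta_t)^{-1}\|$, which follows from the uniform bounds $\|D_x\eta_t\|_F\le d^{1/2}C_d^{1/2}$ and $|\det|\ge\lambda_d$) so that the scalar factor stays near $1$. The paper achieves this automatically by taking $\chi_n(x)=\chi(x/n)$, so the annulus width grows linearly and $\|\nabla\chi_n\|=O(1/n)\to 0$; it then appeals to uniform continuity of $A\mapsto|\det(\mathbf{I}_d+A)|$ on a bounded set rather than to the determinant lemma. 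Your determinant-lemma route is a legitimate alternative, but once you make the width large the two arguments are doing the same thing; as written (width $1$) the step fails.

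Second, a smaller structural difference: the paper truncates the SDE coefficients at level $n$ and the data $g,h$ at a separate level $k$, first sending $n\to\infty$ with $k$ fixed (to pass from $u^{n,k}$ to $u^{0,k}$, still driven by the original SDE), and only then sending $k\to\infty$. You collapse both indices into one parameter $R$. This is admissible -- the relevant estimate $|u^{n,k}-u^{0,k}|\lesssim(1+\|x\|^2)\,k/n^2$ from the stopping-time bound still tends to zero along the diagonal $n=k=R$, and the $g^{(k)},h^{(k)}\to g,h$ convergence is uniform in a weighted norm -- but you should spell out that the growth of $\sup|g_R|,\sup|h_R|$ (which is $O(R)$) is beaten by the $O(1/R^2)$ decay of $\bP(\tau^{t,x}_R<T)$, since "locally uniform convergence" is not immediate from dominated convergence alone here; it is precisely this competition that the paper's two-index scheme keeps transparent.
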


\begin{proof}
Throughout this proof, let $\chi\in C^\infty_c(\bR^d)$ such that 
$\chi(x)=1$ for $\|x\|\leq 1$, 
$0\leq\chi(x)\leq 1$ for $1<\|x\|<2$,
and $\chi(x)=0$ for $\|x\|\geq 2$.
Then for each $n\in\bN$, $(t,x)\in[0,T]\times\bR^d$, and $z\in\bR^d$ 
define $\chi_n(x):=\chi(x/n)$ and
\begin{align*}
&
\mu^{(n)}(t,x):=\mu(t,x)\chi_n(x),\quad \sigma^{(n)}(t,x):=\sigma(t,x)\chi_n(x),
\quad \eta^{(n)}_t(x,z):=\eta_t(x,z)\chi_n(x),\\
&
g^{(n)}(x):=g(x)\chi_n(x), \quad h^{(n)}(t,x):=h(t,x)\chi_n(x).
\end{align*}
For every $n\in\bN$, $(t,x)\in[0,T]\times\bR^d$ 
let $\big(X^{t,x,(n)}_s\big)_{s\in[t,T]}:[t,T]\times\Omega\to\bR^d$ be an 
$\bF$-adapted c\`adl\`ag process satisfying that $X^{t,x}_t=x$,
and almost surely for all $s\in[t,T]$ 
$$                                             
dX^{t,x,(n)}_{s}
=\mu^{(n)}\big(s,X^{t,x,(n)}_{s-}\big)\,ds
+\sigma^{d}\big(s,X^{t,x,(n)}_{s-}\big)\,dW_s
+\int_{\bR^{d}}\eta^{(n)}_s\big(X^{t,x,(n)}_{s-},z\big)
\,\tilde{\pi}(dz,ds).
$$
For each $n\in\bN$, $k\in\bN$, and $(t,x)\in[0,T]\times\bR^d$, we
define 
\begin{equation}                           \label{def u n k}
u^{n,k}(t,x):=\bE\left[
g^{(k)}\big(X^{t,x,(n)}_T\big)+\int_t^Th^{(k)}\big(s,X^{t,x,(n)}_s\big)\,ds
\right],
\end{equation}
and
\begin{equation}                            \label{def u 0 k}
u^{0,k}(t,x):=\bE\left[
g^{(k)}\big(X^{t,x}_T\big)+\int_t^Th^{(k)}\big(s,X^{t,x}_s\big)\,ds
\right].
\end{equation}
By construction, 
for every $n\in\bN$ we have that 
$\mu^{(n)}$, $\sigma^{(n)}$, $\eta^{(n)}$, $g^{(n)}$,
and $h^{(n)}$ satisfy Assumption \ref{assumption conv compact}.
In addition, for all $n\in\bN$, $t\in[0,T]$, and $x,x'\in\bR^d$ with 
$\|x\|\wedge\|x'\|\geq 2n$ we have
$$
\big\|\mu^{(n)}(t,x)-\mu^{(n)}(t,x')\big\|=0.
$$
Moreover, by \eqref{assumption Lip mu sigma eta}, \eqref{assumption growth},
and the mean value theorem
we have for all $n\in\bN$, $t\in[0,T]$, and $x,x'\in\bR^d$ with 
$\|x'\|\leq 2n$
that  
\begin{align*}
&
\big\|\mu^{(n)}(t,x)-\mu^{(n)}(t,x')\big\|^2
=\big\|\mu(t,x)\chi_n(x)-\mu(t,x')\chi_n(x')\big\|^2\\
&
\leq 2\big\|\mu(t,x)\chi_n(x)-\mu(t,x')\chi_n(x)\big\|^2
+2\big\|
\mu(t,x')(\chi_n(x)-\chi_n(x'))
\big\|^2\\
&
\leq
2L\|x-x'\|^2\cdot \sup_{y\in\bR^d}\big(|\chi(y)|^2\big)
+2\cdot\sup_{\|y\|\leq 2n}\big(\|\mu(t,y)\|^2\big)\cdot
\sup_{y\in\bR^d}\big(\big\|\nabla\chi_n(y)\big\|^2\big)\cdot\|x-x'\|^2\\
&
\leq
2L\|x-x'\|^2
+\frac{2Ld^p(1+4n^2)}{n^2}\cdot
\sup_{y\in\bR^d}\big(\big\|\nabla\chi(y)\big\|^2\big)\cdot\|x-x'\|^2\\
&
\leq 2L\|x-x'\|^2\cdot\left[
1+5d^p\cdot
\sup_{y\in\bR^d}\big(\big\|\nabla\chi(y)\big\|^2\big)
\right].
\end{align*}
Furthermore, by \eqref{assumption growth} we have for all $n\in\bN$ and
$(t,x)\in[0,T]\times\bR^d$ that
$$
\big\|\mu^{(n)}(t,x)\big\|^2\leq \big\|\mu(t,x)\big\|^2\leq Ld^p(1+\|x\|^2).
$$
Hence, by analogous arguments we obtain that $\mu^{(n)}$, $\sigma^{(n)}$, $\eta^{(n)}$, $g^{(n)}$,
and $h^{(n)}$ satisfy Assumptions \ref{assumption Lip and growth n}
and \ref{assumption eta n}.
Furthermore, by the product rule it holds for all $n\in\bN$, $\gamma\in[0,1]$,
$(t,x)\in[0,T]\times\bR^d$, and $z\in\bR^d$ that
\begin{equation}                                       \label{jacobian eta n 0}
\gamma D_x\eta^{(n)}_t(x,z)
=\gamma D_x\big(\eta_t(x,z)\chi_n(x)\big)
=\gamma\chi_n(x)D_x\eta_t(x,z)+\gamma\eta_t(x,z)\left[\nabla\chi_n(x)\right]^T,
\end{equation}
where $\eta_t(x,z)\left[\nabla\chi_n(x)\right]^T$
is understood as the matrix product of the column vector
$\eta_t(x,z)$ and the row vector $\left[\nabla\chi_n(x)\right]^T$.
By the definition of $\chi_n(\cdot)$, $n\in\bN$, and Assumption
\ref{assumption pointwise} we also notice that it holds for
all $n\in\bN$, $(t,x)\in[0,T]\times\bR^d$, and $z\in\bR^d$ that
$
0\leq \chi_n(x)\leq 1
$ 
and 
\begin{equation}                                        \label{jacobian eta n 1}
\big\|\eta_t(x,z)\left[\nabla\chi_n(x)\right]^T\big\|_F
\leq \big\|\eta_t(x,z)\big\|\cdot\big\|\nabla \chi_n(x)\big\|
\leq \frac{C_d^{1/2}}{n}\cdot\sup_{y\in\bR^d}\left(\|\nabla \chi(y)\|\right),
\end{equation}
where $C_d$ is the positive constant introduced in \eqref{assumption eta}.
Moreover, by Assumption \ref{assumption pointwise} we have for all 
$(t,x)\in[0,T]\times\bR^d$, $z\in\bR^d$, and $\gamma\in[0,1]$ that
\begin{equation}                                      \label{jacobian eta n 2}
\big\|\gamma D_x\eta_t(x,z)\big\|_F\leq d^{1/2}C_d^{1/2}.
\end{equation}
We also notice that the mapping
$$
\left\{B\in\bR^{d\times d}:\|B\|_F\leq C_d^{1/2}
(d^{1/2}+\sup_{y\in\bR^d}\left(\|\nabla \chi(y)\|\right))\right\}
\ni A\mapsto \big|\det\big(\mathbf{I}_d+A\big)\big|\in\bR
$$
is uniformly continuous.
Hence, \eqref{jacobian cond} 
and \eqref{jacobian eta n 0}-\eqref{jacobian eta n 2} ensure that
there exists an integer $N>0$ such that for all $n\geq N$, 
$(t,x)\in[0,T]\times\bR^d$, $\gamma\in[0,1]$, and $z\in\bR^d$
\begin{equation*}                                    
\frac{\lambda}{2}\leq \big|\det(\mathbf{I}_d+\gamma D_x \eta^{(n)}_t(x,z))\big|.
\end{equation*}
This implies that $\eta^{(n)}$ satisfies Assumption \ref{assumption jacobian n}
with $\delta=\lambda/2$ for sufficiently large $n\in\bN$.
By the above discussion, 
applying Proposition \ref{proposition smooth PIDE}
(with $\mu\cal\mu^{(n)}$, $\sigma\cal\sigma^{(n)}$, $\eta\cal\eta^{(n)}$,
$g\cal g^{(k)}$, $h\cal h^{(k)}$ for $n,k\in\bN$ in the notation of Proposition
\ref{proposition smooth PIDE}) yields that for all $k\in\bN$ and
sufficiently large $n\in\bN$  
the function
$u^{n,k}\in C_1([0,T]\times\bR^d)$ is a viscosity solution of
\begin{align}
&
\frac{\partial}{\partial t}u^{n,k}(t,x)
+\langle\nabla_xu^{n,k}(t,x),\mu^{(n)}(t,x)\rangle
+\frac{1}{2}\operatorname{Trace}
\left(\sigma^{(n)}(t,x)[\sigma^{(n)}(t,x)]^T
\operatorname{Hess}_xu^{n,k}(t,x)\right)\nonumber\\
&
+h^{(k)}(t,x)+\int_{\bR^d}\left(
u^{n,k}(t,x+\eta^{(n)}_t(x,z))-u^{n,k}(t,x)-\langle\nabla_xu^{n,k}(t,x),
\eta^{(n)}_t(x,z)\rangle\right)\nu(dz)=0               \label{PIDE n k}
\end{align}
on $(0,T)\times \bR^d$ with $u^{n,k}(T,x)=g^{(k)}(x)$ on $\bR^d$.

Next, for each $n\in\bN$ and $(t,x)\in[0,T]\times\bR^d$, 
let $\tau^{t,x}_n:\Omega\to[0,T]$ be the stopping
time defined by
\begin{equation}                                     \label{def tau X t x n}
\tau^{t,x}_n:=\inf
\big\{
s\geq t: \max\big(\|X^{t,x,(n)}_s\|,\|X^{t,x}_s\|\big)> n
\big\}
\wedge T.
\end{equation}
Then \eqref{def tau X t x n} and Lemma \ref{lemma two SDEs} ensures that
for all $(t,x)\in[0,T]\times\bR^d$ and $n\in\bN$
\begin{equation}                                       \label{prob equality}
\bP\left(
\mathbf{1}_{\{s\leq \tau^{t,x}_n\}}\big\|X^{t,x,(n)}_s-X^{t,x}_s\big\|=0
\text{ for all $s\in[t,T]$}
\right)=1.
\end{equation}
Hence, considering that for every $n\in\bN$ and $(t,x)\in [0,T]\times\bR^d$
the processes $\big(X^{t,x,(n)}_s\big)_{s\in[t,T]}$ and 
$\big(X^{t,x}_s\big)_{s\in[t,T]}$ have no fixed time of discontinuity,
we have for all $n,k\in\bN$, and $(t,x)\in[0,T]\times\bR^d$ that
\begin{align}
\bE\left[
\big|
g^{(k)}\big(X^{t,x,(n)}_T\big)-g^{(k)}\big(X^{t,x}_T\big)
\big|
\right]
&
=\bE\left[
\mathbf{1}_{\{\tau^{t,x}_n<T\}}\big|
g^{(k)}\big(X^{t,x,(n)}_T\big)-g^{(k)}\big(X^{t,x}_T\big)
\big|
\right]\nonumber\\
&
\leq 2\left[\sup_{y\in\bR^d}\big|g^{(k)}(y)\big|\right]
\bP\big(\tau^{t,x}_n<T\big),
\label{whisky 1}
\end{align}
and
\begin{align}
&
\int_t^T\bE\left[\big|
h^{(k)}(s,X^{t,x,(n)}_s)-h^{(k)}(s,X^{t,x}_s)
\big|\right]ds\nonumber\\
&
=\int_t^T\bE\left[\mathbf{1}_{\{\tau^{t,x}_n<T\}}\big|
h^{(k)}(s,X^{t,x,(n)}_s)-h^{(k)}(s,X^{t,x}_s)
\big|\right]ds
\nonumber\\
&
\leq 2T\left[\sup_{(s,y)\in[0,T]\times\bR^d}\big|h^{(k)}(s,y)\big|\right]
\bP\big(\tau^{t,x}_n<T\big).
\label{whisky 2}
\end{align}
Furthermore, by \eqref{SDE moment est}, the construction of 
$\mu^{(n)}$, $\sigma^{(n)}$, and $\eta^{(n)}$, and Assumptions
\ref{assumption Lip and growth n} and \ref{assumption eta n},
it holds for all $n\in\bN$ and $(t,x)\in[0,T]$ that
\begin{equation}                                             \label{est sup X n}
\bE\Big[\sup_{s\in[t,T]}\big\|X^{t,x,(n)}_s\big\|^2\Big]
\leq C_3(1+\|x\|^2),
\end{equation}
where $C_3$ is a positive constant only depending on $L$, $d$, $p$ and $T$.
Thus, by \eqref{def tau X t x n}, Chebyshev's inequality, and
\eqref{SDE moment est} it holds for all $(t,x)\in[0,T]\times\bR^d$ and $n\in\bN$
that
\begin{align}                                       
\bP\big(\tau^{t,x}_n<T\big)
&
\leq \bP\left(\big\|X^{t,x}_{\tau^{t,x}_n}\big\|
\geq n\right)
+\bP\left(\big\|X^{t,x,(n)}_{\tau^{t,x}_n}\big\|
\geq n\right)
\nonumber\\
&
\leq \frac{\bE\left[\big\|X^{t,x}_{\tau^{t,x}_n}\big\|^2\right]
+\bE\left[\big\|X^{t,x,(n)}_{\tau^{t,x}_n}\big\|^2\right]}{n^2}
\nonumber\\
&
\leq \frac{(C+C_3)(1+\|x\|^2)}{n^2}.                          \label{whisky 3}
\end{align}
Combining \eqref{whisky 1}, \eqref{whisky 2}, and \eqref{whisky 3},
we have for all $(t,x)\in[0,T]\times\bR^d$ and $n,k\in\bN$ that
$$
\big|u^{n,k}(t,x)-u^{0,k}(t,x)\big|\leq\frac{2(C+C_3)(1+\|x\|^2)}{n^2}
\left[
\sup_{y\in\bR^d}\left(\big|g^{(k)}(y)\big|\right)
+T\cdot\sup_{(s,y)\in[0,T]\times\bR^d}\left(\big|h^{(k)}(s,y)\big|\right)
\right].
$$
This implies that for all $k\in\bN$ and all compact sets 
$\mathcal{K}\subseteq [0,T]\times\bR^d$ it holds that
\begin{equation}                                         \label{whisky 4}
\lim_{n\to\infty}\left[
\sup_{(t,x)\in\mathcal{K}}\big|u^{n,k}(t,x)-u^{0,k}(t,x)\big|
\right]=0.
\end{equation} 
Moreover, by \eqref{assumption growth f g}, \eqref{SDE moment est},
\eqref{cond h}, \eqref{assumption growth n}, 
\eqref{def u n k}, \eqref{def u 0 k}, \eqref{est sup X n}, and the fact that
$(a+b)^{1/2}\leq a^{1/2}+b^{1/2}$ for $a,b\geq 0$,
we notice for all $n,k\in\bN$ and $(t,x)\in[0,T]\times\bR^d$ that 
\begin{align}
|u^{n,k}(t,x)|
&
\leq \bE\left[\big|g^{(k)}(X^{t,x,(n)}_T)\big|\right] 
+\int_t^T\bE\left[\big|h^{(k)}(s,X^{t,x,(n)}_s)\big|\right]ds
\nonumber\\
&
\leq \left(\bE\left[L(d^p+\|X^{t,x,(n)}_T\|^2)\right]\right)^{1/2}
+\int_t^T\left(\bE\left[C_0\big(1+\|X^{t,x,(n)}_s\|^2\big)\right]\right)^{1/2}
\,ds
\nonumber\\
&
\leq L^{1/2}d^{p/2}+L^{1/2}C^{1/2}_3(1+\|x\|^2)^{1/2}
+C_0^{1/2}T[1+C_3^{1/2}(1+\|x\|^2)^{1/2}]
\nonumber\\
&
\leq L^{1/2}d^{p/2}+C_0^{1/2}T
+C^{1/2}_3(L^{1/2}+C_0^{1/2}T)(1+\|x\|^2)^{1/2},
\label{diva 1}
\end{align}
and
\begin{equation}                                        \label{diva 2}
|u^{0,k}(t,x)|
\leq L^{1/2}d^{p/2}+C_0^{1/2}T+C^{1/2}(L^{1/2}+C_0^{1/2}T)(1+\|x\|^2)^{1/2}.
\end{equation}
This implies that for all $k,n\in\bN$ 
$$
\sup_{(t,x)\in[0,T]\times\bR^d}
\frac{\big|u^{n,k}(t,x)\big|+\big|u^{0,k}(t,x)\big|}{1+\|x\|}<\infty.
$$
Hence, \eqref{whisky 4} and 
the fact that $u^{n,k}\in C_1([0,T]\times\bR^d)$ for
all $k\in\bN$ and sufficiently large $n\in\bN$ 
ensure that $u^{0,k}\in C_1([0,T]\times\bR^d,\bR)$ for all $k\in\bN$.
Therefore, by \eqref{PIDE n k}, \eqref{whisky 4}, 
Assumptions \ref{assumption Lip and growth}, \ref{assumption pointwise},
and Assumptions \ref{assumption h}-\ref{assumption eta n},
applying Lemma
\ref{lemma subsolution} yields that for all $k\in\bN$ 
the function 
$u^{0,k}$ is a viscosity solution of
\begin{align}
&
\frac{\partial}{\partial t}u^{0,k}(t,x)
+\langle\nabla_xu^{0,k}(t,x),\mu(t,x)\rangle
+\frac{1}{2}\operatorname{Trace}
\left(\sigma(t,x)[\sigma(t,x)]^T
\operatorname{Hess}_xu^{0,k}(t,x)\right)\nonumber\\
&
+h^{(k)}(t,x)+\int_{\bR^d}\left(
u^{0,k}(t,x+\eta_t(x,z))-u^{0,k}(t,x)-\langle\nabla_xu^{0,k}(t,x),
\eta_t(x,z)\rangle\right)\nu(dz)=0               \label{PIDE 0 k}
\end{align}
on $(0,T)\times \bR^d$ with $u^{0,k}(T,x)=g^{(k)}(x)$ on $\bR^d$.
Furthermore, Lemma \ref{lemma est moment} implies for all 
$k\in\bN$ and $(t,x)\in[0,T]\times\bR^d$ that
\begin{align}
&
\frac{\bE\left[\int_t^T\big|h^{(k)}(s,X^{t,x}_s)-h(s,X^{t,x}_s)\big|
\,ds\right]}{d^p+\|x\|^2}\nonumber\\
&
=\int_t^T\bE\left[
\frac{\big|h^{(k)}(s,X^{t,x}_s)-h(s,X^{t,x}_s)\big|}{d^p+\big\|X^{t,x}_s\big\|^2}
\cdot\frac{d^p+\big\|X^{t,x}_s\big\|^2}{d^p+\|x\|^2}
\right]ds\nonumber\\
&
\leq \int_t^T\left[
\sup_{(r,y)\in[0,T]\times\bR^d}\left(\frac{\big|h^{(k)}(r,y)-h(r,y)\big|}
{d^p+\|y\|^2}\right)
\right]
\cdot
\frac{\bE\left[d^p+\big\|X^{t,x}_s\big\|^2\right]}{d^p+\|x\|^2}\,ds
\nonumber\\
&
\leq c_1T\left[
\sup_{(s,y)\in[0,T]\times\bR^d}\left(\frac{\big|h^{(k)}(s,y)-h(s,y)\big|}
{d^p+\|y\|^2}\right)
\right],
\label{rum 1}
\end{align}
where $c_1:=(6LT+1)e^{(1+6L)T}$.
Moreover, by Assumption \ref{assumption h} it holds for all $k\in\bN$ and
$(s,y)\in[0,T]\times\bR^d$ that
\begin{align*}
\frac{\big|h^{(k)}(s,y)-h(s,y)\big|}{d^p+\|y\|^2}
&
=\frac{\mathbf{1}_{\{\|y\|\geq k\}}|h(s,y)(\chi_k(y)-1)|}{d^p+\|y\|^2}\\
&
\leq \frac{2C_0^{1/2}(d^p+\|y\|^2)^{1/2}}{(d^p+k^2)^{1/2}(d^p+\|y\|^2)^{1/2}}
=2C_0^{1/2}(d^p+k^2)^{-1/2}.
\end{align*}
Combining this with \eqref{rum 1} implies that
\begin{equation}                                               \label{rum 2}
\lim_{k\to\infty}\left[
\sup_{(t,x)\in[0,T]\times\bR^d}\left(
\frac{\bE\left[\int_t^T\big|h^{(k)}(s,X^{t,x}_s)-h(s,X^{t,x}_s)\big|
\,ds\right]}{d^p+\|x\|^2}
\right)
\right]=0.
\end{equation}
Analogously, by Lemma \ref{lemma est moment} 
and \eqref{assumption growth f g}
we have
\begin{equation}                                               \label{rum 3}
\lim_{k\to\infty}\left[
\sup_{(t,x)\in[0,T]\times\bR^d}\left(
\frac{\bE\left[\big|g^{(k)}(X^{t,x}_s)-g(X^{t,x}_s)\big|
\right]}{d^p+\|x\|^2}
\right)
\right]=0.
\end{equation}
Then \eqref{def u n k}, \eqref{def u 0 k} together with 
\eqref{rum 2} and \eqref{rum 3} imply that
$$                                              
\lim_{k\to\infty}\left[
\sup_{(t,x)\in[0,T]\times\bR^d}\left(
\frac{\big|u^{0,k}(t,x)-u(t,x)\big|}{d^p+\|x\|^2}
\right)
\right]=0.
$$
Hence, for all non-empty compact set $\mathcal{K}\subseteq [0,T]\times\bR^d$
it holds that
\begin{align}   
&                                   
\lim_{k\to\infty}\left[
\sup_{(t,x)\in\mathcal{K}}\big|u^{0,k}(t,x)-u(t,x)\big|
\right]\nonumber\\
&
\leq 
\lim_{k\to\infty}\left[
\sup_{(t,x)\in\mathcal{K}}\left(
\frac{\big|u^{0,k}(t,x)-u(t,x)\big|}{d^p+\|x\|^2}
\right)
\right]
\cdot
\left[
\sup_{(t,x)\in\mathcal{K}}(d^p+\|x\|^2)
\right]=0.
\label{rum 4}
\end{align}
By \eqref{assumption growth f g}, \eqref{SDE moment est}, \eqref{FK linear 2},
and the analogous 
argument to obtain \eqref{diva 1} and \eqref{diva 2} we have that
$u$ has linear growth. 
Then by \eqref{rum 4} and the fact that 
$u^{0,k}\in C_1([0,T]\times\bR^d)$ for every $k\in\bN$,
we have that $u\in C_1([0,T]\times\bR^d)$. 
Therefore, by \eqref{PIDE 0 k}, \eqref{rum 4}, and Assumptions 
\ref{assumption Lip and growth}, \ref{assumption pointwise},
and \ref{assumption h}-\ref{assumption jacobian n}, we apply
Lemma \ref{lemma subsolution} to get that
$u$ is a viscosity solution of the linear PIDE
\eqref{linear PIDE} with $u(T,x)=g(x)$ for all $x\in\bR^d$. 
The proof of this proposition is thus completed.
\end{proof}

\begin{proposition}                            \label{proposition PIDE existence}
Let Assumptions \ref{assumption Lip and growth}, 
\ref{assumption pointwise}, and \ref{assumption jacobian} hold.
Then the following holds:
\begin{enumerate}[(i)]
\item{
There exists a unique Borel function 
$u:[0,T]\times\bR^d\to\bR$ satisfying
\begin{equation}                                           \label{FK nonlinear}
u(t,x):=\bE\Bigg[g(X^{t,x}_T)+\int_t^Tf(s,X^{t,x}_s,u(s,X^{t,x}_s))\,ds\Bigg],
\quad (t,x)\in[0,T]\times\bR^d.
\end{equation}
}
\item{
Let $u:[0,T]\times\bR^d\to\bR$ satisfy \eqref{FK nonlinear}. Then
$u\in C_1([0,T]\times\bR^d)$.
}
\item{
Let $u:[0,T]\times\bR^d\to\bR$ satisfy \eqref{FK nonlinear}. Then
$u$ is a viscosity solution of
PIDE \eqref{APIDE} with $u(T,x)=g(x)$ for all $x\in\bR^d$.
}
\end{enumerate}
\end{proposition}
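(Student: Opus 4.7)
The plan is to prove (i)--(iii) sequentially: (i) by direct appeal to the stochastic fixed-point result Proposition~\ref{Prop FP}, (ii) by a Picard-iteration argument combined with the continuity-in-probability of $X^{t,x}_s$, and (iii) by freezing $u$ into the source term to reduce the semilinear problem to the linear PIDE handled in Proposition~\ref{proposition linear PIDE}.

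For (i), I would apply Proposition~\ref{Prop FP} with $Z^{t,x}_s:=X^{t,x}_s$. The required hypotheses follow directly from the standing assumptions: the bounds on $|f(t,x,0)|$ and $|g(x)|$ are \eqref{assumption growth f g}; the Lipschitz condition on $f$ in its last variable is \eqref{assumption Lip f g}; the moment bound $\bE[(d^p+\|X^{t,x}_s\|^2)^{1/2}]\le b(d^p+\|x\|^2)^{1/2}$ follows from Jensen's inequality together with Lemma~\ref{lemma est moment}; measurability of $(t,s,x)\mapsto\bE[\varphi(s,X^{t,x}_s)]$ is a standard SDE fact. This yields the unique Borel $u$ satisfying \eqref{FK nonlinear}, together with the linear-growth bound $\sup_{(t,x)}|u(t,x)|/(d^p+\|x\|^2)^{1/2}<\infty$.

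For (ii), the linear-growth bound from (i) is already available, so only continuity remains. I would introduce the Picard iterates $u_0\equiv 0$ and
\[
u_{k+1}(t,x):=\bE\bigl[g(X^{t,x}_T)\bigr]+\int_t^T\bE\bigl[f(s,X^{t,x}_s,u_k(s,X^{t,x}_s))\bigr]\,ds,
\]
and prove by induction that each $u_k\in C_1([0,T]\times\bR^d)$. The inductive continuity step rests on: Corollary~\ref{corollary prob 1} (convergence in probability of $X^{t_n,x_n}_s\to X^{t,x}_s$ as $(t_n,x_n)\to(t,x)$), continuity of $g,f,u_k$, uniform integrability via the $L^2$-bounds of Lemma~\ref{lemma est moment} combined with the linear growth of $u_k$, and dominated convergence in the $ds$-integral justified by the uniform bound $\bE[|f(s,X^{t,x}_s,u_k(s,X^{t,x}_s))|]\le C(d^p+\|x\|^2)^{1/2}$. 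Next, on the Banach space of Borel functions equipped with the weighted norm $\|v\|_\beta:=\sup_{(t,x)}e^{-\beta(T-t)}|v(t,x)|/(d^p+\|x\|^2)^{1/2}$, the Picard operator is a contraction for $\beta$ large, so $u_k\to u$ in $\|\cdot\|_\beta$; this convergence is uniform on each set $\{\|x\|\le R\}$, so continuity passes to $u$, yielding $u\in C_1([0,T]\times\bR^d)$.

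For (iii), set $h(t,x):=f(t,x,u(t,x))$. Continuity of $h$ follows from continuity of $f$ and $u$; \eqref{assumption Lip f g} together with the linear growth of $u$ gives $|h(t,x)|^2\le C_0(1+\|x\|^2)$, so $h$ satisfies Assumption~\ref{assumption h}. The fixed-point identity \eqref{FK nonlinear} now coincides with the Feynman--Kac representation \eqref{FK linear 2} for this $h$, so Proposition~\ref{proposition linear PIDE} yields that $u$ is a viscosity solution of the linear PIDE \eqref{linear PIDE} driven by $G_0$. For any test function $\varphi\in C^{1,2}$ with $\varphi(t_0,x_0)=u(t_0,x_0)$, the identity $G_0=G+f-h$ combined with $f(t_0,x_0,u(t_0,x_0))=h(t_0,x_0)$ gives
\[
G_0\bigl(t_0,x_0,\varphi(t_0,x_0),\nabla\varphi(t_0,x_0),\operatorname{Hess}_x\varphi(t_0,x_0),\varphi(t_0,\cdot)\bigr)=G\bigl(t_0,x_0,\varphi(t_0,x_0),\nabla\varphi(t_0,x_0),\operatorname{Hess}_x\varphi(t_0,x_0),\varphi(t_0,\cdot)\bigr),
\]
so the viscosity sub/super-solution inequalities for the linear PIDE at $(t_0,x_0)$ coincide with those for the semilinear PIDE \eqref{APIDE}, completing (iii). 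The main obstacle is in (ii): one must carefully interchange convergence in probability with expectations and with a time integral, working with moment bounds that grow only polynomially in $\|x\|$ and with the pointwise-in-$s$ (rather than uniform-in-$s$) nature of Corollary~\ref{corollary prob 1}; after continuity of the iterates is in hand, the contraction step and the resulting local-uniform convergence $u_k\to u$ are routine.
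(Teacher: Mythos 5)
Your proof follows the same overall architecture as the paper for parts (i) and (iii), but takes a more self-contained route for part (ii).

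For (i), both you and the paper invoke Proposition~\ref{Prop FP} with $Z^{t,x}_s := X^{t,x}_s$, verifying the moment bound via Lemma~\ref{lemma est moment} and measurability via the convergence-in-probability lemma; this is essentially identical. For (iii), both you and the paper freeze $u$ into the source term $h(t,x):=f(t,x,u(t,x))$, check Assumption~\ref{assumption h} using the linear-growth bound on $u$, and feed this into Proposition~\ref{proposition linear PIDE}. Your write-up is actually slightly more careful here: the paper states directly that Proposition~\ref{proposition linear PIDE} yields a viscosity solution of \eqref{APIDE}, even though that proposition formally concerns the linear PIDE \eqref{linear PIDE} with functional $G_0$; you make the bridge explicit by observing that $G_0 = G + f - h$ and that at any test point $\varphi(t_0,x_0)=u(t_0,x_0)$ one has $f(t_0,x_0,\varphi(t_0,x_0))=h(t_0,x_0)$, so the sub/super-solution inequalities for $G_0$ and for $G$ coincide. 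That step is implicit in the paper and it is good that you spelled it out.

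The genuine difference is in (ii). The paper obtains continuity of $u$ in one stroke by citing Corollary~2.7 in \cite{BGHJ2019}, after preparing the ``vanishing at infinity'' condition \eqref{inf 0} needed to invoke that corollary (and remarking that its hypothesis $\bE[V(s,X^{t,x}_s)]\le V(t,x)$ may be relaxed to allow a constant). You instead reprove continuity from scratch via Picard iteration: you show each iterate $u_k$ is continuous using Corollary~\ref{corollary prob 1}, uniform integrability from the $L^2$ moment bounds, and dominated convergence in the time integral, and then pass continuity to the limit $u$ using the contraction estimate in a weighted sup-norm, which gives convergence uniform on sets $\{\|x\|\le R\}$. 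This is exactly the kind of argument that is internal to the cited corollary, so the two approaches are not mathematically different, but yours is self-contained while the paper's is shorter and offloads the bookkeeping (interchanging convergence in probability, expectation, and the $ds$-integral, with the pointwise-in-$s$ nature of Corollary~\ref{corollary prob 1}) to the external reference. Both are correct; the paper's choice keeps the proof compact, yours avoids a black-box appeal.
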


\begin{proof}
First recall $\Lambda:=\{(t,s)\in[0,T]^2: t\leq s\}$.
Then we notice that
by Corollary \ref{corollary prob 1}, 
the mapping
$\Lambda\times\bR^d\ni(t,s,x)
\mapsto\big(s,X^{t,x}_{s}\big)\in\cL_0(\Omega,[0,T]\times\bR^d)$
is continuous, where $\cL_0(\Omega,[0,T]\times\bR^d)$ denotes the space
of all measurable functions from $\Omega$ to $[0,T]\times\bR^d$ equipped with
the metric deduced by convergence in probability.
Moreover, notice that for every nonnegative Borel function
$\varphi:[0,T]\times\bR^d\to[0,\infty)$, the mapping
$
\cL_0(\Omega,[0,T]\times\bR^d)\ni Z \mapsto \bE\big[\varphi(Z)\big]\in [0,\infty]
$
is measurable.
Hence for all $\theta\in\Theta$,
$d\in\bN$, and all nonnegative Borel functions
$\varphi:[0,T]\times\bR^d\to[0,\infty)$ it holds that the mapping
\begin{equation}                                     \label{measurable 1}                              
\Lambda\times\bR^d\ni(t,s,x)\mapsto
\bE\Big[\varphi\big(s,X^{t,x}_{s}\big)\Big]
\in[0,\infty]
\end{equation}
is measurable. 
Furthermore, by Lemma \ref{lemma est moment},
we observe that for all 
$(t,x)\in[0,T]\times\bR^d$ and $s\in[t,T]$ 
\begin{equation}                                    \label{estimate 1}                                         
\bE\Big[d^p+\big\|X^{t,x}_{s}\big\|^2\Big]\leq 
c_1(d^p+\|x\|^2)e^{\rho_1(s-t)},
\end{equation}
where $c_1:=1+6LT$ and $\rho_1:=1+6L$.
Then, combining \eqref{assumption Lip f g}, \eqref{assumption growth f g}, 
\eqref{measurable 1}, \eqref{estimate 1}, 
and applying Proposition \ref{Prop FP} (with
$Z^{t,x}_s\cal X^{t,x}_{s}$, $a\cal \max\{L^{1/2},L^{1/2}T^{-1/2}\}$,
$b \cal c_1^{1/2}e^{\rho_1T/2}$, $c \cal L^{1/2}$,
$f \cal f$, $g \cal g$, $u \cal u$ in the notation of Proposition
\ref{Prop FP}),
we obtain that there is a unique Borel function 
$u:[0,T]\times\bR^d\to\bR$ satisfying \eqref{FK nonlinear} as well as for
all $t\in[0,T]$ that
\begin{equation}                                               \label{u est}
\sup_{x\in\bR^d}\frac{|u(t,x)|}{\sqrt{d^p+\|x\|^2}}\leq 2L^{1/2}
\exp\big\{L^{1/2}(T-t)\big\}.
\end{equation}
Hence, we finish the proof of (i).
Furthermore, for every $r\in(0,\infty)$ let $B_r\subseteq\bR^d$ 
be the set defined by
$
B_r:=\{x\in\bR^d:\|x\|\leq r\}
$.
Then by \eqref{assumption growth f g} and \eqref{u est} we have for all 
$(s,y)\in[0,T]\times\bR^d$ that
\begin{align*}
\frac{|f(s,y,0)|+|u(s,y)|+g(y)}{d^p+\|y\|^2}
&
\leq \frac{T^{-1}L^{1/2}(d^p+\|y\|^2)^{1/2}
+2L^{1/2}\exp\{L^{1/2}T\}(d^p+\|y\|^2)^{1/2}}
{d^p+\|y\|^2}\\
& \quad
+\frac{L^{1/2}(d^p+\|y\|^2)^{1/2}}{d^p+\|y\|^2}\\
&
=\frac{L^{1/2}(T^{-1}+2\exp\{L^{1/2T}\}+1)}{(d^p+\|y\|^2)^{1/2}}.
\end{align*}
This implies that
\begin{align}
&
\inf_{r\in(0,\infty)}
\left[
\sup_{s\in[0,T]}\sup_{y\in\bR^d/B_r}
\frac{|f(s,y,0)|+|u(s,y)|+|g(y)|}{d^p+\|y\|^2}
\right]\nonumber\\
&
\leq
\inf_{r\in(0,\infty)}
\left[
\sup_{s\in[0,T]}\sup_{y\in\bR^d/B_r}
\frac{L^{1/2}(T^{-1}+2\exp\{L^{1/2T}\}+1)}{(d^p+\|y\|^2)^{1/2}}
\right]\nonumber\\
&
=\inf_{r\in(0,\infty)}\left[
\frac{L^{1/2}(T^{-1}+2\exp\{L^{1/2T}\}+1)}{(d^p+\|r\|)^{1/2}}
\right]
=0.
\label{inf 0}
\end{align}
Then, by \eqref{assumption Lip f g}, \eqref{assumption growth f g},
\eqref{estimate 1}, \eqref{inf 0}, and Corollary \ref{corollary prob 1}, 
we can apply Corollary 2.7 in \cite{BGHJ2019}
(with $X^{t,x}\cal X^{t,x}$, $f\cal f$, $g\cal g$, and
$V(t,x)\cal (d^p+\|x\|^2)$ in the notation of Corollary 2.7 in \cite{BGHJ2019}) 
to get that $u:[0,T]\times\bR^d\to\bR$ is continuous
(Noting that the condition $\bE[V(s,X^{t,x}_s)]\leq V(t,x)$ in 
Corollary 2.7 in \cite{BGHJ2019} can be relaxed to
$\bE[V(s,X^{t,x}_s)]\leq cV(t,x)$ for any $c\geq 1$).
Combining this with \eqref{u est}, we obtain that $u\in C_1([0,T]\times\bR^d)$.
Thus, we obtain (ii).
Next, we define $\mathbf{h}:[0,T]\times\bR^d\to\bR$ by
$$
\mathbf{h}(t,x):=f(t,x,u(t,x)), \quad (t,x)\in[0,T]\times\bR^d.
$$
By \eqref{assumption Lip f g}, \eqref{assumption growth f g},
and \eqref{u est}
it holds for all $(t,x)\in[0,T]\times\bR^d$ that
\begin{align}
|\mathbf{h}(t,x)|
&
=|f(t,x,u(t,x))|\leq |f(t,x,u(t,x))-f(t,x,0)|+|f(t,x,0)|\nonumber\\
&
\leq L^{1/2}|u(t,x)|+L^{1/2}T^{-1/2}(d^p+\|x\|^2)^{1/2}\nonumber\\
&
\leq L^{1/2}d^{p/2}(2L^{1/2}\exp\{L^{1/2}T\}+T^{-1/2})(1+\|x\|^2)^{1/2}.
\label{LG bf h}
\end{align}
This implies that $\mathbf{h}$ satisfies Assumption \ref{assumption h}.
Thus, Assumptions \ref{assumption Lip and growth}-\ref{assumption jacobian} 
and the fact that $u\in C_1([0,T]\times\bR^d)$ satisfies \eqref{FK nonlinear}
allow us to
apply Proposition \ref{proposition linear PIDE} to obtain that
$u$ is a viscosity solution of PIDE \eqref{APIDE} 
with $u(T,x)=g(x)$ for all $x\in\bR^d$, which proves (iii).
The proof of this proposition is therefore completed.
\end{proof}

\section{\textbf{Multilevel Picard (MLP) approximations 
not involving Euler schemes}}
\label{section no Euler}
\subsection{Setting}                                 \label{MLP setting}
We assume the settings in Section \ref{section setting}.
Moreover, for each $d\in\bN$ and $(t,x)\in[0,T]\times\bR^d$
let $(Z^{d,\theta,t,x}_{s})_{s\in[t,T]}$:
$[t,T]\times \Omega\to \bR^d$, $\theta\in\Theta$, be 
$\cB([0,T])\otimes\cF/\cB(\bR^d)$-measurable functions. 
Assume for all $(t,x)\in[0,T]\times\bR^d$ and $s\in[t,T]$ that 
$Z^{d,\theta,t,x}_{s}$,
$\theta\in\Theta$, are independent and identically distributed, and assume 
that $(Z^{d,\theta,t,x}_{s})_{(\theta,s)\in\Theta\times[t,T]}$
and $(\xi^\theta)_{\theta\in\Theta}$ are independent.
For each $d\in \bN$, $(t,x)\in[0,T]\times\bR^d$, $s\in[t,T]$, 
and $\theta\in\Theta$, let $(Z^{(d,\theta,t,x,l,i)}_{s})_{(l,i)\in\bN\times\bZ}$
be independent copies of $Z^{d,\theta,t,x}_{s}$.
For each $d\in\bN$, $n\in\bN_0$, $M\in\bN$, and $\theta\in\Theta$, 
let $\mathcal{U}^{d,\theta}_{n,M}:[0,T]\times \bR^d \times \Omega \to \bR$ 
be a measurable function
satisfying for all $(t,x)\in[0,T]\times\bR^d$ that
$$
\mathcal{U}^{d,\theta}_{n,M}(t,x)
=\frac{\mathbf{1}_{\bN}(n)}{M^n}\sum_{i=1}^{M^n}
g^d\Big(Z^{(d,\theta,t,x,0,-i)}_{T}\Big)
$$
$$
+\sum_{l=0}^{n-1}\frac{(T-t)}{M^{n-l}}
\left[\sum^{M^{n-l}}_{i=1}\left(F^d\Big(\mathcal{U}^{(d,\theta,l,i)}_{l,M}\Big)
-\mathbf{1}_{\bN}(l)F^d\Big(\mathcal{U}^{(d,\theta,-l,i)}_{l-1,M}\Big)\right)
\Big(\cR^{(\theta,l,i)}_t,Z^{(d,\theta,t,x,l,i)}_{\cR^{(\theta,l,i)}_t}\Big)
\right],
$$
where $\Big(\mathcal{U}^{(d,\theta,l,i)}_{n,M}\Big)_{(l,i)
\in (\bZ/\{0\}\times\bN_0)}$
are independent copies of $\mathcal{U}^{d,\theta}_{n,M}$,
and where $\big(\cR^{(\theta,l,i)}_t\big)_{(l,i)\in\bN\times\bN_0}$ are independent
copies of $\cR^{\theta}_t$ for each $t\in[0,T]$.
Furthermore, let $b,c,\rho\in(0,\infty)$, and let
$u^d:[0,T]\times\bR^d\to\bR$, $d\in\bN$, be measurable functions
such that for all $d\in\bN$, $(t,x)\in[0,T]\times\bR^d$, and $s\in[t,T]$  
$$
\bE\Big[\big|g^d(Z^{d,0,t,x}_{T})\big|\Big]+\int_t^T
\bE\Big[\big|(F^d(u^d))(r,Z^{d,0,t,x}_{r})\big|\Big]\,dr<\infty,
$$
$$
\bE\Big[d^p+\big\|Z^{d,0,t,x}_{s}\big\|^2\Big]
\leq be^{\rho(s-t)}(d^p+\|x\|^2),
$$
$$
u^d(t,x)=\bE\left[g(Z^{d,0,t,x}_{T})
+\int_t^T(F^d(u^d))(r,Z^{d,0,t,x}_{r})\,dr\right],
$$
and
\begin{equation}                                    \label{LG F g u}
|(F^d(0))(t,x)|^2+|g^d(x)|^2+|u^d(t,x)|^2\leq c(d^p+\|x\|^2).
\end{equation}

\subsection{Error bounds for MLP approximations}
\label{section Error Bounds 1}
The following lemma is taken from \cite{HJKN2020}.
\begin{lemma}                                          \label{Lemma MLP}
Assume Setting \ref{MLP setting}. 
Then for all $d,n,M\in\bN$
and $t\in[0,T]$ it holds
that 
\begin{align}
&
\sup_{x\in\bR^d}\left( 
\frac{\bE\left[\big|\mathcal{U}^{d,0}_{n,M}(t,x)-u^d(t,x)\big|^2\right]}
{d^p+\|x\|^2} \right)^{1/2}
\leq 2e^{M/2}M^{-n/2}(1+2TL^{1/2})^{n-1}b^{1/2}e^{\rho(T-t)/2}\nonumber\\
&                                         
\cdot \left(\sup_{t\in[0,T]}\sup_{x\in \bR^d}
\left(
\frac{\max\left\{|T\cdot (F^d(0))(t,x)|,|g^d(x)|\right\}}
{\sqrt{d^p+\|x\|^2}}\right)
+TL^{1/2}\sup_{t\in[0,T]}\sup_{x\in\bR^d}
\left(\frac{|u^d(t,x)|}{\sqrt{d^p+\|x\|^2}}\right)                \label{est MLP}
\right)
\end{align}
\end{lemma}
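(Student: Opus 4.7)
The plan is to prove the bound by induction on $n$ via the standard bias--variance decomposition of multilevel Picard approximations. Define
$$\varepsilon_n(t):=\sup_{x\in\bR^d}\left(\frac{\bE\bigl[|\cU^{d,0}_{n,M}(t,x)-u^d(t,x)|^2\bigr]}{d^p+\|x\|^2}\right)^{1/2}.$$
Two identities will drive the argument: the fixed-point representation $u^d(t,x)=\bE[g^d(Z^{d,0,t,x}_T)]+\int_t^T\bE[(F^d(u^d))(r,Z^{d,0,t,x}_r)]\,dr$ assumed in Setting~\ref{MLP setting}; and the importance-sampling identity $\int_t^T\bE[\phi(r,Z^{d,0,t,x}_r)]\,dr=(T-t)\,\bE[\phi(\cR^0_t,Z^{d,0,t,x}_{\cR^0_t})]$, valid for any measurable $\phi$ with integrable absolute value, which follows from $\cR^0_t=t+(T-t)\xi^0$ being uniform on $[t,T]$ and independent of $Z^{d,0,t,x}$.

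The first step is to compute the bias $\bE[\cU^{d,0}_{n,M}(t,x)]-u^d(t,x)$ for $n\ge 1$. By the i.i.d.\ property of the copies $(\cU^{(d,0,l,i)}_{l,M})_i$ and $(Z^{(d,0,t,x,l,i)},\cR^{(0,l,i)}_t)_i$, each inner $M^{n-l}$-fold Monte Carlo average collapses in expectation to a single representative; the $l$-sum then telescopes (using $\mathbf{1}_\bN(0)=0$) to $(T-t)\bE[(F^d(\cU^{d,0}_{n-1,M}))(\cR^0_t,Z^{d,0,t,x}_{\cR^0_t})]$. Combining with the two identities above gives, for $n\ge 1$,
$$\bE[\cU^{d,0}_{n,M}(t,x)]-u^d(t,x)=\int_t^T\bE\bigl[(F^d(\cU^{d,0}_{n-1,M})-F^d(u^d))(r,Z^{d,0,t,x}_r)\bigr]\,dr,$$
and the Lipschitz bound $|F^d(v)(t,x)-F^d(w)(t,x)|\le L^{1/2}|v(t,x)-w(t,x)|$ coming from \eqref{assumption Lip f g}, together with Cauchy--Schwarz and the moment bound $\bE[d^p+\|Z^{d,0,t,x}_r\|^2]\le be^{\rho(r-t)}(d^p+\|x\|^2)$, controls this bias by $TL^{1/2}b^{1/2}e^{\rho(T-t)/2}\varepsilon_{n-1}(t)(d^p+\|x\|^2)^{1/2}$; for $n=0$ the bias is simply $-u^d(t,x)$, of size $c^{1/2}(d^p+\|x\|^2)^{1/2}$ by \eqref{LG F g u}.

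For the variance we would exploit the mutual independence of the Monte Carlo samples across both the $i$-index and the $l$-index. In particular, for each $l\ge 1$ the internal randomness of $\cU^{(d,0,l,i)}_{l,M}$, that of $\cU^{(d,0,-l,i)}_{l-1,M}$, and the pair $(\cR^{(0,l,i)}_t,Z^{(d,0,t,x,l,i)})$ all use disjoint index subsets of $\Theta$ and are therefore independent, so the cross terms between different $(l,i)$-pairs vanish and the variance of $\cU^{d,0}_{n,M}(t,x)$ equals the sum of the variances of the individual summands:
$$\operatorname{Var}(\cU^{d,0}_{n,M}(t,x))=\frac{\mathbf{1}_\bN(n)}{M^n}\operatorname{Var}(g^d(Z^{d,0,t,x}_T))+\sum_{l=0}^{n-1}\frac{(T-t)^2}{M^{n-l}}\operatorname{Var}\bigl((F^d(\cU^{d,0}_{l,M})-\mathbf{1}_\bN(l)F^d(\cU^{d,0}_{l-1,M}))(\cR^0_t,Z^{d,0,t,x}_{\cR^0_t})\bigr).$$
Bounding each variance by the corresponding second moment, reversing the uniform-time identity, using Lipschitzness of $F^d$ and the growth bound \eqref{LG F g u}, together with the moment bound on $Z$, each $l$-th summand with $l\ge 1$ is controlled by a constant multiple of $M^{-(n-l)}Tbe^{\rho(T-t)}(d^p+\|x\|^2)L(\varepsilon_l(t)^2+\varepsilon_{l-1}(t)^2)$, while the $l=0$ term and the boundary contribution $\mathbf{1}_\bN(n)\operatorname{Var}(g^d(Z^{d,0,t,x}_T))/M^n$ produce the factor involving $\max\{|TF^d(0,\cdot,\cdot)|,|g^d|\}/(d^p+\|\cdot\|^2)^{1/2}$ and $|u^d|/(d^p+\|\cdot\|^2)^{1/2}$.

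Summing the bias and variance bounds and taking the supremum over $x$ produces a recursive inequality of the schematic form
$$\varepsilon_n(t)^2 \le T^2L\,\varepsilon_{n-1}(t)^2 + be^{\rho(T-t)}\biggl(\frac{C_0\mathbf{1}_\bN(n)}{M^n}+\sum_{l=0}^{n-1}\frac{C_0+T^2L(\varepsilon_l(t)^2+\mathbf{1}_\bN(l)\varepsilon_{l-1}(t)^2)}{M^{n-l}}\biggr),$$
where $C_0$ absorbs the two supremum constants appearing on the right-hand side of \eqref{est MLP}. The main obstacle is iterating this recursion cleanly so as to extract the precise prefactor $2e^{M/2}M^{-n/2}(1+2TL^{1/2})^{n-1}$. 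Following the argument in the proof of Proposition~3.3 of \cite{HJKN2020}, one rescales by $M^{n/2}$ to balance the weights, iterates the Lipschitz bias term $n-1$ times to produce the geometric factor $(1+2TL^{1/2})^{n-1}$, and bounds the resulting combinatorial sum via $\sum_{k=0}^n M^k/k!\le e^M$, which after taking square roots yields the $e^{M/2}$ factor. Taking the supremum over $x\in\bR^d$ then gives \eqref{est MLP}.
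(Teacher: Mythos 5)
The paper's own proof of this lemma is a one-line reduction to Corollary 3.12 of \cite{HJKN2020}: it gives a notation translation table and observes that the argument there carries over verbatim to general $b\in(0,\infty)$. Your proposal instead reconstructs the internals of that cited argument --- the fixed-point representation of $u^d$, the uniform-time identity $\int_t^T\bE[\phi(r,Z^{d,0,t,x}_r)]\,dr=(T-t)\bE[\phi(\cR^0_t,Z^{d,0,t,x}_{\cR^0_t})]$, the telescoping in expectation that yields the bias identity, the variance decomposition across independent $(l,i)$-indices, and the resulting discrete recursion in $\varepsilon_n(t)$. The bias identity you derive and the orthogonality structure you invoke for the variance are correct, and this is indeed the skeleton of Proposition 3.3 and Corollary 3.12 in \cite{HJKN2020}; so your decomposition is the right one.

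The gap is that the argument is not actually closed. You state the recursion only ``schematically'' and then assert that the prefactor $2e^{M/2}M^{-n/2}(1+2TL^{1/2})^{n-1}b^{1/2}e^{\rho(T-t)/2}$ emerges by ``rescaling by $M^{n/2}$, iterating $n-1$ times, and bounding $\sum_k M^k/k!\le e^M$,'' explicitly deferring to Proposition 3.3 of \cite{HJKN2020} for the execution. That is an accurate pointer, not a derivation, and it means your proof bottoms out on the same reference as the paper's, just one layer deeper. Moreover the schematic recursion as written is not quite the one you would want to iterate: the moment factor $be^{\rho(T-t)}$ must also multiply the bias term $T^2L\varepsilon_{n-1}^2$; the constant $C_0$ should appear only in the $g$-term and the $l=0$ contribution, not in the summands with $l\ge 1$; and because the inner expectations are evaluated at the random uniform time $\cR^0_t\in[t,T]$ while $\varepsilon_l$ is a function of a fixed time argument, the recursion genuinely couples $\varepsilon_n(t)$ to $\sup_{s\in[t,T]}\varepsilon_l(s)$, which is exactly what the weighted-norm bookkeeping in \cite{HJKN2020} is designed to control. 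To make the proof self-contained, these points must be handled precisely and the induction on $n$ carried through --- that is precisely the content of the result the paper cites.
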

\begin{proof}
For $b=1$, the above statement is proved in Corollary 3.12 in \cite{HJKN2020} 
(with $c\cal c$, $\rho\cal \rho$,
$L\cal L^{1/2}$,
$\tau\cal t$, $U^\theta_{n,M}\cal \mathcal{U}^{d,\theta}_{n,M}$,
$Y^\theta_{t,s}(x)\cal Z^{d,\theta,t,x}_s$,
$\varphi\cal (\bR^d\ni x\mapsto(d^p+\|x\|^2)\in\bR)$,
$F\cal F^d$, $g\cal g^d$, $u\cal u^d$
in the notation of Corollary 3.12 in \cite{HJKN2020}).
For the general case $b\in(0,\infty)$, one can precisely follow the proof
of Corollary 3.12 in \cite{HJKN2020} to obtain \eqref{est MLP}.
\end{proof}

\section{\textbf{Proof of the main results}}  
\label{section MLP}

\begin{proof}[Proof of Theorem \ref{MLP conv}]
(i) is well-known
(see, e.g., Theorem 3.1 in \cite{Kunita} and Theorem 117 in \cite{Situ}).
To prove (ii) we notice that
by Corollary \ref{corollary prob 1}, for all $\theta\in\Theta$ and
$d\in \bN$ the mapping
$\Lambda\times\bR^d\ni(t,s,x)
\mapsto\big(s,X^{d,\theta,t,x}_{s}\big)\in\cL_0(\Omega,[0,T]\times\bR^d)$
is continuous, where $\cL_0(\Omega,[0,T]\times\bR^d)$ denotes the metric space
of all measurable functions from $\Omega$ to $[0,T]\times\bR^d$ equipped with
the metric deduced by convergence in probability.
Moreover, notice that for all $d\in\bN$ and nonnegative Borel functions 
$\varphi:[0,T]\times\bR^d\to[0,\infty)$, the mapping
$
\cL_0(\Omega,[0,T]\times\bR^d)\ni Z \mapsto \bE\big[\varphi(Z)\big]\in [0,\infty]
$
is measurable.
Hence for all $\theta\in\Theta$,
$d\in\bN$, and all nonnegative Borel functions
$\varphi:[0,T]\times\bR^d\to[0,\infty)$ it holds that the mapping
\begin{equation}                                  \label{measurability 1}
\Lambda\times\bR^d\ni(t,s,x)\mapsto
\bE\Big[\varphi\big(s,X^{d,\theta,t,x}_{s}\big)\Big]
\in[0,\infty]
\end{equation}
is measurable. Analogously, considering Corollary \ref{lemma prob 2}, we obtain 
for all $\theta\in\Theta$,
$d\in\bN$, $N\in\bN$ and all nonnegative Borel functions
$\varphi:[0,T]\times\bR^d\to[0,\infty)$ that the mapping
\begin{equation}                                  \label{measurability 2}
\Lambda\times\bR^d\ni(t,s,x)\mapsto
\bE\Big[\varphi\big(s,Y^{d,\theta,t,x,N,\delta,\cM}_{s}\big)\Big]
\in[0,\infty]
\end{equation}
is measurable.
Furthermore, by Lemma \ref{lemma est moment}
and Lemma \ref{Lemma stability},
we observe that for all $d\in \bN$, $\theta\in\Theta$,
$t\in[0,T]$, $s\in[t,T]$, and $x,y\in\bR^d$
\begin{equation}                                          \label{est theta}
\bE\Big[d^p+\big\|X^{d,\theta,t,x}_{s}\big\|^2\Big]\leq 
c_1(d^p+\|x\|^2)e^{\rho_1(s-t)},
\end{equation}
and
\begin{equation}                                      \label{stability initial}
\bE\Big[\big\|X^{d,\theta,t,x}_{s}-X^{d,\theta,t,y}_{s}\big\|^2\Big]
\leq 4\|x-y\|^2e^{\rho_2(s-t)},
\end{equation}
where $c_1:=1+6LT$, $\rho_1:=1+6L$, and $\rho_2:=4L(T+2)$.
Moreover, \eqref{estimate M} in Lemma \ref{lemma delta M}
ensures for all $\theta\in\Theta$, $d\in\bN$,
$\delta\in(0,1)$, $(t,x)\in[0,T]\times\bR^d$, $s\in[t,T]$, and
$N,\cM\in\bN$ with $\cM\geq \delta^{-2}Kd^p$ 
that 
\begin{equation}                                         \label{est theta Euler}
\bE\Big[d^p+\big\|Y^{d,\theta,t,x,N,\delta,\cM}_{s}\big\|^2\Big]
\leq c_3(d^p+\|x\|^2)e^{\rho_3(s-t)},
\end{equation}
where $c_3:=5\cdot\max\{1,4LT(T+8)\}$, and $\rho_3:=20L(T+8)$. 
Then, combining \eqref{assumption Lip f g}, \eqref{assumption growth f g}, 
\eqref{measurability 1}, \eqref{est theta}, 
and applying Proposition \ref{Prop FP} (with
$Z^{t,x}_s\cal X^{d,0,t,x}_{s}$, 
$a\cal \max\{L^{1/2},L^{1/2}T^{-1/2}\}$,
$b \cal c_1^{1/2}e^{\rho_1T/2}$, 
$c \cal L^{1/2}$,
$f \cal f^d$, $g \cal g^d$, $u \cal u^d$ in the notation of Proposition
\ref{Prop FP}),
we obtain (ii) and
\begin{equation}                                               \label{u^d est}
\frac{|u^d(t,x)|}{\sqrt{d^p+\|x\|^2}}\leq 2L^{1/2}
\exp\big\{L^{1/2}(T-t)\big\}.
\end{equation}
Analogously, combining \eqref{assumption Lip f g}, \eqref{assumption growth f g}, 
\eqref{measurability 2}, and \eqref{est theta Euler},
and applying Proposition \ref{Prop FP} (with
$Z^{t,x}_s\cal Y^{d,0,t,x,N,\delta,\cM}_{s}$, $b \cal c_3^{1/2}e^{\rho_3T/2}$, 
$c \cal L^{1/2}$,
$f \cal f^d$, $g \cal g^d$, $u \cal u^d_{N,\delta,\cM}$ 
in the notation of Proposition
\ref{Prop FP}),
we obtain for each $d\in\bN$, $\delta\in(0,1)$, 
and $N,\cM\in\bN$ with $\cM\geq \delta^{-2}Kd^p$ that
there exists an unique
Borel function $u^d_{N,\delta,\cM}:[0,T]\times\bR^d\to\bR$ satisfying for all
$(t,x)\in[0,T]\times\bR^d$ that

\begin{align}
&
\bE\Big[\big|g^d(Y^{d,0,t,x,N,\delta,\cM}_{T})\big|\Big]
+\int_t^T\bE\Big[\big|f^d(s,Y^{d,0,t,x,N,\delta,\cM}_{s},
u^d_{N,\delta,\cM}(s,Y^{d,0,t,x,N,\delta,\cM}_{s}))\big|\Big]\,ds<\infty,
\label{M 1}
\\
&
\sup_{y\in\bR^d}\sup_{s\in[0,T]}
\left(\frac{|u^d_{N,\delta,\cM}(s,y)|}{\sqrt{d^p+\|y\|^2}}
\right)<\infty,
\label{M 2}
\\
&
u^d_{N,\delta,\cM}(t,x)=\bE\Big[g^d(Y^{d,0,t,x,N,\delta,\cM}_{T})\Big]
+\int_t^T\bE\Big[f^d(s,Y^{d,0,t,x,N,\delta,\cM}_{s}
,u^d_{N,\delta,\cM}(s,Y^{d,0,t,x,N,\delta,\cM}_{s}))\Big]\,ds,
\label{M 3}
\\
&
\frac{|u^d_{N,\delta,\cM}(t,x)|}{\sqrt{d^p+\|x\|^2}}\leq 2L^{1/2}
\exp\big\{L^{1/2}(T-t)\big\}.
\label{M 4}
\end{align}
Furthermore, we notice that Proposition \ref{proposition uniqueness PIDE} ensures that
for each $d\in\bN$ there exists at most one viscosity solution 
$v^d\in C_1([0,T]\times\bR^d)$.
Hence, combining this with Proposition \ref{proposition PIDE existence} we
obtain (iii) and (iv). 

Next, we start to prove (v). 
We first observe that by \eqref{error Euler}, \eqref{est error Y delta} 
and \eqref{error M}, there exist a positive constant $c_4=c_4(T,L,L_1,L_2,K)$ 
satisfying for all $\theta\in\Theta$, $d\in\bN$,
$\delta\in(0,1)$, $(t,x)\in[0,T]\times\bR^d$, and
$N,\cM\in\bN$ with $\cM\geq \delta^{-2}Kd^p$ 
that 
\begin{equation}                                       \label{Euler error theta}
\bE\Bigg[\sup_{s\in[t,T]}
\left\|X^{d,\theta,t,x}_{s}-Y^{d,\theta,t,x,N,\delta,\cM}_{s}\right\|^2\Bigg]
\leq c_4(d^p+\|x\|^2)\left(N^{-1}T+\delta^qd^p+\delta^{-2}Kd^p\cM^{-1}\right).
\end{equation}
Furthermore, we notice that it holds 
for all $d\in\bN$, $\delta\in(0,1)$, $n,M,N,\cM\in\bN$,
and $(t,x)\in[0,T]\times\bR^d$ that
\begin{align}                                   
\left(\bE\Big[\big|U^{d,0,\delta,\cM}_{n,M,N}(t,x)-u^d(t,x)\big|^2
\Big]\right)^{1/2}
\leq
& 
\left(\bE\Big[\big|U^{d,0,\delta,\cM}_{n,M,N}(t,x)-u^d_{N,\delta,\cM}(t,x)\big|^2
\Big]\right)^{1/2}\nonumber\\
&
+\big|u^d_{N,\delta,\cM}(t,x))-u^d(t,x)\big|.
\label{error norm 1}                        
\end{align}
Considering \eqref{assumption growth f g}, \eqref{est theta Euler},
\eqref{M 1}-\eqref{M 4}, 
and the fact that 
$1+a\leq e^a$ for all $a\in\bR$, and applying Lemma \ref{Lemma MLP} (with
$\mathcal{U}^{d,0}_{n,M}\cal U^{d,0,\delta,\cM}_{n,M,N}$,
$Z^{d,0,t,x}_s\cal Y^{d,0,t,x,N,\delta,\cM}_s$,
$b\cal c_3$, $\rho\cal \rho_3$, $u^d\cal u^d_{N,\delta,\cM}$,
$c\cal 3\cdot\big(4L\exp\{2L^{1/2}T\}\vee T^{-2}L\vee L\big)$
in the notation of Lemma \ref{Lemma MLP}), 
we have for all $d\in\bN$, $\delta\in(0,1)$, $n\in\bN_0$, $t\in[0,T]$, and
$M,N,\cM\in\bN$ with $\cM\geq \delta^{-2}Kd^p$ that
\begin{align}
\sup_{x\in\bR^d} &
\left(
\frac{\bE\left[\big|U^{d,0,\delta,\cM}_{n,M,N}(t,x)
-u^d_{N,\delta,\cM}(t,x)\big|^2\right]}{d^p+\|x\|^2}
\right)^{1/2}\leq
2e^{M/2}M^{-n/2}(1+2TL^{1/2})^{n-1}c_3^{1/2}e^{\rho_3(T-t)/2}\nonumber\\
&
\cdot \left[\sup_{t\in[0,T]}\sup_{x\in\bR^d}
\left(\frac{\max\{T(F^d(0)(t,x)),|g^d(x)|\}}{\sqrt{d^p+\|x\|^2}}\right)
+TL^{1/2}\sup_{t\in[0,T]}\sup_{x\in\bR^d}
\left(\frac{u^d_{N,\delta,\cM}(t,x)}{\sqrt{d^p+\|x\|^2}}
\right)\right]\nonumber\\
&
\leq 2e^{M/2}M^{-n/2}(1+2TL^{1/2})^{n-1}c_3^{1/2}e^{\rho_3(T-t)/2}
\left(L^{1/2}+2TL\exp\{L^{1/2}(T-t)\}\right)
\nonumber\\
&
\leq 2e^{M/2}M^{-n/2}(1+2TL^{1/2})^{n-1}e^{\rho_3(T-t)/2}
(c_3L)^{1/2}\left(1+2TL^{1/2}
\exp\{L^{1/2}(T-t)\}\right)\nonumber\\
&
\leq 
C_1e^{M/2}M^{-n/2}e^{2nTL^{1/2}},
\label{error 1}
\end{align}
where
$C_1:=2e^{\rho_3(T-t)/2}
(c_3L)^{1/2}\left(1+2TL^{1/2}
\exp\{L^{1/2}T\}\right)$.
To deal with the second term in \eqref{error norm 1}, we notice that
by \eqref{stability initial} and \eqref{Euler error theta},
for all $d\in\bN$, $\delta\in(0,1)$, $(t,x)\in[0,T]\times\bR^d$, 
$s\in[t,T]$, $r\in[s,T]$ and $N,\cM\in\bN$ with $\cM\geq \delta^{-2}Kd^p$ 
it holds that
\begin{align*}
&
\bE\left[\bE\left[\left\|X^{d,0,s,x'}_{r}-X^{d,0,s,y'}_{r}\right\|^2\right]
\Big|_{(x',y')=(X^{d,0,t,x}_{s},Y^{d,0,t,x,N,\delta,\cM}_{s})}\right]\\
&
\leq 4e^{\rho_2(r-s)}
\bE\left[\left\|X^{d,0,t,x}_{s}-Y^{d,0,t,x,N,\delta,\cM}_{s}\right\|^2\right]\\
&
\leq 4c_4(d^p+\|x\|^2)e^{\rho_2(r-s)}
(N^{-1}T+\delta^qd^p+\delta^{-2}Kd^p\cM^{-1}).
\end{align*}
Moreover, by \eqref{stability initial} 
we notice that for each $d\in\bN$, $s\in[0,T]$, and $r\in[s,T]$ the mapping 
$
\bR^d\times\bR^d\ni(x,y)\mapsto \big(X^{d,0,s,x}_r,X^{d,0,s,y}_r\big)
\in \cL_0(\Omega,\bR^d\times\bR^d)
$
is continuous and hence measurable, 
and we have for all nonnegative Borel functions 
$h:\bR^d\times\bR^d\to[0,\infty)$ that the mapping
$
\cL_0(\Omega,\bR^d\times\bR^d)\ni Z \mapsto \bE\big[h(Z)\big]\in [0,\infty]
$
is measurable. 
Hence, it holds for all $d\in\bN$, $s\in[0,T]$,
$r\in[s,T]$ and all nonnegative Borel functions 
$h:\bR^d\times\bR^d\to[0,\infty)$ that the mapping
\begin{equation}                                           \label{measurability 3}
\bR^d\times\bR^d\ni(x,y)\mapsto
\bE\Big[h\big(X^{d,0,s,x}_{r},X^{d,0,s,y}_{r}\big)\Big]\in[0,\infty]
\end{equation}
is measurable.
Furthermore, Lemma 2.2 in \cite{HJKNW2018} ensures that for all 
$d\in\bN$, $t\in[0,T]$, $s\in[t,T]$, $r\in[s,T]$, $x,y\in\bR^d$ and
all nonnegative Borel functions $h:\bR^d\times\bR^d\to[0,\infty)$
it holds that
\begin{equation}                                     \label{expectation equality}
\bE\left[\bE \left[h\Big(X^{d,0,s,x'}_{r},X^{d,0,s,y'}_{r}\Big)\right]
\Big|_{(x',y')=(X^{d,0,t,x}_{s},X^{d,0,t,y}_{s})}\right]
=\bE\left[h\Big(X^{d,0,t,x}_{r},X^{d,0,t,y}_{r}\Big)\right].
\end{equation}
Then using \eqref{assumption Lip f g}, \eqref{est theta},
\eqref{est theta Euler}, \eqref{Euler error theta}, \eqref{u^d est},
\eqref{M 1}-\eqref{M 4},
\eqref{measurability 3}, \eqref{expectation equality},
and (iv), 
and applying Lemma \ref{lemma perturbation} with
$T\cal T$, $L\cal L^{1/2}$, 
$X^{t,x,1} \cal X^{d,0,t,x}$,
$X^{t,x,2}\cal 
Y^{d,0,t,x,N,\delta,\cM}$,
$\eta\cal (c_1\vee c_3)e^{(\rho_1\vee \rho_3)T}$,
$\delta\cal \big[4c_4e^{\rho_2 T}(N^{-1}T
+\delta^qd^p+\delta^{-2}Kd^p\cM^{-1})\big]^{1/2}$,
$c\cal c_0:=c_1\vee c_3\vee \big(2L^{1/2}\exp\{L^{1/2}T\}\big)$,
$\rho\cal \rho_1\vee\rho_3$
in the notation of Lemma \ref{lemma perturbation},
we obtain for all $d,N\in\bN$, $\delta\in(0,1)$, 
$(t,x)\in[0,T]\times\bR^d$, 
and $\cM\in\bN$ with $\cM\geq \delta^{-2}Kd^p$ that
\begin{align}
&
\big|u^d_{N,\delta,\cM}(t,x)-u^d(t,x)\big|
\nonumber\\
&
\leq 
4c_0\big[4c_4e^{\rho_2 T}(N^{-1}T
+\delta^qd^p+\delta^{-2}Kd^p\cM^{-1})\big]^{1/2}
(TL)^{1/2}(1+L^{1/2}T)(d^p+\|x\|^2)
\nonumber\\
&
\quad \cdot\exp\left\{\left(L^{1/2}+(\rho_1\vee \rho_3)/2
+(Lc_0)^{1/2}(c_1\vee c_3)^{1/2}e^{(\rho_1 \vee \rho_3)T/2}\right)(T-t)
\right\}\nonumber\\
&
\leq C_2(d^p+\|x\|^2)(N^{-1}T+\delta^qd^p+\delta^{-2}Kd^p\cM^{-1})^{-1/2},
\label{error 2}
\end{align}
where 
\begin{align*}
C_2:=
&
\exp\left\{\left(L^{1/2}+(\rho_1\vee \rho_3)/2
+(Lc_0)^{1/2}(c_1\vee c_3)^{1/2}e^{(\rho_1 \vee \rho_3)T/2}\right)T
\right\}\\
&
\cdot 8c_0\big(c_4TLe^{\rho_2T}\big)^{1/2}\big(1+L^{1/2}T\big).
\end{align*}
Hence, combining \eqref{error norm 1}, \eqref{error 1}, 
and \eqref{error 2} yields
for for all $d\in\bN$, $\delta\in(0,1)$, $n\in\bN_0$, $(t,x)\in[0,T]\times\bR^d$, 
and $M,N,\cM\in\bN$ with $\cM\geq \delta^{-2}Kd^p$ that
\begin{align*}
&
\Big(\bE\left[\big|U^{d,0,\delta,\cM}_{n,M,N}(t,x)
-u^d(t,x)\big|^2\right]\Big)^{1/2}\\
&
\leq
(C_1\vee C_2)(d^p+\|x\|^2)
\left[e^{M/2}M^{-n/2}e^{2nTL^{1/2}}
+(N^{-1}T+\delta^qd^p+\delta^{-2}Kd^p\cM^{-1})^{1/2}\right],
\end{align*}
which proves (v) with $c=C_1\vee C_2$. 
Thus, the proof of this theorem is complete.
\end{proof}

\begin{proof}[Proof of Theorem \ref{MLP complexity}]
Applying (v) in Theorem 4.2 in \cite{HJKN2020} with 
$n\cal n$, $M\cal M$, $\mathfrak{C}_{n,M}\cal \mathfrak{C}_{n,M}^{(d)}$,
$\mathfrak{m}\cal \mathfrak{e}^{(d)}, \mathfrak{g}\cal\mathfrak{g}^{(d)}$,
$\mathfrak{f}\cal\mathfrak{f}^{(d)}$
in the notation of Theorem 4.2 in \cite{HJKN2020},
we obtain \eqref{cc 2}.
Next, for each $d\in\bN$, $\varepsilon\in(0,1]$, and $x\in\bR^d$ define
$\mathbf{n}^d(x,\varepsilon)$ by
\begin{equation}                                                \label{def n}
\mathbf{n}^d(x,\varepsilon):=\inf\{n\in\bN\cap [2,\infty):
\sup_{k\in[n,\infty)\cap\bN}\sup_{t\in[0,T]}
\bE\left[\big|U^{d,0}_{(k)}(t,x)-u^d(t,x)\big|^2\right]<\varepsilon^2\},
\end{equation}
where we use the convention $\inf(\emptyset)=\infty$
($\emptyset$ denotes the empty set), 
and use the shorter notation
$$
U^{d,0}_{(k)}(t,x):=U^{d,0,k^{-k/q},k^{k+2k/q}Kd^p}_{k,k,k^k}(t,x),\quad
\text{$\forall d\in\bN$, $k\in\bN$, $(t,x)\in[0,T]\times\bR^d$}.
$$
Applying (v) in Theorem \ref{MLP conv} (with
$n \cal n$, $M\cal n$, $N\cal n^n$, $\delta\cal n^{-n/q}$, 
$\cM\cal n^{n+2n/q}Kd^p$
in the notation of Theorem \ref{MLP conv}),
we have for all $d\in\bN$, 
$n\in\bN$, and $(t,x)\in[0,T]\times\bR^d$ that
\begin{align}
&
\left(\bE\left[\big|U^{d,0}_{(n)}(t,x)-u^d(t,x)\big|^2\right]\right)^{1/2}
\nonumber\\
&
\leq
c(d^p+\|x\|^2)\left[e^{n/2}n^{-n/2}e^{2nTL^{1/2}}
+(n^{-n}\cdot T+d^pn^{-n}+n^{-n})^{1/2}\right],               \label{MLP n}
\end{align}
where $c=c(T,L,L_1,L_2,K)$
is the constant introduced in (v) in Theorem \ref{MLP conv}.
Moreover, we observe for all integers $n\geq 2\exp\{4TL^{1/2}+1\}$ that
$$
e^{n/2}e^{2nTL^{1/2}}n^{-n/2}
=\exp\left\{(4TL^{1/2}+1)\cdot n/2\right\}\cdot n^{-n/2}
\leq 2^{-n/2},
$$
which implies that
$$
\lim_{n\to\infty}e^{n/2}e^{2nTL^{1/2}}n^{-n/2}=0.
$$
Therefore, by \eqref{MLP n} we have for all $d\in\bN$, $n\in\bN$, $\varepsilon\in(0,1]$, and $(t,x)\in[0,T]\times\bR^d$ that
$$
\mathbf{n}^d(x,\varepsilon)<\infty \quad \text{and} \quad
\sup_{n\in[\mathbf{n}^d(x,\varepsilon),\infty)\cap \bN}
\left(\bE\left[\big|U^{d,0}_{(n)}(t,x)
-u^d(t,x)\big|^2\right]\right)^{1/2}<\varepsilon,
$$
which establishes \eqref{error epsilon}.
Furthermore, by \eqref{cc 2} and \eqref{MLP n}
we obtain for all $d,n\in\bN$, $\gamma\in(0,1]$, 
and $(t,x)\in[0,T]\times\bR^d$ that
\begin{align}
&
\left(\sum_{k=1}^{n+1}\mathfrak{C}^{(d)}_{k,k}\right)
\left(\bE\left[\big|U^{d,0}_{(n)}(t,x)-u^d(t,x)\big|^2
\right]\right)^{\frac{\gamma+4}{2}}
\nonumber\\
&
\leq 12(3\mathfrak{e}^{(d)}+\mathfrak{g}^{(d)}+2\mathfrak{f}^{(d)})36^nn^{2n} 
\big[c(d^p+\|x\|^2)\big]^{\gamma+4}
(e^{n/2}n^{-n/2}e^{2nTL^{1/2}}+(1+T+d^p)^{1/2}n^{-n/2})^{\gamma+4}
\nonumber\\
& 
= 12(3\mathfrak{e}^{(d)}+\mathfrak{g}^{(d)}+2\mathfrak{f}^{(d)})36^n
n^{-\gamma n/2}\big[c(d^p+\|x\|^2)\big]^{\gamma+4}
(e^{n/2}e^{2nTL^{1/2}}+(1+T+d^p)^{1/2})^{\gamma+4}.                     
\label{cc 4}
\end{align} 
Then \eqref{def n} and \eqref{cc 4} show for all $d\in\bN$,
$\varepsilon,\gamma\in(0,1]$ and $(t,x)\in[0,T]\times\bR^d$ that
\begin{align*}
\left(\sum_{k=1}^{\mathbf{n}^d(x,\varepsilon)}\mathfrak{C}^{(d)}_{k,k}\right)
\varepsilon^{\gamma+4}
&
\leq \left(\sum_{k=1}^{\mathbf{n}^d(x,\varepsilon)}\mathfrak{C}^{(d)}_{k,k}\right)
\left(\bE\left[
\big|U^{d,0}_{(\mathbf{n}^d(x,\varepsilon)-1)}(t,x)-u^d(t,x)\big|^2
\right]\right)^{\frac{\gamma+4}{2}}\\
&
\leq 12(3\mathfrak{e}^{(d)}+\mathfrak{g}^{(d)}+2\mathfrak{f}^{(d)})
\big[c(d^p+\|x\|^2)\big]^{\gamma+4}
\nonumber\\
&
\quad
\cdot \sup_{n\in\bN}
\left(36^n
n^{-\gamma n/2}(e^{n/2}e^{2nTL^{1/2}}+(1+T+d^p)^{1/2})^{\gamma+4}\right).
\end{align*}
Moreover, note that we have for all $\gamma\in(0,1]$ and $n\in\bN$ satisfying 
$$
n\geq \left(2\cdot6^4\cdot\exp\{(1+4TL^{1/2})(\gamma+4)\}\right)^{1/\gamma}
$$ that
\begin{align*}
36^nn^{-\gamma n/2}(e^{n/2}e^{2nTL^{1/2}})^{\gamma+4}
&
=6^{4n/2}\cdot\exp\{(1+4TL^{1/2})(\gamma+4)n/2\}\cdot n^{-\gamma n/2}\\
&
=\left[6^4\cdot\exp\{(1+4TL^{1/2})(\gamma+4)\}\right]^{n/2}\cdot n^{-\gamma n/2}\\
&
\leq 2^{-n/2},
\end{align*}
which implies for all $\gamma\in(0,1]$ that
$$
\sup_{n\in\bN}
\left(36^n
n^{-\gamma n/2}(e^{n/2}e^{2nTL^{1/2}}+(1+T+d^p)^{1/2})^{\gamma+4}\right)<\infty.
$$
Hence, the proof of this theorem is complete.
\end{proof}

\clearpage

\appendix

\section{\textbf{Preliminaries: dimension-depending bounds for SDEs with jumps}}
\label{section pre}

Throughout this chapter, we assume the settings in Section \ref{section setting},
and fix $d$ and $\theta$. To ease notations, we will omit the superscripts $d$ and
$\theta$ for the notations introduced in Section \ref{section setting}
(e.g., for each $(t,x)\in[0,T]\times\bR^d$, and $s\in[t,T]$, 
$X^{d,\theta,t,x}_{s}$ 
will be denoted by $X^{t,x}_{s}$).
We follow \cite{GS2021} which provides dimension-depending
bounds for (time discretization of) SDEs with jumps but without time dependence
for the coefficients.

\subsection{Bounds for non-discretized SDEs}

\begin{lemma}                                             \label{lemma est moment}
For all $(t,x)\in\bR^d$ and $s\in[t,T]$ it holds that
\begin{equation}                                            \label{est 1+}
\bE\big[\|X^{t,x}_s\|^2\big]\leq (6Ld^p(s-t)+\|x\|^2)e^{(1+6L)(s-t)}.
\end{equation}
\end{lemma}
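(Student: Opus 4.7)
The plan is to apply the jump-Itô formula to the process $s\mapsto\|X_s^{t,x}\|^2$, take expectations after a suitable localization to kill the martingale pieces, invoke the linear growth bound \eqref{assumption growth} to derive a linear integral inequality for $\phi(s):=\bE[\|X_s^{t,x}\|^2]$, and conclude by Gronwall's lemma.

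First I would apply Itô's formula for jump semimartingales to $f(y)=\|y\|^2$ along the SDE \eqref{SDE}. The finite-variation contribution is $2\langle X_{r-},\mu(r,X_{r-})\rangle\,dr$, the continuous quadratic-variation part adds $\|\sigma(r,X_{r-})\|_F^2\,dr$, the Brownian stochastic integral produces the local martingale $2\int\langle X_{r-},\sigma(r,X_{r-})\,dW_r\rangle$, and the jump term, after using the identity $\|X_{r-}+\eta\|^2-\|X_{r-}\|^2=2\langle X_{r-},\eta\rangle+\|\eta\|^2$, decomposes into a compensated-Poisson local martingale plus the compensator $\int_{\bR^{d}}\|\eta_r(X_{r-},z)\|^2\,\nu(dz)\,dr$.

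Next I would localize with stopping times $\tau_n:=\inf\{s>t:\|X_s^{t,x}\|\geq n\}\wedge T$, take expectations on $[t,s\wedge\tau_n]$, and pass $n\to\infty$. On $[t,\tau_n]$ both stochastic-integral pieces are honest $L^2$-martingales thanks to \eqref{assumption eta}, the small-jump integrability in \eqref{int z & q bound}, and the a priori moment estimate \eqref{SDE moment est}, so they drop out under the expectation; dominated convergence then yields
\begin{equation*}
\bE[\|X_s^{t,x}\|^2]=\|x\|^2+\int_t^s\bE\!\left[2\langle X_r,\mu(r,X_r)\rangle+\|\sigma(r,X_r)\|_F^2+\int_{\bR^{d}}\|\eta_r(X_r,z)\|^2\,\nu(dz)\right]dr.
\end{equation*}

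Finally I would apply Young's inequality $2\langle X,\mu\rangle\leq\|X\|^2+\|\mu\|^2$ and insert the growth bound \eqref{assumption growth} to obtain an inequality of the form $\phi(s)\leq\|x\|^2+\alpha d^p(s-t)+\beta\int_t^s\phi(r)\,dr$ for constants $\alpha,\beta$ depending only on $L$, with the splitting tuned so that the $d^p$-dependence is absorbed into the affine offset rather than the multiplier on $\phi$. Gronwall's lemma in integral form then yields $\phi(s)\leq(\alpha d^p(s-t)+\|x\|^2)e^{\beta(s-t)}$, which matches the stated estimate once the constants are tracked and one recovers the factors $6L$ and $1+6L$. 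The main obstacle, as is typical for jump SDEs, is the localization step: one must verify that the $\tilde\pi$-integrand in Itô's formula is in $L^2(dP\otimes\nu(dz)\otimes dr)$ after stopping, so that the associated stochastic integral is a genuine zero-mean martingale; combining \eqref{assumption eta}, \eqref{int z & q bound}, and the a priori moment control \eqref{SDE moment est} makes this routine, and the rest of the proof is algebraic bookkeeping.
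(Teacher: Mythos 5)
Your proposal is correct and follows essentially the same route as the paper: Itô's formula for $\|X^{t,x}_\cdot\|^2$, localization to make the stochastic integrals true martingales, Young's inequality plus the growth condition \eqref{assumption growth} to obtain a linear integral inequality with affine offset $6Ld^p(s-t)$, and Gr\"onwall. The only cosmetic differences are that the paper defines the stopping times directly via integrability of the martingale integrands (and applies Gr\"onwall to the stopped quantities, then removes the localization with Fatou), whereas you use the ball-exit stopping time and pass $n\to\infty$ by dominated convergence before applying Gr\"onwall; both variants are standard and require the a priori bound \eqref{SDE moment est} at the same place.
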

\begin{proof}
We fix $(t,x)\in[0,T]\times\bR^d$ throughout this proof. 
For $X^{t,x}=(X^{t,x,1},...,X^{t,x,d})$,
applying It\^o's formula 
(see. e.g. Theorem 3.1 in \cite{GW2019}) to $\|X^{t,x}\|^2$ yields
that almost surely for all $s\in[t,T]$ 
\begin{align}
\|X^{t,x}_{s}\|^2=
&
\|x\|^2+\int_t^s\left(2\langle X^{t,x}_{r-},\mu(r,X^{t,x}_{r-})\rangle
+\|\sigma(r,X^{t,x}_{r-})\|_F^2\right)\,dr\nonumber\\
&                                
+2\int_t^s\sum_{i=1}^d
\sum_{j=1}^dX^{t,x,i}_{r-}\sigma^{ij}(r,X^{t,x}_{r-})\,dW^j_r
+2\int_t^s\int_{\bR^{d}}\langle X^{t,x}_{r-},\eta_r(X^{t,x}_{r-},z)\rangle
\,\tilde{\pi}(dz,dr)\nonumber\\
&
+\int_t^s\int_{\bR^{d}}\|\eta_r(X^{t,x}_{r-},z)\|^2\,\pi(dz,dr).     \label{Ito1}
\end{align}
Next, for each $n\in \bN$
define the following stopping time 
\begin{align}
\tau^{t,x}_n:=\inf
\Big\{
&
s\geq t:\int_t^s\sum_{i=1}^d\sum_{j=1}^d
\big|2X^{t,x,i}_r\sigma^{ij}(r,X^{t,x}_r)\big|^2\,dr\nonumber\\
&
+\int_t^s\int_{\bR^{d}}\sum_{i=1}^d
\big|2\eta^i_r(X^{t,x}_r,z)X^{t,x,i}_{r}\big|^2
\,\nu(dz)\,dr
\geq n
\Big\}\wedge T.                                                        \label{def tau}
\end{align}
Then, with \eqref{def tau} we observe that the 
stochastic integrals with respect to the Brownian motion and the compensated
Poisson random measure, respectively, on the right hand side of \eqref{Ito1} 
are true martingales on $\llbracket 0,\tau^{t,x}_n \rrbracket$ for each $n\in\bN$.
Thus, by \eqref{Ito1} we obtain for all $s\in[t,T]$ and $n\in\bN$ that
\begin{align}
\bE\left[\left\|X^{t,x}_{s\wedge \tau^{t,x}_n}\right\|^2\right]=
&
\|x\|^2+2\bE\left[\int_t^{s\wedge \tau^{t,x}_n}
\langle X^{t,x}_{r},\mu(r,X^{t,x}_r) \rangle\,dr\right]
+\bE\left[\int_t^{s\wedge \tau^{t,x}_n}\|\sigma(r,X^{t,x}_{r})\|_F^2\,dr\right]
\nonumber\\
&                                                  
+\bE\left[\int_t^{s\wedge \tau^{t,x}_n}\int_{\bR^{d}}\|\eta_r(X^{t,x}_{r},z)\|^2
\,\nu(dz)\,dr\right].
\label{Ito2}
\end{align}
Considering \eqref{assumption Lip mu sigma eta}, \eqref{assumption growth}, 
and \eqref{Ito2},
by Young's inequality and the fact that $(a+b)^2\leq 2a^2+2b^2$ 
for all $a,b\in\bR$, we have for all $s\in[t,T]$ and $n\in\bN$ that
\begin{align*}
\bE\Big[\left\|X^{t,x}_{s\wedge \tau^{t,x}_n}\right\|^2\Big]
&
\leq \|x\|^2
+\int^s_t\bE\Big[\left\|X^{t,x}_{r\wedge\tau^{t,x}_n}\right\|^2\Big]\,dr\\
&
\quad+2\bE\left[\int_t^{s\wedge \tau^{t,x}_n}
\left\|\mu(r,X^{t,x}_r)-\mu(r,0)\right\|^2\,dr\right]
+2\int_t^{s}\left\|\mu(r,0)\right\|^2\,dr\\
&
\quad+2\bE\left[\int_t^{s\wedge \tau^{t,x}_n}
\left\|\sigma(r,X^{t,x}_r)-\sigma(r,0)\right\|_F^2\,dr\right]
+2\int_t^{s}\left\|\sigma(r,0)\right\|_F^2\,dr\\
&
\quad+2\bE\left[\int_t^{s\wedge \tau^{t,x}_n}
\int_{\bR^{d}}\left\|\eta_r(X^{t,x}_r,z)-\eta_r(0,z)\right\|^2
\,\nu(dz)\,dr\right]\\
&
\quad+2\int_t^{s}
\int_{\bR^d}\left\|\eta_r(0,z)\right\|^2\,\nu(dz)\,dr\\
&
\leq \|x\|^2+6Ld^p(s-t)
+(1+6L)\int^s_t\bE\left[\left\|X^{t,x}_{r\wedge\tau^{t,x}_n}\right\|^2\right]dr.
\end{align*}
Therefore, by \eqref{SDE moment est} and Gr\"onwall's lemma we have
for all $s\in[t,T]$ and $n\in\bN$ that
\begin{align*}
\bE\left[\left\|X^{t,x}_{s\wedge \tau^{t,x}_n}\right\|^2\right]
&
\leq (6Ld^p(s-t)+\|x\|^2)e^{(1+6L)(s-t)}.
\end{align*}
Finally, the application of Fatou's lemma yields for all $s\in[t,T]$ that
$$
\bE\left[\left\|X^{t,x}_{s}\right\|^2\right]
\leq 
\liminf_{n\to\infty}\bE\left[\left\|X^{t,x}_{s\wedge\tau^{t,x}_n}\right\|^2\right]
\leq (6Ld^p(s-t)+\|x\|^2)e^{(1+6L)(s-t)},
$$
which completes the proof of this lemma.
\end{proof}

\begin{lemma}                                            \label{Lemma stability}
There exist a constant $\rho$ only
depending on $L$ and $T$, satisfying for all $t\in[0,T]$, $s\in[t,T]$, 
and $x,y\in\bR^d$ that
$$
\bE\Big[\big\|X^{t,x}_s-X^{t,y}_s\big\|^2\Big]
\leq 4\|x-y\|^2e^{\rho(s-t)}.
$$
\end{lemma}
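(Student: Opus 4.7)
The plan is to mirror the proof of Lemma~\ref{lemma est moment} almost verbatim, applied to the difference process $Z_s^{t,x,y} := X^{t,x}_s - X^{t,y}_s$ for $s \in [t,T]$, which is itself a semimartingale with $Z^{t,x,y}_t = x-y$ and whose coefficients are the differences of the coefficients of the two SDEs. First I would apply It\^o's formula to $\|Z^{t,x,y}_s\|^2$ exactly as in \eqref{Ito1}, making crucial use of the pointwise identity $\|a+b\|^2 - \|a\|^2 - 2\langle a,b \rangle = \|b\|^2$, which causes the finite-variation compensator piece coming from the Poisson integral to collapse to the clean integrand $\int_{\bR^d} \|\eta_r(X^{t,x}_{r-},z) - \eta_r(X^{t,y}_{r-},z)\|^2 \, \nu(dz)$.

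Next, I would introduce localizing stopping times $\tilde{\tau}_n^{t,x,y}$ in direct analogy to \eqref{def tau} (with $X^{t,x}$ replaced by $Z^{t,x,y}$ and the coefficients replaced by their differences) so that the Brownian stochastic integral and the compensated Poisson stochastic integral are true martingales on $\llbracket t, \tilde{\tau}_n^{t,x,y} \rrbracket$. Taking expectations then kills these two terms and yields
\begin{align*}
\bE\left[\|Z^{t,x,y}_{s\wedge \tilde\tau_n^{t,x,y}}\|^2\right]
= \|x-y\|^2
& + \bE\!\int_t^{s\wedge \tilde\tau_n^{t,x,y}} \! \Big( 2\langle Z^{t,x,y}_{r-},\mu(r,X^{t,x}_{r-})-\mu(r,X^{t,y}_{r-})\rangle \\
& \hspace{2cm} + \|\sigma(r,X^{t,x}_{r-})-\sigma(r,X^{t,y}_{r-})\|_F^2 \\
& \hspace{2cm} + \int_{\bR^d}\|\eta_r(X^{t,x}_{r-},z)-\eta_r(X^{t,y}_{r-},z)\|^2 \,\nu(dz) \Big) dr.
\end{align*}

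Then, applying Young's inequality $2\langle a,b\rangle \leq \|a\|^2+\|b\|^2$ to the drift cross-term and bundling the three resulting Lipschitz terms via \eqref{assumption Lip mu sigma eta}, the integrand is dominated by $(1+L)\|Z^{t,x,y}_{r-}\|^2$. This gives the Gr\"onwall-ready estimate
\begin{equation*}
\bE\left[\|Z^{t,x,y}_{s\wedge \tilde\tau_n^{t,x,y}}\|^2\right]
\leq \|x-y\|^2 + (1+L)\int_t^s \bE\left[\|Z^{t,x,y}_{r\wedge \tilde\tau_n^{t,x,y}}\|^2\right] dr,
\end{equation*}
so Gr\"onwall's lemma yields the uniform-in-$n$ bound $\|x-y\|^2 e^{(1+L)(s-t)}$, and Fatou's lemma (justified by the finite-second-moment bound \eqref{SDE moment est}) passes to the limit $n\to\infty$. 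Setting $\rho := 1+L$ (the factor $4$ in the statement is merely harmless slack) completes the proof.

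I do not anticipate any serious obstacle: the argument is essentially the verbatim analogue of Lemma~\ref{lemma est moment}, with the key simplification that no growth constants $Ld^p$ appear since every expression is a \emph{difference} of coefficients evaluated at $X^{t,x}$ and $X^{t,y}$. The only mild care points are (i) verifying that the localization indeed renders the two stochastic integrals martingales---which follows from the same computation as in \eqref{def tau} applied to the coefficient differences---and (ii) invoking Fatou's lemma, which is legitimate because both $X^{t,x}$ and $X^{t,y}$ have square-integrable suprema by \eqref{SDE moment est}.
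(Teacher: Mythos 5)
Your proof is correct, but it takes a genuinely different route from the paper. The paper expands $X^{t,x}_s-X^{t,y}_s$ as a sum of four stochastic-integral terms, squares using $(\sum_{i=1}^4a_i)^2\leq 4\sum_{i=1}^4a_i^2$, applies Jensen and It\^o's isometry term-by-term, and then closes with Gr\"onwall; the factor $4$ in the conclusion is produced directly by the four-term decomposition, and the exponent ends up as $4L(T+2)$. You instead apply It\^o's formula to $\|X^{t,x}-X^{t,y}\|^2$ itself (mirroring the proof of Lemma~\ref{lemma est moment}), localize so the Brownian and compensated-Poisson integrals are martingales, use Young's inequality on the drift cross-term, and bundle all three coefficient differences via the single Lipschitz estimate \eqref{assumption Lip mu sigma eta} to get an integrand bounded by $(1+L)\|Z_r\|^2$. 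This yields the sharper bound $\|x-y\|^2 e^{(1+L)(s-t)}$, which implies the stated one with room to spare. Your approach buys a tighter constant (no prefactor, and a $T$-independent $\rho$) at the cost of the localization/Fatou machinery; the paper's approach is slightly quicker because It\^o's isometry already handles martingale issues without an explicit stopping-time argument. Both are valid proofs.

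One small point worth making explicit when you write it up: after Young's inequality the integrand involves $\|Z_{r-}\|^2$, whereas the Lipschitz bound produces $\|Z_{r}\|^2$ via $\|X^{t,x}_{r-}-X^{t,y}_{r-}\|^2$; these agree $dr$-a.e.\ because a c\`adl\`ag process has at most countably many jump times, so the substitution inside the Lebesgue integral is harmless. Also, to justify $\tilde\tau_n^{t,x,y}\to T$ a.s.\ for the Fatou step, you should note that the integrands defining the stopping time are a.s.\ finite on $[t,T]$ precisely because of the square-integrable supremum bound \eqref{SDE moment est} applied to both $X^{t,x}$ and $X^{t,y}$ together with the Lipschitz conditions; you flag this, and it suffices.
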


\begin{proof} 
We first fix $t\in[0,T]$ and $x,y\in\bR^d$.
Notice for every $s\in[t,T]$ that
\begin{align*}
X^{t,x}_s-X^{t,y}_s=  
&
x-y+\int_t^s\left(\mu(r,X^{t,x}_{r-})-\mu(r,X^{t,y}_{r-})\right)dr
+\int_t^s\left(\sigma(r,X^{t,x}_{r-})-\sigma(r,X^{t,y}_{r-})\right)dW_r\\
&
+\int_t^s\int_{\bR^d}\left(
\eta_r(X^{t,x}_{r-},z)-\eta_r(X^{t,y}_{r-},z)
\right)\tilde{\pi}(dz,dr).
\end{align*}
Then by Jensen's inequality, It\^o's isometry, and
the fact that $(\sum_{i=1}^4a_i)^2\leq 4\sum_{i=1}^4a_i^2$ for
$a_i\in\bR$, $i=1,2,3,4$, it holds for all $s\in[t,T]$ that
\begin{align*}
\bE\Big[\big\|X^{t,x}_s-X^{t,y}_s\big\|^2\Big]\leq 
&
4\|x-y\|^2
+4(s-t)\int_t^s\bE\left[
\big\|\mu(r,X^{t,x}_r)-\mu(r,X^{t,y}_r)\big\|^2
\right]dr\\
&
+4\int_t^s\bE\left[
\big\|\sigma(r,X^{t,x}_r)-\sigma(r,X^{t,y}_r)\big\|_F^2
\right]dr\\
&
+4\int_t^s\int_{\bR^d}\bE\left[
\big\|\eta_r(X^{t,x}_r,z)-\eta_r(X^{t,y}_r,z)\big\|^2
\right]\nu(dz)\,dr.
\end{align*} 
Thus, by \eqref{assumption Lip mu sigma eta} we obtain for all $s\in[t,T]$
that
\begin{align*}
\bE\Big[\big\|X^{t,x}_s-X^{t,y}_s\big\|^2\Big]\leq
& 
4\|x-y\|^2+4(s-t)L\int_t^s\bE\Big[\big\|X^{t,x}_r-X^{t,y}_r\big\|^2\Big]\,dr\\
&
+8L\int_t^s\bE\Big[\big\|X^{t,x}_r-X^{t,y}_r\big\|^2\Big]\,dr.
\end{align*}
Therefore, by \eqref{SDE moment est} and Gr\"onwall's lemma, 
we have for all $s\in[t,T]$ that
$$
\bE\Big[\big\|X^{t,x}_s-X^{t,y}_s\big\|^2\Big]\leq
4\|x-y\|^2e^{4L(T+2)(s-t)},
$$
which completes the proof of this lemma.
\end{proof}

\begin{lemma}                                           \label{lemma s s'}
There exists a positive constant $c$ only depending on $T$ and $L$ satisfying
for all $(t,x)\in[0,T]\times\bR^d$, $s\in[t,T]$, and $s'\in[s,T]$ that
\begin{equation}                                       \label{est s s'}
\bE\left[\big\|X^{t,x}_{s'}-X^{t,x}_s\big\|^2\right]
\leq c\left[(s'-s)+(s'-s)^2\right](d^p+\|x\|^2).
\end{equation}
\end{lemma}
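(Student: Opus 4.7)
The plan is to bound $\bE\|X^{t,x}_{s'}-X^{t,x}_s\|^2$ directly from the SDE representation by splitting into drift, Brownian and compensated Poisson contributions, and then to apply the linear growth bound \eqref{assumption growth} together with the second-moment estimate from Lemma \ref{lemma est moment}.

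Concretely, I would start from the identity
\begin{align*}
X^{t,x}_{s'}-X^{t,x}_s
= & \int_s^{s'}\mu(r,X^{t,x}_{r-})\,dr
+\int_s^{s'}\sigma(r,X^{t,x}_{r-})\,dW_r\\
& +\int_s^{s'}\int_{\bR^d}\eta_r(X^{t,x}_{r-},z)\,\tilde{\pi}(dz,dr),
\end{align*}
and use $\|a+b+c\|^2\leq 3(\|a\|^2+\|b\|^2+\|c\|^2)$. For the drift, Jensen's inequality gives a factor $(s'-s)$ outside the time integral, yielding an overall $(s'-s)^2$ contribution. For the Brownian integral, It\^o's isometry converts it to $\int_s^{s'}\bE\|\sigma(r,X^{t,x}_r)\|_F^2\,dr$, and for the jump integral, the isometry for compensated Poisson integrals converts it to $\int_s^{s'}\int_{\bR^d}\bE\|\eta_r(X^{t,x}_r,z)\|^2\,\nu(dz)\,dr$; both give an overall $(s'-s)$ factor.

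Next, I would apply the linear growth assumption \eqref{assumption growth} to bound each of the three integrands pointwise by $Ld^p(1+\bE\|X^{t,x}_r\|^2)$. By Lemma \ref{lemma est moment} there exists $C=C(T,L)$ such that $\sup_{r\in[t,T]}\bE\|X^{t,x}_r\|^2\leq C(d^p+\|x\|^2)$, so $1+\bE\|X^{t,x}_r\|^2\leq C'(d^p+\|x\|^2)$ for some $C'=C'(T,L)$ (using $d^p\geq 1$ or absorbing into the constant). Collecting terms yields
\[
\bE\|X^{t,x}_{s'}-X^{t,x}_s\|^2
\leq c\bigl[(s'-s)^2+2(s'-s)\bigr]\,(d^p+\|x\|^2)
\]
for some $c=c(T,L)$, which gives the claimed bound.

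The only technical subtlety is ensuring that the Brownian and compensated Poisson stochastic integrals are true martingales so that the isometries apply and the cross-terms vanish under expectation. I would handle this exactly as in the proof of Lemma \ref{lemma est moment}: introduce the localizing stopping times $\tau_n^{t,x}$, apply the isometries on $\llbracket 0,\tau_n^{t,x}\rrbracket$, and then pass to the limit via Fatou's lemma, using the finite second moment $\bE[\sup_{r\in[t,T]}\|X^{t,x}_r\|^2]<\infty$ recorded in \eqref{SDE moment est}. This is the only delicate point; the rest is routine.
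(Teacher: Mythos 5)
Your overall decomposition (drift via Jensen, Brownian via It\^o isometry, compensated jump via the Poisson isometry, then localization) matches the paper's proof exactly. The gap is in the coefficient estimate. You propose bounding each coefficient norm directly via the linear growth condition \eqref{assumption growth}, giving $\bE\|\mu(r,X^{t,x}_r)\|^2\le Ld^p\big(1+\bE\|X^{t,x}_r\|^2\big)$, and then plugging in the moment bound $\bE\|X^{t,x}_r\|^2\le C(d^p+\|x\|^2)$ from Lemma \ref{lemma est moment}. But the growth bound already carries a factor $d^p$, and so does the moment bound, so this yields
$$
Ld^p\cdot C'(d^p+\|x\|^2)=LC'\,d^p\,(d^p+\|x\|^2),
$$
i.e.\ a prefactor proportional to $d^p$ and hence not a constant $c=c(T,L)$. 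Your final ``collecting terms'' step silently drops this extra $d^p$.

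The fix, which is what the paper does, is to \emph{not} use \eqref{assumption growth} directly on $\mu(r,X^{t,x}_r)$, but to split
$$
\|\mu(r,X^{t,x}_r)\|^2\le 2\|\mu(r,X^{t,x}_r)-\mu(r,0)\|^2+2\|\mu(r,0)\|^2\le 2L\|X^{t,x}_r\|^2+2Ld^p,
$$
using the Lipschitz condition \eqref{assumption Lip mu sigma eta} (whose constant $L$ carries \emph{no} $d^p$) on the first term and the growth bound only at the origin on the second. Taking expectations and using $\bE\|X^{t,x}_r\|^2+d^p\le (1+6LT)e^{(1+6L)T}(d^p+\|x\|^2)$ then gives a bound whose prefactor depends only on $T$ and $L$, as required; the same split is applied to $\sigma$ and to $\int\|\eta_r(\cdot,z)\|^2\nu(dz)$.
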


\begin{proof}
We notice that almost surely for all $(t,x)\in[0,T]\times\bR^d$, $s\in[t,T]$ 
and $s'\in[s,T]$ that
$$
X^{t,x}_{s'}=
X^{t,x}_s+\int_s^{s'}\mu(r,X^{t,x}_{r-})\,dr
+\int_s^{s'}\sigma(r,X^{t,x}_{r-})\,dW_r
+\int_s^{s'}\int_{\bR^{d}}\eta_r(X^{t,x}_{r-},z)\,\tilde{\pi}(dz,dr).
$$
Hence, by It\^o's isometry, H\"older's inequality, 
\eqref{assumption Lip mu sigma eta}, \eqref{assumption growth} 
and Lemma \ref{lemma est moment} it holds for all $(t,x)\in[0,T]\times\bR^d$, $s\in[t,T]$ 
and $s'\in[s,T]$ that
\begin{align}
\bE\left[\big\|X^{t,x}_{s'}-X^{t,x}_s\big\|^2\right]
&
\leq 3(s'-s)\int_s^{s'}\bE\left[\left\|\mu(r,X^{t,x}_r)\right\|^2\right]\,dr
+3\int_s^{s'}\bE\left[\left\|\sigma(r,X^{t,x}_r)\right\|_F^2\right]\,dr
\nonumber\\
&
\quad +3\int_s^{s'}\bE\left[\int_{\bR^{d}}
\left\|\eta_r(X^{t,x}_r,z)\right\|^2\,\nu(dz)\right]dr
\nonumber\\
&
\leq 3(s'-s)\int_s^{s'}\bE\left[2\left\|\mu(r,X^{t,x}_r)-\mu(r,0)\right\|^2
+2\left\|\mu(r,0)\right\|^2\right]\,dr
\nonumber\\
&
\quad +3\int_s^{s'}\bE\left[2\left\|\sigma(r,X^{t,x}_r)-\sigma(r,0)\right\|_F^2
+2\left\|\sigma(r,0)\right\|^2\right]\,dr
\nonumber\\
&
\quad +3\int_s^{s'}\bE\left[\int_{\bR^{d}}
\left(2\left\|\eta_r(X^{t,x}_r,z)-\eta_r(0,z)\right\|^2
+2\left\|\eta_r(0,z)\right\|^2\right)\nu(dz)\right]dr\nonumber\\
&
\leq 3(s'-s)\int_s^{s'}\left(2L\bE\left[\|X^{t,x}_r\|^2\right]+2Ld^p\right)\,dr
\nonumber\\
&
\quad+6\int_s^{s'}\left(2L\bE\left[\|X^{t,x}_r\|^2\right]+2Ld^p\right)\,dr
\nonumber\\
&
\leq c(s'-s)^2(d^p+\|x\|^2)+c(s'-s)(d^p+\|x\|^2) \nonumber\\
\end{align}
where $c=12L(1+6LT)e^{(1+6L)T}$. 
\end{proof}

\begin{lemma}                                   \label{Lemma L2 continuity}
For all $t\in[0,T]$, $t'\in[t,T]$,
$s\in[t',T]$, and $x,x'\in\bR^d$ it holds that
\begin{align}   
\bE\left[\left\|X^{t,x}_s-X^{t',x'}_s\right\|^2\right]                                              
\leq & 9(\|x-x'\|+|t-t'|+|t-t'|^{1/2})^2
\cdot \exp \{6LT(T+4)\}\nonumber\\
&                                  
\cdot\left[1+3L^{1/2}d^{p/2}(1+\|x'\|^2)^{1/2}\right]^2.                   \label{est t t'}
\end{align}
\end{lemma}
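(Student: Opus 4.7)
The plan is to decompose the difference of the two SDE solutions so as to isolate, on the one hand, the spatial shift from $x$ to $x'$, and on the other, the temporal shift from $t$ to $t'$. Explicitly, I would write
\[
X^{t,x}_s - X^{t',x'}_s = \big(X^{t,x}_s - X^{t,x'}_s\big) + \big(X^{t,x'}_s - X^{t',x'}_s\big)
\]
and bound each summand separately in the $L^2(\Omega)$ norm, combining the two estimates at the end by Minkowski's inequality.

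For the first summand, Lemma \ref{Lemma stability} gives directly $\bE[\|X^{t,x}_s - X^{t,x'}_s\|^2] \leq 4\|x-x'\|^2 e^{\rho(s-t)}$ with $\rho=\rho(L,T)$. For the second summand, I would invoke the flow/strong-Markov property of the SDE \eqref{SDE}: pathwise uniqueness (guaranteed by Assumptions~\ref{assumption Lip and growth} and \ref{assumption pointwise}) yields the identity
\[
X^{t,x'}_s = X^{t',\,X^{t,x'}_{t'}}_s, \qquad s\in[t',T],
\]
since both sides solve the SDE on $[t',T]$ with initial value $X^{t,x'}_{t'}$ at time $t'$. Subtracting $X^{t',x'}_s$ and using that $X^{t,x'}_{t'}$ is $\cF_{t'}$-measurable while the post-$t'$ increments of $W$ and $\pi$ are independent of $\cF_{t'}$, I would condition on $\cF_{t'}$, ``freeze'' the initial condition $X^{t,x'}_{t'}=y$, and apply Lemma~\ref{Lemma stability} to conclude
\[
\bE\big[\|X^{t,x'}_s - X^{t',x'}_s\|^2\big] \leq 4e^{\rho(s-t')}\,\bE\big[\|X^{t,x'}_{t'} - x'\|^2\big].
\]
Since $x' = X^{t,x'}_t$, Lemma~\ref{lemma s s'} (applied with the two times $t$ and $t'$, initial condition $x'$) yields $\bE[\|X^{t,x'}_{t'}-x'\|^2] \leq c\big[(t'-t)+(t'-t)^2\big](d^p+\|x'\|^2)$.

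To assemble the final bound I would combine the two $L^2$ estimates via Minkowski, then apply the elementary bounds $\sqrt{a+b}\leq\sqrt a+\sqrt b$ (to split $\sqrt{(t'-t)+(t'-t)^2}\leq |t-t'|^{1/2}+|t-t'|$) and $\sqrt{d^p+\|x'\|^2}\leq d^{p/2}\sqrt{1+\|x'\|^2}$ (valid since $d\geq 1$ forces $d^p\geq 1$). This produces an estimate of the form
\[
\bE\big[\|X^{t,x}_s-X^{t',x'}_s\|^2\big]^{1/2} \leq 2e^{\rho T/2}\bigl(1+\sqrt{c}\,d^{p/2}\sqrt{1+\|x'\|^2}\bigr)\bigl(\|x-x'\|+|t-t'|^{1/2}+|t-t'|\bigr),
\]
from which the claim follows after squaring, using $4\leq 9$, checking that $\rho T=4LT(T+2)\leq 6LT(T+4)$, and absorbing the polynomial prefactor $1+6LT$ from the constant $c=12L(1+6LT)e^{(1+6L)T}$ into the exponential via $1+6LT\leq e^{6LT}$ so that the remaining multiplicative constant fits inside $[1+3L^{1/2}d^{p/2}(1+\|x'\|^2)^{1/2}]^2$.

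The main conceptual obstacle is the rigorous justification of applying Lemma~\ref{Lemma stability} with the random initial condition $X^{t,x'}_{t'}$: one needs an explicit measurable-functional representation $(y,\omega)\mapsto X^{t',y}_s(\omega)$ (which exists by strong existence and uniqueness) and the fact that this functional depends on $\omega$ only through the post-$t'$ increments of $(W,\pi)$, which are independent of $\cF_{t'}$. Once this ``disintegration'' argument is in place, the remainder of the proof is purely a matter of carefully tracking constants through Minkowski's inequality and the standard elementary inequalities above.
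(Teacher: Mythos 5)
Your decomposition
\[
X^{t,x}_s - X^{t',x'}_s = \big(X^{t,x}_s - X^{t,x'}_s\big) + \big(X^{t,x'}_s - X^{t',x'}_s\big)
\]
together with the flow/Markov property and Lemmas~\ref{Lemma stability} and \ref{lemma s s'} is conceptually sound, and the disintegration step you flag (applying Lemma~\ref{Lemma stability} with the random initial point $X^{t,x'}_{t'}$ by conditioning on $\cF_{t'}$) is indeed the right technical point to justify. However, the constant-tracking at the end does not go through, and the bound you obtain is strictly weaker than the one asserted in the lemma. The problem is that the constant $c=12L(1+6LT)e^{(1+6L)T}$ from Lemma~\ref{lemma s s'} contains the factor $e^{(1+6L)T}$, whose ``$+1$'' in the exponent originates from the Young-inequality step in Lemma~\ref{lemma est moment} and is \emph{not} proportional to $L$. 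After composing with the factor $4e^{\rho(s-t')}$ from Lemma~\ref{Lemma stability} ($\rho=4L(T+2)$), the coefficient multiplying the temporal-increment term is of order $L(1+6LT)e^{(1+6L)T}\,e^{4LT(T+2)}$, which must be dominated by $L\,e^{6LT(T+4)}$ for your argument to yield the stated bound. Cancelling the common $L$, this requires $(1+6LT)e^{T+6LT}\lesssim e^{2LT^2+10LT}$, which fails as soon as $L$ is small relative to $T$ (for instance for fixed $T>\tfrac12$ and $L\to 0$ the left side tends to $e^T>1$ while the right side tends to $1$). So the stated inequality is not a consequence of your chain.

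The paper avoids this compounding by \emph{not} using Lemma~\ref{lemma s s'} at this point. Instead it writes $X^{t,x}_s-X^{t',x'}_s = (X^{t,x}_{t'}-x') + \int_{t'}^s(\cdots)$ and runs a Gr\"onwall argument for the relative quantity $\bE[\|X^{t,x}_r-x'\|^2]$ directly (see \eqref{X t'}--\eqref{est t' 3}). This produces the factor $\exp\{3LT(T+4)\}$, whose exponent is proportional to $L$, instead of the $e^{(1+6L)T}$-type factor inherited from Lemma~\ref{lemma est moment} via Lemma~\ref{lemma s s'}; combined with a second Gr\"onwall of the same form in \eqref{est t' 1}--\eqref{est t' 2}, this yields the target constant $\exp\{6LT(T+4)\}\,[1+3L^{1/2}d^{p/2}(1+\|x'\|^2)^{1/2}]^2$. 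To repair your proof you would need to replace the appeal to Lemma~\ref{lemma s s'} with a direct estimate of $\bE[\|X^{t,x'}_{t'}-x'\|^2]$ in terms of the relative increments rather than Lemma~\ref{lemma est moment}'s absolute moment bound, which essentially reproduces the paper's argument.
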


\begin{proof}
We follow the idea of the proof of Lemma 3.6 in \cite{BGHJ2019},
and fix $t\in[0,T]$, $t'\in[t,T]$, $s\in[t',T]$, and $x,x'\in\bR^d$
throughout this proof.
One first notice that almost surely
$$
X^{t,x}_s-X^{t',x'}_s
=X^{t,x}_{t'}-x'
+\int_{t'}^s\left(\mu(r,X^{t,x}_{r-})-\mu(r,X^{t',x'}_{r-})\right)\,dr
$$
$$
+\int_{t'}^s\left(\sigma(r,X^{t,x}_{r-})-\sigma(r,X^{t',x'}_{r-})\right)\,dW_r
+\int_{t'}^s\int_{\bR^{d}}
\left(\eta_r(X^{t,x}_{r-},z)-\eta_r(X^{t',x'}_{r-},z)\right)
\,\tilde{\pi}(dz,dr).
$$
Then by Minkowski's inequality and H\"older's inequality we have
\begin{align*}
\left(\bE\left[\left\|X^{t,x}_s-X^{t',x'}_s\right\|^2\right]\right)^{1/2}
\leq
& 
\left(\bE\left[\left\|X^{t,x}_{t'}-x'\right\|^2\right]\right)^{1/2}\\
&
+\left(\int_{t'}^s(s-t')\bE\left[
\left\|\mu(r,X^{t,x}_r)-\mu(r,X^{t',x'}_r)\right\|^2
\right]dr\right)^{1/2}\\
&
+\left(\bE\left[\left\|\int_{t'}^s
\left(\sigma(r,X^{t,x}_{r-})-\sigma(r,X^{t',x'}_{r-})\right)
\,dW_r\right\|^2\right]\right)^{1/2}\\
&
+\left(\bE\left[\left\|\int_{t'}^s\int_{\bR^{d}}
\left(\eta_r(X^{t,x}_{r-},z)-\eta_r(X^{t',x'}_{r-},z)\right)
\,\tilde{\pi}(dz,dr)\right\|^2\right]\right)^{1/2}
\end{align*}
Therefore, by It\^o's isometry and \eqref{assumption Lip mu sigma eta} 
we obtain
\begin{align*}
\left(\bE\left[\left\|X^{t,x}_s-X^{t',x'}_s\right\|^2\right]\right)^{1/2}
& 
\leq \left(\bE\left[\left\|X^{t,x}_{t'}-x'\right\|^2\right]\right)^{1/2}\\
& \quad
+L^{1/2}\left((s-t')\int_{t'}^s
\bE\left[\left\|X^{t,x}_r-X^{t',x'}_r\right\|^2\right]dr\right)^{1/2}\\
& \quad
+\left(\int_{t'}^s
\bE\left[\left\|\sigma(r,X^{t,x}_r)-\sigma(r,X^{t',x'}_r)\right\|_F^2\right]\,dr 
\right)^{1/2}\\
& \quad
+\left(\int_{t'}^s\bE\left[\int_{\bR^{d}}
\left\|\eta_r(X^{t,x}_r,z)-\eta_r(X^{t',x'}_r,z)\right\|^2
\nu(dz)\right]dr\right)^{1/2}\\
&
\leq \left(\bE\left[\left\|X^{t,x}_{t'}-x\right\|^2\right]\right)^{1/2}
+L^{1/2}\left((s-t')\int_{t'}^s
\bE\left[\left\|X^{t,x}_r-X^{t',x'}_r\right\|^2\right]dr\right)^{1/2}\\
& \quad
+2L^{1/2}\left(\int_{t'}^s
\bE\left[\left\|X^{t,x}_r-X^{t',x'}_r\right\|^2\right]\,dr\right)^{1/2}.
\end{align*}
Then by the fact that for all $a,b,c\in \bR$ it holds that
$(a+b+c)^2\leq 3(a^2+b^2+c^2)$ and H\"older's inequality we have
\begin{align}
\bE\left[\left\|X^{t,x}_s-X^{t',x'}_s\right\|^2\right]
&
\leq 3\bE\left[\left\|X^{t,x}_{t'}-x'\right\|^2\right]
+3LT\int_{t'}^s\bE\left[\left\|X^{t,x}_r-X^{t',x'}_r\right\|^2\right]
\,dr\nonumber\\
& \quad
+12L\int_{t'}^s\bE\left[\left\|X^{t,x}_r-X^{t',x'}_r\right\|^2\right]
\,dr\nonumber\\
&
= 3\bE\left[\left\|X^{t,x}_{t'}-x'\right\|^2\right]
+3L(T+4)\int_{t'}^s
\bE\left[\left\|X^{t,x}_r-X^{t',x'}_r\right\|^2\right]\,dr  \label{est t' 1}
\end{align}
Moreover, by \eqref{est t' 1}, \eqref{SDE moment est}, and Gr\"onwall's lemma 
we get 
\begin{equation}                                               \label{est t' 2}
\bE\left[\left\|X^{t,x}_s-X^{t',x'}_s\right\|^2\right]
\leq 3\bE\left[\left\|X^{t,x}_{t'}-x'\right\|^2\right]
\cdot\exp\{3LT(T+4)\}.
\end{equation}
Furthermore, we notice that almost surely
\begin{align*}
X^{t,x}_{t'}-x'=x-x'+\int_t^{t'}\mu(r,X^{t,x}_{r-})\,dr
+\int_{t}^{t'}\sigma(r,X^{t,x}_{r-})\,dW_r
+\int_t^{t'}\int_{\bR^{d}}\eta_r(X^{t,x}_{r-},z)\,\,\tilde{\pi}(dz,dr).
\end{align*}
Hence, by Minkowski's inequality and H\"older's inequality we get
$$
\left(\bE\left[\left\|X^{t,x}_{t'}-x'\right\|^2\right]\right)^{1/2}
\leq
\|x-x'\|
+\left((t'-t)\int_t^{t'}\bE\left[\left\|\mu(r,X^{t,x}_r)\right\|^2\right]
dr\right)^{1/2}
$$
\begin{equation}                           \label{t x t'}
+\left(\bE\left[\left\|\int_t^{t'}\sigma(r,X^{t,x}_{r-})\,dW_r
\right\|^2\right]\right)^{1/2}
+\left(\bE\left[\left\|\int_t^{t'}\int_{\bR^{d}}
\eta_r(X^{t,x}_{r-},z)\,\tilde{\pi}(dz,dr)\right\|^2\right]\right)^{1/2}.
\end{equation}
Moreover, by It\^o's isometry, \eqref{t x t'} implies that
$$
\left(\bE\left[\left\|X^{t,x}_{t'}-x'\right\|^2\right]\right)^{1/2}
\leq
\|x-x'\|+\left((t'-t)\int_t^{t'}
\bE\left[\left\|\mu(r,X^{t,x}_r)\right\|^2\right]\,dr\right)^{1/2}
$$
$$
+\left(\int_t^{t'}
\bE\left[\left\|\sigma(r,X^{t,x}_{r})\right\|_F^2\right]\,dr\right)^{1/2}
+\left(\int_t^{t'}\bE\left[\int_{\bR^{d}}
\left\|\eta_r(X^{t,x}_r,z)\right\|^2\nu(dz)\right]dr\right)^{1/2}.
$$
Thus, applying Minkowski's inequality, \eqref{assumption Lip mu sigma eta},
and \eqref{assumption growth} yields
\begin{align*}
&
\left(\bE\left[\left\|X^{t,x}_{t'}-x'\right\|^2\right]\right)^{1/2}\\
&\leq
\|x-x'\|
+\left((t'-t)\int_{t}^{t'}\|\mu(r,x')\|^2\,dr\right)^{1/2}
+\left((t'-t)\int_t^{t'}\bE\left[\left\|\mu(r,X^{t,x}_r)-\mu(r,x')\right\|^2
\right]dr\right)^{1/2}\\
&
\quad+\left(\int_t^{t'}\|\sigma(r,x')\|_F^2\,dr\right)^{1/2}
+\left(\int_t^{t'}
\bE\left[\left
\|\sigma(r,X^{t,x}_r)-\sigma(r,x')\right\|_F^2\right]\,dr\right)^{1/2}\\
&
\quad+\left(\int_t^{t'}\bE\left[\int_{\bR^{d}}
\left\|\eta_r(X^{t,x}_r,z)-\eta_r(x',z)\right\|^2\nu(dz)\right]dr\right)^{1/2}
+\left(\int_t^{t'}\int_{\bR^{d}}\|\eta_r(x',z)\|^2\,\nu(dz)\,dr\right)^{1/2}
\\
&
\leq \|x-x'\|
+\left((t'-t)\int_{t}^{t'}\|\mu(r,x')\|^2\,dr\right)^{1/2}
+L^{1/2}\left((t'-t)\int_t^{t'}
\bE\left[\left\|X^{t,x}_r-x'\right\|^2\right]dr\right)^{1/2}\\
&
\quad
+\left(\int_t^{t'}\|\sigma(r,x')\|_F^2\,dr\right)^{1/2}
+2L^{1/2}\left(\int_t^{t'}
\bE\left[\left\|X^{t,x}_r-x'\right\|^2\right]\,dr\right)^{1/2}
\\
& \quad
+\left(\int_t^{t'}\int_{\bR^{d}}\|\eta_r(x',z)\|^2\,\nu(dz)\,dr\right)^{1/2}.
\end{align*}
Then using H\"older's
inequality, and the fact that for all
$a,b,c\in\bR$ it holds that $(a+b+c)^2\leq 3a^2+3b^2+3c^2$, we have
\begin{align}
\bE\left[\left\|X^{t,x}_{t'}-x'\right\|^2\right] \leq
&
3L(T+4)\int_t^{t'}\bE\left[\left\|X^{t,x}_r-x'\right\|^2\right]\,dr\nonumber\\
&
+3\Bigg[\|x-x'\|+\left(|t-t'|\cdot\int_t^{t'}\|\mu(r,x')\|^2\,dr\right)^{1/2}
+\left(\int_t^{t'}\|\sigma(r,x')\|_F^2\,dr\right)^{1/2}\nonumber\\
&
+\left(\int_t^{t'}\int_{\bR^{d}}\|\eta_r(x',z)\|^2\,\nu(dz)\,dr\right)^{1/2}
\Bigg]^2.                                                       \label{X t'}
\end{align}
This together with \eqref{assumption growth} imply that
\begin{align*}
\bE\left[\left\|X^{t,x}_{t'}-x'\right\|^2\right]\leq
&
3L(T+4)\int_t^{t'}\bE\left[\left\|X^{t,x}_r-x'\right\|^2\right]\,dr\\
&
+3\Big[\|x-x'\|+|t-t'|\cdot L^{1/2}d^{p/2}(1+\|x'\|^2)^{1/2}\\
&
+|t-t'|^{1/2}\cdot L^{1/2}d^{p/2}(1+\|x'\|^2)^{1/2}
+|t-t'|^{1/2}L^{1/2}d^{p/2}(1+\|x'\|^2)^{1/2}\Big]^2.
\end{align*}
Hence, we have
\begin{align*}
\bE\left[\left\|X^{t,x}_{t'}-x'\right\|^2\right]\leq 
&
3L(T+4)\int_t^{t'}\bE\left[\left\|X^{t,x}_r-x'\right\|^2\right]\,dr
+3(\|x-x'\|+|t-t'|+|t-t'|^{1/2})^2\\
&
\cdot\left[1+3L^{1/2}d^{p/2}(1+\|x'\|^2)^{1/2}\right]^2
\end{align*}
Therefore, by \eqref{SDE moment est} and Gr\"onwall's lemma we have
\begin{align}
\bE \left[\left\|X^{t,x}_{t'}-x'\right\|^2\right]
&
\leq 3(\|x-x'\|+|t-t'|+|t-t'|^{1/2})^2
\cdot\exp\{3LT(T+4)\}
\nonumber\\
&
\quad
\cdot
\left[1+3L^{1/2}d^{p/2}(1+\|x'\|^2)^{1/2}\right]^2.            \label{est t' 3}
\end{align}
Hence, combining \eqref{est t' 3} with \eqref{est t' 2} yields \eqref{est t t'},
which completes the proof of this lemma.
\end{proof}
From now on, for ease of notation, 
we define $\Lambda=\{(t,s)\in[0,T]^2:t\leq s\}$.
\begin{corollary}                                         \label{corollary prob 1}
For each $(t,x)\in[0,T]\times\bR^d$ and $s\in[t,T]$,
let $(t_k,s_k,x_k)_{k=1}^\infty$ be a sequence such that 
$(t_k,s_k,x_k)\in\Lambda\times\bR^d$ for all $k\in\bN$ 
and $\lim_{k\to\infty}(t_k,s_k,x_k)=(t,s,x)$.
Then for any $\varepsilon>0$ it holds that
\begin{equation}                                         \label{conv in prob 1}
\lim_{k\to\infty}
\mathbb{P}\left(\left\|X^{t,x}_s-X^{t_k,x_k}_{s_k}\right\|\geq \varepsilon\right)=0.
\end{equation}
\end{corollary}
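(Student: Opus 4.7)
The plan is to reduce convergence in probability to $L^2$-convergence via Chebyshev's inequality, and then to control the $L^2$-distance by a three-term triangle decomposition that isolates (a) the change of terminal time with fixed initial condition, (b) the change of initial condition with fixed terminal time, and (c) the symmetric change of terminal time for the perturbed SDE. Each piece will be handled by one of the previously established lemmas.

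Concretely, fix $\varepsilon>0$. By Chebyshev's inequality it suffices to prove that
\[
\lim_{k\to\infty}\bE\!\left[\left\|X^{t,x}_{s}-X^{t_k,x_k}_{s_k}\right\|^2\right]=0.
\]
For each sufficiently large $k$ set $s^\ast_k:=s\vee s_k$, which belongs to $[t\vee t_k,T]$ since $s\ge t$ and $s_k\ge t_k$. Using Minkowski's inequality I would estimate
\[
\left(\bE\!\left[\left\|X^{t,x}_s-X^{t_k,x_k}_{s_k}\right\|^2\right]\right)^{1/2}
\le A_k+B_k+C_k,
\]
where
\[
A_k:=\left(\bE\!\left[\left\|X^{t,x}_{s^\ast_k}-X^{t_k,x_k}_{s^\ast_k}\right\|^2\right]\right)^{1/2},\quad
B_k:=\left(\bE\!\left[\left\|X^{t,x}_{s}-X^{t,x}_{s^\ast_k}\right\|^2\right]\right)^{1/2},\quad
C_k:=\left(\bE\!\left[\left\|X^{t_k,x_k}_{s^\ast_k}-X^{t_k,x_k}_{s_k}\right\|^2\right]\right)^{1/2}.
\]

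For $B_k$ and $C_k$, Lemma \ref{lemma s s'} applies directly (with $(t,x,s,s')=(t,x,s,s^\ast_k)$ and $(t_k,x_k,s_k,s^\ast_k)$ respectively), giving a bound of the form $c^{1/2}\big[(s^\ast_k-s)+(s^\ast_k-s)^2\big]^{1/2}(d^p+\|x\|^2)^{1/2}$, and similarly for $C_k$ with $\|x_k\|$. Since $s^\ast_k\to s$ and $s^\ast_k-s_k\to 0$, and since $\|x_k\|$ is bounded in $k$, both $B_k$ and $C_k$ tend to $0$.

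For $A_k$, I would split into the two cases $t\le t_k$ and $t_k\le t$ (or equivalently apply Lemma \ref{Lemma L2 continuity} with the smaller of the two as the "$t$" in the lemma and the larger as the "$t'$"). Taking, say, $\underline{t}_k:=t\wedge t_k$, $\overline{t}_k:=t\vee t_k$, and the corresponding initial points, Lemma \ref{Lemma L2 continuity} (valid because $s^\ast_k\ge\overline{t}_k$) yields
\[
A_k^2\le 9\bigl(\|x-x_k\|+|t-t_k|+|t-t_k|^{1/2}\bigr)^2\,\exp\{6LT(T+4)\}\,\bigl[1+3L^{1/2}d^{p/2}(1+\|x\|^2\vee\|x_k\|^2)^{1/2}\bigr]^2,
\]
where the last factor is bounded in $k$ by boundedness of $\|x_k\|$, and the first factor tends to $0$ as $k\to\infty$ by the assumed convergence $(t_k,x_k)\to(t,x)$. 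Combining the three bounds and applying Chebyshev's inequality gives \eqref{conv in prob 1}.

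The only mildly delicate point is the case split $t\le t_k$ vs.\ $t_k\le t$ needed to apply Lemma \ref{Lemma L2 continuity} (which is stated only for $t\le t'$); this is handled by the $\underline{t}_k,\overline{t}_k$ device above and is not a real obstacle. Everything else is a straightforward triangle inequality plus the already-proven moment estimates.
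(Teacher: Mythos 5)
Your proposal is correct and rests on the same two pillars as the paper's proof, namely Lemma \ref{lemma s s'} (varying the terminal time) and Lemma \ref{Lemma L2 continuity} (varying the initial time and starting point), glued together by Chebyshev and the triangle inequality.

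The only difference is the decomposition. The paper uses the two-term split $X^{t,x}_s-X^{t_k,x_k}_{s_k}=\bigl(X^{t,x}_s-X^{t,x}_{s_k}\bigr)+\bigl(X^{t,x}_{s_k}-X^{t_k,x_k}_{s_k}\bigr)$, applying Lemma \ref{lemma s s'} to the first piece and Lemma \ref{Lemma L2 continuity} to the second. This requires evaluating $X^{t,x}$ at $s_k$, which only makes sense when $s_k\ge t$; since $(t_k,s_k)\in\Lambda$ guarantees only $s_k\ge t_k$ and one could have $s_k<t$ for infinitely many $k$ (e.g.\ when $s=t$), the paper's proof leaves this edge case implicit. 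Your introduction of $s^{\ast}_k:=s\vee s_k$ and the resulting three-term split costs one extra application of Lemma \ref{lemma s s'} but sidesteps that issue cleanly, since $s^{\ast}_k\ge s\ge t$ and $s^{\ast}_k\ge s_k\ge t_k$ hold automatically; the $\underline{t}_k,\overline{t}_k$ device you mention for Lemma \ref{Lemma L2 continuity} is the same minor case split the paper also faces. So the two proofs are essentially equivalent, with yours being marginally more careful about the time ordering.
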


\begin{proof}
Fix $\varepsilon\in(0,\infty)$, $(t,x)\in[0,T]\times\bR^d$, $s\in[t,T]$ 
and a sequence 
$(t_k,s_k,x_k)_{k=1}^\infty$ satisfying that 
$(t_k,s_k,x_k)\in\Lambda\times\bR^d$ for all $k\in\bN$, 
and $\lim_{k\to\infty}(t_k,s_k,x_k)=(t,s,x)$. By Chebyshev inequality and
the fact that $(a+b)^2\leq2a^2+2b^2$ for all $a,b\in\bR$, we have
\begin{align*}
\bP\left(\left\|X^{t,x}_s-X^{t_k,x_k}_{s_k}\right\|\geq \varepsilon\right)
&
\leq \varepsilon^{-2}\bE\left[\left\|X^{t,x}_s-X^{t_k,x_k}_{s_k}\right\|^2\right]
\\
&
\leq 2\varepsilon^{-2}\bE\left[\left\|X^{t,x}_s-X^{t,x}_{s_k}\right\|^2\right]
+2\varepsilon^{-2}
\bE\left[\left\|X^{t,x}_{s_k}-X^{t_k,x_k}_{s_k}\right\|^2\right].
\end{align*}
Therefore, the application of Lemma \ref{lemma s s'} 
and Lemma \ref{Lemma L2 continuity}
yields
\begin{align}
\bP\left(\left\|X^{t,x}_s-X^{t_k,x_k}_{s_k}\right\|\geq \varepsilon\right) \leq 
&
2\varepsilon^{-2}\cdot c\big[|s_k-s|+|s_k-s|^2\big](d^p+\|x\|^2)\nonumber\\
& 
+2\varepsilon^{-2}\cdot 9\left(\|x_k-x\|+|t_k-t|+|t_k-t|^{1/2}\right)^2
\exp\{6LT(T+4)\}\nonumber\\
&
\cdot \left[1+3L^{1/2}d^{p/2}(1+\|x_k\|^2+\|x\|^2)^{1/2}\right]^2,
\label{conv in prob 0}
\end{align}
where $c$ is the positive constant introduced in \eqref{est s s'}
in Lemma \ref{lemma s s'}. Finally, passing limit as $k\to\infty$ 
in \eqref{conv in prob 0} gives \eqref{conv in prob 1}.
\end{proof}

\subsection{Bounds for Euler approximations}

\begin{lemma} [Euler Approximation]                            \label{Lemma Euler}                                              
There exist positive constants
$C_1=C_1(T,L)$, $C_2=C_2(T,L,L_1,L_2)$, $\rho_1=\rho_1(L)$, 
and $\rho_2=\rho_2(L)$ satisfying 
for all $N\in\bN$ and $(t,x)\in[0,T]\times\bR^d$ that
\begin{equation}                                     \label{sup Euler}
\bE \Big[\sup_{s\in[t,T]}\left\|Y^{t,x,N}_s\right\|^2\Big]
\leq C_1(d^p+\|x\|^2)e^{\rho_1(T-t)}
\end{equation}
and
\begin{equation}                                     \label{error Euler}
\bE\Big[\sup_{s\in[t,T]}\left\|X^{t,x}_s-Y^{t,x,N}_s\right\|^2\Big]
\leq C_2(d^p+\|x\|^2)N^{-1}e^{\rho_2(T-t)}.
\end{equation} 
\end{lemma}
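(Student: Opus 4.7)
The plan is to establish \eqref{sup Euler} and \eqref{error Euler} separately, mirroring the strategy used for the continuous SDE in Lemmas \ref{lemma est moment}--\ref{lemma s s'}, but keeping careful track of how Assumption \ref{assumption time Holder} is exploited to discretize in time. Throughout, I will write $Y = Y^{t,x,N}$ for short.

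For \eqref{sup Euler}, I would start from \eqref{Euler 1} and apply in succession the elementary bound $\|a+b+c+d\|^2 \le 4(\|a\|^2+\|b\|^2+\|c\|^2+\|d\|^2)$, Doob's $L^2$ inequality to the martingale terms, the Burkholder--Davis--Gundy / It\^o isometry bounds for the stochastic integral against $dW$ and against $\tilde\pi$, Cauchy--Schwarz for the Lebesgue integral term, and finally the linear growth bound \eqref{assumption growth} together with the elementary inequality $\|\mu(t,y)\|^2 \le 2\|\mu(t,y)-\mu(t,0)\|^2 + 2\|\mu(t,0)\|^2$ (and analogously for $\sigma$ and $\eta$) to reach
\begin{equation*}
\bE\Big[\sup_{s'\in[t,s]}\|Y_{s'}\|^2\Big]
\le C\|x\|^2 + C(T+1)Ld^p\int_t^s \Big(1 + \bE\big[\|Y_{\kappa_N(r-)}\|^2\big]\Big)\,dr.
\end{equation*}
Since $\bE[\|Y_{\kappa_N(r-)}\|^2] \le \bE[\sup_{s'\in[t,r]}\|Y_{s'}\|^2]$, Gr\"onwall's lemma yields \eqref{sup Euler} with constants $C_1,\rho_1$ depending only on $T$ and $L$.

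For \eqref{error Euler}, write $E_s:=X_s^{t,x}-Y_s$ and insert telescoping terms in the drift/diffusion/jump coefficients, e.g.\ for the drift
\begin{equation*}
\mu(r,X_{r-})-\mu\big(\kappa_N(r-),Y_{\kappa_N(r-)}\big)
= \big[\mu(r,X_{r-})-\mu(r,Y_{r-})\big]
+ \big[\mu(r,Y_{r-})-\mu(r,Y_{\kappa_N(r-)})\big]
+ \big[\mu(r,Y_{\kappa_N(r-)})-\mu(\kappa_N(r-),Y_{\kappa_N(r-)})\big],
\end{equation*}
and similarly for $\sigma$ and $\eta$. Applying Doob, It\^o's isometry and Cauchy--Schwarz as before, the first of the three summands is controlled by $L\,\bE[\sup_{r'\in[t,r]}\|E_{r'}\|^2]$ via \eqref{assumption Lip mu sigma eta}, the third is controlled by $(L_1+L_2)|r-\kappa_N(r)|\le (L_1+L_2)TN^{-1}$ via \eqref{Holder mu sigma d}--\eqref{Holder eta d}, while the second is the only non-trivial piece: it needs a one-step analog of Lemma \ref{lemma s s'} for the Euler scheme, namely
\begin{equation*}
\bE\big[\|Y_{r}-Y_{\kappa_N(r)}\|^2\big]\le C\, (d^p+\|x\|^2)\,N^{-1} e^{\rho(T-t)}.
\end{equation*}
This auxiliary bound follows exactly as in Lemma \ref{lemma s s'}: on each interval of length at most $(T-t)/N$, apply It\^o's isometry to the three increments in \eqref{Euler 1} and use the growth bound \eqref{assumption growth} together with \eqref{sup Euler} already proved above to replace $\bE[\|Y_{\kappa_N(r-)}\|^2]$ by $C_1(d^p+\|x\|^2)e^{\rho_1 T}$.

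Combining these three contributions gives an inequality of the form
\begin{equation*}
\bE\Big[\sup_{s'\in[t,s]}\|E_{s'}\|^2\Big]
\le C'\int_t^s \bE\Big[\sup_{r'\in[t,r]}\|E_{r'}\|^2\Big]\,dr
+ C''(d^p+\|x\|^2)N^{-1}e^{\rho(T-t)},
\end{equation*}
to which Gr\"onwall's lemma applies and yields \eqref{error Euler}. The main obstacle, as in the continuous case, is to keep the dimension dependence linear in $d^p$; the key is that Assumption \ref{assumption small jumps} is \emph{not} needed here, because the full (non-truncated) compensated Poisson integral appearing in \eqref{Euler 1} is estimated by the linear growth bound \eqref{assumption growth} on $\int_{\bR^d}\|\eta^d_t(x,z)\|^2\,\nu^d(dz)$, so that all three diffusion-type terms contribute the same scaling $Ld^p(1+\bE\|Y\|^2)$ as in the proof of \eqref{sup Euler}.
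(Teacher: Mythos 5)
Your proposal is correct and its overall skeleton (Grönwall on $\bE[\sup\|Y\|^2]$, then a telescoping decomposition of the coefficient error controlled by Lipschitz-in-space, H\"older-in-time, and a one-step increment bound) matches the paper, but the telescoping decomposition itself is genuinely different. The paper inserts the intermediate value $X_{\kappa_N(r-)}$, decomposing e.g.
\begin{align*}
\mu(r,X_{r-})-\mu\big(\kappa_N(r-),Y_{\kappa_N(r-)}\big)
&= \big[\mu(r,X_{r-})-\mu(r,X_{\kappa_N(r-)})\big]
+\big[\mu(r,X_{\kappa_N(r-)})-\mu(\kappa_N(r-),X_{\kappa_N(r-)})\big]\\
&\quad+\big[\mu(\kappa_N(r-),X_{\kappa_N(r-)})-\mu(\kappa_N(r-),Y_{\kappa_N(r-)})\big],
\end{align*}
so the ``one-step'' term is $\bE\|X_r-X_{\kappa_N(r)}\|^2$, which is precisely Lemma \ref{lemma s s'} already proved. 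You instead insert the intermediate value $Y_{r-}$ and $Y_{\kappa_N(r-)}$, so your one-step term is $\bE\|Y_r-Y_{\kappa_N(r)}\|^2$, which is not yet available and requires an auxiliary estimate. Your claim that this auxiliary bound ``follows exactly as in Lemma \ref{lemma s s'}'' is correct, since on a single time step the Euler coefficients are frozen and the same It\^o-isometry computation goes through (using \eqref{sup Euler} in place of Lemma \ref{lemma est moment} to control $\bE\|Y_{\kappa_N(r)}\|^2$). The trade-off is thus: the paper's decomposition is slightly more economical because it reuses an existing lemma, while yours is self-contained within the Euler world but needs one extra lemma proved on the side. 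One further stylistic difference: for \eqref{sup Euler} the paper applies It\^o's formula to $\|Y\|^2$ and then Davis' inequality, which naturally produces cross terms like $\langle Y,\mu\rangle$ and then uses Young's inequality with a small $\varepsilon$ to absorb $\bE[\sup\|Y\|^2]$ back to the left-hand side, whereas you propose to apply BDG/Doob directly to the integral representation and square-expand via $\|a+b+c+d\|^2\le 4(\cdots)$; both work and give the same polynomial scaling, the It\^o route just gives slightly cleaner constants and sidesteps integrability issues via a stopping-time truncation (which you omit but which is a standard technicality). Your remark that Assumption \ref{assumption small jumps} is not needed here is also correct.
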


\begin{proof}
We fix $(t,x)\in[0,T]\times\bR^d$ and $N\in\bN$ throughout this proof.
For $Y^{t,x,N}=(Y^{t,x,N,1},...,Y^{t,x,N,d})$,
applying It\^o's formula to $\big\|Y^{t,x,N}_s\big\|^2$ shows almost surely
for all $s\in[t,T]$ that
\begin{align*}
\left\|Y^{t,x,N}_s\right\|^2=
&
\|x\|^2+2\int_t^s\left(
\big\<Y^{t,x,N}_r,\mu(\kappa_N(r),Y^{t,x,N}_{\kappa_N(r)})\big\>
+\frac{1}{2}
\left\|\sigma(\kappa_N(r),Y^{t,x,N}_{\kappa_N(r)})\right\|_F^2\right)\,dr\\
&
+2\sum_{i=1}^d\sum_{j=1}^d
\int_t^sY^{t,x,N,i}_{r-}\sigma^{ij}
(\kappa_N(r-),Y^{t,x,N}_{\kappa_N(r-)})\,dW_r^j\\
&
+2\int_t^s\int_{\bR^{d}}
\big\<Y^{t,x,N}_{r-},\eta_{\kappa_N(r-)}(Y^{t,x,N}_{\kappa_N(r-)},z)\big\>
\,\tilde{\pi}(dz,dr)\\
&
+\int_t^s\int_{\bR^{d}}
\left\|\eta_{\kappa_N(r-)}(Y^{t,x,N}_{\kappa_N(r-)},z)\right\|^2\,\pi(dz,dr).
\end{align*}
Then, by Davis' inequality
(see, e.g., Theorem VII.90 and VII (90.1) in \cite{DM1982})
we obtain for every $s\in[t,T]$ that
\begin{equation}                                                 \label{A_s}
a_N(s):=\bE\Big[\sup_{r\in[t,s]}\left\|Y^{t,x,N}_r\right\|^2\Big]\leq 
\|x\|^2+\sum_{i=1}^4A_i(s),
\end{equation}
where
\begin{align*}
&
A_1(s):=2\bE\left[\int_t^s\left
(\left|\big\<Y^{t,x,N}_r,\mu(\kappa_N(r),Y^{t,x,N}_{\kappa_N(r)})\big\>\right|
+\frac{1}{2}\left\|\sigma(\kappa_N(r),Y^{t,x,N}_{\kappa_N(r)})\right\|_F^2\right)
dr\right],\\
&
A_2(s):=8\bE\left[\left(
\int_t^s\left\|\left[\sigma(\kappa_N(r),Y^{t,x,N}_{\kappa_N(r)})\right]^T
Y^{t,x,N}_{\kappa_N(r)}
\right\|^2\,dr
\right)^{1/2}\right],\\
&
A_3(s):=8\bE\left[\left(
\int_t^s\int_{\bR^{d}}\left|\<Y^{t,x,N}_{r-},
\eta_{\kappa_N(r-)}(Y^{t,x,N}_{\kappa_N(r-)},z)\>
\right|^2
\,\pi(dz,dr)
\right)^{1/2}\right],\\
&
A_4(s):=\bE\left[
\int_t^s\int_{\bR^{d}}\left\|
\eta_{\kappa_N(r)}(Y^{t,x,N}_{\kappa_N(r)},z)\right\|^2
\,\nu(dz)\,dr\right].
\end{align*}
Furthermore, by Cauchy-Schwarz inequality, Young's inequality,
and \eqref{assumption growth} we have for all $s\in[t,T]$ that
\begin{align}
A_1(s)
&
\leq \bE\left[\int_t^s\left(\left\|Y^{t,x,N}_r\right\|^2
+\left\|\mu(\kappa_N(r),Y^{t,x,N}_{\kappa_N(r)})\right|^2
+\left\|\sigma(\kappa_N(r),Y^{t,x,N}_{\kappa_N(r)})\right\|_F^2\right)\,dr\right]\nonumber\\
&
\leq \bE\Bigg[\int_t^s\Big(\left\|Y^{t,x,N}_r\right\|^2
+2\left\|\mu(\kappa_N(r), Y^{t,x,N}_{\kappa_N(r)})-\mu(\kappa_N(r),0)\right\|^2
+2\left\|\mu(\kappa_N(r),0)\right\|^2\nonumber\\
&
\quad +2\left\|\sigma(\kappa_N(r), Y^{t,x,N}_{\kappa_N(r)})
-\sigma(\kappa_N(r),0)\right\|_F^2
+2\left\|\sigma(\kappa_N(r),0)\right\|_F^2
\Big)\,dr\Bigg]\nonumber\\
&
\leq \bE\left[\int_t^s\left(\left\|Y^{t,x,N}_r\right\|^2
+4L\left\|Y^{t,x,N}_{\kappa_N(r)}\right\|^2+4Ld^p\right)\,dr\right]\nonumber\\
&
\leq 4TLd^p+(1+4L)\int_t^sa_N(r)\,dr.                         \label{A1}
\end{align}
Similarly, by Cauchy-Schwarz inequality, Young's inequality, 
\eqref{assumption Lip mu sigma eta}, and \eqref{assumption growth} we get
for all $s\in[t,T]$ and $\varepsilon>0$ that
\begin{align}
A_2(s)
&
\leq 8\bE\left[\left(\int_t^s\left\|Y^{t,x,N}_{\kappa_N(r)}\right\|^2
\left\|\sigma(\kappa_N(r),Y^{t,x,N}_{\kappa_N(r)})\right\|_F^2\,dr\right)^{1/2}
\right]\nonumber\\
&
\leq 8\bE\left[\sup_{r\in[t,s]}(\left\|Y^{t,x,N}_r\right\|)\Big(
\int_t^s\Big(2\left\|\sigma(\kappa_N(r),Y^{t,x,N}_{\kappa_N(r)})
-\sigma(\kappa_N(r),0)\right\|_F^2
+2\left\|\sigma(\kappa_N(r),0)\right\|_F^2
\Big)\,dr
\Big)^{1/2}\right]\nonumber\\
&
\leq 4\varepsilon a_N(s)+\frac{4}{\varepsilon}\bE\left[
\int_t^s\left(2L\left\|Y^{t,x,N}_{\kappa_N(r)}\right\|^2+2Ld^p\right)\,dr\right]
\nonumber\\
&
\leq 4\varepsilon a_N(s)+\frac{8TLd^p}{\varepsilon}
+\frac{8L}{\varepsilon}\int_t^sa_N(r)\,dr,                             \label{A2}
\end{align}
and
\begin{align}                                                    
A_3(s) &
\leq 8\bE\left[\sup_{r\in[t,s]}(\left\|Y^{t,x,N}_r\right\|)\Big(
\int_t^s\int_{\bR^{d}}
\big\|\eta_{\kappa_N(r-)}(Y^{t,x,N}_{\kappa_N(r-)},z)\big\|^2
\,\pi(dz,dr)
\Big)^{1/2}\right]\nonumber\\
&
\leq 4\varepsilon a_N(s)+\frac{4}{\varepsilon}A_4(s).
\label{A3}
\end{align}
Moreover, by \eqref{assumption Lip mu sigma eta} and \eqref{assumption growth} 
we have for all $s\in[t,T]$ that
\begin{align}                                                  
A_4(s) & \leq \bE\left[\int_t^s\int_{\bR^{d}}\left(
2\left\|\eta_{\kappa_N(r)}(Y^{t,x,N}_{\kappa_N(r)},z)
-\eta_{\kappa_N(r)}(0,z)\right\|^2
+2\left\|\eta_{\kappa_N(r)}(0,z)\right\|^2
\right)\,\nu(dz)\,dr \right]\nonumber\\
&
\leq
\bE\left[\int_t^s\left(
2L\left\|Y^{t,x,N}_{\kappa_N(r)}\right\|^2+2Ld^p
\right)\,dr \right]\nonumber\\
&
\leq 2TLd^p+2L\int_t^sa_N(r)\,dr.
\label{A4}
\end{align}
Then, combining \eqref{A_s} and \eqref{A1}-\eqref{A4} yields for all
$s\in[t,T]$ that
\begin{align}                                                    
a_N(s)\leq & \|x\|^2+8\varepsilon a_N(s)
+2TLd^p\left(3+8/\varepsilon\right)
+\left(1+6L+16L/\varepsilon\right)\int_t^sa_N(r)\,dr.
\label{a_s}
\end{align}
Define a sequence of stopping times
$\{\tau^N_k\}_{k=1}^\infty$ by
$$
\tau^N_k:=\inf\{s\geq t:\left\|Y^{t,x,N}_s\right\|\geq k\}\wedge T,
\quad k\in\bN
$$
Then by Assumption \ref{assumption pointwise}, we have that
\begin{align*}
\bE\left[\sup_{s\in[t,T]}\left\|Y^{t,x,N}_{s\wedge\tau^N_k}\right\|^2\right]
&
\leq 
k^2+\bE\left[\left\|Y^{t,x,N}_{\tau^N_k}
-Y^{t,x,N}_{\tau^N_k-}\right\|^2\right]
\\
&
= k^2+\bE\left[
\Big\|
\int_{\bR^d}\eta_{\kappa_N(\tau^N_k-)}\big(Y^{t,x,N}_{\kappa_N(\tau^N_k-)},z\big)
\,\pi\big(dz,\{\tau^N_k\}\big)
\Big\|^2
\right]
\\
&
\leq k^2+\bE\left[
\int_{\bR^d}C_d(1\wedge\|z\|^2)\,\pi\big(dz,\{\tau^N_k\}\big)
\right]
\\
&
\leq k^2+C_d<\infty.
\end{align*}
Hence, by taking $\varepsilon=1/9$ and replacing $s$ by $s\wedge \tau^N_k$
in \eqref{a_s}, as well as using Gr\"onwall's lemma we obtain
$$
a_N(s\wedge\tau^N_k)\leq C_1(d^p+\|x\|^2)e^{\rho_1(s-t)}\quad
\text{for all $s\in[t,T]$ and $k\in\bN$,}
$$
where
$C_1:=9\cdot\max\{150LT,1\}$ and $\rho_1:=9(1+150L)$. 
Then the application of Fatou's lemma gives
$$
a_N(T)\leq \liminf_{k\to\infty}a_N(T\wedge \tau^N_k)\leq C_1(d^p+\|x\|^2)
e^{\rho_1(T-t)},
$$
which proves \eqref{sup Euler}.

Next, we show the proof of \eqref{error Euler},
which is analogous to the proof of \eqref{sup Euler}.
For every $s\in[t,T]$, and $i,j\in\{1,...,d\}$, we define
$$
\tilde{\mu}^N_s:=\mu(s,X^{t,x}_{s-})-\mu(\kappa_N(s-),Y^{t,x,N}_{\kappa_N(s-)}),
\quad 
\tilde{\eta}^N_s(z):=\eta_s(X^{t,x}_{s-},z)
-\eta_{\kappa_N(s-)}(Y^{t,x,N}_{\kappa_N(s-)},z),
$$
and
$$
\tilde{\sigma}^N_s:=\sigma(s,X^{t,x}_{s-})
-\sigma(\kappa_N(s-),Y^{t,x,N}_{\kappa_N(s-)}),
\quad 
\tilde{\sigma}^{N,ij}_s:=\sigma^{ij}(s,X^{t,x}_{s-})
-\sigma^{ij}(\kappa_N(s-),Y^{t,x,N}_{\kappa_N(s-)}).
$$
For
$X^{t,x}-Y^{t,x,N}=(X^{t,x,1}-Y^{t,x,N,1},...,X^{t,x,d}-Y^{t,x,N,d})$,
the application of It\^o's formula shows almost surely for all $s\in[t,T]$ that
\begin{align*}
\left\|X^{t,x}_s-Y^{t,x,N}_s\right\|^2=
&
2\int_t^s\left(\big\<(X^{t,x}_{r-}-Y^{t,x,N}_{r-}),\tilde{\mu}^N_r\big\>
+\frac{1}{2}\left\|\tilde{\sigma}^N_r\right\|_F^2\right)\,dr\\
&
+2\sum_{i=1}^d\sum_{j=1}^d
\int_t^s(X^{t,x,i}_{r-}-Y^{t,x,N,i}_{r-})\tilde{\sigma}^{N,ij}_r\,dW_r^j\\
&
+2\int_t^s\int_{\bR^{d}}\big\<(X^{t,x}_{r-}-Y^{t,x,N}_{r-}),
\tilde{\eta}^N_r(z)\big\>\,\tilde{\pi}(dz,dr)\\
&
+\int_t^s\int_{\bR^{d}}\left\|\tilde{\eta}^N_r(z)\right\|^2\,\pi(dz,dr).
\end{align*}
Then by Davis' inequality 
we obtain for all $s\in[t,T]$ that
\begin{equation}                                             \label{E_s}
e_N(s):=\bE\Big[\sup_{r\in[t,s]}\left\|X^{t,x}_r-Y^{t,x,N}_r\right\|^2\Big]
\leq \sum_{i=1}^4E_i(s),
\end{equation}
where
\begin{align*}
&
E_1(s):=
2\bE\left[\int_t^s\left(\left|
\<(X^{t,x}_{r-}-Y^{t,x,N}_{r-}),\tilde{\mu}^N_r\>\right|
+\frac{1}{2}\left\|\tilde{\sigma}^N_r\right\|_F^2
\right)\,dr\right],\\
&
E_2(s):=
8\bE\left[\left(\int_t^s\left\|[\tilde{\sigma}^N_r]^T
(X^{t,x}_{r-}-Y^{t,x,N}_{r-})\right\|^2
\,dr\right)^{1/2}\right],\\
&
E_3(s):=
8\bE\left[\left(\int_t^s\int_{\bR^{d}}
\left|\big\<(X^{t,x}_{r-}-Y^{t,x,N}_{r-}),\tilde{\eta}^N_r(z)\big\>\right|^2
\,\pi(dz,dr)\right)^{1/2}\right],\\
&
E_4(s):=\bE\left[\int_t^s\int_{\bR^{d}}
\left\|\tilde{\eta}^N_r(z)\right\|^2\,\nu(dz)\,dr\right].
\end{align*}
Moreover, by Lemma \ref{lemma s s'} we notice for all $s\in[t,T]$ that
\begin{equation}                                              \label{eq kappa}             
\bE\left[\left\|X^{t,x}_{s}-X^{t,x}_{\kappa_N(s)}\right\|^2\right]
\leq 2c(d^p+\|x\|^2)N^{-1}T(T+1),
\end{equation}
where $c=c(T,L)$ is the positive constant introduced in \eqref{est s s'}
in Lemma \ref{lemma s s'}.
Then, by \eqref{assumption Lip mu sigma eta}, \eqref{assumption growth},  
\eqref{Holder mu sigma d}, and \eqref{Holder eta d} we obtain
for all $s\in[t,T]$ that
\begin{align}                                           
\left\|\tilde{\mu}^N_s\right\| &
\leq  \left\|\mu(s,X^{t,x}_{s-})-\mu(s,X^{t,x}_{\kappa_N(s-)})\right\|
+\left\|\mu(s,X^{t,x}_{\kappa_N(s-)})
-\mu(\kappa_N(s-),X^{t,x}_{\kappa_N(s-)})\right\|
\nonumber\\
&
\quad +\left\|\mu(\kappa_N(s-),
X^{t,x}_{\kappa_N(s-)})-\mu(\kappa_N(s-),Y^{t,x,N}_{\kappa_N(s-)})\right\|
\nonumber\\
&
\leq 
L^{1/2}\left\|X^{t,x}_{s-}-X^{t,x}_{\kappa_N(s-)}\right\|+(L_1N^{-1}T)^{1/2}
+L^{1/2}\left\|X^{t,x}_{\kappa_N(s-)}-Y^{t,x,N}_{\kappa_N(s-)}\right\|,   
\label{mu~}
\end{align}
\begin{equation}                                             \label{sigma~}
\left\|\tilde{\sigma}^N_s\right\|_F\leq 
L^{1/2}\left\|X^{t,x}_{s-}-X^{t,x}_{\kappa_N(s-)}\right\|+(L_1N^{-1}T)^{1/2}
+L^{1/2}\left\|X^{t,x}_{\kappa_N(s-)}-Y^{t,x,N}_{\kappa_N(s-)}\right\|,
\end{equation}
and
\begin{equation}                                           \label{eta~}
\int_{\bR^{d}}\left\|\tilde{\eta}^N_s(z)\right\|^2\,\nu(dz)
\leq 3L\left\|X^{t,x}_{s-}-X^{t,x}_{\kappa_N(s-)}\right\|^2+3L_2N^{-1}T
+3L\left\|X^{t,x}_{\kappa_N(s-)}-Y^{t,x,N}_{\kappa_N(s-)}\right\|^2.
\end{equation}      
Thus, by \eqref{eq kappa}, \eqref{mu~}, \eqref{sigma~}, Cauchy-Schwarz inequality, 
and Young's inequality we obtain for every $s\in[t,T]$ that
\begin{align}
E_1(s)
&
\leq 2\bE\Bigg[\int_t^s\big(\left\|X^{t,x}_r-Y^{t,x,N}_r\right\|
\left((L_1N^{-1}T)^{1/2}
+L^{1/2}\left\|X^{t,x}_r-X^{t,x}_{\kappa_N(r)}\right\|
+L^{1/2}\left\|X^{t,x}_{\kappa_N(r)}-Y^{t,x,N}_{\kappa_N(r)}\right\|\right)\nonumber\\
&
\quad+\frac{1}{2}\left((L_1N^{-1}T)^{1/2}
+L^{1/2}\left\|X^{t,x}_r-X^{t,x}_{\kappa_N(r)}\right\|
+L^{1/2}\left\|X^{t,x}_{\kappa_N(r)}-Y^{t,x,N}_{\kappa_N(r)}\right\|\right)^2\,dr
\Bigg]\nonumber\\
&
\leq \bE\left[\int_t^s\left(
\left\|X^{t,x}_r-Y^{t,x,N}_r\right\|^2+6L_1N^{-1}T
+6L\left\|X^{t,x}_r-X^{t,x}_{\kappa_N(r)}\right\|^2
+6L\left\|X^{t,x}_{\kappa_N(r)}-Y^{t,x,N}_{\kappa_N(r)}\right\|^2
\right)dr\right]
\nonumber\\
&
\leq 6T^2(L_1+2c(T+1)L)(d^p+\|x\|^2)N^{-1}+(1+6L)\int_t^se_N(r)\,dr,
\label{E1}
\end{align}
where we recall that 
$e_N(s):=\bE\Big[\sup_{r\in[t,s]}\left\|X^{t,x}_r-Y^{t,x,N}_r\right\|^2\Big]$ 
for each $s\in[t,T]$.
Analogously, by \eqref{eq kappa}, \eqref{sigma~}, Cauchy-Schwarz inequality, 
and Young's inequality
we get for all $s\in[t,T]$ and $\varepsilon>0$ that
\begin{align}
E_2(s)
&
\leq 8\bE\left[\left( \int_t^s\left\|X^{t,x}_r-Y^{t,x,N}_r\right\|^2
\left\|\tilde{\sigma}^N_r\right\|_F^2 
\,dr\right)^{1/2}\right]\nonumber\\
&
\leq 8\bE\left[\left(\sup_{r\in[t,s]}\left\|X^{t,x}_r-Y^{t,x,N}_r\right\|\right)
\left(\int_t^s\|\tilde{\sigma}^N_r\|_F^2\,dr\right)^{1/2}\right]\nonumber\\
&
\leq
\frac{12}{\varepsilon}\bE\left[\int_t^s\left(L_1N^{-1}T
+L\left\|X^{t,x}_{r}-X^{t,x}_{\kappa_N(r)}\right\|^2
+L\left\|X^{t,x}_{\kappa_N(r)}-Y^{t,x,N}_{\kappa_N(r)}\right\|^2\right)
\,dr\right]\nonumber\\
&
\quad+4\varepsilon \bE\Bigg[\sup_{r\in[t,s]}\left\|X^{t,x}_r-Y^{t,x,N}_r\right\|^2\Bigg]
\nonumber\\
&
\leq 4\varepsilon e_N(s)+\frac{12T^2(L_1+2c(T+1)L)(d^p+\|x\|^2)}{\varepsilon N}
+\frac{12L}{\varepsilon}\int_t^se_N(r)\,dr,          \label{E2}
\end{align}
and
\begin{align}                                       
E_3(s) &
\leq 8\bE\left[\left(\sup_{r\in[t,s]}\left\|X^{t,x}_r-Y^{t,x,N}_r\right\|\right)
\left(\int_t^s\int_{\bR^{d}}\|\tilde{\eta}^N_r(z)\|^2
\,\pi(dz,dr)\right)^{1/2}\right]\nonumber\\
&
\leq 4\varepsilon e_N(s)+\frac{4E_4(s)}{\varepsilon}.          
\label{E3}
\end{align}
Moreover, by \eqref{eq kappa} and \eqref{eta~} we see for all $s\in[t,T]$ that
\begin{align}
E_4(s)
&
\leq \bE\Big[\int_t^s\Big(
3L_2N^{-1}T+3L\left\|X^{t,x}_r-X^{t,x}_{\kappa_N(r)}\right\|^2 
+3L\left\|X^{t,x}_{\kappa_N(r)}-Y^{t,x,N}_{\kappa_N(r)}\right\|^2 
\Big)\,dr\Big]
\nonumber\\
&
\leq \frac{3T^2(L_2+2c(T+1)L)(d^p+\|x\|^2)}{N}+3L\int_t^se_N(r)\,dr.
\label{E4}
\end{align}
Then combining \eqref{E_s} and \eqref{E1}-\eqref{E4} yields
for each $s\in[t,T]$ that
\begin{align*}
e_N(s)
&
\leq
8\varepsilon e_N(s)+\left(
1+9L+\frac{24L}{\varepsilon}\right)
\int_t^se_N(r)\,dr
\\
&
\quad+
\left[
6T^2(L_1+2c(T+1)L)\left(1+\frac{2}{\varepsilon}\right)
+3T^2(L_2+2c(T+1)L)\left(1+\frac{4}{\varepsilon}\right)
\right]\frac{(d^p+\|x\|^2)}{N}.
\end{align*} 
Hence, by \eqref{SDE moment est} and \eqref{sup Euler},  
taking $\varepsilon=1/9$ and applying Gr\"onwall's lemma gives
\begin{equation}                                   \label{eq C3}
e_N(T)=\bE\Bigg[\sup_{s\in[t,T]}\left\|X^{t,x}_s-Y^{t,x,N}_s\right\|^2\Bigg]
\leq C_2
(d^p+\|x\|^2)N^{-1}e^{\rho_2(T-t)},
\end{equation}
where $C_2:=27T^2(38L_1+37L_2+150c(T+1)L)$ and $\rho_2:=9(1+225L)$.
Therefore, \eqref{error Euler} is proved,
and we complete the proof of this lemma.
\end{proof}

\subsection{Bounds for Euler approximations with truncated jumps}

\begin{lemma}
There exist positive constants $C_1=C_1(T,L)$, $C_2=C_2(T,L,K)$, 
$\rho_1=\rho_1(T,L)$, and $\rho_2=\rho_2(T,L)$ satisfying
for each $(t,x)\in[0,T]\times\bR^d$, $N\in\bN$, and $\delta\in(0,1)$ that
\begin{equation}                                         \label{est Y delta}
\bE\Bigg[\sup_{s\in[t,T]}\left\|Y^{t,x,N,\delta}_s\right\|^2\Bigg]
\leq C_1(d^p+\|x\|^2)e^{\rho_1(T-t)},
\end{equation}
and
\begin{equation}                                      \label{est error Y delta}
\bE\Bigg[\sup_{s\in[t,T]}\left\|Y^{t,x,N,\delta}_s-Y^{t,x,N}_s\right\|^2\Bigg]
\leq C_2\delta^qd^p(d^p+\|x\|^2)e^{\rho_2(T-t)}.
\end{equation}
\end{lemma}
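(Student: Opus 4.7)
My plan is to mimic the two-step strategy of Lemma~\ref{Lemma Euler}, with essentially one genuinely new ingredient: the control of the ``small jump'' part $\{\|z\|<\delta\}$ via Assumption~\ref{assumption small jumps}.

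For the a priori bound \eqref{est Y delta}, I would first observe that the drift and jump pieces of \eqref{def Y N delta} reassemble into a compensated integral on $A_\delta^d$, so
\[
dY^{t,x,N,\delta}_s = \mu\bigl(\kappa_N(s-), Y^{t,x,N,\delta}_{\kappa_N(s-)}\bigr)\, ds + \sigma\bigl(\kappa_N(s-), Y^{t,x,N,\delta}_{\kappa_N(s-)}\bigr)\, dW_s + \int_{A_\delta^d} \eta_{\kappa_N(s-)}\bigl(Y^{t,x,N,\delta}_{\kappa_N(s-)}, z\bigr)\, \tilde\pi(dz, ds).
\]
This has exactly the structure of the Euler scheme \eqref{Euler 1}, the only difference being that the jump measure is restricted to $A_\delta^d$. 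Since $\int_{A_\delta^d}\|\eta_t(x,z)\|^2\,\nu(dz) \le \int_{\bR^d}\|\eta_t(x,z)\|^2\,\nu(dz) \le Ld^p(1+\|x\|^2)$ by \eqref{assumption growth}, every estimate used to prove \eqref{sup Euler}---It\^o's formula on $\|Y^{t,x,N,\delta}_s\|^2$, Davis' inequality for the supremum, Cauchy--Schwarz/Young to split linear and quadratic pieces, localization via $\tau_k := \inf\{s \ge t : \|Y^{t,x,N,\delta}_s\|\ge k\}\wedge T$ (finite because Assumption~\ref{assumption pointwise} bounds the jump size by $C_d^{1/2}$), and Gr\"onwall---transfers verbatim.

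For the error bound \eqref{est error Y delta} I would set $D_s := Y^{t,x,N,\delta}_s - Y^{t,x,N}_s$ and subtract \eqref{def Y N delta} from \eqref{Euler 1}. Since the small-jump part of $Y^{t,x,N}$ does not appear in $Y^{t,x,N,\delta}$,
\[
dD_s = \Delta\mu_s\, ds + \Delta\sigma_s\, dW_s + \int_{A_\delta^d} \Delta\eta_s(z)\, \tilde\pi(dz,ds) - \int_{\{\|z\|<\delta\}} \eta_{\kappa_N(s-)}\bigl(Y^{t,x,N}_{\kappa_N(s-)}, z\bigr)\, \tilde\pi(dz,ds),
\]
where $\Delta\mu_s,\Delta\sigma_s,\Delta\eta_s(z)$ denote the differences of $\mu,\sigma,\eta$ evaluated at $(\kappa_N(s-),Y^{t,x,N,\delta}_{\kappa_N(s-)})$ versus $(\kappa_N(s-),Y^{t,x,N}_{\kappa_N(s-)})$. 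Applying It\^o's formula to $\|D_s\|^2$, Davis' inequality, and the Lipschitz bounds \eqref{assumption Lip mu sigma eta} on $\Delta\mu,\Delta\sigma,\Delta\eta$ produces Gr\"onwall-type integrals of $e^\delta_N(s) := \bE[\sup_{r\in[t,s]}\|D_r\|^2]$. The only new contribution is the small-jump integral, which by It\^o's isometry and Assumption~\ref{assumption small jumps} satisfies
\[
\bE\left[\int_t^s \int_{\{\|z\|<\delta\}} \bigl\|\eta_{\kappa_N(r-)}\bigl(Y^{t,x,N}_{\kappa_N(r-)}, z\bigr)\bigr\|^2\, \nu(dz)\, dr\right] \le K d^p \delta^q \int_t^s \bE\bigl[1 + \|Y^{t,x,N}_{\kappa_N(r-)}\|^2\bigr]\, dr,
\]
and \eqref{sup Euler}, already proved in Lemma~\ref{Lemma Euler}, bounds the right-hand side by a constant times $\delta^q d^p (d^p+\|x\|^2) e^{\rho_1(T-t)}$. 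Absorbing the supremum term coming from Davis' inequality via Young's inequality with a small parameter $\varepsilon$, exactly as in \eqref{E2}--\eqref{E3}, and closing with Gr\"onwall's lemma yields \eqref{est error Y delta}.

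The main obstacle will be purely bookkeeping: tracking constants through Davis' inequality and the Young-inequality absorptions is tedious but mechanical. The conceptually new content is the single inequality coming from Assumption~\ref{assumption small jumps}, which produces the $\delta^q d^p$ factor; everything else is a direct transcription of the arguments in Lemma~\ref{Lemma Euler}.
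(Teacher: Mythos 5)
Your proposal is correct and essentially matches the paper's argument: the a priori bound \eqref{est Y delta} is obtained by transcribing the proof of \eqref{sup Euler} with the jump domain restricted to $A_\delta^d$, and for the error bound \eqref{est error Y delta} the only genuinely new ingredient is bounding the compensator of the discarded small jumps of $Y^{t,x,N}$ by Assumption~\ref{assumption small jumps} together with \eqref{sup Euler}, after which Gr\"onwall closes. The paper's version is marginally lighter---it applies Burkholder--Davis--Gundy and Jensen directly to the integral form of $Y^{t,x,N,\delta}_s - Y^{t,x,N}_s$ rather than running It\^o's formula on $\|D_s\|^2$ and then absorbing the cross-terms via Young's inequality with a small parameter---but the two routes are interchangeable and lead to the same estimate.
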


\begin{proof}
As before, we first fix $(t,x)\in[0,T]\times\bR^d$, $N\in\bN$, 
and $\delta\in(0,1)$.
One can follow the proof of Lemma \ref{Lemma Euler} for \eqref{sup Euler} 
to obtain that
$$                                       
\bE\Bigg[\sup_{s\in[t,T]}\left\|Y^{t,x,N,\delta}_s\right\|^2\Bigg]
\leq C_1(d^p+\|x\|^2)e^{\rho_1(T-t)}
$$
with positive constants $C_1=C_1(T,L)$ and $\rho_1=\rho_1(T,L)$ defined 
in Lemma \ref{Lemma Euler}, which proves \eqref{est Y delta}.
Next, by Davis' inequality, H\"older's inequality, and 
\eqref{assumption Lip mu sigma eta} 
we observe that
\begin{align}
\bE & \Bigg[\sup_{u\in[t,s]}\left\|Y^{t,x,N,\delta}_u-Y^{t,x,N}_u\right\|^2\Bigg]
\leq  3(s-t)\int_t^s\bE
\left[\left\|\mu(\kappa_N(r),Y^{t,x,N,\delta}_{\kappa_N(r)})
-\mu(\kappa_N(r),Y^{t,x,N}_{\kappa_N(r)})\right\|^2\right]dr\nonumber\\
&
\quad+12\int_t^s\bE \left[\left
\|\sigma(\kappa_N(r),Y^{t,x,N,\delta}_{\kappa_N(r)})
-\sigma(\kappa_N(r),Y^{t,x,N}_{\kappa_N(r)})\right\|_F^2\right]dr\nonumber\\
&
\quad+12\int_t^s\bE\left[\int_{\bR^{d}}\left\|
\eta_{\kappa_N(r)}(Y^{t,x,N,\delta}_{\kappa_N(r)},z)\mathbf{1}_{A_\delta}(z)
-\eta_{\kappa_N(r)}(Y^{t,x,N}_{\kappa_N(r)},z)
\right\|^2\nu(dz)\right]dr\nonumber\\
&
\leq  3(s-t)\int_t^s\bE
\left[\left\|\mu(\kappa_N(r),Y^{t,x,N,\delta}_{\kappa_N(r)})
-\mu(\kappa_N(r),Y^{t,x,N}_{\kappa_N(r)})\right\|^2\right]dr\nonumber\\
&
\quad+12\int_t^s\bE \left[\left
\|\sigma(\kappa_N(r),Y^{t,x,N,\delta}_{\kappa_N(r)})
-\sigma(\kappa_N(r),Y^{t,x,N}_{\kappa_N(r)})\right\|_F^2\right]dr\nonumber\\
&
\quad+24\int_t^s\bE\left[\int_{\bR^{d}}\left\|
\eta_{\kappa_N(r)}(Y^{t,x,N,\delta}_{\kappa_N(r)},z)\mathbf{1}_{A_\delta}(z)
-\eta_{\kappa_N(r)}(Y^{t,x,N}_{\kappa_N(r)},z)\mathbf{1}_{A_\delta}(z)
\right\|^2\nu(dz)\right]dr\nonumber\\
&
\quad+24\int_t^s\bE\left[\int_{\bR^{d}}\left\|
\eta_{\kappa_N(r)}(Y^{t,x,N}_{\kappa_N(r)},z)\mathbf{1}_{\bR^d/ A_\delta}(z)
\right\|^2\nu(dz)\right]dr\nonumber\\
&
\leq 3(T+12)L\int_t^s\bE
\Bigg[\sup_{u\in[t,r]}\left\|Y^{t,x,N,\delta}_u-Y^{t,x,N}_u\right\|^2\Bigg]dr
\nonumber\\
& \quad
+24\int_t^T\bE\Bigg[\int_{\bR^{d}/A_\delta}\Big\|\eta_{\kappa_N(r)}(Y^{t,x,N}_{\kappa_N(r)},z)\Big\|^2\,\nu(dz)\Bigg]dr.               \label{delta 1}
\end{align}
Furthermore, by \eqref{int z & q bound} and \eqref{sup Euler} it holds that
\begin{align}                                           
\int_t^T\bE\Bigg[\int_{\bR^{d}/A_\delta}\Big\|\eta_{\kappa_N(r)}
(Y^{t,x,N}_{\kappa_N(r)},z)\Big\|^2\,\nu(dz)\Bigg]dr 
&
\leq K\delta^qd^p\int_t^T\bE\Bigg[1+\Big\|Y^{t,x,N}_{\kappa_N(r)}\Big\|^2\Bigg]\,dr\nonumber\\
&
\leq (C_1+1)KT\delta^qd^p(d^p+\|x\|^2)e^{\rho_1 T},              \label{delta 2}
\end{align}
where $C_1=C_1(T,L)$ and $\rho_1=\rho_1(L)$ are positive constants defined in
\eqref{sup Euler} in Lemma \ref{Lemma Euler}.
Combining \eqref{delta 1} and \eqref{delta 2}, 
as well as taking into account \eqref{sup Euler} and \eqref{est Y delta},
by Gr\"onwall's lemma we obtain
$$
\bE\Bigg[\sup_{s\in[t,T]}\left\|Y^{t,x,N,\delta}_s-Y^{t,x,N}_s\right\|^2\Bigg]
\leq
C_2\delta^qd^p(d^p+\|x\|^2)e^{\rho_2(T-t)},
$$
where $C_2:=24(C_1+1)KTe^{\rho_1T}$
and $\rho_2:=3(T+12)L$.
Thus, the proof of this lemma is completed.
\end{proof}

\subsection{Bounds for Euler approximations with truncated jumps and
Monte Carlo approximations of compensator integrals}

\begin{lemma}                                       \label{lemma delta M}
There exist positive constants $C_1$, $C_2$, 
$\rho_1$, and $\rho_2$ only depending on $T$ and $L$ satisfying
for all $(t,x)\in[0,T]\times\bR^d$, $\delta\in(0,1)$, and $N,\cM\in\bN$
with $\cM\geq \delta^{-2}Kd^p$ that
\begin{equation}                                       \label{estimate M}
\bE\Bigg[\sup_{s\in[t,T]}\big\|Y^{t,x,N,\delta,\cM}_s\big\|^2\Bigg]\leq
C_1(d^p+\|x\|^2)e^{\rho_1(T-t)}
\end{equation}
and
\begin{equation}                                      \label{error M}
\bE\Bigg[
\sup_{s\in[t,T]}\big\|Y^{t,x,N,\delta,\cM}_s-Y^{t,x,N,\delta}_s\big\|^2\Bigg]
\leq C_2\delta^{-2}Kd^p\cM^{-1}(d^p+\|x\|^2)e^{\rho_2(T-t)}.
\end{equation} 
\end{lemma}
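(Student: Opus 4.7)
The plan is to follow the strategy of Lemma \ref{Lemma Euler}, treating the Monte Carlo residual as an additional drift term. The key preliminary reformulation is to rewrite the SDE defining $Y^{t,x,N,\delta,\cM}$ in terms of the compensated Poisson random measure $\tilde{\pi}$ restricted to $A_\delta$, by adding and subtracting $\int_{A_\delta}\eta_{\kappa_N(s-)}(Y^{t,x,N,\delta,\cM}_{\kappa_N(s-)},z)\,\nu(dz)\,ds$. This isolates the Monte Carlo error as a single residual drift
\begin{equation*}
\xi_s := \tfrac{\nu(A_\delta)}{\cM}\sum_{j=1}^{\cM}\eta_{\kappa_N(s-)}\bigl(Y^{t,x,N,\delta,\cM}_{\kappa_N(s-)}, V_{(\kappa_N(s-)-t)N,j}\bigr) - \int_{A_\delta}\eta_{\kappa_N(s-)}\bigl(Y^{t,x,N,\delta,\cM}_{\kappa_N(s-)},z\bigr)\,\nu(dz),
\end{equation*}
while keeping all other terms in the SDE structurally identical to those of $Y^{t,x,N}$. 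Since the random vectors $V_{i,\cdot}$ used at time step $i$ are i.i.d.\ $\nu_\delta$-distributed and independent of $\cF_{t+i(T-t)/N}$ by construction, a conditional Monte Carlo variance computation combined with \eqref{int z & q bound} and the linear growth \eqref{assumption growth} yields
\begin{equation*}
\bE\bigl[\|\xi_s\|^2\,\big|\,\cF_{\kappa_N(s-)}\bigr] \leq \tfrac{\nu(A_\delta)}{\cM}\int_{A_\delta}\bigl\|\eta_{\kappa_N(s-)}\bigl(Y^{t,x,N,\delta,\cM}_{\kappa_N(s-)},z\bigr)\bigr\|^2\,\nu(dz) \leq \tfrac{\delta^{-2}Kd^p}{\cM}\,L d^p\bigl(1+\bigl\|Y^{t,x,N,\delta,\cM}_{\kappa_N(s-)}\bigr\|^2\bigr).
\end{equation*}

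For \eqref{estimate M}, I would apply It\^o's formula to $\|Y^{t,x,N,\delta,\cM}_s\|^2$ together with Davis' inequality, mimicking the proof of \eqref{sup Euler}. The only new contribution is the cross term $-2\int_t^s\langle Y^{t,x,N,\delta,\cM}_{r-},\xi_r\rangle\,dr$, which Young's inequality splits into $\int_t^s\|Y^{t,x,N,\delta,\cM}_r\|^2\,dr + \int_t^s\|\xi_r\|^2\,dr$. Under the hypothesis $\cM\geq \delta^{-2}Kd^p$ the conditional variance bound above collapses to $Ld^p(1+\|Y^{t,x,N,\delta,\cM}_{\kappa_N(r)}\|^2)$, which is of the same form as the standard linear-growth contribution. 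After the routine localization by stopping times (to ensure genuine martingality of the stochastic integrals, as in Lemma \ref{Lemma Euler}), Gr\"onwall's lemma produces \eqref{estimate M} with constants $C_1,\rho_1$ depending only on $T$ and $L$.

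For \eqref{error M}, I set $Z_s := Y^{t,x,N,\delta,\cM}_s - Y^{t,x,N,\delta}_s$. Subtracting \eqref{def Y N delta} from the reformulated SDE, the two jump integrals over $A_\delta$ combine into a compensated stochastic integral of $\eta_{\kappa_N(s-)}(Y^{t,x,N,\delta,\cM}_{\kappa_N(s-)},z)-\eta_{\kappa_N(s-)}(Y^{t,x,N,\delta}_{\kappa_N(s-)},z)$ against $\tilde{\pi}$. Hence the dynamics of $Z$ decompose into Lipschitz-type drift, diffusion, and compensated jump contributions governed by \eqref{assumption Lip mu sigma eta}, plus the residual drift $-\xi_s\,ds$. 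Applying It\^o's formula to $\|Z_s\|^2$ together with Davis' inequality as in the proof of \eqref{error Euler}, and invoking the already-established \eqref{estimate M} to bound $\bE\|Y^{t,x,N,\delta,\cM}_{\kappa_N(r)}\|^2 \leq C_1(d^p+\|x\|^2)e^{\rho_1(T-t)}$, I obtain an inequality of the form
\begin{equation*}
\bE\Bigl[\sup_{u\in[t,s]}\|Z_u\|^2\Bigr] \leq C(T,L)\int_t^s\bE\Bigl[\sup_{u\in[t,r]}\|Z_u\|^2\Bigr]\,dr + \widetilde{C}(T,L,K)\cdot\tfrac{\delta^{-2}Kd^p}{\cM}\bigl(d^p+\|x\|^2\bigr)e^{\rho_1(T-t)},
\end{equation*}
from which a final application of Gr\"onwall's lemma yields the claim with appropriate $C_2,\rho_2$ depending only on $T,L,K$.

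The main obstacle is the correct treatment of the Monte Carlo residual: (i) recognizing that the reformulation via $\tilde{\pi}$ is essential to isolate $\xi_s$ as a pure drift so that all other terms remain Lipschitz-controlled; (ii) verifying the conditional-independence of $V_{i,\cdot}$ relative to $\cF_{t+i(T-t)/N}$, which is what makes the variance estimate tight; and (iii) in \eqref{error M}, preserving the full factor $\delta^{-2}Kd^p/\cM$ encoding the genuine Monte Carlo rate, rather than prematurely absorbing it via $\cM\geq \delta^{-2}Kd^p$ as is done in the proof of \eqref{estimate M}.
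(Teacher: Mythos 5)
Your overall architecture mirrors the paper's proof: the reformulation of the SDE via the compensated measure $\tilde{\pi}$ restricted to $A_\delta$ plus the isolated Monte Carlo residual drift $\xi_s$ is exactly \eqref{Y N delta M 1}, and the conditional Monte Carlo variance computation exploiting independence of $V^{N,\cM}_{i,j}$ from the pre-$\kappa_N(s-)$ information is the mechanism behind \eqref{est delta Y}. (The paper bounds $\bE[\sup\|Y\|^2]$ directly with Burkholder--Davis--Gundy on the integral representation rather than via It\^o on $\|Y\|^2$ and Davis as in Lemma \ref{Lemma Euler}, but that is an interchangeable technicality.)

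There is, however, a genuine gap in your variance bound, and it propagates to both conclusions. You estimate
\[
\frac{\nu(A_\delta)}{\cM}\int_{A_\delta}\bigl\|\eta_{\kappa_N(s-)}\bigl(Y^{t,x,N,\delta,\cM}_{\kappa_N(s-)},z\bigr)\bigr\|^2\,\nu(dz)
\leq \frac{\delta^{-2}Kd^p}{\cM}\,Ld^p\bigl(1+\bigl\|Y^{t,x,N,\delta,\cM}_{\kappa_N(s-)}\bigr\|^2\bigr)
\]
by applying \eqref{assumption growth} directly. This is not ``of the same form as the standard linear-growth contribution'': the coefficient of $\|Y\|^2$ here is $Ld^p$, whereas in the $A_i$-estimates \eqref{A1}--\eqref{A4} the paper always gets a dimension-free $2L$ (or $4L$, $6L$, etc.) in front of $\|Y\|^2$ and an isolated $Ld^p$ constant term, by first splitting $\eta(y,z)=[\eta(y,z)-\eta(0,z)]+\eta(0,z)$, using the Lipschitz bound \eqref{assumption Lip mu sigma eta} on the first piece and \eqref{assumption growth} \emph{at $x=0$} on the second. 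This is what \eqref{error com int} does, yielding $2\nu(A_\delta)\cM^{-1}L\bigl(\|Y\|^2 + d^p\bigr)$. With your cruder bound, after invoking $\cM\geq\delta^{-2}Kd^p$, the per-step Gr\"onwall coefficient for \eqref{estimate M} would be $Ld^p$, forcing $\rho_1$ to depend on $d^p$; and for \eqref{error M} the residual would produce a prefactor of order $\delta^{-2}Kd^{2p}\cM^{-1}(d^p+\|x\|^2)$ rather than the stated $\delta^{-2}Kd^p\cM^{-1}(d^p+\|x\|^2)$. Both contradict the requirement that $C_1,\rho_1,C_2,\rho_2$ depend only on $T$ and $L$. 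You must replace the direct growth estimate with the Lipschitz/zero-point split to close the proof.
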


\begin{proof}
We fix $(t,x)\in[0,T]$, $\delta\in(0,1)$, and $N,\cM\in\bN$ 
with $\cM\geq \delta^{-2}Kd^p$ throughout this proof.
To prove \eqref{estimate M}, we first notice that by \eqref{int z & q bound} 
it holds that
\begin{equation}                                              \label{nu finite}
\nu(A_\delta)=\int_{A_\delta}\frac{1\wedge\|z\|^2}{1\wedge\|z\|^2}\,\nu(dz)
\leq \delta^{-2}\int_{\bR^{d}}(1\wedge\|z\|^2)\,\nu(dz)\leq \delta^{-2}Kd^p,
\end{equation}
which shows that $\nu$ is a finite measure on
$(A_\delta,\cB(A_\delta))$. Thus, by \eqref{def Y N delta} we have 
almost surely for all $s\in[t,T]$ that
\begin{align}
Y^{t,x,N,\delta,\cM}_s=
&
x+\int_t^s\mu(\kappa_N(r-),Y^{t,x,N,\delta,\cM}_{\kappa_N(r-)})\,dr
+\int_t^s\sigma(\kappa_N(r-),Y^{t,x,N,\delta,\cM}_{\kappa_N(r-)})\,dW_r\nonumber\\
&
+\int_t^s\int_{A_\delta}
\eta_{\kappa_N(r-)}(Y^{t,x,N,\delta,\cM}_{\kappa_N(r-)},z)\,\tilde{\pi}(dz,dr)
+\int_t^s\int_{A_\delta}
\eta_{\kappa_N(r-)}(Y^{t,x,N,\delta,\cM}_{\kappa_N(r-)},z)\,\nu(dz)\,dr
\nonumber\\
&
-\int_t^s\Big(\frac{\nu(A_\delta)}{\cM}\sum_{j=1}^{\cM}
\eta_{\kappa_N(r-)}(Y^{t,x,N,\delta,\cM}_{\kappa_N(r-)},
V^{N,\delta,\cM}_{\frac{(\kappa_N(r-)-t)N}{T-t},j})
\Big)dr.                                                \label{Y N delta M 1}            
\end{align}
By H\"older's inequality, Burkholder-Davis-Gundy inequality
(see, e.g., Theorem VII.92 in \cite{DM1982}), 
Minkowski integral inequality, and the fact that
$(\sum_{i=1}^5a_i)^2\leq 5\sum_{i=1}^5a_i^2$ for $a_i\in\bR$, $i=1,2,...,5$,
we obtain for each $s\in[t,T]$ that
\begin{equation}                                       \label{a N delta M}
a^\delta_{N,\cM}(s):=\bE\Bigg[\sup_{r\in[t,s]}\big\|Y^{t,x,N,\delta,\cM}_r\big\|^2\Bigg]
\leq 5\left(\|x\|^2+\sum_{i=1}^4A_i(s)\right),
\end{equation} 
where
\begin{align}
&                                               
A_1(s):=(s-t)\bE\left[\int_t^s\big\|
\mu(\kappa_N(r),Y^{t,x,N,\delta,\cM}_{\kappa_N(r)})
\big\|^2\,dr\right],                                    
\label{def A1}\\
&
A_2(s):=8\bE\left[\int_t^s\big\|
\sigma(\kappa_N(r),Y^{t,x,N,\delta,\cM}_{\kappa_N(r)})
\big\|_F^2\,dr\right],
\label{def A2}\\
&
A_3(s):=8\bE\left[\int_t^s\int_{A_\delta}\big\|
\eta_{\kappa_N(r)}(Y^{t,x,N,\delta,\cM}_{\kappa_N(r)},z)
\big\|^2\,\nu(dz)\,dr\right],
\label{def A3}\\
&
A_4(s):=(s-t)\bE\Bigg[
\int_t^s\Big\|\int_{A_\delta}\eta_{\kappa_N(r)}(Y^{t,x,N,\delta,\cM}_{\kappa_N(r)},z)
\nu(dz)\nonumber\\
&
\qquad\qquad
-\Big(\frac{\nu(A_\delta)}{\cM}\sum_{j=1}^{\cM}
\eta_{\kappa_N(r)}(Y^{t,x,N,\delta,\cM}_{\kappa_N(r)},
V^{N,\delta,\cM}_{\frac{(\kappa_N(r)-t)N}{T-t},j})
\Big)\Big\|^2\,dr
\Bigg].
\label{def A4}
\end{align}
Next,by the definition of $V^{N,\delta,\cM}_{i,j}$, we observe for all
$\delta\in(0,1)$,
$y\in\bR^d$, $s\in[0,T]$, $i=0,1,...,N$, and
$j=1,2,...,\cM$ that
\begin{equation}                                             \label{V int}
\nu(A_\delta)\bE\Big[\eta_s(y,V^{N,\delta,\cM}_{i,j})\Big]
=\int_{A_\delta}\eta_s(y,z)\,\nu(dz).
\end{equation}
Then by the independence properties of $V^{N,\delta,\cM}_{i,j}$, 
\eqref{assumption Lip mu sigma eta}, 
\eqref{assumption growth}, and \eqref{V int} it holds for all $r\in[t,T]$ that 
\begin{align}
&
\bE\Big[
\Big\|\int_{A_\delta}\eta_{\kappa_N(r)}(Y^{t,x,N,\delta,\cM}_{\kappa_N(r)},z)
\nu(dz)
-\Big(\frac{\nu(A_\delta)}{\cM}\sum_{j=1}^{\cM}
\eta_{\kappa_N(r)}(Y^{t,x,N,\delta,\cM}_{\kappa_N(r)},
V^{N,\delta,\cM}_{\frac{(\kappa_N(r)-t)N}{T-t},j})
\Big)\Big\|^2
\Big]\nonumber\\
&
=\bE\left[\bE\Big[
\Big\|\int_{A_\delta}\eta_{\kappa_N(r)}(y,z)
\nu(dz)
-\Big(\frac{\nu(A_\delta)}{\cM}\sum_{j=1}^{\cM}
\eta_{\kappa_N(r)}(y,V^{N,\delta,\cM}_{\frac{(\kappa_N(r)-t)N}{T-t},j})
\Big)\Big\|^2\Big]\Bigg|_{y=Y^{t,x,N,\delta,\cM}_{\kappa_N(r)}}
\right]\nonumber\\
&
=[\nu(A_\delta)]^2\cM^{-1}\bE\left[
\sum_{k=1}^d\bE\Bigg[
\Big|\bE\big[\eta_{\kappa_N(r)}^k(y,V^{N,\delta,\cM}_{1,1})\big]
-\eta_{\kappa_N(r)}^k(y,V^{N,\delta,\cM}_{1,1})\Big|^2
\Bigg]\Bigg|_{y=Y^{t,x,N,\delta,\cM}_{\kappa_N(r)}}
\right]\nonumber\\
&
\leq[\nu(A_\delta)]^2\cM^{-1}\bE\left[
\sum_{k=1}^d\bE\big[\big|\eta^k_{\kappa_N(r)}(y,V^{N,\delta,\cM}_{1,1})\big|^2\big]
\Bigg|_{y=Y^{t,x,N,\delta,\cM}_{\kappa_N(r)}}\right]\nonumber\\
&
=\nu(A_\delta)\cM^{-1}\bE\left[
\int_{A_\delta}\big\|\eta_{\kappa_N(r)}(Y^{t,x,N,\delta,\cM}_{\kappa_N(r)},z)
\big\|^2
\,\nu(dz)\right]
\label{est delta Y}
\\
&\leq 2\nu(A_\delta)\cM^{-1}\bE\left[
\int_{A_\delta}\big\|\eta_{\kappa_N(r)}(Y^{t,x,N,\delta,\cM}_{\kappa_N(r)},z)
-\eta_{\kappa_N(r)}(0,z)\big\|^2\,\nu(dz)
\right]\nonumber\\
&
\quad
+2\nu(A_\delta)\cM^{-1}\bE\left[
\int_{A_\delta}\big\|\eta_{\kappa_N(r)}(0,z)\big\|^2\,\nu(dz)
\right]\nonumber\\
&
\leq 2\nu(A_\delta)\cM^{-1}L\left(\bE\Big[
\big\|Y^{t,x,N,\delta,\cM}_{\kappa_N(r)}\big\|^2\Big]+d^p\right).
\label{error com int}
\end{align}
Thus, by \eqref{nu finite} and the assumption that $\cM\geq \delta^{-2}Kd^p$,
we have for all $s\in[t,T]$ that
\begin{align}
A_4(s)
&
\leq 2\nu(A_\delta)\cM^{-1}L(s-t)\left(
\int_t^s\bE\Big[\big\|Y^{t,x,N,\delta,\cM}_{\kappa_N(r)}\big\|^2\Big]\,dr
+(s-t)d^p
\right)\nonumber\\
&
\leq 2\delta^{-2}Kd^p\cM^{-1}L(s-t)\left(
\int_t^s\bE\Big[\big\|Y^{t,x,N,\delta,\cM}_{\kappa_N(r)}\big\|^2\Big]\,dr
+(s-t)d^p
\right)
\label{A4 delta M 0}
\\
&
\leq 2L(s-t)\left(
\int_t^s\bE\Big[\big\|Y^{t,x,N,\delta,\cM}_{\kappa_N(r)}\big\|^2\Big]\,dr
+(s-t)d^p
\right).
\label{A4 delta M}
\end{align}

Furthermore, using \eqref{assumption Lip mu sigma eta} 
and \eqref{assumption growth} we obtain for all
$s\in[t,T]$ that
\begin{align}
 & A_1(s)
\leq 2(s-t)\bE\Big[\int_t^s
\big\|\mu(\kappa_N(r),Y^{t,x,N,\delta,\cM}_{\kappa_N(r)})
-\mu(\kappa_N(r),0)\big\|^2
\,dr\Big]\nonumber\\
& \quad\quad\quad\;\;
+2(s-t)\bE\Big[\int_t^s
\big\|\mu(\kappa_N(r),0)\big\|^2
\,dr\Big]\nonumber\\
&
\quad\quad\;\;
\leq 2(s-t)L\int_t^s\bE
\Big[\big\|Y^{t,x,N,\delta,\cM}_{\kappa_N(r)}\big\|^2\Big]\,dr
+2(s-t)^2Ld^p,
\label{A1 delta M}\\
&
A_2(s)\leq 
16L\int_t^s\bE\Big[\big\|Y^{t,x,N,\delta,\cM}_{\kappa_N(r)}\big\|^2\Big]\,dr
+2(s-t)8Ld^p,
\label{A2 delta M}\\
&
A_3(s)\leq 
16L\int_t^s\bE\Big[\big\|Y^{t,x,N,\delta,\cM}_{\kappa_N(r)}\big\|^2\Big]\,dr
+2(s-t)8Ld^p.
\label{A3 delta M}
\end{align}
Then combining \eqref{a N delta M} and \eqref{A4 delta M}-\eqref{A3 delta M}
yields for all $s\in[t,T]$ that
\begin{equation}                                            \label{est a}                            
a^\delta_{N,\cM}(s)\leq 5\|x\|^2+20LT(T+8)d^p
+20L(T+8)\int_t^sa^\delta_{N,\cM}(r)\,dr.
\end{equation}
Therefore, by the same stopping time
argument used to prove \eqref{sup Euler} in Lemma \ref{Lemma Euler} and
Gr\"onwall's lemma we get that
$$
a^\delta_{N,\cM}(T)\leq C_1(d^p+\|x\|^2)e^{\rho_1(T-t)}
$$
with $C_1:=5\cdot\max\{1,4LT(T+8)\}$ and $\rho_1:=20L(T+8)$, which proves
\eqref{estimate M}. 

Next,  we start to prove \eqref{error M}. By \eqref{def Y N delta} and
\eqref{Y N delta M 1} we notice for all $s\in[t,T]$ that 
\begin{align*}
Y^{t,x,N,\delta,\cM}_s-Y^{t,x,N,\delta}_s=
&
\int_t^s\left(
\mu(\kappa_N(r-),Y^{t,x,N,\delta,\cM}_{\kappa_N(r-)})
-\mu(\kappa_N(r-),Y^{t,x,N,\delta}_{\kappa_N(r-)})
\right)dr\\
&
+\int_t^s\left(
\sigma(\kappa_N(r-),Y^{t,x,N,\delta,\cM}_{\kappa_N(r-)})
-\sigma(\kappa_N(r-),Y^{t,x,N,\delta}_{\kappa_N(r-)})
\right)dW_r\\
&
+\int_t^s\int_{A_\delta}\left(
\eta_{\kappa_N(r-)}(Y^{t,x,N,\delta,\cM}_{\kappa_N(r-)},z)
-\eta_{\kappa_N(r-)}(Y^{t,x,N,\delta}_{\kappa_N(r-)},z)
\right)\tilde{\pi}(dz,dr)\\
&
+\int_t^s\int_{A_\delta}
\eta_{\kappa_N(r-)}(Y^{t,x,N,\delta,\cM}_{\kappa_N(r-)},z)\,\nu(dz)\,dr
\\
&
-\int_t^s\Big(\frac{\nu(A_\delta)}{\cM}\sum_{j=1}^{\cM}
\eta_{\kappa_N(r-)}(Y^{t,x,N,\delta,\cM}_{\kappa_N(r-)},
V^{N,\delta,\cM}_{\frac{(\kappa_N(r-)-t)N}{T-t},j})
\Big)dr.
\end{align*}
Analogous to the argument to obtain \eqref{a N delta M}, 
by Burkholder-Davis-Gundy inequality and H\"older's inequality, for each 
$s\in[t,T]$ it holds that
\begin{equation}
e^\delta_{N,\cM}(s):=\bE\Bigg[
\sup_{r\in[t,s]}\big\|Y^{t,x,N,\delta,\cM}_r-Y^{t,x,N,\delta}_r\big\|^2\Bigg]
\leq 4\sum_{i=1}^3E_i(s)+4A_4(s),
\label{e N delta M}
\end{equation}
where $A_4(s)$ is defined by \eqref{def A4}, and
\begin{align*}
&
E_1(s):=(s-t)\bE\left[\int_t^s
\big\|\mu(\kappa_N(r),Y^{t,x,N,\delta,\cM}_{\kappa_N(r)})
-\mu(\kappa_N(r),Y^{t,x,N,\delta}_{\kappa_N(r)})\big\|^2\,dr\right],\\
&
E_2(s):=8\bE\left[\int_t^s
\big\|\sigma(\kappa_N(r),Y^{t,x,N,\delta,\cM}_{\kappa_N(r)})
-\sigma(\kappa_N(r),Y^{t,x,N,\delta}_{\kappa_N(r)})\big\|_F^2\,dr\right],\\
&
E_3(s):=8\bE\left[
\int_t^s\int_{A_\delta}\big\|
\eta_{\kappa_N(r)}(Y^{t,x,N,\delta,\cM}_{\kappa_N(r)},z)
-\eta_{\kappa_N(r)}(Y^{t,x,N,\delta}_{\kappa_N(r)},z)
\big\|^2\nu(dz)\,dr
\right]
\end{align*}
Moreover, by \eqref{assumption Lip mu sigma eta} we have for all $s\in[t,T]$ that
\begin{align}
&
E_1(s)\leq 2(s-t)L\int_t^s\bE\Big[\big\|
Y^{t,x,N,\delta,\cM}_{\kappa_N(r)}-Y^{t,x,N,\delta}_{\kappa_N(r)}
\big\|^2\Big]\,dr,
\label{E1 delta}\\
&
E_2(s)\leq 16L\int_t^s\bE\Big[\big\|
Y^{t,x,N,\delta,\cM}_{\kappa_N(r)}-Y^{t,x,N,\delta}_{\kappa_N(r)}
\big\|^2\Big]\,dr,
\label{E2 delta}\\
&
E_3(s)\leq 16L\int_t^s\bE\Big[\big\|
Y^{t,x,N,\delta,\cM}_{\kappa_N(r)}-Y^{t,x,N,\delta}_{\kappa_N(r)}
\big\|^2\Big]\,dr.
\label{E3 delta}
\end{align}
Hence, combining \eqref{e N delta M}, \eqref{E1 delta}-\eqref{E3 delta}, 
and \eqref{A4 delta M 0} we obtain 
for all $s\in[t,T]$ that
\begin{align*}
e^\delta_{N,\cM}(s)\leq 8L(16+T)\int_t^se^\delta_{N,\cM}(r)\,dr
+8L\delta^{-2}Kd^p\cM^{-1}T\left(
\int_t^s\bE\Big[\big\|Y^{t,x,N,\delta,\cM}_{\kappa_N(r)}\big\|^2\Big]\,dr
+(s-t)d^p\right).
\end{align*}
Thus, by \eqref{estimate M} we have for all $s\in[t,T]$ that
\begin{equation}                                        \label{estimate e}
e^\delta_{N,\cM}(s)\leq 8L(16+T)\int_t^s e^\delta_{N,\cM}(r)\,dr
+16\delta^{-2}Kd^p\cM^{-1}LT^2C_1(d^p+\|x\|^2)e^{\rho_1(T-t)}.
\end{equation}
Furthermore, considering \eqref{est Y delta} and \eqref{estimate M},
we apply Gr\"onwall's lemma to \eqref{estimate e} to get
$$
e^{\delta}_{N,\cM}(T)\leq C_2\delta^{-2}d^pK\cM^{-1}(d^p+\|x\|^2)e^{\rho_2(T-t)},
$$
where $C_2:=16LT^2C_1$ and $\rho_2:=8L(16+T)$ are positive constants. 
Then, the proof of Lemma \eqref{lemma delta M} is completed.
\end{proof}

\begin{lemma}                                      \label{Lemma Y s s'}
There exist positive constants $C_1$ and $\rho_1$ only depending on $T$ and $L$
satisfying for all $(t,x)\in[0,T]\times\bR^d$, $s\in[t,T]$, $s'\in[s,T]$, 
$\delta\in(0,1)$, and $N,\cM\in\bN$ with $\cM\geq \delta^{-2}Kd^p$ that
\begin{equation}                                            \label{M delta s s'}
\bE\left[\big\|Y^{t,x,N,\delta,\cM}_{s'}-Y^{t,x,N,\delta,\cM}_{s}\big\|^2\right]
\leq 8L(s'-s)(T+2+\nu(A_\delta)\cM^{-1})
\left[d^p+C_1(d^p+\|x\|^2)e^{\rho_1T}\right]
\end{equation}
\end{lemma}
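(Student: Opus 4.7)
The plan is to start from the integral form of $Y^{t,x,N,\delta,\cM}_{s'}-Y^{t,x,N,\delta,\cM}_{s}$ obtained from the defining SDE (as in equation \eqref{Y N delta M 1}) by rewriting $\pi(dz,dr)=\tilde{\pi}(dz,dr)+\nu(dz)\,dr$. This decomposes the increment into four contributions: the drift integral involving $\mu$; the Brownian stochastic integral involving $\sigma$; the compensated Poisson stochastic integral $\int_s^{s'}\int_{A_\delta}\eta_{\kappa_N(r-)}(Y^{t,x,N,\delta,\cM}_{\kappa_N(r-)},z)\,\tilde{\pi}(dz,dr)$; and the pure drift term
$$\int_s^{s'}D_r\,dr, \qquad D_r:=\int_{A_\delta}\eta_{\kappa_N(r-)}(Y^{t,x,N,\delta,\cM}_{\kappa_N(r-)},z)\,\nu(dz)-\frac{\nu(A_\delta)}{\cM}\sum_{j=1}^{\cM}\eta_{\kappa_N(r-)}\bigl(Y^{t,x,N,\delta,\cM}_{\kappa_N(r-)},V^{N,\delta,\cM}_{(\kappa_N(r-)-t)N,j}\bigr),$$
which quantifies the Monte Carlo error of the compensator integral. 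Applying $(\sum_{i=1}^{4}a_i)^2\leq 4\sum_{i=1}^{4}a_i^2$ and taking expectations reduces the problem to bounding four expectations.

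For the two drift contributions I would apply the Cauchy--Schwarz inequality in time, pulling out a factor $(s'-s)$, while for the two stochastic integrals I would use It\^o's isometry. This gives four terms of the form
$$4(s'-s)\int_s^{s'}\bE\big[\|\mu\|^2\big]\,dr,\quad 4\int_s^{s'}\bE\big[\|\sigma\|_F^2\big]\,dr,\quad 4\int_s^{s'}\bE\Big[\int_{A_\delta}\|\eta\|^2\,\nu(dz)\Big]\,dr,\quad 4(s'-s)\int_s^{s'}\bE\big[\|D_r\|^2\big]\,dr.$$
For the Monte Carlo error, I would reuse the calculation displayed around \eqref{est delta Y}--\eqref{error com int}: conditioning on $Y^{t,x,N,\delta,\cM}_{\kappa_N(r)}$ and exploiting that $V^{N,\delta,\cM}_{(\kappa_N(r)-t)N,j}$ are i.i.d.~with distribution $\nu_\delta^d$ and independent of $Y^{t,x,N,\delta,\cM}_{\kappa_N(r)}$, one obtains $\bE[\|D_r\|^2]\leq \nu(A_\delta)\cM^{-1}\,\bE\big[\int_{A_\delta}\|\eta\|^2\,\nu(dz)\big]$.

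Next, I would apply the linear growth condition \eqref{assumption growth} to bound each of $\|\mu(\kappa_N(r),y)\|^2$, $\|\sigma(\kappa_N(r),y)\|_F^2$, and $\int_{A_\delta}\|\eta_{\kappa_N(r)}(y,z)\|^2\,\nu(dz)$ by $Ld^p(1+\|y\|^2)$ evaluated at $y=Y^{t,x,N,\delta,\cM}_{\kappa_N(r)}$. Collecting terms (with the $\mu$ and $D_r$ contributions carrying an additional factor $(s'-s)\leq T$ from Cauchy--Schwarz, and the $D_r$ contribution carrying the additional factor $\nu(A_\delta)\cM^{-1}$) yields
$$\bE\big[\|Y^{t,x,N,\delta,\cM}_{s'}-Y^{t,x,N,\delta,\cM}_{s}\|^2\big]\leq 8L(s'-s)\bigl(T+2+\nu(A_\delta)\cM^{-1}\bigr)\sup_{r\in[t,T]}\bE\big[d^p(1+\|Y^{t,x,N,\delta,\cM}_{r}\|^2)\bigr].$$
Finally I invoke the uniform moment estimate \eqref{estimate M} from Lemma \ref{lemma delta M} to replace $\sup_r \bE\|Y^{t,x,N,\delta,\cM}_r\|^2$ by $C_1(d^p+\|x\|^2)e^{\rho_1 T}$, giving the claimed bound \eqref{M delta s s'}.

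The main obstacle is purely one of bookkeeping: tracking which of the four contributions produces the factor $\nu(A_\delta)\cM^{-1}$ (only the Monte Carlo term) and which produces an additional factor $(s'-s)\leq T$ (only the two drift terms via Cauchy--Schwarz), so that the final multiplicative structure is exactly $(T+2+\nu(A_\delta)\cM^{-1})$ and not a looser combination such as $T(1+\nu(A_\delta)\cM^{-1})$. No conceptually new argument is required beyond those already deployed in the proof of Lemma~\ref{lemma delta M}.
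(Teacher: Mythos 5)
Your proposal follows precisely the same route as the paper: decompose the increment via $\pi = \tilde\pi + \nu\otimes dr$ into the $\mu$-drift, the $\sigma$-martingale, the $\tilde\pi$-martingale, and the Monte Carlo drift $\int_s^{s'}D_r\,dr$; use H\"older in time for the two drift terms and It\^o's isometry for the two martingale terms; invoke \eqref{est delta Y}--\eqref{error com int} for $\bE[\|D_r\|^2]$; then apply linear growth and \eqref{estimate M}. That is exactly what is done in \eqref{error s s' 1}--\eqref{delta s s'} and the two displays following them.

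One small caveat you share with the paper: the $D_r$ contribution, being a drift integral, picks up a factor $(s'-s)$ from H\"older and then another $(s'-s)$ from $\int_s^{s'}\,dr$, so it is of order $(s'-s)^2\nu(A_\delta)\cM^{-1}$, which after $s'-s\le T$ gives $T\nu(A_\delta)\cM^{-1}$ rather than $\nu(A_\delta)\cM^{-1}$. Your bookkeeping, tracked literally, therefore produces the constant $(T+2+T\nu(A_\delta)\cM^{-1})$, not $(T+2+\nu(A_\delta)\cM^{-1})$; the paper silently performs the same reduction (its penultimate display correctly has $(s'-s)^2$, which becomes $(s'-s)$ in the final one). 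Since $\nu(A_\delta)\cM^{-1}\le 1$ under the standing hypothesis $\cM\ge\delta^{-2}Kd^p$, the extra $T$ is harmless for the only application of this lemma (Corollary \ref{lemma prob 2}, which needs merely that the right-hand side tends to $0$ as $s'\to s$), but it is worth being aware that the exact multiplicative structure stated in the lemma does not drop out of this computation without an additional assumption such as $T\le 1$.
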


\begin{proof}
We fix $(t,x)\in[0,T]\times\bR^d$, $s\in[t,T]$, $s'\in[s,T]$, 
$\delta\in(0,1)$, and $N,\cM\in\bN$ with $\cM\geq \delta^{-2}Kd^p$ throughout
this proof. 
By \eqref{Y N delta M 1}, It\^o's isometry,  H\"older's inequality,
and the fact that $(\sum_{i=1}^4a_i)^2\leq 4\sum_{i=1}^4a_i^2$ for $a_i\in\bR$,
$i=1,...,4$, we have 
\begin{align}
\bE\left[\big\|Y^{t,x,N,\delta,\cM}_{s'}-Y^{t,x,N,\delta,\cM}_s\big\|^2\right]
&
\leq 4(s'-s)\int_s^{s'}\bE\left[\left\|\mu(\kappa_N(r),
Y^{t,x,N,\delta,\cM}_{\kappa_N(r)})
\right\|^2\right]\,dr\nonumber\\
&
\quad
+4\int_s^{s'}\bE\left[\left\|\sigma(\kappa_N(r),
Y^{t,x,N,\delta,\cM}_{\kappa_N(r)})
\right\|_F^2\right]\,dr
\nonumber\\
&
\quad +4\int_s^{s'}\bE\left[\int_{\bR^{d}}
\left\|\eta_{\kappa_N(r)}
(Y^{t,x,N,\delta,\cM}_{\kappa_N(r)},z)\right\|^2\,\nu(dz)\right]dr
\nonumber\\
&
\quad +4\bE\Bigg[\Big\|
\int_s^{s'}\Big(
\int_{A_\delta}\eta_{\kappa_N(r)}(Y^{t,x,N,\delta,\cM}_{\kappa_N(r)},z)
\,\nu(dz)\nonumber\\
&
\quad\quad
-\frac{\nu(A_\delta)}{\cM}\sum_{j=1}^\cM\eta_{\kappa_N(r)}
(Y^{t,x,N,\delta,\cM}_{\kappa_N(r)},V^{N,\delta,\cM}_{\frac{(\kappa_N(r)-t)N}{T-t},j})
\Big)\,dr
\Big\|^2\Bigg].                                             
\label{error s s' 1}
\end{align}
Moreover, by H\"older's inequality, \eqref{error com int}, and \eqref{estimate M}
it holds that
\begin{align}
&
\bE\Bigg[\Big\|
\int_s^{s'}\Big(
\int_{A_\delta}\eta_{\kappa_N(r)}(Y^{t,x,N,\delta,\cM}_{\kappa_N(r)},z)
\,\nu(dz)-
\frac{\nu(A_\delta)}{\cM}\sum_{j=1}^\cM\eta_{\kappa_N(r)}
(Y^{t,x,N,\delta,\cM}_{\kappa_N(r)},V^{N,\delta,\cM}_{\frac{(\kappa_N(r)-t)N}{T-t},j})
\Big)\,dr
\Big\|^2\Bigg]\nonumber\\
&
\leq 2(s'-s)\nu(A_\delta)\cM^{-1}L\int_s^{s'}\left(
d^p+\bE\Big[\big\|Y^{t,x,N,\delta,\cM}_{\kappa_N(r)}\big\|^2\Big]
\right)dr\nonumber\\
&
\leq 2(s'-s)^2\nu(A_\delta)\cM^{-1}L
\left[d^p+C_1(d^p+\|x\|^2)e^{\rho_1T}\right],             \label{delta s s'}
\end{align}
where $C_1=C_1(T,L)$ and $\rho_1=\rho_1(T,L)$ are positive constants 
introduced in \eqref{estimate M} in Lemma~\ref{lemma delta M}.
Thus, by \eqref{error s s' 1}, \eqref{delta s s'}, \eqref{assumption Lip mu sigma eta}, and \eqref{assumption growth} we obtain
\begin{align}
&
\bE\left[\big\|Y^{t,x,N,\delta,\cM}_{s'}-Y^{t,x,N,\delta,\cM}_{s}\big\|^2\right]
\nonumber\\
&
\leq 
4(s'-s)\int_s^{s'}\bE
\left[2\left\|\mu(\kappa_N(r),Y^{t,x,N,\delta,\cM}_{\kappa_N(r)})
-\mu(\kappa_N(r),0)\right\|^2
+2\left\|\mu(\kappa_N(r),0)\right\|^2\right]\,dr
\nonumber\\
&
\quad +4\int_s^{s'}\bE\left[2\left\|
\sigma(\kappa_N(r),Y^{t,x,N,\delta,\cM}_{\kappa_N(r)})
-\sigma(\kappa_N(r),0)\right\|_F^2
+2\left\|\sigma(\kappa_N(r),0)\right\|^2\right]\,dr
\nonumber\\
&
\quad +4\int_s^{s'}\bE\left[\int_{\bR^{d}}
\left(2\left\|\eta_{\kappa_N(r)}
(Y^{t,x,N,\delta,\cM}_{\kappa_N(r)},z)-\eta_{\kappa_N(r)}(0,z)\right\|^2
+2\left\|\eta_{\kappa_N(r)}(0,z)\right\|^2\right)\nu(dz)\right]dr\nonumber\\
&
\quad+8(s'-s)^2\nu(A_\delta)\cM^{-1}L
\left[d^p+C_1(d^p+\|x\|^2)e^{\rho_1T}\right]\nonumber\\
&
\leq 8L(s'-s)\int_s^{s'}\left(\bE\left[\big\|
Y^{t,x,N,\delta,\cM}_{\kappa_N(r)}\big\|^2+d^p\right]\right)dr
+16L\int_s^{s'}\left(\bE\left[\big\|
Y^{t,x,N,\delta,\cM}_{\kappa_N(r)}\big\|^2+d^p\right]\right)dr\nonumber\\
&
\quad+8(s'-s)^2\nu(A_\delta)\cM^{-1}L
\left[d^p+C_1(d^p+\|x\|^2)e^{\rho_1T}\right].\nonumber
\end{align}
Hence, by \eqref{estimate M} we obtain
for all $s\in[t,T]$ and $s'\in[s,T]$ that
\begin{align}
\bE\left[\big\|Y^{t,x,N,\delta,\cM}_{s'}-Y^{t,x,N,\delta,\cM}_{s}\big\|^2\right]
&
\leq 8L(s'-s)^2\left[d^p+C_1(d^p+\|x\|^2)e^{\rho_1T}\right]\nonumber\\
& \quad
+16L(s'-s)\left[d^p+C_1(d^p+\|x\|^2)e^{\rho_1T}\right]\nonumber\\
& \quad
+8(s'-s)\nu(A_\delta)\cM^{-1}L\left[d^p+C_1(d^p+\|x\|^2)e^{\rho_1T}\right]
\nonumber\\
&
\leq 8L(s'-s)(T+2+\nu(A_\delta)\cM^{-1})
\left[d^p+C_1(d^p+\|x\|^2)e^{\rho_1T}\right],\nonumber            
\end{align}
which completes the proof of this lemma.
\end{proof}

\begin{lemma}                                                \label{Lemma e_s}
There exist constants $C_1$ and $\rho_1$ only depending on $T$ and $L$ satisfying
for all $t\in[0,T]$, $t'\in[t,T]$,
$s\in[t',T]$, $x,x'\in\bR^d$, $\delta\in(0,1)$, 
and $N,\cM\in\bN$ with $\cM\geq \delta^{-2}Kd^p$ that
\begin{align}
e_s & :=\sup_{u\in[t',s]}\bE\Big[\big\|Y^{t,x,N,\delta,\cM}_u
-Y^{t',x',N,\delta,\cM}_u\big\|^2\Big]\nonumber\\
&
\leq 16\exp\left\{4LT\left(2T+8+\nu(A_\delta)\cM^{-1}T\right)\right\}\nonumber\\
&
\quad
\cdot
\Big\{
(\|x-x'\|+|t'-t|+|t'-t|^{1/2})\big[1+3L^{1/2}d^{p/2}(1+\|x\|^2)^{1/2}\big]^2\nonumber\\
& 
\quad
+2L(t'-t)\nu(A_\delta)\cM^{-1}
\left[d^p+C_1(d^p+\|x\|^2)e^{\rho_1T}\right]
\Big\}.                                                       \label{sup t t'}
\end{align}
\end{lemma}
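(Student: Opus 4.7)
Estimate \eqref{sup t t'} is the fully-discretized analogue of Lemma~\ref{Lemma L2 continuity}, and the plan is to combine the stability argument underlying Lemma~\ref{lemma delta M} with the initial-time decomposition used in the proof of Lemma~\ref{Lemma L2 continuity}.

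For $u\in[t',s]$, I would first decompose
\begin{equation*}
Y^{t,x,N,\delta,\cM}_u-Y^{t',x',N,\delta,\cM}_u
= \bigl(Y^{t,x,N,\delta,\cM}_{t'}-x'\bigr)+I_1(u)+I_2(u)+I_3(u)+I_4(u),
\end{equation*}
where $I_1$, $I_2$, $I_3$ denote, respectively, the $dr$-, $dW_r$- and $\tilde{\pi}(dz,dr)$-integrals over $[t',u]$ of the Lipschitz differences of $\mu$, $\sigma$, $\eta$ evaluated along the two discretized schemes (using the representation \eqref{Y N delta M 1}), and $I_4$ is the residual Monte Carlo compensator difference written as a $dr$-integral. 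Applying H\"older's inequality, It\^o's isometry and the elementary bound $(\sum_{i=1}^{5}a_i)^2\le 5\sum_{i=1}^{5}a_i^2$, in complete analogy with the passage leading to \eqref{a N delta M}, and then invoking \eqref{assumption Lip mu sigma eta}, I would bound $\bE\|I_1(u)\|^2+\bE\|I_2(u)\|^2+\bE\|I_3(u)\|^2$ by a constant multiple of $L(T+8)\int_{t'}^u \bE\bigl[\|Y^{t,x,N,\delta,\cM}_{\kappa_N(r)}-Y^{t',x',N,\delta,\cM}_{\kappa_N(r)}\|^2\bigr]\,dr$.

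The initial mismatch is handled by the splitting
$Y^{t,x,N,\delta,\cM}_{t'}-x'=(Y^{t,x,N,\delta,\cM}_{t'}-Y^{t,x,N,\delta,\cM}_{t})+(x-x')$: Lemma~\ref{Lemma Y s s'} applied with $s=t$ and $s'=t'$ bounds the first summand by $O(t'-t)$, and the second contributes $\|x-x'\|^2$. Proceeding exactly as in the chain \eqref{X t'}--\eqref{est t' 3} to express the growth factor centered at $x$ then yields the summand $(\|x-x'\|+|t'-t|+|t'-t|^{1/2})^2[1+3L^{1/2}d^{p/2}(1+\|x\|^2)^{1/2}]^2$ appearing in \eqref{sup t t'}.

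The main obstacle is $\bE\|I_4(u)\|^2$, involving
\[
\frac{\nu(A_\delta)}{\cM}\sum_{j=1}^{\cM}\Big[\eta_{\kappa_N(r)}\bigl(Y^{t,x,N,\delta,\cM}_{\kappa_N(r)},V^{N,\delta,\cM}_{(\kappa_N(r)-t)N,j}\bigr)-\eta_{\kappa_N(r)}\bigl(Y^{t',x',N,\delta,\cM}_{\kappa_N(r)},V^{N,\delta,\cM}_{(\kappa_N(r)-t')N,j}\bigr)\Big],
\]
in which the two families of $V$'s are indexed by distinct time-step counters depending on $t$ and $t'$ respectively and hence are mutually independent. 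I would add and subtract the two exact compensator integrals $\int_{A_\delta}\eta_{\kappa_N(r)}(Y^{\cdot}_{\kappa_N(r)},z)\,\nu(dz)$, thereby splitting $I_4$ into (i) the difference of two Monte Carlo errors, each controlled by the computation in \eqref{est delta Y}--\eqref{error com int} and producing via \eqref{estimate M} the additive term $2L(t'-t)\nu(A_\delta)\cM^{-1}[d^p+C_1(d^p+\|x\|^2)e^{\rho_1T}]$ in \eqref{sup t t'}, and (ii) the Lipschitz-in-$y$ difference of the exact compensators, which by \eqref{assumption Lip mu sigma eta} is dominated by $L\,\bE\|Y^{t,x,N,\delta,\cM}_{\kappa_N(r)}-Y^{t',x',N,\delta,\cM}_{\kappa_N(r)}\|^2$ and is absorbed into Gr\"onwall. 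Assembling the pieces into a single integral inequality for $u\mapsto\sup_{v\in[t',u]}\bE\|Y^{t,x,N,\delta,\cM}_v-Y^{t',x',N,\delta,\cM}_v\|^2$ and invoking Gr\"onwall's lemma with exponential rate of order $LT(T+8+\nu(A_\delta)\cM^{-1}T)$ yields the stated bound.
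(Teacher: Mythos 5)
Your overall decomposition on $[t',s]$ and the Gr\"onwall framework match the paper's, and your treatment of the initial mismatch via Lemma~\ref{Lemma Y s s'} (with $s=t$, $s'=t'$) plus the triangle inequality is a workable shortcut compared to the paper's direct Gr\"onwall estimate of $\bE[\|Y^{t,x,N,\delta,\cM}_{t'}-x'\|^2]$ over $[t,t']$, though it would change constants and does not reproduce the exact form $(\|x-x'\|+|t'-t|+|t'-t|^{1/2})[1+3L^{1/2}d^{p/2}(1+\|x\|^2)^{1/2}]^2$ appearing in \eqref{sup t t'}. The genuine problem is your treatment of the Monte Carlo compensator difference $I_4$.

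You observe that the time-step indices in the two schemes differ, conclude that the corresponding $V$'s are mutually independent, and then plan to bound $\bE\|I_4\|^2$ by the sum of the two individual Monte Carlo variances, each controlled as in \eqref{est delta Y}--\eqref{error com int}. But if you do this, each individual variance is of order $\nu(A_\delta)\cM^{-1}L(d^p+\bE\|Y^{\,\cdot\,}_{\kappa_N(r)}\|^2)$, and after H\"older and integrating over $[t',s]$ you obtain an additive term of order $(s-t')^2\nu(A_\delta)\cM^{-1}(d^p+\|x\|^2)$. This is \emph{not} proportional to $(t'-t)$, and in particular it does not vanish as $(t',x')\to(t,x)$, so the claimed estimate \eqref{sup t t'} would not follow. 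In other words, treating the two families of $V$'s as independent noise destroys the small-$(t'-t)$ behaviour that the lemma asserts.

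What the paper actually does is the opposite: it (implicitly) couples the two schemes so that on $[t',T]$ they share the same discretization grid and the same sampled $V$'s. With this coupling, the combined Monte Carlo term is
\begin{equation*}
\int_{t'}^s\Big[\int_{A_\delta}\big(\eta(Y^{t,x}_{\kappa_N(r)},z)-\eta(Y^{t',x'}_{\kappa_N(r)},z)\big)\,\nu(dz)-\frac{\nu(A_\delta)}{\cM}\sum_{j=1}^{\cM}\big(\eta(Y^{t,x}_{\kappa_N(r)},V_{\cdot,j})-\eta(Y^{t',x'}_{\kappa_N(r)},V_{\cdot,j})\big)\Big]dr,
\end{equation*}
and applying the variance argument \eqref{est delta Y} to the single function $y\mapsto\eta(Y^{t,x}_{\kappa_N(r)},y)-\eta(Y^{t',x'}_{\kappa_N(r)},y)$ (which is Lipschitz in the first argument) yields a bound of the form $\nu(A_\delta)\cM^{-1}T\int_{t'}^s L\,e_r\,dr$, which is absorbed into Gr\"onwall and only contributes the $\nu(A_\delta)\cM^{-1}T$ factor inside the exponential. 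The additive term $2L(t'-t)\nu(A_\delta)\cM^{-1}[d^p+C_1(d^p+\|x\|^2)e^{\rho_1T}]$ in \eqref{sup t t'} comes entirely from estimating $\bE[\|Y^{t,x,N,\delta,\cM}_{t'}-x'\|^2]$ over $[t,t']$ (the stretch on which only the first scheme evolves), specifically from \eqref{delta s s'}. If you insist on taking the grid mismatch literally, you need a separate argument to handle the de-synchronized compensator sampling; your current route would only recover a bound with an $O(\nu(A_\delta)\cM^{-1})$ term that is independent of $(t'-t)$ and $\|x-x'\|$, which is strictly weaker than what is claimed.
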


\begin{proof}
We fix $x,x'\in\bR^d$, $t\in[0,T]$, $t'\in[t,T]$, 
$\delta\in(0,1)$, and $N,\cM\in\bN$ with $\cM\geq \delta^{-2}Kd^p$ throughout
this proof. One notices that almost surely for all $s\in[t',T]$
\begin{align*}
& Y^{t,x,N,\delta,\cM}_s-Y^{t',x',N,\delta,\cM}_s\\
&
=
Y^{t,x,N,\delta,\cM}_{t'}-x'
+\int_{t'}^s\left(\mu(\kappa_N(r-),Y^{t,x,N,\delta,\cM}_{\kappa_N(r-)})
-\mu(\kappa_N(r-),Y^{t',x',N,\delta,\cM}_{\kappa_N(r-)})\right)\,dr\\
& \quad
+\int_{t'}^s\left(\sigma(\kappa_N(r-),
Y^{t,x,N,\delta,\cM}_{\kappa_N(r-)})
-\sigma(\kappa_N(r-),Y^{t',x',N,\delta,\cM}_{\kappa_N(r-)})\right)\,dW_r\\
& \quad
+\int_{t'}^s\int_{A_\delta}
\left(\eta_{\kappa_N(r-)}(Y^{t,x,N,\delta,\cM}_{\kappa_N(r-)},z)
-\eta_{\kappa_N(r-)}(Y^{t',x',N,\delta,\cM}_{\kappa_N(r-)},z)\right)
\,\tilde{\pi}(dz,dr)\\
& \quad
+\int_{t'}^s\int_{A_\delta}
\left(\eta_{\kappa_N(r-)}(Y^{t,x,N,\delta,\cM}_{\kappa_N(r-)},z)
-\eta_{\kappa_N(r-)}(Y^{t',x',N,\delta,\cM}_{\kappa_N(r-)},z)\right)
\nu(dz)\,dr
\\
& \quad
-\int_{t'}^s\Bigg[\frac{\nu(A_\delta)}{\cM}\sum_{j=1}^{\cM}
\Big(
\eta_{\kappa_N(r-)}(Y^{t,x,N,\delta,\cM}_{\kappa_N(r-)},
V^{N,\delta,\cM}_{\frac{(\kappa_N(r-)-t)N}{T-t},j})
\\
& \quad
-\eta_{\kappa_N(r-)}(Y^{t',x',N,\delta,\cM}_{\kappa_N(r-)},
V^{N,\delta,\cM}_{\frac{(\kappa_N(r-)-t)N}{T-t},j})
\Big)
\Bigg]dr.
\end{align*}
Therefore, by Minkowski's inequality, H\"older's inequality, It\^o's isometry,
H\"older's inequality,
and the analogous argument to get \eqref{est delta Y}, we have for all
$s\in[t',T]$ that
\begin{align*}
e_s^{1/2}\leq & 
\left(\bE\left[\left\|Y^{t,x,N,\delta,\cM}_{t'}-x'\right\|^2\right]\right)^{1/2}
+L^{1/2}\left((s-t')\int_{t'}^s
\bE\left[\left\|Y^{t,x,N,\delta,\cM}_{\kappa_N(r)}
-Y^{t',x',N,\delta,\cM}_{\kappa_N(r)}\right\|^2\right]dr\right)^{1/2}\\
& 
+2L^{1/2}\left(\int_{t'}^s
\bE\left[\left\|Y^{t,x,N,\delta,\cM}_{\kappa_N(r)}
-Y^{t',x',N,\delta,\cM}_{\kappa_N(r)}\right\|^2\right]\,dr\right)^{1/2}\\
&
+\left(\nu(A_\delta)\cM^{-1}T\int_{t'}^s \bE\left[\int_{A_\delta}
\big\|\eta_{\kappa_N(r)}(Y^{t,x,N,\delta,\cM}_{\kappa_N(r)},z)-
\eta_{\kappa_N(r)}(Y^{t',x',N,\delta,\cM}_{\kappa_N(r)},z)\big\|^2\,
\nu(dz)\right]\,dr\right)^{1/2}.
\end{align*} 
Thus, by \eqref{assumption Lip mu sigma eta} and the fact that
$(\sum_{i=1}^4a_i)^2\leq 4\sum_{i=1}^4a_i^2$ for $a_i\in\bR$, $i=1,...,4$,
we have for all $s\in[t',T]$ that
\begin{align}
e_s 
\leq 4\bE\left[\big\|Y^{t,x,N,\delta,\cM}_{t'}-x'\big\|^2\right]
+4L(T+4)\int_{t'}^se_r\,dr
+4L\nu(A_\delta)\cM^{-1}T\int_{t'}^se_r\,dr.
\end{align}
Hence, by \eqref{estimate M} and 
Gr\"onwall's lemma, we get for all $s\in[t',T]$ that
\begin{equation}
e_s\leq 4\bE\left[\big\|Y^{t,x,N,\delta,\cM}_{t'}-x'\big\|^2\right]\cdot
\exp\big\{4LT\big(T+4+\nu(A_\delta)\cM^{-1}T\big)\big\}.                    \label{e M s 1}
\end{equation}
Furthermore, we notice for all $s\in[t,t']$ almost surely that
\begin{align*}
Y^{t,x,N,\delta,\cM}_{s}-x'=
&
x-x'+\int_t^{s}
\mu(\kappa_N(r-),Y^{t,x,N,\delta,\cM}_{\kappa_N(r-)})\,dr
+\int_{t}^{s}\sigma(\kappa_N(r-),Y^{t,x,N,\delta,\cM}_{\kappa_N(r-)})\,dW_r\\
&
+\int_t^{s}\int_{\bR^{d}}\eta_{\kappa_N(r-)}
(Y^{t,x,N,\delta,\cM}_{\kappa_N(r-)},z)
\,\,\tilde{\pi}(dz,dr)\\
&
+\int_t^s\int_{A_\delta}
\eta_{\kappa_N(r-)}(Y^{t,x,N,\delta,\cM}_{\kappa_N(r-)},z)\,\nu(dz)\,dr
\\
&
-\int_t^s\Big(\frac{\nu(A_\delta)}{\cM}\sum_{j=1}^{\cM}
\eta_{\kappa_N(r-)}(Y^{t,x,N,\delta,\cM}_{\kappa_N(r-)},
V^{N,\delta,\cM}_{\frac{(\kappa_N(r-)-t)N}{T-t},j})
\Big)dr.
\end{align*}
Then, by Minkowski's inequality, H\"older's inequality, It\^o's isometry,
\eqref{delta s s'},
\eqref{assumption Lip mu sigma eta}, and \eqref{assumption growth},
we obtain for all $s\in[t,t']$ that
\begin{align*}
&
\left(\sup_{u\in[t,s]}\bE\left[\left\|Y^{t,x,N,\delta,\cM}_{u}
-x'\right\|^2\right]\right)^{1/2}\\
&\leq
\|x-x'\|
+\left((s-t)\int_{t}^{s}\|\mu(\kappa_N(r),x')\|^2\,dr\right)^{1/2}\\
& \quad
+\left((s-t)\int_t^{s}
\bE\left[\left\|\mu(\kappa_N(r),Y^{t,x,N,\delta,\cM}_{\kappa_N(r)})
-\mu(\kappa_N(r),x')\right\|^2
\right]dr\right)^{1/2}\\
&
\quad+\left(\int_t^{s}\|\sigma(\kappa_N(r),x')\|_F^2\,dr\right)^{1/2}
+\left(\int_t^{s}
\bE\left[\left
\|\sigma(\kappa_N(r),Y^{t,x,N,\delta,\cM}_{\kappa_N(r)})
-\sigma(\kappa_N(r),x')\right\|_F^2\right]\,dr\right)^{1/2}\\
&
\quad+\left(\int_t^{s}\bE\left[\int_{\bR^{d}}
\left\|\eta_{\kappa_N(r)}(Y^{t,x,N,\delta,\cM}_{\kappa_N(r)},z)
-\eta_{\kappa_N(r)}(x',z)\right\|^2\nu(dz)\right]dr\right)^{1/2}\\
& \quad
+\left(\int_t^{s}\int_{\bR^{d}}\|\eta_{\kappa_N(r)}(x',z)\|^2
\,\nu(dz)\,dr\right)^{1/2}
\\
&
\quad +\left[2(s-t)^2\nu(A_\delta)\cM^{-1}L
\big(d^p+C_1(d^p+\|x\|^2)e^{\rho_1T}\big)\right]^{1/2}
\\
&
\leq \|x-x'\|
+\left((s-t)\int_{t}^{s}\|\mu(\kappa_N(r),x')\|^2\,dr\right)^{1/2}
+L^{1/2}\left((s-t)\int_t^{s}
\bE\left[\left\|Y^{t,x,N,\delta,\cM}_{\kappa_N(r)}
-x'\right\|^2\right]dr\right)^{1/2}\\
&
\quad+2L^{1/2}\left(\int_t^{s}
\bE\left[\left\|Y^{t,x,N,\delta,\cM}_{\kappa_N(r)}-x'\right\|^2\right]
\,dr\right)^{1/2}
+\left(\int_t^{s}\|\sigma(\kappa_N(r),x')\|_F^2\,dr\right)^{1/2}\\
& \quad
+\left(\int_t^{s}\int_{\bR^{d}}\|
\eta_{\kappa_N(r)}(x',z)\|^2\,\nu(dz)\,dr\right)^{1/2}
+\left[2(s-t)^2\nu(A_\delta)\cM^{-1}L
\big(d^p+C_1(d^p+\|x\|^2)e^{\rho_1T}\big)\right]^{1/2}.
\end{align*}

Hence, by \eqref{assumption growth} and the fact that
$(\sum_{i=1}^4a_i)^2\leq 4\sum_{i=1}^4a_i^2$ for $a_i\in\bR$, $i=1,...,4$, it holds for all $s\in[t,t']$ that
\begin{align*}
\sup_{u\in[t,s]}\bE\left[\big\|Y^{t,x,N,\delta,\cM}_{u}-x'\big\|^2\right]
&
\leq 4L(T+4)\int_t^{s}
\sup_{u\in[t,r]}\bE\left[\big\|Y^{t,x,N,\delta,\cM}_{u}-x'\big\|^2\right]dr
\\
& \quad
+4\Bigg[\|x-x'\|+\Bigg(|t'-t|\cdot\int_t^{t'}\|
\mu(\kappa_N(r),x')\|^2\,dr\Bigg)^{1/2}
\\
& \quad
+\Bigg(\int_t^{t'}\|\sigma(\kappa_N(r),x')\|_F^2\,dr\Bigg)^{1/2}
\\
& \quad
+\Bigg(\int_t^{t'}\int_{\bR^{d}}
\|\eta_{\kappa_N(r)}(x',z)\|^2\,\nu(dz)\,dr\Bigg)^{1/2}
\Bigg]^2\\
& \quad
+8L(t'-t)\nu(A_\delta)\cM^{-1}
\left[d^p+C_1(d^p+\|x\|^2)e^{\rho_1T}\right]\\
&
\leq 4L(T+4)\int_t^{s}
\sup_{u\in[t,r]}\bE\left[\big\|Y^{t,x,N,\delta,\cM}_{u}-x'\big\|^2\right]dr
\\
& \quad
+4(\|x-x'\|+|t'-t|+|t'-t|^{1/2})\big[1+3L^{1/2}d^{p/2}(1+\|x\|^2)^{1/2}\big]^2\\
& \quad
+8L(t'-t)\nu(A_\delta)\cM^{-1}
\left[d^p+C_1(d^p+\|x\|^2)e^{\rho_1T}\right].
\end{align*}
Thus, by \eqref{estimate M} and Gr\"onwall's lemma, we obtain that
\begin{align}
&
\sup_{u\in[t,t']}\bE\left[\big\|Y^{t,x,N,\delta,\cM}_{u}-x'\big\|^2\right]
\nonumber\\
&
\leq 
4e^{4LT(T+4)} \Big\{
(\|x-x'\|+|t'-t|+|t'-t|^{1/2})
\big[1+3L^{1/2}d^{p/2}(1+\|x\|^2)^{1/2}\big]^2\nonumber\\
& \quad
+2L(t'-t)\nu(A_\delta)\cM^{-1}
\left[d^p+C_1(d^p+\|x\|^2)e^{\rho_1T}\right]
\Big\}.                                                \label{sup Y x' t t'}                                                 
\end{align}
Finally, combining \eqref{e M s 1} and \eqref{sup Y x' t t'}
yields \eqref{sup t t'}. Thus, the proof of this lemma is completed.
\end{proof}

\begin{corollary}                                            \label{lemma prob 2}
Let $(t,x)\in[0,T]\times\bR^d$, $s\in[t,T]$,
and let $(t_k,s_k,x_k)_{k=1}^\infty$ be a sequence such that 
$(t_k,s_k,x_k)\in\Lambda\times\bR^d$ for all $k\in\bN$, 
and $\lim_{k\to\infty}(t_k,s_k,x_k)=(t,s,x)$.
Then it holds for for all $\varepsilon>0$,
$\delta\in(0,1)$, 
and $N,\cM\in\bN$ with $\cM\geq \delta^{-2}Kd^p$, that
\begin{equation}                                    \label{conv prob 2}
\lim_{k\to\infty}\mathbb{P}
\left(\left\|Y^{t_k,x_k,N,\delta,\cM}_{s_k}-Y^{t,x,N,\delta,\cM}_s\right\|
\geq \varepsilon\right)=0.
\end{equation}
\end{corollary}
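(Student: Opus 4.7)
The plan is to mirror the proof of Corollary \ref{corollary prob 1}, replacing the estimates for the unapproximated SDE (Lemmas \ref{lemma s s'} and \ref{Lemma L2 continuity}) with their Euler--Monte-Carlo analogues just established (Lemmas \ref{Lemma Y s s'} and \ref{Lemma e_s}). First I would apply Chebyshev's inequality together with the elementary bound $(a+b)^2\leq 2a^2+2b^2$ to reduce \eqref{conv prob 2} to showing that both
$\bE\left[\left\|Y^{t_k,x_k,N,\delta,\cM}_{s_k}-Y^{t,x,N,\delta,\cM}_{s_k}\right\|^2\right]$
and
$\bE\left[\left\|Y^{t,x,N,\delta,\cM}_{s_k}-Y^{t,x,N,\delta,\cM}_{s}\right\|^2\right]$
tend to zero as $k\to\infty$. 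Since $t_k\to t$ and $s_k\to s\in[t,T]$, for all sufficiently large $k\in\bN$ one has $s_k\in[\max\{t,t_k\},T]$, so both processes are defined at the required time points.

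For the second expectation I would invoke Lemma \ref{Lemma Y s s'} with $\min\{s,s_k\}$ and $\max\{s,s_k\}$ playing the roles of $s$ and $s'$. The resulting bound is proportional to $|s_k-s|$ times a constant depending only on $T$, $L$, $d$, $\|x\|$, $\delta$, and $\cM$, and therefore vanishes as $k\to\infty$.

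For the first expectation I would split into the two cases $t_k\geq t$ and $t_k<t$. In the first case Lemma \ref{Lemma e_s} applies directly with $(t',x')=(t_k,x_k)$ evaluated at $u=s_k\in[t_k,T]$; in the second case I would apply the same lemma with the roles of $(t,x)$ and $(t_k,x_k)$ swapped, evaluated at $u=s_k\in[t,T]$, which holds eventually because $s_k\to s\geq t$. In either case the bound furnished by Lemma \ref{Lemma e_s} depends continuously on $\|x-x_k\|+|t-t_k|+|t-t_k|^{1/2}+|t-t_k|\nu(A_\delta)\cM^{-1}$ (with prefactors depending on $T,L,L_1,L_2,d,\|x\|,\delta,\cM$ but not on $k$), and hence tends to zero as $k\to\infty$.

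Combining the two limits and passing to the limit in $k$ yields \eqref{conv prob 2}. The only mild technicality is the case-split on the sign of $t_k-t$, since Lemma \ref{Lemma e_s} is only stated for $t'\geq t$; however, this is entirely routine once one notes that $s_k\to s\geq t$ together with $t_k\to t$ guarantees $s_k\geq\max\{t,t_k\}$ for all sufficiently large $k$, so both orderings are admissible.
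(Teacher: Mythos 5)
Your proposal is correct and follows exactly the route the paper itself takes: the published proof of Corollary~\ref{lemma prob 2} simply says to mirror the proof of Corollary~\ref{corollary prob 1} with Lemmas~\ref{Lemma Y s s'} and~\ref{Lemma e_s} substituted for Lemmas~\ref{lemma s s'} and~\ref{Lemma L2 continuity}, which is precisely what you do. Your explicit case split on the sign of $t_k-t$ (and the observation that one needs $s_k\geq\max\{t,t_k\}$ eventually) makes visible a detail the paper leaves implicit, but the substance is identical.
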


\begin{proof}
By Lemma \ref{Lemma Y s s'} and Lemma \ref{Lemma e_s}, one can precisely follow
the proof of Corollary \ref{corollary prob 1} to obtain \eqref{conv prob 2}.
\end{proof}

\smallskip
\noindent

\end{document}